\theoremstyle{plain} % default
\newtheorem{theorem}[subsection]{Theorem}
\newtheorem{proposition}[subsection]{Proposition}
\newtheorem{lemma}[subsection]{Lemma}
\newtheorem{corollary}[subsection]{Corollary}
\swapnumbers \theoremstyle{plain} % default
\newtheorem{theorem*}{Theorem}
\theoremstyle{definition}
\newtheorem{conjecture*}[section]{Conjecture}
\theoremstyle{definition}
\newtheorem{example}[subsection]{Example}
\newtheorem{definition}[subsection]{Definition}
\newtheorem{question}[subsection]{Question}
\theoremstyle{remark}
\newtheorem{remark}[subsection]{Remark}
\newcommand\Aut{\text{\rm Aut}}
\newcommand{\mpair}[1]{\pair{\,#1\,}}
\newcommand{\mset}[1]{\set{\,#1\,}}
\newcommand{\pair}[1]{\langle #1\rangle}
\newcommand{\set}[1]{\{#1\}}
\newcommand{\card}[1]{\sharp(#1)}
\newcommand\lex{{\text{\rm lex}}}
\renewcommand{\p@enumi}{\thesubsection}
\newenvironment{conds}{
                       
                        \begin{enumerate} }
                     {\end{enumerate} }
\newenvironment{num}{
                      
                      \begin{enumerate} }
                    {\end{enumerate} }
                    \newcommand{\op}{^\text{\rm op}}
\newcommand{\Int}{{\mathbb Z}}
\newcommand{\Nat}{{\mathbb N}}
\newcommand{\real}{{\mathbb R}}
\newcommand{\mc}{\mathcal}
\newcommand{\Id}{\text{\rm Id}}
\newcommand{\ck}[1]{{#1}^\vee}
\begin{document}

\title{On rigidity of abstract root systems\\ of Coxeter systems}

\author{Matthew Dyer}
\address{Department of Mathematics 
\\ 255 Hurley Building\\ University of Notre Dame \\
Notre Dame, Indiana 46556, U.S.A.}
%\email{}
%\dedicatory{}
\date{\today}
%\thanks{}
%\keywords{}
\subjclass{20F55}

\begin{abstract} We introduce and study a combinatorially defined notion of root basis of a (real) root system of a possibly infinite Coxeter group.
Known results on conjugacy up to sign of root bases of certain irreducible finite rank real root systems  are extended to abstract root bases, to a larger class of  real root systems, and, with a short list of (genuine) exceptions, to infinite rank irreducible Coxeter systems.    
\end{abstract}

\maketitle

%%%%%%%%%%%%%%%%%%%%%%%%MNAIN BODY%%%%%%%%%%%%%%%%%%%%%%%%%%%%%%%%%

\section*{Introduction} 

It is well known that any two root bases (simple systems of roots) for a root system of a finite
Weyl group (or finite Coxeter group) $W$ are $W$-conjugate.
This result has been extended to $W$-conjugacy up to sign of root bases of root systems of 
certain   reflection representations of finite rank, irreducible Coxeter systems in \cite{How}
and \cite{Moop}
(see also \cite[Proposition 5.9]{Kac}).

In this paper, we extend these results in three ways.
First, we reformulate  the results  of \cite{How} as  asserting  conjugacy up to sign of
suitably defined  ``abstract root bases'' of ``abstract  root systems''  of irreducible, finite rank Coxeter systems $(W,S)$; the main point here is that we provide  a characterization of root bases which does not require the  linear structure of the ambient real vector space. Secondly, we use the abstract result to 
extend the above conjugacy result   to (real) root systems  of 
more general    reflection representations  of finite rank irreducible Coxeter systems. Thirdly, we prove that   two   root bases for  any  (real or abstract)  root system of an  irreducible Coxeter system of possibly infinite rank are ``locally $W$-conjugate'' up to sign,  with a small number of types of  exceptions.

In order to explain some of these results in more detail, 
let $(W,S)$ be a  Coxeter system, and $\Phi$ be the  
root system of the standard reflection representation of $(W,S)$ (see \cite[Ch 5]{Hum} and  \cite[Ch V]{Bou}), with standard set of positive roots $\Phi_+$ and corresponding standard root basis $\Pi$.
For $\alpha\in \Phi$, the reflection
$s_\alpha$ permutes $\Phi$, and $W$ identifies with the group of permutations of $\Phi$ generated by
 the restrictions of the $s_\alpha$ to $\Phi$ (which we denote still as $s_\alpha$).
We let the sign   group $\set{\pm 1}$ act on $\Phi$
by $(\pm 1)\alpha=\pm \alpha$ for $\alpha\in \Phi$.  
The map $\alpha\mapsto s_\alpha$ is a two-fold covering of its image (the set $T$ of reflections of $(W,S)$) with the orbits of the sign group as fibers. We call the  set $\Phi$ with its   action by $\set{\pm 1}$ and  the map $\alpha\mapsto s_\alpha\colon \Phi\rightarrow \text{\rm Sym}(\Phi)$  the standard abstract root system of $(W,S)$.

We say that a subset $\Psi_+$ of $\Phi$ is a quasi-positive system
if it is a set of orbit representatives for the sign group acting on $\Phi$.
Then  $\alpha\in \Psi_+$ is said to be a simple root  of  $\Psi_+$ if $s_\alpha$ permutes
$\Psi_+\setminus\set{\alpha}$. Let $\Pi'$ be the set of simple roots of $\Psi_+$,
$S'=\mset{s_\alpha\mid \alpha\in \Pi'}$ be the set of simple reflections for $\Psi_+$,  and $W'=\mpair{S'}$ be the subgroup generated by $S'$.
It is easy to see that $(W',S')$ is a Coxeter system (we prove this here as \ref{ss:3.4}(c), though it follows also from   \cite[1.8]{Dy1}). We say that $\Psi_+$ is a generative quasi-positive system  if $W'=W$.
In this case, it need not be true that $(W,S)$ is isomorphic to $(W,S')$ as Coxeter system;
indeed,  some (but not all) examples  of non-isomorphic finite rank, irreducible  Coxeter systems  $(W,S)$ and $(W,S')$  related by diagram twisting in the sense of \cite{Mue} arise  in this way (see \ref{ss:3.16}--\ref{ss:3.19}).

We say that  $\gamma\in \Phi$ is between $\alpha\in \Phi$ and $\beta\in \Phi$
iff $\gamma=a\alpha+b\beta$ for some $a,b\in \real_{\geq 0}$.   Say that $\Psi\subseteq \Phi$
is closed if $\alpha,\beta\in \Psi$, and $\gamma\in \Phi$ with $\gamma$ between $\alpha$ and $\beta$ implies $\gamma\in \Phi$.  
We define $\Psi_+\subseteq \Psi$ to be  an abstract positive system for $\Psi$ if 
it is a generative, closed, quasi-positive system. It is easy to see that the above notions of  abstract root system, betweenness, abstract positive system etc may all be reformulated   purely combinatorially and algebraically in terms of $(W,S)$.

Recall the Coxeter system of type $A_\infty$ (resp., $A_{\infty,\infty})$ has as underlying Coxeter  group the group of all permutations of $\Int$ (resp., $\Nat$) leaving all but finitely many elements fixed, with Coxeter generators given by the adjacent transpositions $(n,n+1)$ for $n\in \Int$ (resp., $n\in \Nat$).
These Coxeter systems are non-isomorphic, but
any bijection $\Nat\rightarrow \Int$ induces a reflection preserving isomorphism of
the underlying Coxeter groups $W(A_\infty)\xrightarrow{\cong} W(A_{\infty,\infty})$.
We prove in this paper the following results.

\begin{theorem*} Let   $(W,S)$ be an irreducible, Coxeter system which is   not necessarily of finite rank. Let $\Psi_+$ be an  abstract positive system of $\Phi$, and $\Pi'$, $S'$ denote respectively the sets of simple roots and simple reflections  of $\Psi_+$.
Then
\begin{num}   \item $(W,S)$ and $(W,S')$ have the same finite rank parabolic subgroups.
\item The Coxeter system  $(W,S)$ is isomorphic to $(W,S')$  unless perhaps
 one of them is of type $A_{\infty}$ and the other is of type $A_{\infty,\infty}$.
\item If  $(W,S)$ and $(W,S')$ are isomorphic, then there is a permutation 
$\sigma$ of  $\Phi$ and a sign $\epsilon\in \set{\pm 1}$ with $\Psi_+=\epsilon \sigma(\Phi_+)$ and $\Pi'=\epsilon \sigma(\Pi)$, such that for
 any subgroup $W'$ of $W$ which is generated by a finite subset of $T$, there exists $w=w(W')\in W$
 such that $\sigma(\alpha)=w(\alpha)$ for all $\alpha\in \Phi$ with $s_\alpha\in W'$. 
\item If $(W,S)$ is of finite rank, then in $\text{\rm (c)}$, $\sigma\in W$, and moreover, $\sigma$ and $\epsilon$ are then  uniquely determined provided $\epsilon=1$ if $W$ is finite. 
  \item A subset $\Delta $ of $\Phi$ is a root basis of $\Phi$ iff it is the set of abstract simple roots of some abstract positive system of $\Phi$.
  \end{num}\end{theorem*}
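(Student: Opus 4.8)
The plan is to route all five parts through a single structural bridge and then to invoke the finite-rank conjugacy theorems of \cite{How} and \cite{Moop}, bootstrapping to infinite rank by a local-to-global argument. The bridge asserts that an abstract positive system $\Psi_+$, together with its simple roots $\Pi'$ and simple reflections $S'$, realizes $(\Phi,\Psi_+,\Pi')$ as an isomorphic copy of the standard abstract root system of the Coxeter system $(W,S')$ (which is a Coxeter system by \ref{ss:3.4}(c)). Proving this is the first and most important step. Granting generativity ($W'=W$), I would show that it is exactly \emph{closedness} which forces the partition $\Phi=\Psi_+\sqcup(-\Psi_+)$ to coincide with the positive/negative decomposition of the $W$-set $\Phi$ relative to $S'$: one argues that each $\alpha\in\Psi_+$ is reached from $\Pi'$ by successively passing to positive combinations inside rank-two (dihedral) subsystems $\langle s_\beta,s_\gamma\rangle$ with $\beta,\gamma\in\Pi'$, where closedness is the combinatorial shadow of the geometric statement that $\Psi_+$ lies in an open half-space and the dihedral closure calculus (cf.\ \cite{Dy1}) renders betweenness intrinsic to $(W,S)$. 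This is precisely the feature separating abstract positive systems from the merely generative quasi-positive systems, which can fail to be closed and then need not yield an isomorphic $(W,S')$ (see \ref{ss:3.16}--\ref{ss:3.19}).

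Given the bridge, part (e) follows by reconciling two descriptions of a simple system. A root basis $\Delta$ determines the positive system $\Phi^\Delta_+$ of those roots that are nonnegative $\real$-combinations of $\Delta$, and I would verify directly that $\Phi^\Delta_+$ is quasi-positive, generative and closed with abstract simple roots exactly $\Delta$, the geometric condition that $s_\delta$ preserve the positivity of the remaining roots translating into the combinatorial condition that $s_\delta$ permute $\Phi^\Delta_+\setminus\set{\delta}$; conversely the bridge identifies $\Pi'$ with a genuine root basis through the isomorphism onto the standard root system of $(W,S')$. For parts (c) and (d) in finite rank the bridge reduces matters to the assertion that any two root bases of $\Phi$ are $W$-conjugate up to a single global sign (here the standing irreducibility hypothesis is essential), which is \cite{How} and \cite{Moop} reformulated abstractly: this yields $\sigma\in W$ and $\epsilon\in\set{\pm1}$ with $\Pi'=\epsilon\sigma(\Pi)$ and, since the positive system is recovered from its simple roots, $\Psi_+=\epsilon\sigma(\Phi_+)$. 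The uniqueness in (d) follows because the only element of $W$ stabilizing $\Phi_+$ is the identity, while the normalization $\epsilon=1$ for finite $W$ removes the residual ambiguity arising there because $-\Phi_+$ is then also $W$-conjugate to $\Phi_+$.

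For infinite rank I would descend to finite-rank reflection subgroups. Any subgroup $W''\le W$ generated by a finite subset of $T$ has an associated finite-rank subsystem, namely the roots $\alpha$ with $s_\alpha\in W''$, on which $\Psi_+$ and $\Phi_+$ induce abstract positive systems; applying the (possibly reducible) finite-rank case componentwise yields a conjugator $w(W'')\in W$ together with a local comparison of these two induced systems. The crux is coherence: although the $w(W'')$ themselves need not agree, I would show that their induced actions on overlapping roots coincide, so that they patch into the single permutation $\sigma$ of $\Phi$ demanded by (c). Part (a) then follows once finite-rank parabolic subgroups are characterized invariantly, i.e.\ in terms of the $W$-action on $\Phi$ alone (for instance as pointwise stabilizers of suitable finite subsets of $\Phi$, or via parabolic closures computed from that action); as this action is shared by $(W,S)$ and $(W,S')$ after the bridge identification, the two systems have the same finite-rank parabolic subgroups.

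Finally, part (b) combines (a) with a classification of the obstruction to choosing the local conjugators \emph{coherently} over all of $\Phi$. Sharing finite-rank parabolics, $(W,S)$ and $(W,S')$ have isomorphic finite-rank parabolic subdiagrams; a combinatorial analysis of the two Coxeter diagrams, exploiting irreducibility, shows these local isomorphisms assemble into a global isomorphism of Coxeter systems, except when the combinatorics of how the irreducible finite-rank pieces nest degenerates to the two inequivalent ways of exhausting a line, namely $A_\infty$ versus $A_{\infty,\infty}$, for which $W(A_\infty)\cong W(A_{\infty,\infty})$ holds via a reflection-preserving isomorphism of groups but not as Coxeter systems. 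I expect the main obstacle to be exactly this infinite-rank coherence problem: the finite-rank theorem delivers conjugators only locally, and proving that the $A_\infty/A_{\infty,\infty}$ dichotomy is the \emph{only} way global coherence can fail requires a careful analysis of how irreducible finite-rank subsystems can be nested and reoriented, which is the technical heart of both (b) and the infinite-rank half of (c).
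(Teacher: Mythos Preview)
Your high-level strategy matches the paper's: establish the bridge identifying $\Psi_+$ with the standard positive system of the Coxeter system $(W,S')$ (the paper's \ref{ss:3.27}), reduce finite rank to \cite{How} (the paper's \ref{ss:5.3}--\ref{ss:5.4}), and for infinite rank patch local conjugators obtained from finite-rank reflection subgroups (the paper's Lemma \ref{ss:5.16}). But two technical ingredients you leave unspecified are precisely where the work lies, and neither is supplied by your outline.

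First, the coherence argument in infinite rank requires splitting into two regimes according to whether $(W,S)$ has an infinite irreducible finite-rank standard parabolic (the paper's types (LocInf) versus (LocFin)), and the mechanisms are entirely different. In the (LocInf) case, the key input is Lemma \ref{ss:conj} (a consequence of Deodhar/Brink--Howlett on conjugacy of parabolics): if $J\subseteq S$ with $W_J$ infinite irreducible and $w$ satisfies $wJw^{-1}\subseteq S$ with $l(wr)>l(w)$ for all $r\in J$, then $w$ centralizes $W_J$. This is exactly what forces the local conjugators $a_{W'}$, $a_{W''}$ for $W'\subseteq W''$ to agree on the roots of $W'$, yielding the well-defined $\sigma$ you want; without it, ``induced actions on overlapping roots coincide'' is an assertion rather than an argument. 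In the (LocFin) case no such rigidity is available (finite parabolics can be conjugated nontrivially inside larger ones), and the paper instead invokes the classification: the only infinite-rank irreducible (LocFin) systems are $A_\infty$, $A_{\infty,\infty}$, $B_\infty$, $D_\infty$, and one verifies the possible root bases by hand (Example \ref{ss:5.18}). Your proposal treats coherence as a uniform problem, but the argument you sketch cannot work in the (LocFin) regime.

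Second, your route to (a) via an intrinsic characterization of parabolic subgroups is both more indirect than needed and not correct as stated: pointwise stabilizers of finite root sets need not be parabolic. The paper's argument (Corollary \ref{ss:5.17}) is direct and short: any finite subset of $\Delta_{\Psi_+}$ lies in $\Psi_+(W')$ for some finite-rank $W'$, hence by the finite-rank theorem is $W$-conjugate up to sign into the standard simple roots; thus finite-rank standard parabolics of $(W,S')$ are parabolic in $(W,S)$, and symmetry (via \ref{ss:3.27}) gives the converse.
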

  
  In the body of the paper, we extend the theorem to a more  general class of root systems   (see Section 2)  parametrized roughly by what we call  possibly non-integral generalized Cartan matrices
(NGCM's).  Most (reduced, real) root systems  usually considered in the literature 
(e.g. in \cite{Bou}, \cite{Hum}, \cite{Kac}, \cite{How}, \cite{Dy2})
  are in this class and  it has the important technical advantage of being  closed under passage to root subsystems for arbitrary reflection subgroups. In general, for some of the reflection representations considered, it is not possible to choose a ``reduced'' root system,  but we do not restrict 
 to the subclass of  ``reduced'' root systems since this would amount to imposing an unnatural
condition on the NGCM (compare \ref{ss:4.5}, \ref{ss:4.7}  and \cite{Jon}).  Since,  for non-reduced root systems,  a given set of simple reflections may correspond to several different sets of simple roots differing by ``rescaling'' (multiplying each by a positive scalar), the statement of the theorems need minor modification for non-reduced root systems; for instance, conjugacy results hold only up to sign and rescaling.    The restriction to irreducible Coxeter systems   in (a)--(d) is only a matter of convenience, since those parts of the theorem can be applied separately to the irreducible components; the situation for (e) in general is more complicated (see \ref{ss:5.2} and \ref{ss:5.8}). 
  
The arrangement of the paper is as follows. Section 1  contains  
definitions of,  and basic facts about,  abstract root systems of Coxeter systems.
Some of the   facts    involving the relation of the root system to the  ``reflection cocycle''  extend  
to  similar structures (which we call quasi-root systems, to avoid confusion) attached to other groups.
Section 2 collects basic properties of the  real reflection representations of Coxeter groups, and corresponding root systems, which we consider in this paper.  The main results, involving the connection between the abstract root systems of Section 1 and  the real root systems of Section 2, and including the statements and  proof of  more general versions of the parts of Theorem 1, are given in Section 3. 

There are two appendixes dealing with matters subsidiary to our main concerns.  Appendix A 
 recalls some definitions and  basic properties of possibly non-abelian cohomology which  provide the natural setting for some of the special results proved  in  Section 1 involving  reflection cocycles. In particular, the generalities in Appendix A are relevant to classification of abstract root systems, though we do not pursue this here.  
 Appendix B gives some examples and further results involving  general
 quasi-root systems, of which we make no essential use in the body of the paper.
 We observe in particular  that  the definition of  Bruhat orders (and their twisted versions \cite{Dy1}) in terms of the reflection cocycle of a Coxeter system extends to other groups with quasi-root systems realized linearly over real vector spaces, (e.g.  real orthogonal groups).

\section{Abstract root systems}
  \subsection{} \label{ss:2.1}Let $\Psi$ be a set with a given function
   $F  \colon \Psi\rightarrow \text{\rm Sym}(\Psi)$ where $\text{\rm Sym}(\Psi)$
   is the symmetric group consisting of all permutations of $\Psi$. Set $s_\alpha:= F(\alpha)$. We make the following assumptions:
   \begin{conds}\item there is a fixed point free action of the sign group $\set{\pm 1}$ on
   $\Psi$ such that for $\alpha\in \Psi$, $(-1)\alpha=-\alpha:=s_\alpha(\alpha)$ and $s_{-\alpha}=s_\alpha$
   \item  $s_{s_\alpha(\beta)}=s_\alpha s_\beta s_\alpha$ for all $\alpha,\beta\in \Psi$.\end{conds}
    The conditions imply that $s_\alpha$ is an involution (i.e. it is a permutation of  order exactly $2$)
commuting with the action of the sign group,

    To avoid any possible confusion with abstract root systems of Coxeter groups, which
    may naturally be regarded as examples of this formalism (see \ref{ss:3.1})  we call such  a pair $(\Psi,F)$ a quasi-root system; other examples of quasi-root systems  are mentioned in Appendix B.  By abuse of notation, we sometimes call $\Psi$ itself a quasi-root system.
 \subsection{} \label{ss:2.1a}    A morphism $(\Psi^1,F^1)\rightarrow (\Psi^2,F^2)$ of quasi-root systems is defined to  be function
    $\theta\colon \Psi^1\rightarrow \Psi^2$ such that  $\theta(s_\alpha(\beta))=s_{\theta(\alpha)}(\theta(\beta))$ for all $\alpha,\beta\in \Psi^1$. Taking $\alpha=\beta$ shows that  
       $\theta(-\alpha)=-\theta(\alpha)$ for $\alpha\in \Psi_1$.
  
     With the obvious definition of composition of morphisms, the quasi-root systems form a category.
     In particular, this gives rise to the usual (categorical)  notions of an isomorphism of quasi-root systems, and  of the action of a group $G$ on a quasi-root system; an isomorphism is a bijective  morphism of quasi-root systems, and an action of a group $G$  on a quasi-root system $\Psi$  is a homomorphism from
     $G$ to the group $\Aut(\Psi)$ of automorphisms of $\Psi$.  An arbitrary family $\set{(\Psi^i,F^i)}_{i\in I}$ 
     of quasi-root systems has a ``union''   $(\Psi,F)$ where $\Psi=\coprod_{i\in I}\Psi^i$
      (coproduct in the category of sets i.e. disjoint union) and for $\alpha\in \Psi^i\subseteq \Psi$,
      $F(\alpha)_{\vert \Psi^i}=F^i(\alpha)$ while  $F(\alpha)_{\vert \Psi^j}=\Id_{\Psi^j}$ for $j\neq i$.
    
  \subsection{} \label{ss:2.2}   
       Till \ref{ss:2.5e}, we fix a quasi-root system $(\Psi,F)$  and let $G$ be a group acting on $\Psi$ by means 
       of a homomorphism $\iota\colon G\rightarrow G_\Psi:=\Aut(\Psi)$. Set $T=T_\Psi:=\mset{s_\alpha\mid \alpha\in \Psi}\subseteq G_\Psi$ and let $W=W_\Psi:=\mpair{T}$ be the subgroup of $G_\Psi$ generated by $T$. Note $G$ acts on the left on $T$ by $(g,t)\mapsto \iota(g)t\iota(g)^{-1}$.  By abuse of notation, we write this as $(g,t)\mapsto gtg^{-1}$. Then 
       \begin{equation} \label{eq:1} s_{\sigma(\alpha)}=\sigma s_\alpha \sigma^{-1},\quad \sigma\in G, \alpha\in \Psi\end{equation}
   Regard the power set $\mc{P}(T)$ as an additive abelian group under symmetric difference
   $A+B=(A\cup B)\setminus (A\cap B)$, and as a $G$-module with $G$ acting by conjugation ($\sigma\cdot A=\sigma A\sigma^{-1}:=\set{\sigma t\sigma^{-1}\mid t\in A}$ for $\sigma\in G$ and $A\subseteq T$).

    Define a  map $\tau=\tau_{\Psi}\colon \Psi\rightarrow T$  by $\tau(\alpha)=s_\alpha$.
We call $\tau$  the reflection map of $\Psi$.  The sets  $\tau^{-1}(t)$ for $t\in T$ are called the fibers of $\tau$. 

 \subsection{} Consider  two  sets $\Psi_+, \Psi'_+$ of orbit representatives for the sign group
   on $\Psi$ i.e. $\Psi_+$ is a subset of $\Psi$ with $-\Psi_+=\Psi\setminus \Psi_+$, and similarly for
   $\Psi'_+$.
         We say that $\Psi_+$ and $\Psi_+'$  are compatible
  if for each $t\in T$, $\Psi_+\cap \tau^{-1}(t)=\pm(\Psi'_+\cap \tau^{-1}(t))$ or  equivalently, if $T$ is the disjoint union  \[T=\mset{s_\alpha\mid \alpha\in \Psi_+\cap -\Psi'_+}\cup \mset{s_\alpha\mid \alpha\in \Psi_+\cap \Psi'_+}.\]
  Note that $\Psi_+$ and $-\Psi_+$ are compatible. Compatibility is an equivalence relation on the family of sets of orbit representatives for $\set{\pm 1}$ on $\Psi$. 

  We say that $\Psi_+$ is a quasi-positive system for $G$ on $\Psi$ iff
  $\Psi_+$ and $\sigma(\Psi_+)$ are compatible for all $\sigma\in G$. It is not clear that such a quasi-positive system need exist in general, but it will in cases of concern in this paper. In particular,   if  $\set{\pm 1}$
  acts simply transitively on each fiber of the reflection map, then any set of orbit representatives for $\set{\pm 1}$ is a quasi-positive system for $G$ on $\Psi$, and any two quasi-positive systems for $G$ on $\Psi$ are 
  compatible.
  
   \begin{proposition} \label{ss:2.4} Let $\Psi_+$ be a  quasi-positive system for $G$ on $\Psi$.
    Define the function $N_{\Psi_+}\colon G\rightarrow \mc{P}(T)$ by 
   $N_{\Psi_+}(\sigma)=\mset{s_\alpha\mid \alpha\in\Psi_+\cap \sigma (-\Psi_+)}$.

    \begin{num} \item The function  $N_{\Psi_+}$ is a cocycle on $G$ with values in $\mc{P}(T)$.
    \item $T_{\Psi_+}:=T\times \set{\pm 1}$ is a $G$-set  with the action   \begin{equation*} \sigma(t,\epsilon)=(\sigma t\sigma^{-1},\eta(\sigma^{-1},t)\epsilon), \quad
   \eta(\sigma,t):=\begin{cases} 1,&t\not \in N_{\Psi_+}(\sigma)\\ -1,& t\in N_{\Psi_+}(\sigma).\end{cases}\end{equation*}
   \item Define the map $F_{\Psi_+}\colon T_{\Psi_+}\rightarrow \text{\rm Sym}(T_{\Psi_+})$
   by $(t,\epsilon)\mapsto (\alpha\mapsto t(\alpha))$, regarding $T_{\Psi_+}$ as $G_\Psi$-set via $\text{\rm (b)}$. Then $(T_{\Psi_+},F_{\Psi_+})$ is a quasi-root system for $G$. Further,  $\set{\pm 1}$ acts simply transitively on each fiber of the reflection map for $T_{\Psi_+}$ 
and $T\times\set{1}$
   is a quasi-positive system for $G$ on $T_{\Psi_+}$   \item The map $\rho\colon \Psi\rightarrow T_{\Psi_+}$ defined by $\alpha\mapsto (s_\alpha,\epsilon)$ for $\epsilon\in\set{\pm 1}$ and  $\alpha\in \epsilon \Psi_+$ is a surjective morphism of $G$-sets. Further, the map $\Psi'_{+}\mapsto \rho(\Psi_+')$ is  a bijection between the sets of  quasi-positive systems of $\Psi$ compatible with $\Psi_+$ and the quasi-positive systems for  $T_{\Psi_+}$. Finally, $\rho$ is an isomorphism if $\set{\pm 1}$ acts simply transitively on each fiber of the reflection map for $\Psi$.  
   \end{num}\end{proposition}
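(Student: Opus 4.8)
The plan is to verify each of the four assertions of the Proposition in the order (a), (b), (c), (d), using only the definitions of quasi-root system, quasi-positive system, compatibility, and the conjugation action of $G$ on $\mc{P}(T)$ introduced just above the statement.

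For (a), I would first check directly from the definition of $N_{\Psi_+}$ that the cocycle identity $N_{\Psi_+}(\sigma\sigma') = N_{\Psi_+}(\sigma) + \sigma\cdot N_{\Psi_+}(\sigma')$ holds in the $G$-module $\mc{P}(T)$. The key computation is to track, for a fixed reflection $t = s_\alpha$ with $\alpha \in \Psi_+$, whether $\alpha$ and $(\sigma\sigma')^{-1}(\alpha)$ lie on the same side of $\Psi_+$; one inserts the intermediate root $\sigma'^{-1}(\alpha)$ and uses \eqref{eq:1} (so that $s_{\sigma'^{-1}(\alpha)} = \sigma'^{-1} s_\alpha \sigma'$, whence membership in $N_{\Psi_+}(\sigma')$ is detected by the appropriate fiber) together with the fact that compatibility of $\Psi_+$ with its $G$-translates guarantees the ``sign change'' count is well-defined on fibers rather than on individual roots. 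This is the standard argument that the length-type function $N$ is a cocycle, transcribed to the abstract setting; I expect it to be essentially bookkeeping once the fiberwise well-definedness is used.

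For (b), I would check that the proposed formula $\sigma(t,\epsilon) = (\sigma t \sigma^{-1}, \eta(\sigma^{-1},t)\epsilon)$ is a genuine $G$-action: the $T$-coordinate is obviously the conjugation action, and the $\epsilon$-coordinate compatibility $\eta(\sigma'^{-1}\sigma^{-1}, t) = \eta(\sigma'^{-1}, t)\,\eta(\sigma^{-1}, \sigma' t \sigma'^{-1})$ is exactly a restatement of the cocycle identity from (a) applied with the arguments $\sigma^{-1}$ and $\sigma'^{-1}$. For (c), I would unwind the definitions: $F_{\Psi_+}(t,\epsilon)$ acts on $T_{\Psi_+}$ through the $G_\Psi$-action of $t$ on $T$ (trivial on the sign coordinate change since $t$ conjugates itself trivially), verify conditions \ref{ss:2.1}(i)--(ii) — here $(-1)(t,\epsilon) = (t,-\epsilon)$ and $s_{(t,\epsilon)} = s_{(t,-\epsilon)}$ because both equal the permutation induced by $t$ — and then observe that the fiber of the reflection map over $t$ is precisely $\{(t,1),(t,-1)\}$, on which $\set{\pm 1}$ visibly acts simply transitively; that $T \times \set{1}$ is $G$-stable up to the compatibility condition follows since translating $(t,1)$ only flips the sign, so $T\times\set 1$ is a quasi-positive system.

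For (d), I would check that $\rho$ is well-defined (an element $\alpha$ lies in exactly one of $\Psi_+$, $-\Psi_+$, so $\epsilon$ is determined) and that $\rho(s_\alpha(\beta)) = s_{\rho(\alpha)}(\rho(\beta))$ and $\rho(\sigma\beta) = \sigma\rho(\beta)$, the latter being precisely where the definition of $\eta$ via $N_{\Psi_+}$ was rigged to work. Surjectivity is clear. The bijection between quasi-positive systems of $\Psi$ compatible with $\Psi_+$ and quasi-positive systems for $T_{\Psi_+}$ is the heart of the matter: I would show $\Psi'_+ \mapsto \rho(\Psi'_+)$ is well-defined (compatibility of $\Psi'_+$ with $\Psi_+$ is exactly what makes $\rho(\Psi'_+)$ a set of sign-orbit representatives in $T_{\Psi_+}$, since it must meet each fiber $\{(t,\pm1)\}$ in one point) and injective (two compatible systems of orbit representatives for $\Psi$ agreeing fiberwise with the same signs relative to $\Psi_+$ coincide), and construct the inverse by pulling back along $\rho$. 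The claim that $\rho$ is an isomorphism when $\set{\pm1}$ acts simply transitively on the fibers of $\tau_\Psi$ is then immediate, since in that case $\rho$ is a bijection. I expect the main obstacle to be (d): specifically, proving that the correspondence $\Psi'_+ \mapsto \rho(\Psi'_+)$ is a bijection requires carefully matching the equivalence class of a compatible positive system with the fiberwise sign data it determines, and verifying that every quasi-positive system of $T_{\Psi_+}$ arises this way — this is where one must argue that pulling back a quasi-positive system along $\rho$ yields a quasi-positive system of $\Psi$ compatible with $\Psi_+$, using that $\rho$ intertwines the $G$-actions.
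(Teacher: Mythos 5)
Your proposal is correct and takes essentially the same route as the paper: part (a) by the identical fiberwise sign-tracking computation (inserting the intermediate root and using compatibility of $\Psi_+$ with its $G$-translates so that membership in $N_{\Psi_+}$ is detected on fibers), part (b) by translating the cocycle identity into multiplicativity of $\eta$, exactly the cocycle--action dictionary the paper invokes via its Appendix A, and parts (c)--(d) by unwinding the definitions, which is all the paper itself does (``(c)--(d) follow from (b) and the definitions''). The only difference is that you spell out more of (c)--(d) than the paper's one-line dismissal, and that extra detail is sound.
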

     \begin{proof}  Write $N$ for $N_{\Psi_+}$. For (a), we have to check the cocycle condition (see \ref{ss:1.1})
   \begin{equation*}N(\sigma\tau)=N(\sigma)+\sigma N(\tau)\sigma^{-1},\qquad \sigma,\tau\in G.\end{equation*}
   Let $\beta\in \Psi_+$. Then, using compatibility of $\Psi_+$ with $\pm \rho\Psi_+$ for all $\rho\in G$, \begin{equation*}\begin{split}&s_\beta\in N(\sigma\tau)\Longleftrightarrow 
   (\tau^{-1}\sigma^{-1})(\beta)\in -\Psi_+\Longleftrightarrow\\& 
   \bigl(\sigma^{-1}(\beta)\in \Psi_+,\  (\tau^{-1}\sigma^{-1})(\beta)\in -\Psi_+
  \bigr) \text{ \rm or }\bigl(\sigma^{-1}(\beta)\in -\Psi_+,\  (\tau^{-1}\sigma^{-1})(\beta)\in -\Psi_+\bigr)
 \\& \Longleftrightarrow\ 
 \bigl(s_\beta\not\in N(\sigma)\text{ \rm and }s_{\sigma^{-1}(\beta)}\in N(\tau)\bigr)  
    \text{ \rm or } \bigl(s_\beta\in N(\sigma)\text{ \rm and }s_{\sigma^{-1}(\beta)}\not\in N(\tau)\bigr)\\
  & \Longleftrightarrow s_\beta\in N(\sigma)+\sigma N(\tau)\sigma^{-1}\end{split}\end{equation*} since 
   $s_{\sigma^{-1}(\beta)}\in N(\tau)$ iff $s_\beta\in \sigma N(\tau)\sigma^{-1}$.
  This proves (a). Then (b) follows from (a) by a straightforward calculation
  (more generally, see \ref{ss:1.4} and its proof, noting    the identification  $\mc{P}(T)\cong \prod_{t\in T}\set{\pm 1}$ of groups with $G$-action).  Finally,  (c)--(d) follow from (b) and the definitions.
  \end{proof} 

   \begin{proposition} \label{ss:2.5} Let $\Psi_+$ and $\Psi'_+$ be compatible quasi-positive systems for $\Psi$.
   \begin{num}  \item Let $A:=\mset{s_\alpha\mid \alpha\in \Psi_+\cap -\Psi'_+}$.
   Then for $\sigma\in G$, \[N_{\Psi_+}(\sigma)+\sigma A\sigma^{-1}=\mset{s_\alpha\mid \alpha\in \Psi_+\cap \sigma(-\Psi'_+)}=N_{\Psi'_+}(\sigma)+A.\]
   \item The cocycles $N_{\Psi_+}$ and $N_{\Psi_+'}$ are cohomologous i.e. the cohomology classes of $N_{\Psi_+}$ and $N_{\Psi'_+}$ in $H^1(G,\mc{P}(T))$ are equal.
   \item The map $T_{\Psi_+}\rightarrow T_{\Psi'_+}$ given by $(s_\alpha,\epsilon)\mapsto (s_\alpha,\epsilon \epsilon')$ for  $\epsilon, \epsilon'\in \set{\pm 1}$ and   $\alpha\in \Psi_+\cap \epsilon' \Psi'_+$ is a $G$-equivariant  isomorphism of quasi-root systems. 
    \end{num}\end{proposition}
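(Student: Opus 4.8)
The plan is to reduce everything to part (a), which I would prove by recognizing all the subsets of $T$ that occur in its statement as values of a single ``discrepancy'' function; parts (b) and (c) then follow by formal manipulation.

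For (a): given compatible sets $X,Y$ of orbit representatives for $\set{\pm1}$ on $\Psi$, put $D(X,Y):=\mset{s_\alpha\mid\alpha\in X\cap -Y}\in\mc P(T)$. The key observation is that compatibility is an all-or-nothing condition on each fibre of $\tau$: every fibre $\tau^{-1}(t)$ is nonempty, and on it $X$ coincides either with $Y$ or with $-Y$, exactly one alternative holding, with $t\in D(X,Y)$ precisely in the second case. From this one reads off at once that $D(X,Y)+D(Y,Z)=D(X,Z)$ whenever $X,Y,Z$ are pairwise compatible, and---using the conjugation formula $s_{\sigma(\alpha)}=\sigma s_\alpha\sigma^{-1}$ and the fact that $\sigma$ carries $\tau^{-1}(\sigma^{-1}t\sigma)$ onto $\tau^{-1}(t)$ respecting the sign action---that $D(\sigma X,\sigma Y)=\sigma D(X,Y)\sigma^{-1}$. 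Now $\Psi_+$ is quasi-positive, hence compatible with $\sigma(\Psi_+)$, and since compatibility is transitive the four systems $\Psi_+,\Psi'_+,\sigma(\Psi_+),\sigma(\Psi'_+)$ are pairwise compatible; moreover $N_{\Psi_+}(\sigma)=D(\Psi_+,\sigma\Psi_+)$, $A=D(\Psi_+,\Psi'_+)$, and $\sigma A\sigma^{-1}=D(\sigma\Psi_+,\sigma\Psi'_+)$. The chain of equalities in (a) then becomes two instances of $D(X,Y)+D(Y,Z)=D(X,Z)$ (plus commutativity of symmetric difference), the common middle term being $D(\Psi_+,\sigma\Psi'_+)$.

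For (b): recall that $\mc P(T)$ is $2$-torsion under symmetric difference, so that a $1$-coboundary $G\to\mc P(T)$ is a map $\sigma\mapsto\sigma A\sigma^{-1}+A$ for fixed $A\in\mc P(T)$, and two cocycles are cohomologous iff their sum is such a coboundary; adding the two identities of (a) gives $N_{\Psi_+}(\sigma)+N_{\Psi'_+}(\sigma)=A+\sigma A\sigma^{-1}$, the coboundary of $A$. For (c): the fibrewise dichotomy above shows that for $\alpha\in\Psi_+$ with $s_\alpha=t$ the sign $\epsilon'$ with $\alpha\in\epsilon'\Psi'_+$ depends only on $t$; writing it $\epsilon'(t)$, one has $\epsilon'(t)=-1$ iff $t\in A$, so the map of (c) is $\phi\colon(t,\epsilon)\mapsto(t,\epsilon\epsilon'(t))$, which is a bijection with inverse $(t,\delta)\mapsto(t,\delta\epsilon'(t))$. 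Since $F_{\Psi_+}$ and $F_{\Psi'_+}$ depend only on the first coordinate, which $\phi$ preserves, the assertions that $\phi$ is a morphism of quasi-root systems and that $\phi$ is $G$-equivariant both reduce, via the formulas of \ref{ss:2.4}(b) for the two actions, to the identity $N_{\Psi_+}(\sigma^{-1})+N_{\Psi'_+}(\sigma^{-1})=A+\sigma^{-1}A\sigma$ (for $\sigma$ a reflection, resp.\ an arbitrary group element), which is an instance of (b).

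The hard part will be (a); everything afterwards is formal. Within (a) the delicate point is the equivariance property $D(\sigma X,\sigma Y)=\sigma D(X,Y)\sigma^{-1}$, i.e.\ keeping straight how conjugation by $\sigma$ permutes the fibres of $\tau$; once the fibrewise bookkeeping is in place, the remainder is linear algebra over $\mathbb{F}_2$.
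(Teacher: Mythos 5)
Your proof is correct and follows essentially the same route as the paper: part (a) rests on the same fibrewise compatibility bookkeeping that the paper carries out elementwise (its proof of the first equality is ``similar to that of \ref{ss:2.4}(a)''), and (b), (c) are obtained by the same formal steps (the coboundary of $A$, and the sign-twisting map as in \ref{ss:1.2}). Your discrepancy function $D(X,Y)$ with its triangle identity and conjugation-equivariance is just a tidier packaging of that computation, giving both equalities in (a) at once where the paper uses a symmetry argument.
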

  \begin{proof}  The proof that $\mset{s_\alpha\mid \alpha\in \Psi_+\cap \sigma(-\Psi'_+)}=N_{\Psi_+}(\sigma)+\sigma A\sigma^{-1}$ is similar to that of \ref{ss:2.4}(a).
  By symmetry,  $\mset{s_\alpha\mid \alpha\in \sigma^{-1}(\Psi_+)\cap -\Psi'_+}=N_{\Psi'_+}(\sigma^{-1})+\sigma^{-1} A\sigma$, which implies the other equality in (a). Note that  (a) provides an explicit cohomology between $N_{\Psi_+}$ and $N_{\Psi'_+}$, proving (b) (see \ref{ss:1.1}). 
  Then (c) follows from (b) by a simple calculation (see \ref{ss:1.2} more generally). \end{proof}
   \begin{corollary} \label{ss:2.5e} Let $X$ denote the the set of quasi-positive systems for the quasi-root system
 $T_{\Psi_+}$, with $G$-action $(\sigma,\Psi'_+)\mapsto \sigma(\Psi'_+)$.
 Then the map $X\rightarrow \mc{P}(T)$ given by $\Psi'_+\mapsto\mset{t\in T\mid (t,-1)\in \Psi'_+}$ is a $G$-equivariant bijection,  where  $\mc{P}(T)$ has the $G$-action 
 $(g,A)\mapsto N_{\Psi_+}(g)+gAg^{-1}$ (cf $\text{\rm \ref{ss:1.2}}$).
  \end{corollary}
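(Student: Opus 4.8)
The plan is to make the asserted bijection explicit and then verify $G$-equivariance by a direct unwinding of the action formula in \ref{ss:2.4}(b). First I would record the structure of $T_{\Psi_+}$: its underlying set is $T\times \set{\pm 1}$, and, as noted in \ref{ss:2.4}(c), the negation on it is $(t,\epsilon)\mapsto (t,-\epsilon)$ while $\set{\pm 1}$ acts simply transitively on the fibers of the reflection map. Hence, by the remark preceding \ref{ss:2.4}, every set of orbit representatives for $\set{\pm 1}$ on $T_{\Psi_+}$ is automatically a quasi-positive system for $G$, so $X$ consists precisely of the subsets $\Psi'_+\subseteq T\times \set{\pm 1}$ containing exactly one of $(t,1),(t,-1)$ for each $t\in T$; and $\sigma(\Psi'_+)$ is again such a set since $\sigma$ commutes with negation. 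Such a $\Psi'_+$ is determined by $A(\Psi'_+):=\mset{t\in T\mid (t,-1)\in \Psi'_+}$, and $\Psi'_+\mapsto A(\Psi'_+)$ is manifestly a bijection $X\rightarrow \mc P(T)$ carrying the base point $T\times \set{1}$ to $\emptyset$. The only thing left to check is that this bijection intertwines the two $G$-actions.

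For the equivariance, fix $\sigma\in G$ and $\Psi'_+\in X$ with $A=A(\Psi'_+)$; I must show $A(\sigma(\Psi'_+))=N_{\Psi_+}(\sigma)+\sigma A\sigma^{-1}$. For $t\in T$ one has $(t,-1)\in \sigma(\Psi'_+)$ iff $\sigma^{-1}(t,-1)\in \Psi'_+$, and by the formula in \ref{ss:2.4}(b), $\sigma^{-1}(t,-1)=(\sigma^{-1}t\sigma,\,-\eta(\sigma,t))$, where $\eta(\sigma,t)$ is $1$ or $-1$ according as $t\notin N_{\Psi_+}(\sigma)$ or $t\in N_{\Psi_+}(\sigma)$. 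I would then split into these two cases. If $t\notin N_{\Psi_+}(\sigma)$, then $\sigma^{-1}(t,-1)=(\sigma^{-1}t\sigma,-1)$, which lies in $\Psi'_+$ iff $\sigma^{-1}t\sigma\in A$, i.e. $t\in \sigma A\sigma^{-1}$. If $t\in N_{\Psi_+}(\sigma)$, then $\sigma^{-1}(t,-1)=(\sigma^{-1}t\sigma,1)$, which lies in $\Psi'_+$ iff $(\sigma^{-1}t\sigma,-1)\notin \Psi'_+$ (exactly one of the two being in $\Psi'_+$), i.e. $t\notin \sigma A\sigma^{-1}$. Combining the two cases yields $(t,-1)\in \sigma(\Psi'_+)$ iff $t\in N_{\Psi_+}(\sigma)+\sigma A\sigma^{-1}$ (symmetric difference), which is exactly the claim; the base point statement is the special case $A=\emptyset$, giving $A(\sigma(T\times \set{1}))=N_{\Psi_+}(\sigma)$, consistently with the cocycle identity of \ref{ss:2.4}(a).

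I do not expect a genuine obstacle here: the statement is essentially a bookkeeping reformulation, an instance of the same general torsor phenomenon as in \ref{ss:1.2}, and one could alternatively deduce it by transport of structure from the identification in \ref{ss:2.4}(d) of $X$ with the set of quasi-positive systems of $\Psi$ compatible with $\Psi_+$. The only points requiring care are conventions: the direction ($\sigma$ versus $\sigma^{-1}$) in the $G$-action of \ref{ss:2.4}(b), the fact that $-A=A$ in $\mc P(T)$ under symmetric difference, and the verification that the twisted operation $(g,A)\mapsto N_{\Psi_+}(g)+gAg^{-1}$ really is a left $G$-action — which follows at once from the cocycle condition on $N_{\Psi_+}$ together with the additivity of conjugation on $\mc P(T)$.
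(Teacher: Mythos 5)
Your proof is correct and in substance matches the paper's: the paper simply cites Proposition \ref{ss:2.5}(a) (applied to $T_{\Psi_+}$ with the quasi-positive system $T\times\set{1}$), which encodes exactly the sign-tracking computation you carry out directly from the action formula of \ref{ss:2.4}(b). Your explicit case analysis is a correct unwinding of that same identity, and your remark that one could instead invoke \ref{ss:2.5}/\ref{ss:2.4}(d) is precisely the paper's route.
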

   \begin{proof} This follows  using \ref{ss:2.5}(a).\end{proof}

\subsection{} \label{ss:3.1a}  Now we recall some characterizations of Coxeter systems which 
will be useful to us, and are all either implicit or explicit in the literature. For general references on Coxeter groups, see \cite{Bou}, \cite{Hum} and  \cite{BjBr}.

Consider  a pair $(W,S)$ consisting of a group $W$ and a set $S$
of involutions  generating $W$. The function $l\colon W\rightarrow \Nat$  defined by \[l(w)=\min\mset{n\in \Nat\mid w=s_1\cdots s_n\text{\ for some $s_1,\ldots, s_n\in S$}}\] is called the length function of $(W,S)$.
 Let $T=\cup_{w\in W} wSw^{-1}$ and $\mc{P}(T)$
denote the power set of $T$, regarded as abelian group under symmetric difference
$A+B=(A\cup B)\setminus (A\cap B)$. We let $W$ act on $\mc{P}(T)$ by conjugation
and the group $\set{\pm 1}$ act on $\widehat T:=T\times \set{\pm 1}$ by multiplication on the second factor.
 Set $\widehat T_+=T\times\set{1}\subseteq \widehat T$.
\begin{proposition} \label{ss:3.1} The following conditions are equivalent: 
\begin{conds}\item the  pair $(W,S)$ is a Coxeter system. 
\item there is a  action of the group $W$ by permutations on the set
$\widehat T$ such that $s(t,\epsilon)=(sts,(-1)^{\delta_{s,t}}\epsilon )$
for $s\in S$, $t\in T$, $\epsilon\in\set{\pm 1}$.
\item  there is a function
$N\colon W\rightarrow \mc{P}(T)$ satisfying  
\[N(xy)=N(x)+xN(y)x^{-1}
\text{ for } x,y\in W,\qquad
N(s)=\set{s}\text{ for } s\in S.\] \end{conds} 
Suppose that these conditions hold. One then  has 
\begin{gather*} N(w)=\mset{t\in T\mid l(tw)<l(w)}=\mset{t\in T\mid w^{-1}(t,1)\in -\widehat T_+},\qquad l(w)=\vert N(w)\vert \\
 w(t,\epsilon)=(wtw^{-1},\eta(w,t)\epsilon),\qquad
\eta(w,t)=\begin{cases} 1,&\text{if $t\not\in N(w^{-1})$}\\
-1,&\text{if $t\in N(w^{-1})$.}\end{cases}\end{gather*}
Further, the action of $W$ by permutations on $\Psi:=\widehat T$ is faithful. Set $s_\alpha=t\in \text{\rm Sym}(\Psi)$ for $\alpha=(t,\epsilon)\in \widehat T$, and let  map $F\colon \Psi\rightarrow \text{\rm Sym}(\Psi)$ be the map $\alpha\mapsto s_\alpha$. Then $(\Psi,F)$ is a quasi-root system for $W$ with $\widehat T_+$ as a quasi-positive system. 
\end{proposition}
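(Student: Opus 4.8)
The strategy is to prove the cycle of implications (i)$\Rightarrow$(ii)$\Rightarrow$(iii)$\Rightarrow$(i) and then, assuming the three equivalent conditions, to read off the displayed formulas and verify the quasi-root-system assertion. All the ingredients are classical (cf.\ \cite{Bou}, \cite{Hum}, \cite{BjBr}); the content lies in assembling them in the stated form. For (i)$\Rightarrow$(ii) I would define $\pi_s\colon\widehat T\to\widehat T$ by $\pi_s(t,\epsilon)=(sts,(-1)^{\delta_{s,t}}\epsilon)$ for $s\in S$, check that each $\pi_s$ is an involution, and verify that $s\mapsto\pi_s$ satisfies the defining Coxeter relations of $(W,S)$ — which reduces to a computation in the dihedral subgroups, as in the standard construction of the reflection/sign representation (cf.\ \cite{Bou}) — so that this extends to an action of $W$ on $\widehat T$ with the stated value on generators. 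For (ii)$\Rightarrow$(iii), given such an action I would put $N(w)=\{t\in T\mid w^{-1}(t,1)\in-\widehat T_+\}$; following the second coordinate through $w_1w_2(t,1)$ yields the cocycle identity $N(xy)=N(x)+xN(y)x^{-1}$, and evaluation at $s\in S$ gives $N(s)=\{s\}$.

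The direction (iii)$\Rightarrow$(i) carries the real content and is where I expect the main difficulty. From the cocycle identity one gets $N(ws)=N(w)+\{wsw^{-1}\}$, so $|N(ws)|=|N(w)|\pm1$ and, telescoping along a reduced expression, $|N(w)|\le l(w)$; moreover for $w=s_1\cdots s_n$ one obtains $N(w)=\{\beta_1,\dots,\beta_n\}$ as an iterated symmetric difference, with $\beta_i=s_1\cdots s_{i-1}s_is_{i-1}\cdots s_1$. The crux is that these $\beta_i$ are pairwise distinct when the word is reduced: a coincidence $\beta_i=\beta_j$ with $i<j$ can be transformed by cancellation in $W$ into an identity $s_i\cdots s_j=s_{i+1}\cdots s_{j-1}$, contradicting $l(w)=n$. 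Hence $l(w)=|N(w)|$, and every reduced word has distinct partial reflections. From $l(sw)=|\{s\}+sN(w)s|$ one then reads off that $l(sw)<l(w)\iff s\in N(w)$, and combined with the description of $N(w)$ this gives the Exchange Condition $sw=s_1\cdots\widehat{s_i}\cdots s_n$. The classical theorem that the Exchange Condition characterizes Coxeter systems among groups generated by involutions (\cite[Ch.\ IV]{Bou}, \cite[\S1.5--1.7]{Hum}) then closes the loop.

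Granting the three (now equivalent) conditions, $l(w)=|N(w)|$ and $N(w)=\{t\in T\mid l(tw)<l(w)\}$ are precisely the statements proved above (the same argument applying for arbitrary $t\in T$); the identity $N(w)=\{t\mid w^{-1}(t,1)\in-\widehat T_+\}$ is the defining formula used in (ii)$\Rightarrow$(iii), and it pins $N$ down uniquely, since the difference of two cocycles $W\to\mc{P}(T)$ that agree on $S$ is a cocycle vanishing on $S$ and hence on all of $W=\pair S$. The formula $w(t,\epsilon)=(wtw^{-1},\eta(w,t)\epsilon)$ follows: the first coordinate is $wtw^{-1}$ by $W$-equivariance for the conjugation action, and $\eta(w,t)=-1\iff t\in N(w^{-1})$ is extracted from the relation $\eta(w_1w_2,t)=\eta(w_1,w_2tw_2^{-1})\eta(w_2,t)$, the value $\eta(s,t)=(-1)^{\delta_{s,t}}$, and the uniqueness just noted. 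Faithfulness is then immediate: if $w$ fixes $\widehat T$ pointwise then $\eta(w,\cdot)\equiv1$, so $N(w^{-1})=\emptyset$, so $l(w^{-1})=0$ and $w=e$.

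Finally, for the quasi-root-system claim: each fiber of the reflection map on $\widehat T$ is a two-element set on which $\{\pm1\}$ acts simply transitively, so — once $W\le\Aut(\widehat T)$ is known — any set of orbit representatives, and in particular $\widehat T_+$, is a quasi-positive system (cf.\ the discussion following \ref{ss:2.2}). Axiom \ref{ss:2.1}(i) holds because $s_{(t,\epsilon)}=t$ depends only on $t$ (giving $s_{-\alpha}=s_\alpha$) and $s_\alpha(\alpha)=t\cdot(t,\epsilon)=(t,-\epsilon)=-\alpha$, using $\eta(t,t)=-1$ (which follows from $l(t\cdot t)=0<l(t)$, whence $t\in N(t)$); axiom \ref{ss:2.1}(ii), namely $s_{s_\alpha(\beta)}=s_\alpha s_\beta s_\alpha$, as well as the $W$-equivariance $w\cdot s_\alpha(\beta)=s_{w\alpha}(w\beta)$, both reduce to the identity $(wt)w^{-1}\cdot w=wt$ in $W$. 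This completes the proof.
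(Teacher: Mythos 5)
Your proposal is correct and follows essentially the same route as the paper: the paper's own proof is a citation-level sketch that handles (ii)$\Leftrightarrow$(iii) via the dictionary between cocycles $N$ and sign functions $\eta$ (exactly your argument) and refers to \cite{Bou} for (i)$\Rightarrow$(ii) and to \cite{Dy2}, \cite{Dy0} for the equivalence with (iii), and what you have done is fill in those cited arguments in a cycle (i)$\Rightarrow$(ii)$\Rightarrow$(iii)$\Rightarrow$(i), with the $l(w)=\vert N(w)\vert$/exchange-condition argument being precisely the content of the Dyer references. The only loose point is your derivation of $t\in N(t)$ from the identity $N(w)=\mset{t\in T\mid l(tw)<l(w)}$, whose proof for general $t\in T$ (as opposed to $t\in S$) itself needs $t\in N(t)$ or the strong exchange condition; this is harmless since at that stage (i) is available and the fact is standard, matching the level of detail at which the paper itself disposes of the ``remaining assertions.''
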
 
 \begin{proof} For completeness, we sketch direct proofs of all the implications amongst (i)--(iii).
  For (i) implies (ii), see \cite[Ch IV, \S 1.4]{Bou}; for the converse, one could  note 
that the arguments of     \cite[Ch IV, \S 1.4--1.5]{Bou} work with minor changes under the assumption (ii) instead of (i). If (ii) holds, it is easy to see that $(\Psi,F)$ is a quasi-root system with $T\times \set{1}$ as a quasi-positive system, and then (ii) implies (iii) follows readily from  Proposition \ref{ss:2.4}(a).  A proof of the equivalence of
   (i) and (iii) can be found in \cite{Dy2} or \cite{Dy0}, though the proof that (i) implies (iii) there goes via
   (ii) (or a similar result).
    A direct proof that (i) implies (iii)
   can be given  by noting that a cocycle on a group $G$ with values in an abelian group can be specified arbitrarily on a set
   of generators  of $G$ provided it preserves (in an obvious sense) a set of defining relations for $G$
   in terms of those generators; it is straightforward then  to deduce existence of the cocycle $N$ as in (iii)  from the standard presentation of $W$.

   A direct proof of the  equivalence of (ii) and (iii) is implicit as a special case of the proof of Proposition \ref{ss:1.4}. 
Specifically,    there is a bijection between the set of  functions  $N\colon W\rightarrow \mc{P}(T)$ and the set of functions   $\eta\colon W\times \widehat T\rightarrow\set{\pm 1}$  given by sending $N$ to $\eta$ as defined
   by the  formula in the statement of the Proposition \ref{ss:3.1}. One checks immediately that
   $N$ is a cocycle iff   $\eta(xy,t)=\eta(x,yty^{-1})\eta(y,t)$ for all $x,y\in W$, $t\in T$ iff
   $w(t,\epsilon)=(wtw^{-1},\eta(w,t)\epsilon)$ defines a representation of $W$ on $\widehat T$.
   Further, for $s\in S$, $N(s)=\set{s}$ iff $s(t,\epsilon)=(sts,(-1)^{\delta_{s,t}}\epsilon )$.
   
   The remaining assertions are clear from the above references and the arguments of the preceding paragraph.\end{proof}
   
\subsection{} \label{ss:3.2a}Whenever the conditions of the proposition hold,
 we call $l$ the standard length function, $T$
the set of reflections, and $N$ the reflection cocycle of $(W,S)$.
 We call the quasi-root system  $(\widehat T, F)$  the
 standard abstract root system of $(W,S)$. 
The sets $\widehat T_+:=T\times\set{1}\subseteq \widehat T$ and $\hat S:=S\times\set{1}\subseteq \widehat T_+$
  are called the standard set of positive roots and standard root basis of $\widehat T$, respectively.
  When convenient, we identify $W$ with a group of permutations of $\widehat T$ in the natural way.

  We take the opportunity to  recall a well-known fact about Coxeter systems, which will be used several times in the sequel.
  \begin{lemma} \label{ss:mingen} For any Coxeter system $(W,S)$, $S$ is a minimal (under inclusion) set of generators of $W$. \end{lemma}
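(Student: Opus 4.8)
The plan is to argue by contradiction using the deletion condition for Coxeter systems. Suppose some proper subset $S'\subsetneq S$ generates $W$, and fix $s\in S\setminus S'$. Since $s$ is an involution (order exactly $2$) we have $s\neq e$, and since $s$ is a single element of $S$ we get $l(s)=1$. Because $W=\pair{S'}$, the element $s$ can be written as $s=t_1\cdots t_m$ with all $t_i\in S'$; choose such an expression with $m$ minimal, so $m\geq 1$.

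If $m=1$, then $s=t_1\in S'$, contradicting the choice of $s$. So suppose $m\geq 2$. Regarded as a word in the letters $S$, the expression $t_1\cdots t_m$ represents $s$ but has length $m>1=l(s)$, hence is not reduced. By the deletion condition (see \cite[Ch.~IV, \S1]{Bou} or \cite[\S5.8]{Hum}) there are indices $1\leq i<j\leq m$ with $s=t_1\cdots\widehat{t_i}\cdots\widehat{t_j}\cdots t_m$, which is again a word in letters from $S'$, now of length $m-2$. If $m\geq 3$ this contradicts the minimality of $m$; if $m=2$ it asserts $s=e$, again impossible. Hence no proper subset of $S$ generates $W$, i.e.\ $S$ is minimal.

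There is no real obstacle here: the lemma is elementary and is recorded only for convenient later reference. The one point needing minor care is the bookkeeping in the minimal-word argument, namely that the two sub-cases $m=2$ (which would force $s=e$) and $m\geq 3$ (which would contradict minimality of $m$) are both genuinely impossible. If one prefers to stay within the cocycle formalism of \ref{ss:3.1}, one can instead observe by induction on $l(w)$ --- using $N(ws')=N(w)+\mset{ws'w^{-1}}$ for $s'\in S'$ --- that $N(w)\subseteq T':=\bigcup_{v\in\pair{S'}}vS'v^{-1}$ for every $w\in\pair{S'}$, so that $s\in\pair{S'}$ would give $s\in N(s)\subseteq T'$; however, ruling out $s\in T'$ then amounts to a standard fact about parabolic subgroups, so the deletion-condition argument above is the more self-contained route.
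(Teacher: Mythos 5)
Your argument is correct: the deletion condition applied to a minimal-length expression $s=t_1\cdots t_m$ with $t_i\in S'$ does rule out both $m\geq 3$ (contradicting minimality) and $m=2$ (which would force $s=e$, impossible since the elements of $S$ are involutions), so no proper subset of $S$ can generate $W$. Note that the paper itself offers no proof of this lemma --- it is recorded as a well-known fact for later reference --- and your deletion-condition argument is precisely the standard one (equivalently, every reduced word for an element of $\pair{S'}$ uses only letters of $S'$), so there is nothing substantive to compare; your closing aside is also right that the cocycle route via $N(w)\subseteq T'$ is less self-contained, since excluding $s\in T'$ requires additional facts about reflection or parabolic subgroups.
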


  \subsection{}\label{ss:3.3}  Consider a quasi-root system $(\Psi,F)$  with a specified quasi-positive system 
  $\Psi_+$. 
    If $t=s_\beta$ with $\beta\in \Psi_+$,   then $s_\beta(\beta)=-\beta\in -\Psi_+$. We say that $\beta\in \Psi_+$ is a simple quasi-root  for $\Psi_+$ if for each $\alpha\in \Psi_+$
  with $s_\beta(\alpha)\not\in \Psi_+$, one has $s_\beta=s_\alpha$. 
  Note that one may have distinct simple quasi-roots $\beta$, $\gamma$ for $\Psi$ with $s_\beta=s_\gamma$.
  The reflection $s_\beta$ in a simple quasi-root $\beta$ for $\Psi_+$ is called a simple reflection for $\Psi_+$.

   Let $S'$ be any subset of the set of all simple reflections
  for $\Psi_+$, and let  $\Pi':=\mset{\beta\in \Psi_+\mid s_\beta\in S'}$  be the set of all simple quasi-roots $\beta$ for $\Psi_+$ with $s_\beta\in S'$. Let $W'=\mpair{S'}$ denote the subgroup of $W=\mpair{s_\alpha\mid \alpha\in \Psi}$ generated by $S'$,
  $\Phi':=\mset{w(\alpha)\mid w\in W',\alpha\in \Pi'}$, $T':=\mset{s_\alpha\mid \alpha\in \Phi'}$  and $\Phi'_+:=\Phi'\cap \Psi_+$. By definition of $\Pi'$ and $T'$,  $\Phi'_+=\mset{\alpha\in \Psi_+\mid s_\alpha\in T'}$.

  \begin{proposition}\label{ss:3.4} \begin{num}\item  For $\alpha\in \Phi'$, $s_\alpha$ restricts to a permutation  $s_\alpha'$  of   $\Phi'$.  \item Define the map $F'\colon \Phi'\rightarrow \text{\rm Sym}(\Phi')$ by $\alpha\mapsto s_\alpha'$. Then $(\Phi',F')$ is a quasi-root system with
  $\Phi'_+$ as a positive system.
  \item The pair $(W',S')$ is a Coxeter system with reflection cocycle $N'\colon W'\rightarrow \mc{P}(T')$
given by 
$N'(w)=\mset{s_\alpha\mid \alpha\in \Psi_+\cap w(-\Psi'_+)}=\mset{s_\alpha\mid \alpha\in \Phi'_+\cap w(-\Phi'_+)}$.
\item Restriction induces an isomorphism $W'=\mpair{s_\alpha\mid \alpha\in \Phi'}\xrightarrow \cong 
\mpair{s'_\alpha\mid \alpha\in \Phi'}$.
\item $\Pi'$ is the set of all simple roots for $\Phi'_+$ in $\Phi'$. \item Let $(\widehat {T'},F'')$ denote the standard abstract root system of $(W',S')$.
 There is a $W'$-equivariant surjection $\Phi'\rightarrow \widehat {T'}$ given by
$\alpha\mapsto (s_\alpha,\epsilon)$ for $\alpha\in \epsilon \Phi'_+$ where $\epsilon\in \set{\pm 1}$.
If $\set{\pm 1}$ acts simply transitively on the fibers of the reflection map of  $(\Phi', F')$, this surjection is an isomorphism $(\Phi',F')\rightarrow (\widehat {T'},F'')$ of quasi-root systems.
  \end{num}\end{proposition}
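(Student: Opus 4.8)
The plan is to prove the six parts essentially in the stated order, with (c) as the linchpin: parts (d)--(f) will follow from (c) together with the reflection-cocycle descriptions in \ref{ss:3.1} and \ref{ss:2.4}. First some bookkeeping that disposes of (a) and most of (b). From \eqref{eq:1} and $\mset{s_\beta\mid\beta\in\Pi'}=S'$ one gets $T'=\mset{wsw^{-1}\mid w\in W',\,s\in S'}=\bigcup_{w\in W'}wS'w^{-1}$, hence $T'\subseteq W'$, $W'=\mpair{s_\alpha\mid\alpha\in\Phi'}$, and $T'$ is exactly the reflection set of the pair $(W',S')$ in the sense of \ref{ss:3.1}. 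Since $\Phi'=\mset{w(\beta)\mid w\in W',\,\beta\in\Pi'}$ is visibly stable under $W'$ and under $\set{\pm1}$ (as $-w(\beta)=(ws_\beta)(\beta)$ with $s_\beta\in S'\subseteq W'$), every element of $W'$, in particular every $s_\alpha$ with $\alpha\in\Phi'$ since $s_\alpha\in T'\subseteq W'$, restricts to a permutation of $\Phi'$; this is (a). For (b) one verifies the two axioms of \ref{ss:2.1} for $(\Phi',F')$ by restriction from $\Psi$, using that restriction respects composition and that $s'_\alpha(\beta)=s_\alpha(\beta)\in\Phi'$, and notes that $\Phi'_+=\Phi'\cap\Psi_+$ is a set of $\set{\pm1}$-orbit representatives in $\Phi'$ because $\Phi'$ is $\set{\pm1}$-stable. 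The remaining point in (b), and the crux of (c), is the \emph{Claim} that $\Phi'_+$ and $\sigma(\Phi'_+)$ are compatible for every $\sigma\in W'$, i.e. $\Phi'_+$ is a quasi-positive system for $W'$ on $\Phi'$. I would prove this by observing that for $t\in T'$ the fiber $\tau^{-1}(t)\subseteq\Psi$ is already contained in $\Phi'$ (if $s_\gamma=t=s_\delta$ with $\delta\in\Phi'$ then $\gamma=\pm\delta\in\Phi'$), so the fiber of the reflection map of $\Phi'$ over $t$ equals $\tau^{-1}(t)$; hence $\Phi'_+\cap\tau^{-1}(t)=\Psi_+\cap\tau^{-1}(t)$ and, since $\sigma\in W'$ stabilizes $\Phi'$, $\sigma(\Phi'_+)\cap\tau^{-1}(t)=\sigma(\Psi_+)\cap\tau^{-1}(t)$, and compatibility of $\Psi_+$ with $\sigma(\Psi_+)$ (valid as $\sigma\in W'\le W_\Psi$ and $\Psi_+$ is a quasi-positive system) finishes it.

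For (c): $S'$ is a set of involutions generating $W'$ with reflection set $T'$, so by \ref{ss:3.1} it suffices to exhibit $N'\colon W'\to\mc{P}(T')$ with $N'(xy)=N'(x)+xN'(y)x^{-1}$ and $N'(s)=\set{s}$ for $s\in S'$. Take $N'(w):=\mset{s_\alpha\mid\alpha\in\Phi'_+\cap w(-\Phi'_+)}$; its values lie in $\mc{P}(T')$, it agrees with the $\Psi_+$-version appearing in the statement (since $w(-\Phi'_+)\subseteq\Phi'$ for $w\in W'$), and it satisfies the cocycle identity by the same computation as in \ref{ss:2.4}(a), now legitimate thanks to the Claim. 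For $\beta\in\Pi'$, the roots $\alpha\in\Psi_+$ with $s_\beta(\alpha)\notin\Psi_+$ are, by simplicity of $\beta$ for $\Psi_+$, exactly those with $s_\alpha=s_\beta$; intersecting with $\Phi'_+$, using $-\Phi'_+\subseteq-\Psi_+$ and that $\beta$ itself qualifies, gives $N'(s_\beta)=\set{s_\beta}$. Thus $(W',S')$ is a Coxeter system with reflection cocycle $N'$, and by \ref{ss:3.1} also $l'(w)=\vert N'(w)\vert$.

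Parts (d)--(f) are then short. For (d): restriction $W'\to\mpair{s'_\alpha\mid\alpha\in\Phi'}$ is a surjective homomorphism, and if $g\in W'$ restricts to $\Id_{\Phi'}$ then $g(-\Phi'_+)=-\Phi'_+$, so $N'(g)=\mset{s_\alpha\mid\alpha\in\Phi'_+\cap(-\Phi'_+)}=\emptyset$, forcing $l'(g)=0$ and $g=1$. For (e): to see $\Pi'$ lies in the set of simple roots of $\Phi'_+$ in $\Phi'$, transfer simplicity from $\Psi_+$ (if $\alpha\in\Phi'_+$ and $s'_\beta(\alpha)=s_\beta(\alpha)\in\Phi'\setminus\Phi'_+\subseteq\Psi\setminus\Psi_+$ with $\beta\in\Pi'$, then $s_\beta=s_\alpha$, so $s'_\beta=s'_\alpha$); conversely, if $\beta\in\Phi'_+$ is simple for $\Phi'_+$ in $\Phi'$ then $s_\beta\in T'\subseteq W'$, and using the definition of simplicity together with the injectivity from (d) to upgrade $s'_\alpha=s'_\beta$ to $s_\alpha=s_\beta$, one computes $N'(s_\beta)=\set{s_\beta}$, whence $l'(s_\beta)=1$, so $s_\beta\in S'$ and $\beta\in\Pi'$. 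For (f): $(W',S')$ being Coxeter with reflection cocycle $N'$ equal to the restriction of $N_{\Phi'_+}$ to $W'$, the standard abstract root system $(\widehat{T'},F'')$ of \ref{ss:3.1} is identified with the quasi-root system $(T_{\Phi'_+},F_{\Phi'_+})$ of \ref{ss:2.4}(c) (same underlying set $T'\times\set{\pm1}$, same $W'$-action, both governed by $N'$, same reflection map), so (f) is precisely \ref{ss:2.4}(d) applied to $G=W'$ acting on $\Phi'$ with quasi-positive system $\Phi'_+$.

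The genuine content is concentrated in (c), and inside it in the Claim: it asserts that passing from $\Psi$ to $\Phi'$ is transparent at the level of fibers of the reflection map, which is exactly what permits the $\Psi_+$-cocycle to restrict to a $\Phi'_+$-cocycle on $W'$ — everything downstream, including the identification of $\Phi'$ with a sub-quasi-root-system of $\Psi$, is then formal. The only other point requiring care is the order of the arguments, since (d) is invoked in the proof of (e) and (f) depends on (b) and (c); I would accordingly regard establishing the Claim as the main obstacle.
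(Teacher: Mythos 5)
Your overall architecture is the same as the paper's (produce a cocycle on $W'$ taking the value $\set{s}$ at each $s\in S'$ and invoke \ref{ss:3.1}(iii), then get (d)--(f) from the cocycle and \ref{ss:2.4}), but the step you yourself identify as the crux rests on a false assertion. In your proof of the Claim you argue that the fibre $\tau^{-1}(t)$ lies in $\Phi'$ for $t\in T'$ because ``if $s_\gamma=t=s_\delta$ with $\delta\in\Phi'$ then $\gamma=\pm\delta$''. In a general quasi-root system $s_\gamma=s_\delta$ does not imply $\gamma=\pm\delta$: the fibres of the reflection map need not be single $\set{\pm 1}$-orbits. The paper says so explicitly in \ref{ss:3.3} (``one may have distinct simple quasi-roots $\beta$, $\gamma$ for $\Psi$ with $s_\beta=s_\gamma$''), and the hypothesis in part (f) --- that $\set{\pm 1}$ act simply transitively on the fibres --- exists precisely because this can fail; so as written you assume, in proving (a)--(c), the special hypothesis that only (f) is entitled to use. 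The containment you want is nevertheless true (indeed $\Phi'=\tau^{-1}(T')$), but it needs a different argument: writing $t=wsw^{-1}$ with $w\in W'$, $s\in S'$, one has $s_{w^{-1}(\gamma)}=s\in S'$, and since the defining property of a simple quasi-root for $\Psi_+$ depends only on the associated reflection, every element of $\Psi_+$ whose reflection lies in $S'$ belongs to $\Pi'$; hence $w^{-1}(\gamma)\in\pm\Pi'$ and $\gamma\in\Phi'$ (this is what justifies the description $\Phi'_+=\mset{\alpha\in\Psi_+\mid s_\alpha\in T'}$ recorded in \ref{ss:3.3}). Alternatively, your Claim needs no fibre computation at all: for $\sigma\in W'$ and $t\in T'$ simply intersect the compatibility relation $\Psi_+\cap\tau^{-1}(t)=\pm\bigl(\sigma(\Psi_+)\cap\tau^{-1}(t)\bigr)$ with the set $\Phi'$, which is stable under $\set{\pm 1}$ and under $W'$. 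The paper sidesteps the issue in yet a third way: it keeps the cocycle $N_{\Psi_+}$ on all of $W$ (using only the given hypothesis that $\Psi_+$ is a quasi-positive system for $W$), notes that the cocycle identity forces $N_{\Psi_+}(w)\subseteq T'$ for $w\in W'$, and only afterwards identifies the restricted cocycle with the $\Phi'_+$-formula.

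A related shortfall: if, as in the paper's own proof, the first formula in (c) is read with the ambient positive system, i.e. $N'(w)=\mset{s_\alpha\mid\alpha\in\Psi_+\cap w(-\Psi_+)}$, then your parenthetical ``since $w(-\Phi'_+)\subseteq\Phi'$'' gives only the trivial inclusion. The substantive direction is that for $w\in W'$ every $\alpha\in\Psi_+$ with $w^{-1}(\alpha)\in-\Psi_+$ satisfies $s_\alpha\in T'$, hence $\alpha\in\Phi'_+$; that is exactly the containment $N_{\Psi_+}(w)\subseteq T'$ coming from the cocycle identity on $W$, which your purely intrinsic construction of $N'$ on $W'$ never produces. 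Both defects are repairable along the lines indicated, but as written the proof of the Claim --- and with it (b), (c) and everything downstream --- does not go through in the stated generality.
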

  \begin{proof}  Part (a) follows from the definition if $\alpha\in \Pi$ and then from \eqref{eq:1} in general.
  Part (b) follows readily from the corresponding facts for $\Psi$.  
  Define the function  $ N_{\Psi_+}\colon W\rightarrow \mc{P}(T)$ by
  $N_{\Psi_+}(w):=\mset{s_\alpha\mid \alpha\in \Psi_+\cap w(-\Psi_+)}$.  
  By \ref{ss:2.4}(a), $N_{\Psi_+}$ is a cocycle, and by definition of $S'$,
  $N_{\Psi_+}(s)=\set{s}$ for $s\in S'$.  The cocycle condition implies $N_{\Psi_+}(w)\subseteq T'$ for all
  $w\in W'$, so 
  $N_{\Psi_+}$ restricts to a cocycle $N'\colon W'\rightarrow\mc{P}(T')$,
 satisfying $N(s)=\mset{s}$ for all $s\in S'$. By \ref{ss:3.1}, $(W',S')$ is  a Coxeter system with reflection cocycle $N'$ where  $N'(w)=\mset{s_\alpha\mid \alpha\in \Psi_+\cap w(-\Psi_+)}$. Now to prove (c), it will suffice to show that for $w\in W'$,  $\Psi_+\cap w(-\Psi_+)=\Phi'_+\cap w(-\Phi_+)$. Clearly, the right hand side is included in the left. But if $\alpha\in \Psi_+$ with $w^{-1}(\alpha)\in -\Psi_+$, then
 from above $s_{\alpha}\in T'$, so $\alpha\in \Phi'_+$ and $w^{-1}(\alpha)\in -\Phi'_+$, which proves we have equality.
 Now for (d), suppose that $w\in W'$ has restriction $w_{\vert \Phi'}=\Id_{\Phi'}$. 
 Then $\Phi_+'\cap w(-\Phi'_+)=\Phi_+'\cap w_{\vert \Phi'}(-\Phi'_+)=\emptyset$ so
$ N(w)=\emptyset$ and $w=\Id_{\Psi}$ by \ref{ss:3.1}.
 For (e), it is clear that $\Pi'$ is a subset of the set of all simple
roots for $\Phi'_+$ in $\Phi'$. On the other hand,  if $\alpha\in \Phi'$ is a simple root
for $\Phi'_+$ in $\Phi'$, then  the definitions readily give that $N'(s_\alpha)=\set{s_\alpha}$, and so $s_\alpha\in S$ which implies $\alpha\in \Pi$. Finally,  (f) follows from \ref{ss:2.4}(d). 
   \end{proof}
   
   \begin{definition}\label{ss:3.5a} A quasi-positive system $\Psi_+$ for   a quasi-root system $(\Psi,F)$  will be said to be  generative  if for some set $S'$ of simple reflections for $\Psi_+$, one has 
   $\Phi=\Psi$ (or equivalently, $W'=W$) in \ref{ss:3.3} above. By \ref{ss:mingen},  $S'$ is necessarily the set of all simple reflections for $\Psi_+$.\end{definition}
   We then  call $(W,S')$ the Coxeter system attached to  $\Psi_+$.
  As we observe later, the isomorphism type of $(W,S')$ as Coxeter system in general depends on the choice of   $\Psi_+$.  The sets of quasi-positive systems and of generative quasi-positive systems
   are  clearly both stable under  the $W$-action.

 \begin{example} \label{ss:B2exmp}Let $(W,S)$ be a Coxeter system of type $B_2$ with $S:=\set{r,s}$ as its set of simple reflections (so $rs$ has order $4$). The set of reflections is $T=\set{r,rsr,srs,s}$. We consider the standard abstract root system $\widehat{T}=T\times\set{\pm 1}$ of $(T,S)$, which has standard positive system $\widehat{T}_+:=T\times\set{1}$.
 One can readily check that $\Psi_+:=\set{(s,1),(srs,1),(r,1), (rsr,-1)}$ is a generative  quasi-positive system for $\widehat{T}$, with $\Delta:=\set{(s,1),(srs,1)}$ as corresponding set of abstract simple roots and $S':=\set{s,srs}$ as the corresponding set of abstract simple reflections. 
 Observe that $\widehat{T}_+$  and $\Psi_+$ are not $W$-conjugate. \end{example}  
 
\begin{definition} \label{ss:3.5b}  An abstract root system is a  quasi-root system $(\Psi,F)$ for which there exists some generative quasi-positive system. Elements of $\Psi$ will then be called roots instead of quasi-roots.
\end{definition} 
   Note the pair $(W,T)$ where $T=\mset{s_\alpha\mid \alpha\in \Psi}$ depends only on $(\Psi,F)$ and not on the choice of generative quasi-positive system.

   \subsection{}  Fix a Coxeter system $(W,S)$.    
   For any $J\subseteq S$, let  $W_J$ denote
the  subgroup of $W$ generated by $J$. The subgroups $W_J$ (resp., their $W$-conjugates)  are called  standard parabolic subgroups
(resp.,  parabolic subgroups) of $(W,S)$.
A reflection subgroup of $W$ is a subgroup $W'$ which is generated by $W'\cap T$.
 A dihedral reflection subgroup is a reflection subgroup which may be generated by two distinct reflections of  $(W,S)$.

\subsection{}\label{ss:3.7} Here we recall some properties of reflection subgroups; for proofs, see \cite{Dy2} or \cite{Dy0}.    For any  reflection subgroup
 $W' =\pair{W' \cap T}$ of $W$,
\[S' =\chi(W' )=\chi_{(W,S)}(W' ):=
\mset{t\in T\mid N(t)\cap W' =\set{t}}\tag{1}\] is
a set of Coxeter generators for $W' $.
 The corresponding set of reflections and
reflection cocycle for $(W' ,S' )$ are $T':=W' \cap T$ 
and $N' \colon w\rightarrow N(w)\cap
W' $ respectively.  If $R\subseteq T$ is any set of reflections of $W$ with $W' =\mpair{R}$,
then $\cup_{w\in W' } wRw^{-1}= T'$ and $\vert S'\vert\leq \vert R\vert$. In particular, if $R$ is a set of Coxeter generators for $W$ with $R\subseteq T$, then $T=\cup_{w\in W}wRw^{-1}$ and $\vert R\vert=\vert  S\vert$. It is known that any dihedral reflection subgroup is contained in a unique maximal (under inclusion) dihedral reflection subgroup.

The following result follows directly from the above facts and Proposition  \ref{ss:3.1}.
\begin{proposition} \label{ss:3.8} Let $\widehat{T'}=T'\times\set{\pm 1}$ and for $\alpha\in \widehat {T'}$, let $s_\alpha'$ denote the restriction of $s_\alpha$ to a permutation of $\widehat{T'}$.
Let $F'\colon  \widehat{T'}\rightarrow \text{\rm Sym}(\widehat{T'})$ denote the map $\alpha\mapsto s_\alpha'$. Then $(\widehat{T'},F')$ is equal to the standard abstract root system of $(W',S')$.\end{proposition}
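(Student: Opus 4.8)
The plan is to unwind the definitions on both sides and see that they coincide as data. By Proposition~\ref{ss:3.1} applied to the Coxeter system $(W',S')$ --- using the facts recalled in \ref{ss:3.7} that $T'=W'\cap T$ is its set of reflections and that $N'\colon w\mapsto N(w)\cap W'$ is its reflection cocycle --- its standard abstract root system $(\widehat{T'},F'')$ has underlying set $\widehat{T'}=T'\times\set{\pm 1}$, carries the $W'$-action
\[
w(t,\epsilon)=\bigl(wtw^{-1},\ \eta'(w,t)\epsilon\bigr),\qquad
\eta'(w,t)=\begin{cases}1,&t\notin N'(w^{-1}),\\ -1,&t\in N'(w^{-1}),\end{cases}
\]
and has $F''$ sending $\alpha=(t,\epsilon)$ to the permutation of $\widehat{T'}$ induced by $t$ under this action. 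So it suffices to show that this $W'$-action on $\widehat{T'}$ is the restriction to $W'$ of the $W$-action on $\widehat T$, and that under this identification $F''=F'$.

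First I would note that $\widehat{T'}$ is indeed stable under the $W$-action restricted to $W'$: for $w'\in W'$ and $(t,\epsilon)\in\widehat{T'}$, Proposition~\ref{ss:3.1} gives $w'(t,\epsilon)=(w'tw'^{-1},\pm\epsilon)$, and $w'tw'^{-1}\in W'\cap T=T'$. This is exactly the stability that makes the restriction $s'_\alpha$ of $s_\alpha$ to $\widehat{T'}$ well defined for $\alpha\in\widehat{T'}$, so that $(\widehat{T'},F')$ is a legitimate quasi-root system (which also follows by specializing \ref{ss:3.3}--\ref{ss:3.4}).

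The only substantive point is the comparison of signs. Both $W'$-actions on $\widehat{T'}$ have the form $w'(t,\epsilon)=(w'tw'^{-1},\eta(w',t)\epsilon)$; by the formula of Proposition~\ref{ss:3.1} the sign for the restricted $W$-action is $-1$ precisely when $t\in N(w'^{-1})$, while the sign for the intrinsic $(W',S')$-action is $-1$ precisely when $t\in N'(w'^{-1})=N(w'^{-1})\cap W'$. Since $t\in T'\subseteq W'$, these conditions are equivalent, so the two $W'$-actions on $\widehat{T'}$ agree, and hence so do $F'$ and $F''$ (both send $(t,\epsilon)$ to the permutation induced by $t$). I do not anticipate a real obstacle: the argument is entirely a matter of matching conventions, the one point requiring attention being that the sign $\eta(w,t)$ in Proposition~\ref{ss:3.1} is controlled by $N(w^{-1})$ rather than $N(w)$, after which the relation $N'(w^{-1})=N(w^{-1})\cap W'$ does all the work.
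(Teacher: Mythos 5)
Your argument is correct and is exactly the route the paper intends: it gives no written proof beyond asserting that the statement "follows directly" from the facts recalled in \ref{ss:3.7} (that $T'=W'\cap T$ and $N'(w)=N(w)\cap W'$) together with Proposition \ref{ss:3.1}, which is precisely the stability-plus-sign-comparison you carry out. Your observation that the sign is governed by $N(w'^{-1})$ versus $N'(w'^{-1})=N(w'^{-1})\cap W'$, and that these agree because $t\in T'\subseteq W'$, is the key point, and you have it right.
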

\begin{proposition}\label{ss:3.8a} Let $\Delta\subseteq \widehat T_+$.  Then $\Delta$ is the standard set of simple roots of 
the standard  abstract root system ${T'}\times \set{\pm 1}$ of some reflection subgroup $W'$ of $W$, with $T'=T\cap W'$, iff for each $\alpha\neq \beta$ in $\Delta$, $\set{\alpha,\beta}$ is the set of simple roots of the standard positive system of the standard  abstract root system $(\mpair{s_\alpha,s_\beta}\cap T)\times \set{\pm 1}$ of $\mpair{s_\alpha,s_\beta}$. \end{proposition}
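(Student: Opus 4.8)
The plan is to reduce the statement, by means of Proposition \ref{ss:3.8}, to a purely group-theoretic equivalence about the map $\chi=\chi_{(W,S)}$ of \ref{ss:3.7}, and then to prove that equivalence. Since $\Delta\subseteq\widehat T_+$, every $\alpha\in\Delta$ equals $(s_\alpha,1)$, so $\Delta=R\times\set 1$ with $R:=\mset{s_\alpha\mid\alpha\in\Delta}\subseteq T$. Applying \ref{ss:3.8} to the reflection subgroup $W':=\mpair R$ (so that $S'=\chi(W')$, $T'=T\cap W'$), the standard abstract root system of $(W',\chi(W'))$ is $(\widehat{T'},F')$, whose standard set of simple roots is $\chi(W')\times\set 1$; since $\chi(W'')$ generates $W''$ for any reflection subgroup $W''$ (\ref{ss:3.7}), it follows that $\Delta$ is the standard set of simple roots of the standard abstract root system $T'\times\set{\pm1}$ of some reflection subgroup $W'$ with $T'=T\cap W'$ precisely when $R=\chi(\mpair R)$. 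Applying \ref{ss:3.8} likewise to the reflection subgroups $\mpair{s_\alpha,s_\beta}$ turns the second condition of the proposition into: $\chi(\mpair{s,t})=\set{s,t}$ for all distinct $s,t\in R$. So it suffices to prove that, for every subset $R$ of $T$, one has $R=\chi(\mpair R)$ if and only if $\chi(\mpair{s,t})=\set{s,t}$ for all distinct $s,t\in R$.

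For the ``only if'' implication, suppose $R=\chi(W')$ with $W'=\mpair R$, and let $s\neq t$ in $R$. From $N(w)=\mset{u\in T\mid l(uw)<l(w)}$ (see \ref{ss:3.1}) we get $s\in N(s)$, and since $s\in\chi(W')$ and $\mpair{s,t}\subseteq W'$ we have $N(s)\cap\mpair{s,t}\subseteq N(s)\cap W'=\set s$; hence $N(s)\cap\mpair{s,t}=\set s$, i.e.\ $s\in\chi(\mpair{s,t})$, and symmetrically $t\in\chi(\mpair{s,t})$. As $\mpair{s,t}$ is generated by the two-element set $\set{s,t}$ of reflections, the inequality $\vert\chi(W'')\vert\le\vert R''\vert$ of \ref{ss:3.7} (for $R''$ a set of reflections generating a reflection subgroup $W''$) gives $\vert\chi(\mpair{s,t})\vert\le2$, so $\chi(\mpair{s,t})=\set{s,t}$.

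For the ``if'' implication, put $W'=\mpair R$; it suffices to prove $R\subseteq\chi(W')$, since then — using $\mpair{\chi(W')}=W'=\mpair R$ and the fact that, by \ref{ss:mingen} applied to $(W',\chi(W'))$, $\chi(W')$ is a minimal generating set of $W'$ — the inclusion forces $R=\chi(W')$. Fix $r\in R$. Because $\chi(W')=\mset{t\in T\mid N(t)\cap W'=\set t}$ and any $u\in N(r)\cap W'$ with $u\neq r$ would lie in $\mpair{r,u}\subseteq W'$ and so witness $r\notin\chi(\mpair{r,u})$, the assertion $r\in\chi(W')$ is equivalent to: $r\in\chi(\mpair{r,v})$ for every reflection $v\in W'\cap T$. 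By \ref{ss:3.7}, $W'\cap T=\bigcup_{w\in W'}wRw^{-1}$, so any such $v$ equals $wsw^{-1}$ for some $w\in W'$, $s\in R$; I would prove $r\in\chi(\mpair{r,v})$ by induction on the minimal length $l'(w)$ of a conjugator $w$ in the Coxeter system $(W',\chi(W'))$ realizing $v=wsw^{-1}$ with $s\in R$. The base case $l'(w)=0$ gives $v=s\in R$ and is exactly the hypothesis $r\in\chi(\mpair{r,s})=\set{r,s}$. For the inductive step, writing $w=aw'$ with $a\in\chi(W')$ a left descent and $l'(w')=l'(w)-1$, one has $v=av'a$ with $v'=w'sw'^{-1}$, for which $r\in\chi(\mpair{r,v'})$ holds by induction; one must then descend this to $r\in\chi(\mpair{r,v})$, ideally by exhibiting a dihedral reflection subgroup $D$ with $r,v\in D$ and $r\in\chi(D)$, whence $N(r)\cap\mpair{r,v}\subseteq N(r)\cap D=\set r$.

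The descent in the inductive step is the step I expect to be the main obstacle. Passing from $\mpair{r,v'}$ (about which one knows $r\in\chi(\mpair{r,v'})$) to $\mpair{r,v}=\mpair{r,av'a}$ requires genuine control of how $\chi$, conjugation by $a\in\chi(W')$, and the reflection cocycle $N$ restricted to $W'$ interact — essentially the theory of reflection subgroups of \cite{Dy2}, \cite{Dy0}, for instance the fact that the canonical generating set of a reflection subgroup can be computed relative to any intermediate reflection subgroup, together with \ref{ss:3.7} (uniqueness of the maximal dihedral reflection subgroup over a given dihedral one). In a careful write-up one would track, via reduced expressions in $(W',\chi(W'))$ and the exchange condition, precisely which reflections of $W'$ lie in $N(r)$; alternatively, one may simply quote the corresponding result from \cite{Dy2}/\cite{Dy0}. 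I do not foresee difficulty in the reduction of the first paragraph or in the ``only if'' implication.
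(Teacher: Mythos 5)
Your reduction of the statement to the purely group-theoretic equivalence ``$R=\chi(\mpair{R})$ iff $\chi(\mpair{s,t})=\set{s,t}$ for all distinct $s,t\in R$'' is precisely the ``direct translation using \ref{ss:3.1}'' that constitutes the paper's entire proof, which then simply cites \cite[3.5]{Dy2} for that equivalence; since you explicitly allow quoting that result from \cite{Dy2}/\cite{Dy0}, your approach coincides with the paper's (and your ``only if'' argument via $\vert\chi(W'')\vert\le\vert R''\vert$ is fine). Do note that your attempted self-contained induction for the ``if'' direction is, as you suspect, genuinely incomplete at the descent step --- that step is essentially the whole content of \cite[3.5]{Dy2} --- so in a final write-up you should do exactly what the paper does and cite it.
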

\begin{proof} This  is  a direct translation  using \ref{ss:3.1} of  \cite[3.5]{Dy2}.\end{proof}
\subsection{} \label{ss:3.9} If $W'$ is a reflection subgroup of $(W,S)$ and $w\in W$, there is a unique
$y\in W'w$ with $N(y^{-1})\cap W'=\emptyset$. Then $\chi(wW'w^{-1})=y\chi(W')y^{-1}$, by \cite[Lemma 1]{Dy3}. 
As in \text{\rm Proposition \ref{ss:3.7}}, regard the (underlying sets of)  abstract root systems attached to $(W',\chi(W'))$ and $(wW'w^{-1},\chi(wW'w^{-1}))$ 
as subsets of $\widehat T$. 
\begin{proposition} \label{ss:3.10} For $W'$, $w$, $y$ as in $\text{\rm \ref{ss:3.9}}$,   the map  $\alpha\mapsto y(\alpha)\colon \widehat T\rightarrow \widehat T$ 
restricts to an isomorphism from the standard abstract root system of $(W',\chi(W'))$   to the standard abstract root system of $(wW'w^{-1},\chi(wW'w^{-1}))$. This bijection restricts to  a bijection between corresponding sets of abstract positive roots (resp., abstract simple roots). \end{proposition}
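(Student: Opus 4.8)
The plan is to identify the two standard abstract root systems appearing in the statement, using Proposition~\ref{ss:3.8}, with the sub-quasi-root systems $(\widehat{T'},F')$ and $(\widehat{T''},F'')$ of $\widehat T$, where $T'=W'\cap T$, $T''=(wW'w^{-1})\cap T=wT'w^{-1}$, $\widehat{T'}=T'\times\set{\pm1}$ and $\widehat{T''}=T''\times\set{\pm1}$; it then suffices to show that the permutation of $\widehat T$ afforded by $y$ carries $\widehat{T'}$ onto $\widehat{T''}$ compatibly with the quasi-root system structure and with the standard positive systems.

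First I would observe that $y$, acting on $\widehat T$ as in Proposition~\ref{ss:3.1}, is an automorphism of the quasi-root system $(\widehat T,F)$: for $\alpha,\beta\in\widehat T$ one has $y(s_\alpha(\beta))=(ys_\alpha y^{-1})(y(\beta))=s_{y(\alpha)}(y(\beta))$ by~\eqref{eq:1}. Next, from the equality $\chi(wW'w^{-1})=y\chi(W')y^{-1}$ recalled in~\ref{ss:3.9}, together with the fact (see~\ref{ss:3.7}) that $\chi$ of a reflection subgroup is a set of Coxeter generators for it, I get $yW'y^{-1}=wW'w^{-1}$; conjugating $T$ by $y\in W$ then gives $yT'y^{-1}=T''$, so $y$ restricts to a bijection $\widehat{T'}\to\widehat{T''}$. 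Since $s_\alpha\in W'$ whenever $\alpha\in\widehat{T'}$ and $W'$ stabilises $\widehat{T'}$ (and symmetrically for $\widehat{T''}$, using $yW'y^{-1}=wW'w^{-1}$), the computation above yields $y(F'(\alpha)(\beta))=F''(y(\alpha))(y(\beta))$ for $\alpha,\beta\in\widehat{T'}$, so $y|_{\widehat{T'}}$ is a bijective morphism, hence an isomorphism, $(\widehat{T'},F')\to(\widehat{T''},F'')$ of quasi-root systems. By Proposition~\ref{ss:3.8} these are exactly the standard abstract root systems of $(W',\chi(W'))$ and $(wW'w^{-1},\chi(wW'w^{-1}))$.

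It remains to check that $y$ maps the standard positive roots $T'\times\set{1}$ of $\widehat{T'}$ onto the standard positive roots $T''\times\set{1}$ of $\widehat{T''}$, and the standard root basis $\chi(W')\times\set{1}$ onto $\chi(wW'w^{-1})\times\set{1}$. By the formula for the $W$-action in Proposition~\ref{ss:3.1}, $y(t,1)=(yty^{-1},\eta(y,t))$ with $\eta(y,t)=1$ precisely when $t\notin N(y^{-1})$. Since $y$ was chosen so that $N(y^{-1})\cap W'=\emptyset$ and $T'\subseteq W'$, we have $\eta(y,t)=1$ for every $t\in T'$; hence $y(T'\times\set{1})=T''\times\set{1}$, and as $\chi(W')\subseteq T'$ the same argument handles the root bases.

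I expect the only step needing care to be this last one. For the underlying isomorphism of quasi-root systems, any element of the coset of $w$ modulo $W'$ would serve equally well; it is exactly the defining property $N(y^{-1})\cap W'=\emptyset$ of the distinguished representative $y$ that forces the induced bijection to respect positivity, and therefore also to send simple roots to simple roots. Everything else is routine assembly of Propositions~\ref{ss:3.1} and~\ref{ss:3.8} with the facts on $\chi$ recalled in~\ref{ss:3.7} and~\ref{ss:3.9}.
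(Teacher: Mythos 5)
Your proof is correct and is essentially the paper's own argument: the paper disposes of this proposition with the remark that it ``follows easily using \ref{ss:3.1} and the definitions,'' and your write-up is exactly that unpacking, namely the action formula $y(t,\epsilon)=(yty^{-1},\eta(y,t)\epsilon)$ from \ref{ss:3.1} combined with the identification of \ref{ss:3.8} and the facts recalled in \ref{ss:3.7}--\ref{ss:3.9}. Your closing observation—that preservation of positivity (and hence of simple roots) rests precisely on the defining property $N(y^{-1})\cap W'=\emptyset$ of the distinguished representative $y$—is exactly the point the paper leaves implicit.
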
 
\begin{proof} This follows easily using  \ref{ss:3.1} and the definitions.\end{proof}

We record the following Lemma for use in Section 3.

   \begin{lemma} \label{ss:conj}  Let $J\subseteq S$ such that $(W_J,J)$ is an infinite irreducible Coxeter system.
    If $w\in W$ with $l(wr)>l(w)$ for all $r\in J$
    and $wJw^{-1}\subseteq S$, then $w\in W_K$ where $K:=\mset{r\in S\setminus J\mid rs=sr  \text{ \rm  for all $s\in J$}}$. \end{lemma}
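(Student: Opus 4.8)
The plan is to exploit the hypothesis $l(wr) > l(w)$ for all $r \in J$, which says $N(w) \cap \{r : r \in J\}$ is empty, i.e. $w$ is the minimal-length element of the coset $wW_J$ (equivalently $l(wv) = l(w) + l(v)$ for all $v \in W_J$). Combined with $wJw^{-1} \subseteq S$, we get $wW_Jw^{-1} = W_{J'}$ for $J' := wJw^{-1} \subseteq S$, and $w$ conjugates the Coxeter system $(W_J, J)$ isomorphically onto $(W_{J'}, J')$, which is therefore also infinite irreducible. First I would record that $N(w)$ consists entirely of reflections $t$ with $l(tw) < l(w)$; I want to show every such $t$ lies in $W_K$, or rather that $N(w) \subseteq W_K$ so that $w \in W_K$.

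The key step is a "deletion/exchange" argument on a reduced word. Write $w = s_1 \cdots s_n$ reduced. For each generator $r \in J$ and each $i$, consider how $s_i$ interacts with the parabolic $W_J$. Since $w$ is $(\emptyset, J)$-reduced and $w^{-1}r w \in S$ for each $r\in J$, I would look at the string $w r w^{-1}$: it is a single reflection lying in $S$, and one can track, letter by letter from the right, how conjugating by $s_n, s_{n-1}, \dots, s_1$ transforms $r$. The standard fact (from the theory of minimal coset representatives, e.g. Deodhar/Dyer) is that if $w$ is the minimal representative of $wW_J$ and $wW_Jw^{-1}$ is again standard parabolic, then each $s_i$ either commutes with all of the (successively conjugated) $J$, or the word structure forces a length contradiction with minimality. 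The irreducibility and infiniteness of $W_J$ is what rules out the "bad" case: if some $s_i$ failed to commute with the relevant conjugate of $J$, one could build an element of $W_J$ (or of the conjugate parabolic) whose length drops, contradicting that $w$ is minimal in its coset — here infiniteness guarantees there are enough long elements, and irreducibility prevents $J$ from splitting off a piece that $s_i$ could act on nontrivially while still commuting with the rest.

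More concretely, I would argue by induction on $n = l(w)$. If $n = 0$ we are done ($e \in W_K$). If $n \geq 1$, pick $r \in J$; since $wrw^{-1} \in S \subseteq T$ and $l(rw^{-1}) > l(w^{-1})$ is false in general, I instead use: there exists $s = s_1 \in S$ with $l(sw) < l(w)$, and I claim $s \in K$. Indeed, set $w' = sw$, so $l(w') = n-1$ and $w'$ is still $(\emptyset,J)$-reduced (since $l(w'r) = l(swr)$; one checks $l(swr) > l(sw)$ using that $l(wr) > l(w)$ and a short length computation, as $s \notin W_J$). Now $w'Jw'^{-1} = s(wJw^{-1})s = sJ's$. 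If $s$ commutes with every element of $J'$, then $w'J w'^{-1} = J' \subseteq S$ and we may apply the inductive hypothesis to $w'$, getting $w' \in W_K$; then it remains to show $s \in K$, i.e. $s$ commutes with all of $J$ — and here is where I use that $s$ commutes with all of $J' = wJw^{-1} = w'Jw'^{-1}$ together with $w' \in W_K$: since $W_K$ centralizes $W_J$ elementwise (by definition of $K$ — wait, $K$ commutes with $J$ but need not centralize $W_J$), I'd rather argue $J = w'^{-1}J'w'$ and $s$ commutes with $J'$, so $s$ commutes with $w'^{-1}J'w' = J$ provided $s$ commutes with $w'$, which holds because $w' \in W_K$ and... this needs $[s,W_K]$ trivial, not obvious. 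So the real content is showing $s$ commutes with $J'$ in the first place.

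To show $s = s_1$ commutes with all of $J'$: suppose not, so $sr's \neq r'$ for some $r' \in J'$. Then in $W$ we have the two reflections $s, sr's \in N(sw)$-considerations... Actually the clean route: because $w = sw'$ with $l(w) = 1 + l(w')$ and $w$ is minimal in $wW_J$, the reflection $s$ satisfies $s \notin W_J$ trivially, but more importantly for every $v \in W_J$, $l(swv) = l(sw) + l(v)$ or $= l(w) + l(v) \mp 1$; minimality of $w$ in $wW_J$ plus $l(sw)<l(w)$ forces $sw$ to also be minimal in $sw \cdot W_J = (swW_Jw^{-1}) \cdot w$... I would push on this: the pair $(s, \text{the group } \langle s, J'\rangle)$ must be a reflection subgroup; if $s$ does not commute with some $r' \in J'$ then $\langle s, r'\rangle$ is dihedral of order $\geq 6$ or infinite, and $W_J$ being infinite irreducible of rank $\geq 2$ (rank $1$ is finite, so $|J|\geq 2$), I can find $r \in J$ with $r \neq w^{-1}r'w$ and $m(r, w^{-1}r'w) = \infty$ or large (irreducibility: the Coxeter diagram of $J$ is connected, so some generator is "far" from $w^{-1}r'w$ in a way forcing... ).

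The honest assessment: the main obstacle is precisely this commutation claim — proving $s_1 \in K$, i.e. that the "new" letter peeled off a reduced word for $w$ must commute with all of $J$. The mechanism should be: if $s_1$ did not commute with $J' = wJw^{-1}$, then $s_1 w W_J$ would contain an element shorter than $w$, or $s_1 W_{J'}$ would fail to have $w \cdot(\text{something})$... contradicting that $w$ is the minimal double-coset-type representative; infiniteness of $W_J$ provides the unbounded lengths needed to run this contradiction, and irreducibility ensures we cannot dodge it by moving within a proper parabolic of $W_J$. I would locate the precise statement in \cite{Dy2} or \cite{Dy3} (the cited "Lemma 1 of \cite{Dy3}" on $\chi(wW'w^{-1}) = y\chi(W')y^{-1}$ is adjacent to exactly this kind of reasoning) and reduce to it. Once the commutation claim is in hand, the induction closes immediately: $s_1 \in K$ and $s_1 w' = w$ with $w' \in W_K$ by induction (after checking $w'$ still satisfies the hypotheses with the \emph{same} $J$, which uses $s_1 \in K$ so that $w'Jw'^{-1} = s_1 J' s_1 = J' \subseteq S$ and $l(w'r) = l(s_1 w r) = l(s_1) + l(wr) = 1 + l(wr) > 1 + l(w) > l(w') $ — wait, need $l(w'r) > l(w')$: $l(w'r) = l(s_1 w r)$; since $s_1 \in K$ commutes with $r \in J$, $s_1 w r = s_1 w r$, and $l(s_1 w r) = l(w r) \pm 1$; as $l(wr) = l(w)+1 = l(w')+2$ and $l(w') = l(w)-1$, we get $l(w'r) \geq l(wr) - 1 = l(w') + 1 > l(w')$, good), hence $w = s_1 w' \in W_K$. $\square$

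I expect the write-up to be about one page, with the commutation lemma as the technical heart; everything else is length-function bookkeeping of the kind the paper treats as routine.
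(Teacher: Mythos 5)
Your induction skeleton is essentially the paper's, just mirrored: you peel a simple reflection off the left of $w$, the paper peels $a\in S\cap N(w^{-1})$ off the right, and your length bookkeeping for restarting the induction is fine. But the step you yourself flag as ``the main obstacle'' --- that the peeled letter commutes with all of $J'=wJw^{-1}$ (equivalently, in the paper's version, that $a$ commutes with all of $J$, i.e.\ $a\in K$) --- is the entire mathematical content of the lemma, and your proposal does not prove it. The mechanisms you sketch (``build an element of $W_J$ whose length drops, contradicting minimality'', or ``$\mpair{s,r'}$ is dihedral of order $\geq 6$ or infinite, and I can find $r\in J$ far from $w^{-1}r'w$'') are not arguments, and a bare length contradiction from minimality of $w$ in $wW_J$ will not produce the commutation. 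The input the paper uses is Deodhar's theorem on conjugacy of parabolic subgroups (Proposition 2.3 of \cite{BrHow}): for $a\in S\cap N(w^{-1})$ one has $a\notin J$ and the irreducible component of $(W_{J\cup\set{a}},J\cup\set{a})$ containing $a$ is finite. Given that, the hypotheses finish the claim in one line: since $(W_J,J)$ is irreducible and infinite, if $a$ were joined in the Coxeter diagram to any element of $J$, connectivity would force that component to contain all of $J$ and hence be infinite; so $a$ is an isolated node of $J\cup\set{a}$, i.e.\ $a\in K$, and induction applies to $wa$. Your plan to ``locate the precise statement in \cite{Dy2} or \cite{Dy3} and reduce to it'' does not close this gap: neither reference contains the needed statement; it is the Brink--Howlett/Deodhar result that is required (or an independent proof of it, which you do not supply).

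Two smaller points. Your worry that ``$K$ commutes with $J$ but need not centralize $W_J$'' is unfounded: each element of $K$ commutes with each element of $J$, hence $W_K$ centralizes $W_J$ elementwise. This is exactly what the paper uses to get $(wa)J(wa)^{-1}=wJw^{-1}\subseteq S$, and in your left-handed variant it is what converts ``$s$ commutes with $J'$'' together with $sw\in W_K$ into $J'=sJs$ and then $s\in K$, so that part of your argument can be repaired. You should also note explicitly that the peeled letter lies outside $J$ (in your version this follows, e.g., because a generator of the infinite irreducible, hence rank $\geq 2$ and connected, system $(W_J,J)$ cannot commute with all of $J$). These are cosmetic; the unproved commutation claim is the genuine gap.
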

    \begin{proof} This follows directly from a result of Deodhar's 
  (see   \cite[Prop 2.3]{BrHow})  on conjugacy of parabolic subgroups and is even implicit as a very special case of the discussion after the statement of Theorem A in loc cit.    To prove it explicitly, we use induction on $l(w)$.
    If $w \neq 1_W$, choose $a\in S\cap N(w^{-1})$. From Proposition 2.3 of loc cit, $a\not\in J$ and
  the irreducible  component of $(W_{J\cup\set{a}},J\cup\set{a})$ containing 
    $a$ is a finite Coxeter system (i.e. in the notation of loc cit,   the element  $v[a,J]$ is defined). Since $W_J$ is infinite and irreducible,
     this implies that this irreducible component has $a$ as its only simple reflection and hence that $a\in K$.
      Now  $l(war)>l(wa)$ for all $r\in J$, and $(wa)J(wa)^{-1}=wJw^{-1}\subseteq S$ so by induction,
      $wa\in W_K$ and the proof is complete. 
       \end{proof}
  
 \subsection{} \label{ss:3.11} We next  examine more closely  the  result \ref{ss:3.4}
 in the special  case that  $(\Psi,F)$ is the standard abstract root system $(\widehat T, F)$ of a Coxeter system
 $(W,S)$.  For  a quasi-positive system $\Psi_+\subseteq \widehat T$, let $\Delta_{\Psi_+}$ and $S_{\Psi_+}$ denote the sets of simple roots and simple reflections for $\Psi_+$  i.e. \begin{gather}\Delta_{\Psi_+}:=\mset{\alpha\in \Psi_+ \mid s_\alpha (\Psi_+\setminus \set{\alpha})\subseteq \Psi_+}\\ S_{\Psi_+}:=\mset{s_\alpha\mid \alpha\in \Delta_{\Psi_+}}.\end{gather} Let  $W_{\Psi_+}:=\mpair{S_{\Psi_+}}$ denote the subgroup of $W$
  generated by $S_{\Psi_+}$. (The  fact  in \ref{ss:3.4}(c) that $(W_{\Psi_+},S_{\Psi_+})$ is a Coxeter 
  system also   follows from  \cite[1.8]{Dy1} using  \ref{ss:3.1}.) 
  For example, if ${\Psi_+}=\widehat T_+$, then $\Delta_{\Psi_+}=S\times \set{1}$ and $S_{\Psi_+}=S$, as is well known. 

 \begin{definition}  A subset $S'$ of  $T$ (resp., a subset $\Delta$ of $\widehat{T}$) is an abstract set of simple reflections (resp., an abstract set of simple roots) for $\widehat{T}$  if $S'=S_{\Psi_+}$ (resp., $\Delta=\Delta_{\Psi_+}$) for some generative quasi-positive system $\Psi_+$.\end{definition} 
  Note that any abstract set of simple reflections of $\widehat T$ is a set of Coxeter generators of $W$ contained in the set of  reflections of $(W,S)$, but that the converse does not hold (see Example \ref{ss:3.19}).
  
 \begin{lemma}  The map $\Psi_+\mapsto \Delta_{\Psi_+}$ 
 is a  a bijection between the set of generative quasi-positive systems and  the set of abstract sets of simple roots of 
 $(\widehat T,F)$.\end{lemma}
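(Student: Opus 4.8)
\emph{Proof sketch.} The plan is to observe first that surjectivity is immediate: by definition an abstract set of simple roots of $(\widehat{T},F)$ is exactly a set of the form $\Delta_{\Psi_+}$ for some generative quasi-positive system $\Psi_+$. So the substance of the lemma is injectivity, i.e.\ that a generative quasi-positive system is recovered from its set of simple roots. It helps to record at the outset that any quasi-positive system $\Psi_+$ of $(\widehat{T},F)$ is determined by the subset $B_{\Psi_+}:=\mset{t\in T\mid (t,-1)\in\Psi_+}$ of $T$; indeed, since $\set{\pm 1}$ acts simply transitively on the fibers of the reflection map of $\widehat{T}$, one has $\Psi_+=\mset{(t,1)\mid t\in T\setminus B_{\Psi_+}}\cup\mset{(t,-1)\mid t\in B_{\Psi_+}}$, and conversely every subset of $T$ arises this way (the remark preceding Proposition \ref{ss:2.4}). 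Thus it suffices to show that $B_{\Psi_+}$ is determined by $\Delta_{\Psi_+}$.

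Next I would set up the comparison. Let $\Psi_+$, $\Psi_+^{\circ}$ be generative quasi-positive systems with $\Delta:=\Delta_{\Psi_+}=\Delta_{\Psi_+^{\circ}}$, and write $B:=B_{\Psi_+}$, $B^{\circ}:=B_{\Psi_+^{\circ}}$ and $S':=S_{\Psi_+}=\mset{s_\alpha\mid \alpha\in\Delta}=S_{\Psi_+^{\circ}}$. Since $\Psi_+$ is generative, in the notation of \ref{ss:3.3} we have $\Phi'=\widehat{T}$ and $T'=T$, and Proposition \ref{ss:3.4}(c) shows that $(W,S')$ is a Coxeter system whose reflection cocycle is $w\mapsto N_{\Psi_+}(w):=\mset{s_\alpha\mid \alpha\in\Psi_+\cap w(-\Psi_+)}$; the same statement applied to $\Psi_+^{\circ}$ shows $N_{\Psi_+^{\circ}}$ is the reflection cocycle of $(W,S')$ as well. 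Since the reflection cocycle of a Coxeter system is unique (a cocycle with value $\set{s}$ at each $s\in S'$ is determined on $S'$ and extends uniquely by the cocycle identity, cf.\ Proposition \ref{ss:3.1}), it follows that $N_{\Psi_+}=N_{\Psi_+^{\circ}}$ as functions $W\rightarrow\mc{P}(T)$.

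Now comes the step I expect to be the crux: expressing $N_{\Psi_+}$ back in terms of $B$. Because $\set{\pm 1}$ acts simply transitively on the fibers of the reflection map of $\widehat{T}$, the standard positive system $\widehat{T}_+$ is compatible with $\Psi_+$, so Proposition \ref{ss:2.5}(a) (applied with $\widehat{T}_+$ as the second positive system, for which the set $A$ occurring there is $B$) gives $N_{\Psi_+}(w)=N(w)+B+wBw^{-1}$ for all $w\in W$, where $N$ denotes the standard reflection cocycle of $(W,S)$; alternatively this identity can be checked directly from the formula $w(t,\epsilon)=(wtw^{-1},\eta(w,t)\epsilon)$ of Proposition \ref{ss:3.1}. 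Combining this with $N_{\Psi_+}=N_{\Psi_+^{\circ}}$ yields $B+wBw^{-1}=B^{\circ}+wB^{\circ}w^{-1}$ for all $w$, i.e.\ the symmetric difference $C:=B+B^{\circ}$ satisfies $wCw^{-1}=C$ for every $w\in W$. On the other hand $\Delta\subseteq\Psi_+\cap\Psi_+^{\circ}$, and for $s\in S'$ the fiber of the reflection map over $s$ meets $\Delta$ in exactly one element $\alpha_s$; since $s\in B$ (resp.\ $s\in B^{\circ}$) iff $\alpha_s=(s,-1)$, and $\Delta$ is common to both systems, $B\cap S'=B^{\circ}\cap S'$ and hence $C\cap S'=\emptyset$. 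Finally, $S'$ is a set of Coxeter generators of $W$ contained in $T$, so $T=\bigcup_{w\in W}wS'w^{-1}$ by \ref{ss:3.7}; writing any $t\in C$ as $t=wsw^{-1}$ with $s\in S'$ gives $s=w^{-1}tw\in w^{-1}Cw=C$, contradicting $C\cap S'=\emptyset$. Hence $C=\emptyset$, so $B=B^{\circ}$ and therefore $\Psi_+=\Psi_+^{\circ}$, which proves injectivity.

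The only genuinely delicate point is the identity $N_{\Psi_+}(w)=N(w)+B+wBw^{-1}$: it involves keeping the sign conventions for $\epsilon$, for $\eta(w,t)$, and for membership in $N(w)$ mutually consistent, which is why I would rather extract it from Proposition \ref{ss:2.5}(a) than re-derive it by hand. Everything else is a routine assembly of Proposition \ref{ss:3.4}(c), the uniqueness of the reflection cocycle (Proposition \ref{ss:3.1}), Proposition \ref{ss:2.5}(a), and the standard facts on reflection subgroups recalled in \ref{ss:3.7}.
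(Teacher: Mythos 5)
Your proof is correct, but it takes a genuinely different route from the paper's. The paper proves injectivity directly by reconstruction: since $\Psi_+$ is generative, Proposition \ref{ss:3.1} applied to the Coxeter system $(W,S_{\Psi_+})$ yields the explicit formula $\Psi_+=\mset{w(\alpha)\mid w\in W,\ \alpha\in \Delta_{\Psi_+},\ l_{\Psi_+}(ws_\alpha)>l_{\Psi_+}(w)}$, so $\Delta_{\Psi_+}$ determines $\Psi_+$ in one line. You instead argue by comparing two generative quasi-positive systems with the same set of simple roots: you identify both cocycles $N_{\Psi_+}$ and $N_{\Psi_+^{\circ}}$ with the (unique) reflection cocycle of $(W,S')$, encode each system by its set $B\subseteq T$ of reflections $t$ with $(t,-1)$ in the system, use Proposition \ref{ss:2.5}(a) relative to $\widehat T_+$ to get $N_{\Psi_+}(w)=N(w)+B+wBw^{-1}$, and conclude that $C=B+B^{\circ}$ is conjugation-invariant and disjoint from $S'$, hence empty because $T=\bigcup_{w\in W}wS'w^{-1}$ by \ref{ss:3.7}. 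All the steps check out: compatibility with $\widehat T_+$ is legitimate because $\set{\pm 1}$ acts simply transitively on the fibers of the reflection map of $\widehat T$, the value $\set{s}$ of both cocycles on $S'$ follows from the definition of simple roots (or \ref{ss:3.4}(c)), and uniqueness of a cocycle with prescribed values on a generating set is immediate from the cocycle identity. What each approach buys: the paper's argument is shorter and gives an explicit inverse to the map $\Psi_+\mapsto\Delta_{\Psi_+}$, whereas your cocycle-comparison shows in passing that a generative quasi-positive system is determined by its cocycle $N_{\Psi_+}$ together with its simple roots, and the conjugation-invariance trick is a nice reusable device; the price is that your argument is indirect and somewhat longer, relying on the identification $\mc{P}(T)$-valued bookkeeping that the paper avoids.
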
 
  \begin{proof}
   Clearly  $\Psi_+$  determines $\Delta_{\Psi_+}$, for any quasi-positive system $\Psi_+$.
 On the other hand, if $\Psi_+$ is a generative quasi-positive system, then by \ref{ss:3.1}, $S_{\Psi_+}=\mset{s_\alpha\mid \alpha\in \Delta_{\Psi_+}}$, $W=\pair{S_{\Psi_+} }$
 and $\Psi_+=\mset{w(\alpha)\mid w\in W, \alpha\in \Delta_{\Psi_+},  l_{\Psi_+}(ws_\alpha)>l(w)}$
 where $l_{\Psi_+}$ is the standard length function of $(W,S_{\Psi_+})$. This shows that $\Delta_{\Psi_+}$ determines $\Psi_+$.\end{proof}

 The following  Lemma will be crucial in Section 3. 
  \begin{lemma} \label{ss:3.13} Let ${\Psi_+}$ be a generative quasi-positive  system for the standard abstract root system $(\widehat{T},F)$ of $(W,S)$, and $W'$ be a reflection subgroup of $(W,S)$. Set $S'=\chi(W')$. Regard the abstract root system  $\widehat{T'}$ of $(W',S')$
 as a subset of $\widehat{T}$ as in $\text{\rm \ref{ss:3.8}}$. Then $\Psi_+':={\Psi_+}\cap \widehat{T'}$ is a generative quasi-positive  system for $\widehat{T'}$, and the set of simple reflections of  $\Psi_+'$ is the set  $\chi_{(W,S_{\Psi_+})}(W')$of canonical generators of $W'$ with respect to the Coxeter system $(W,S_{\Psi_+})$.\end{lemma}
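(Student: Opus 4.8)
The plan is to reduce the whole statement to an identity between reflection cocycles. Write $N$ for the reflection cocycle of $(W,S)$ and $N_{\Psi_+}\colon W\to\mc P(T)$ for the cocycle $w\mapsto\mset{s_\alpha\mid\alpha\in\Psi_+\cap w(-\Psi_+)}$ attached by \ref{ss:2.4}(a) to the $W$-action on $\widehat T$ and the quasi-positive system $\Psi_+$ (this construction is already used in the proof of \ref{ss:3.4}(c)). Directly from the definition of $\Delta_{\Psi_+}$ one gets $N_{\Psi_+}(s)=\set{s}$ for every $s\in S_{\Psi_+}$, so by the characterization of reflection cocycles in \ref{ss:3.1}, applied to the Coxeter system $(W,S_{\Psi_+})$, $N_{\Psi_+}$ \emph{is} the reflection cocycle $M$ of $(W,S_{\Psi_+})$. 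I would also record that, since $S_{\Psi_+}\subseteq T$ is a set of Coxeter generators of $W$, the Coxeter systems $(W,S)$ and $(W,S_{\Psi_+})$ have the same reflection set $T$ (by \ref{ss:3.7}); hence $W'=\pair{W'\cap T}$ is also a reflection subgroup of $(W,S_{\Psi_+})$ and $\chi_{(W,S_{\Psi_+})}(W')$ is defined and is a set of Coxeter generators of $W'$.

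Next I would dispatch the easy half of the first assertion: $\Psi'_+:=\Psi_+\cap\widehat{T'}$ contains exactly one of $(t,1),(t,-1)$ for each $t\in T'$, so it is a set of orbit representatives for $\set{\pm1}$ on $\widehat{T'}=T'\times\set{\pm1}$, and since $\set{\pm1}$ acts simply transitively on the fibers of the reflection map of $\widehat{T'}$, it is automatically a quasi-positive system for $W'$ on $\widehat{T'}$. The heart of the argument is then the formula
\[
N_{\Psi'_+}(w)=N_{\Psi_+}(w)\cap T'=M(w)\cap W',\qquad w\in W',
\]
where $N_{\Psi'_+}$ is the cocycle of \ref{ss:2.4}(a) for the $W'$-action on $\widehat{T'}$ and $\Psi'_+$, and where the reflections of $(\widehat{T'},F')$ are identified with $T'=W'\cap T$ as in \ref{ss:3.8}. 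To prove it, I would use that each $w\in W'$ stabilizes the subset $\widehat{T'}$ of $\widehat T$ (\ref{ss:3.8}), so that $\Psi'_+\cap w(-\Psi'_+)=\Psi_+\cap w(-\Psi_+)\cap\widehat{T'}$, together with the observation that $s_\alpha\in T'=W'\cap T$ forces $\alpha\in\widehat{T'}$.

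Finally, directly from the definitions (cf. the proof of \ref{ss:3.4}(e)), for $t\in T'$ one has that $t$ is a simple reflection of $\Psi'_+$ precisely when $N_{\Psi'_+}(t)=\set{t}$; by the displayed formula this is equivalent to $M(t)\cap W'=\set{t}$, i.e. to $t\in\chi_{(W,S_{\Psi_+})}(W')$ by \ref{ss:3.7}(1) applied inside $(W,S_{\Psi_+})$. This gives the second assertion, and since $\chi_{(W,S_{\Psi_+})}(W')$ is a set of Coxeter generators of $W'$ it generates $W'=\pair{s_\alpha\mid\alpha\in\widehat{T'}}$ (via \ref{ss:3.8}), so the quasi-positive system $\Psi'_+$ is generative, completing the first assertion. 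I expect the only point needing genuine care to be the bookkeeping behind the displayed formula — concretely, checking that $W'$ really does preserve the subset $\widehat{T'}\subseteq\widehat T$ and that no root outside $\widehat{T'}$ has its reflection in $W'$ — but both are routine consequences of \ref{ss:3.8} and $T'=W'\cap T$.
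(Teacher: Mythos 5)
Your proof is correct and takes essentially the same route as the paper: both arguments identify $N_{\Psi_+}$ with the reflection cocycle of $(W,S_{\Psi_+})$ and reduce simplicity of a root of $\Psi'_+$ to the condition $N_{\Psi_+}(t)\cap W'=\set{t}$ that defines $\chi_{(W,S_{\Psi_+})}(W')$ via \ref{ss:3.7}. The only difference is presentational: you derive the equality of the two sets of reflections directly from the restriction identity $N_{\Psi'_+}(w)=N_{\Psi_+}(w)\cap W'$ for $w\in W'$, whereas the paper proves only the inclusion of $\chi_{(W,S_{\Psi_+})}(W')$ into the set of simple reflections of $\Psi'_+$ and then concludes by minimality of Coxeter generating sets (\ref{ss:mingen}).
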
 
 \begin{proof} Clearly $\Psi'_+$ is  a quasi-positive system
  for $\widehat{T'}_+$. By \ref{ss:mingen}, it suffices to show 
  that if $\alpha\in \Psi'_+$ with
  $s_\alpha\in  \chi_{(W,S_{\Psi_+})}(W')$, then $\alpha\in \Pi'$ where $\Pi'$ is the set of simple roots for $\Psi'_+$
   in $\widehat{T'}$ i.e. $s_\alpha(\Psi'_+\setminus\set{\alpha})\subseteq \Psi'_+$.
     Now  by \ref{ss:3.7}, $N_{\Psi_+}(s_\alpha)\cap W'=\set{s_\alpha}$ i.e. if $\gamma\in \Psi_+$ with $s_\gamma\in W'$ and $s_\alpha(\gamma)\in -\Psi_+$, then $\gamma=\alpha$.
     Since for $\gamma\in \Psi_+$ we have $s_\gamma\in W'$ iff $\gamma\in \Psi'_+$,
     this gives the desired conclusion.      \end{proof}

    \begin{example} \label{ss:3.14} Let $\set{C_i}_{i\in I}$ be the conjugacy classes of reflections in $W$.
  For each $i\in I$, choose $\epsilon_ i\in \set{\pm 1}$. Then 
  ${\Phi_+}:=\cup_{i\in I} \,(C_i\times\set{\epsilon_i})$ is a quasi-positive system.
  It is easy to check that $S\subseteq S_{\Phi_+}$, so $W_{\Phi_+}=W$ and $S_{\Phi_+}=S$
  by \ref{ss:mingen}.
  The non-standard generative-quasi-positive system $\Psi_+$  for type $B_2$ in Example \ref{ss:B2exmp} arises by conjugation  of one of the quasi-positive systems arising as above.
 \end{example}
  
   Proposition \ref{ss:3.15} below  shows in particular that any generative quasi-positive system with $S$ as the corresponding set of simple reflections arises as in the preceding example. For the proof, we  use the following  fact about conjugacy of simple reflections, which follows readily from the exchange condition and is a  special case of  more general results of Deodhar and Brink-Howlett (see \cite{BrHow}) on conjugacy of parabolic subgroups.

 \begin{lemma}\label{ss:3.14a} Let $(W,S)$ be a Coxeter system and $r,s\in S$, $w\in W$ satisfy
 $wrw^{-1}=s$ and $l(wr)=l(w)+1$. Then there exist $k\in \Nat$ and   sequences  $w_1,\ldots, w_k$ in $W$, $J_1,\ldots, J_k\subseteq S$, $a_0,a_1\ldots, a_k$ in $W$ with the following properties:
 \begin{conds}
 \item $a_0=a$, $a_k=b$
 \item $a_{i-1},a_{i}\in J_i$ for $i=1,\ldots,k$
 \item $\vert J_i\vert=2$ for $i=1,\ldots, k$.
 \item $w_i\in W_{J_i}$, $w_ia_i=a_{i+1}w_i$ and $l(w_ia_i)=l(w_i)+1$ for $i=1,\ldots, k$.
 \item $w=w_k\cdots w_2w_1$ with $l(w)=l(w_1)+l(w_2)+\ldots+ l(w_k)$.
 \end{conds}
  \end{lemma}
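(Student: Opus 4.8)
The plan is to prove Lemma~\ref{ss:3.14a} by induction on $l(w)$, following the standard strategy for results of this type (exchange condition plus "ladder" of rank-two parabolic moves). The base case $l(w)=0$ forces $w=1$, $r=s$, and we take $k=0$ with the empty sequence. For the inductive step, suppose $l(w)\geq 1$ and $wrw^{-1}=s$ with $l(wr)=l(w)+1$. Since $l(w)\geq 1$, pick $u\in S$ with $l(uw)=l(w)-1$; write $w=uw'$ with $l(w')=l(w)-1$. The idea is to "pull off" $u$ from the left, absorbing it into a single rank-two move, and to apply induction to a shorter element conjugating $r$ to some $r'\in S$.

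The key technical step is to analyze the element $w'r(w')^{-1}$. We have $wr(w)^{-1}=s$, so $u(w'r(w')^{-1})u=s$, i.e. $w'r(w')^{-1}=usu$. Now $usu$ is a reflection; the crucial claim is that $usu\in S$, equivalently that $usu$ is a \emph{simple} reflection, which is exactly where a rank-two analysis enters: in the dihedral group $W_{\{u,s\}}$ (or rather, using the structure of $\langle u,s\rangle$), the element $usu$ is a simple reflection of $(W,S)$ precisely when $u$ and $s$ generate a finite dihedral group with the appropriate parity, and one must show that the length condition $l(wr)=l(w)+1$ forces this. More carefully, I would argue as follows: since $l(uw')<l(uw')+1$... rather, since $l(w)=l(uw')+1$ is false—we have $l(w')=l(w)-1$, so $l(uw')=l(w)>l(w')$. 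Applying the exchange/deletion analysis to $w'r$ versus $w'$: we have $l(w'r)=l(ww'^{-1}\cdot w' r)$... I would instead directly invoke that $w'r(w')^{-1}=usu\in T$ and that since $l(w'r)$ is either $l(w')+1$ or $l(w')-1$; a short argument (using $l(wr)>l(w)$ and $w=uw'$) pins down which, and then the standard fact that a reflection conjugate to a simple reflection by a length-reducing element of the form handled here must itself be simple (this is the genuinely rank-two input). Set $r':=usu\in S$, and note $l(w'r')$ relates correctly so that induction applies to $w'$ with $w'r'(w')^{-1}=s$... wait, I need $w'$ to conjugate $r'$ to $s$: indeed $w' r' (w')^{-1}= w'(usu)(w')^{-1}$, and since $w'r(w')^{-1}=usu=r'$, we get... this needs $r'$ on the correct side; I would re-set-up so that $w'$ conjugates $r$ to $r'$ and then prepend the single move sending $r'$ to $s$ via $W_{\{u,s\}}$.

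Concretely: by induction there exist $k'$, sequences $w_1,\dots,w_{k'}$, subsets $J_1,\dots,J_{k'}$, elements $a_0=r,\dots,a_{k'}=r'$ satisfying (i)--(v) for $w'$ in place of $w$. Then I would set $w_{k'+1}:=$ the appropriate element of $W_{\{u,s\}}$ realizing the conjugation $r'\mapsto s$ with $l(w_{k'+1}r')=l(w_{k'+1})+1$ (this element exists and is essentially $u$ or a short word in $u,s$, by the finite dihedral case), set $J_{k'+1}:=\{u,s\}$, $a_{k'+1}:=s$, and check that $w=w_{k'+1}w'=w_{k'+1}w_{k'}\cdots w_1$ with the length additivity $l(w)=l(w_{k'+1})+l(w')=l(w_{k'+1})+l(w_{k'})+\cdots+l(w_1)$. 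The length additivity at the top needs $l(w)=l(uw')=l(u)+l(w')$, i.e. $l(uw')=1+l(w')$, which holds by our choice of $u$; and more generally $l(w_{k'+1}w')=l(w_{k'+1})+l(w')$, which I would verify using the condition $l(wr)>l(w)$ together with standard length arguments in the dihedral subgroup.

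The main obstacle I anticipate is precisely the rank-two base case of the conjugation, i.e. establishing that the intermediate conjugate $usu$ (or the relevant reflection) is genuinely simple and that the length bookkeeping $l(w_{k'+1}w')=l(w_{k'+1})+l(w')$ holds — this is where one cannot avoid a small explicit computation in a finite dihedral group and a careful invocation of the exchange condition. Everything else is routine induction and concatenation of the ladder data. Since the paper states this is "a special case of more general results of Deodhar and Brink-Howlett," an alternative and shorter route would be to quote \cite{BrHow} directly; but I would prefer to at least sketch the inductive argument above for self-containedness, relegating the full dihedral verification to a remark or to the cited references.
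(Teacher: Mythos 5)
Your overall strategy (induction on $l(w)$, peeling off a move in a rank-two standard parabolic) is the right family of argument, and note that the paper itself gives no written-out proof of this lemma --- it only remarks that it follows from the exchange condition and cites Brink--Howlett --- so a self-contained induction is a legitimate route. However, your inductive step has a genuine gap at exactly the point you flag as the ``crucial claim'': it is \emph{not} true that the length hypothesis $l(wr)=l(w)+1$ forces $usu\in S$ when $u$ is a left descent of $w$. Counterexample in type $A_3$: take $W=S_4$ with $S=\{s_1,s_2,s_3\}$, $r=s_1$, $s=s_3$, and $w=s_2s_1s_3s_2$ (the permutation $(1\,3)(2\,4)$). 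Then $wrw^{-1}=s_3=s$ and $l(wr)=5=l(w)+1$, but the \emph{only} left descent of $w$ is $u=s_2$, and $usu=s_2s_3s_2\notin S$. So after writing $w=uw'$ you get $w'r(w')^{-1}=usu$, which is not simple, and the induction hypothesis cannot be applied to $w'$; there is no choice of $u$ that avoids this. Stripping a single letter from the right fails for the same reason ($trt$ need not be simple). The error is not a bookkeeping issue but the core mechanism: one must remove an entire dihedral block of $w$ in one step, not a single simple reflection.

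The standard repair is as follows. Since $l(wr)>l(w)$, $r$ is not a right descent of $w$; choose a right descent $t\in S$ of $w$, so $t\neq r$, and set $J=\{r,t\}$. Write $w=w'v$ with $v\in W_J$ and $w'$ the element of minimal length in $wW_J$, so that $l(w'x)=l(w')+l(x)$ for all $x\in W_J$, and $v\neq 1$ (because $t$ is a descent), hence $l(w')<l(w)$. Working with the standard root system, $w(\alpha_r)=\alpha_s$ by the two hypotheses; writing $v(\alpha_r)=a\alpha_r+b\alpha_t$ with $a,b\geq 0$ and noting that $w'(\alpha_r)$ and $w'(\alpha_t)$ are both positive, the identity $\alpha_s=a\,w'(\alpha_r)+b\,w'(\alpha_t)$ forces exactly one of $a,b$ to be nonzero (a simple root is not a positive combination of two distinct positive roots), whence $v(\alpha_r)\in\{\alpha_r,\alpha_t\}$, i.e.\ $r':=vrv^{-1}\in J$ with $l(vr)>l(v)$ inside $W_J$. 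Then $w'r'(w')^{-1}=s$ with $l(w'r')=l(w')+1$, induction applies to $w'$, and one prepends the block $J_1:=J$, $w_1:=v$, $a_0:=r$, $a_1:=r'$; the additivity in (v) is exactly $l(w)=l(w')+l(v)$ from the coset decomposition. (The mirror version from the $s$-side also works, but only if you remove the full $W_{\{u,s\}}$-component of $w$, not the single letter $u$.) Alternatively, since the paper merely cites \cite{BrHow}, quoting that reference outright is also acceptable.
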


  \begin{proposition} \label{ss:3.15} Let $S'\subseteq T$ such that
  $(W,S')$ is a Coxeter system. 
  Consider a family of roots $\Pi'=\set{\alpha_r}_{r\in S'}\subseteq \widehat T$ such that $s_{\alpha_r}=r$ for all $r\in S'$.
  Then $\Pi'$ is the set of simple roots of some  (generative) quasi-positive system $\Psi_+$ for $\widehat{T}$ iff $(rt)^m(\alpha_r)=\alpha_t$  whenever $r\neq t$ are in $S'$ with the order of $rt$  an odd integer   $2m+1$. 
  \end{proposition}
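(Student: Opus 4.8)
The key reformulation is that, by the Lemma just before \ref{ss:3.13} (the bijection $\Psi_+\mapsto\Delta_{\Psi_+}$), giving a generative quasi-positive system $\Psi_+$ with $\Delta_{\Psi_+}=\Pi'$ is the same as giving a family $\Pi'=\{\alpha_r\}_{r\in S'}$ of roots lifting $S'$ together with the assertion that $\Psi_+:=\{w(\alpha_r)\mid w\in W,\,r\in S',\,l_{S'}(ws_{\alpha_r})>l_{S'}(w)\}$ is a well-defined set of orbit representatives for $\{\pm1\}$ on $\widehat T$, where $l_{S'}$ is the length function of $(W,S')$. The ``only if'' direction is the easy one: if such a $\Psi_+$ exists, then for $r\ne t$ in $S'$ with $rt$ of odd order $2m+1$, the dihedral reflection subgroup $W'=\langle r,t\rangle$ has, with respect to $(W,S')$, canonical generators $\{r,t\}$, and by \ref{ss:3.13} the induced system $\Psi_+\cap\widehat{T'}$ is a generative quasi-positive system of $\widehat{T'}$ with simple roots $\{\alpha_r,\alpha_t\}$. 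So the problem reduces to the rank-two case: in a dihedral group of order $2(2m+1)$ with simple reflections $r,t$, the two simple roots of any positive system are related by $(rt)^m(\alpha_r)=\alpha_t$ — this is a direct computation with the standard abstract root system of $B_2$-type replaced by $I_2(2m+1)$-type, using that the positive roots are $\{(rt)^i(\alpha_r)\}\cup\{(tr)^i(\alpha_t)\}$ and matching reflections.

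For the ``if'' direction, I would try to build $\Psi_+$ directly via the formula above and verify it is well-defined and generative. The cleanest route is to define, for each reflection $u\in T$, a sign $\epsilon_u\in\{\pm1\}$ by declaring $\epsilon_u\alpha_u'\in\Psi_+$ where $\alpha_u'$ is any fixed lift of $u$; concretely, choose $w\in W$ of minimal $l_{S'}$-length with $wuw^{-1}=r\in S'$ (such $w$ exists since $(W,S')$ is Coxeter and $u\in T=\bigcup_w wS'w^{-1}$), and set $\epsilon_u$ so that the root $w^{-1}(\alpha_r)$ lands in $\Psi_+$. The content is that this is independent of the choice of $w$. By Lemma \ref{ss:3.14a} (the Brink–Howlett factorization of a length-additive conjugating element into ``rank-two moves''), any two minimal-length conjugators differ by a sequence of passages through rank-two standard parabolic subgroups $W_{J_i}$ ($|J_i|=2$), so independence reduces to the rank-two case. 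In each rank-two subgroup, if $J_i=\{a,b\}$ with $ab$ of even order (or infinite order) then $a,b$ are not conjugate, and there is nothing to check for passing between them; if $ab$ has odd order $2m+1$, the hypothesis $(ab)^m(\alpha_a)=\alpha_b$ is exactly the compatibility needed for the sign to be well-defined. I would then check that the resulting $\Psi_+$ is $\{\pm1\}$-orbit-representing by construction, and that $s_{\alpha_r}$ permutes $\Psi_+\setminus\{\alpha_r\}$ for each $r\in S'$ — equivalently, that $N_{\Psi_+}$ restricted to $(W,S')$ is the reflection cocycle of $(W,S')$ — which gives $\Delta_{\Psi_+}\supseteq\Pi'$, hence (by \ref{ss:mingen} and \ref{ss:3.4}) $S_{\Psi_+}=S'$, $W_{\Psi_+}=W$, and $\Psi_+$ generative with $\Delta_{\Psi_+}=\Pi'$.

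Alternatively, and perhaps more robustly, I would phrase the ``if'' direction cocycle-theoretically: I want a function $N'\colon W\to\mathcal P(T)$ satisfying the cocycle condition $N'(xy)=N'(x)+xN'(y)x^{-1}$ with $N'(r)=\{r\}$ for $r\in S'$ — by \ref{ss:3.1} this is just the reflection cocycle of the Coxeter system $(W,S')$, which exists. The data $\{\alpha_r\}$ then pins down, via $\Psi_+=\{\alpha\in\widehat T\mid s_\alpha=u,\ \epsilon=\pm1\ \text{with}\ \alpha=\epsilon(\text{chosen lift})\}$, a candidate quasi-positive system; the hypothesis is needed precisely to glue the local (dihedral) data $\alpha_r,\alpha_t$ into a global choice of signs on $T$ compatible with the $W$-action — i.e. to check that the $W$-orbit of $\alpha_r$ under the twisted action meets each fiber of $\tau\colon\widehat T\to T$ in exactly one $\{\pm1\}$-pair. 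The odd-bond condition is the only obstruction because for an even or infinite bond $\{a,b\}$ the reflections $a,b$ lie in distinct conjugacy classes within $W_{\{a,b\}}$, so no constraint links $\alpha_a$ and $\alpha_b$.

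\textbf{Main obstacle.} The crux is the well-definedness of the global sign function on $T$, i.e. showing that two different ways of conjugating a reflection $u$ into $S'$ yield the same sign on the lift of $u$. This is where Lemma \ref{ss:3.14a} does the real work, reducing an arbitrary conjugacy to a chain of rank-two moves; the odd-order hypothesis is then exactly what is consumed at each step. Getting the bookkeeping of signs right through the factorization $w=w_k\cdots w_1$ — keeping track of how $\eta(w_i,\cdot)$ flips the sign on the relevant root as one moves from $W_{J_i}$ to $W_{J_{i+1}}$ — is the delicate part, but it is a finite rank-two check iterated along the chain.
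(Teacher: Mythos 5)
Your proposal is correct and follows essentially the same route as the paper: the paper's ``if'' direction likewise constructs $\Psi_+=\mset{x(\alpha_r)\mid x\in W,\,r\in S',\,l'(xr)>l'(x)}$ after proving well-definedness (if $xrx^{-1}=yty^{-1}$ with both conjugations length-increasing then $x(\alpha_r)=y(\alpha_t)$) by reducing via Lemma \ref{ss:3.14a} to the rank-two case, where even or infinite bonds are automatic and odd bonds consume exactly the hypothesis, and then uses the exchange condition and \ref{ss:mingen} to identify $\Pi'$ as the set of simple roots. Your ``only if'' detour through \ref{ss:3.13} is a harmless variant of the paper's direct computation that $\beta=(rt)^m(\alpha_r)$ satisfies $s_\beta=t$ and lies in $\Psi_+$ because $l'((rt)^mr)>l'((rt)^m)$.
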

  \begin{proof}   The ``only if'' direction is clear, since under the above conditions on $r$ and $t$,  if we write $\beta=(rt)^m(\alpha_r)$, we have $s_\beta=(rt)^mr(rt)^{-m}=t$ so $\beta\in \pm\set{\alpha_t}$, but
$\beta\in \Psi_+$ since $l'((rt)^mr)>l'((rt)^m)$ where $l'$ is the standard length function of $(W,S')$. 

 For the ``if'' direction,
we shall  show first   that for any $x,y\in W$, $r,t\in S'$ with $xrx^{-1}=yty^{-1}$, $l'(xr)>l'(x)$ and $l'(yt)>l'(y)$,  one has $x(\alpha_r)=y(\alpha_t)$. In fact, using \ref{ss:3.1}, the conditions imply that
$zrz^{-1}=t$  and $l'(zr)>l'(z)$ with $z=y^{-1}x$.  By Lemma \ref{ss:3.14a},  the proof that 
$z(\alpha_r)=\alpha_t$ under these conditions  reduces to its special case in which $z\neq 1$ and $z,r,t$ all lie in some 
finite rank two standard parabolic subgroup of $(W,S')$, say of order $2m$. If $m$ is even,  $z(\alpha_r)=\alpha_t$ is automatic, whereas if $m$ is odd,
 $z(\alpha_r)=\alpha_t$  by the assumptions. 

Next we set $\Psi_+:=\mset{x(\alpha_r)\mid x\in W, r\in S', l'(xr)>l'(x)}$. The above implies that
$\Psi_+$ is a quasi-positive system for $\widehat {T}$.  Using  the exchange condition for $(W,S')$, one sees that $\Pi'$ is contained  in the set of  simple roots of $\Psi_+$, and equality follows by \ref{ss:mingen}.  
   \end{proof}
  \subsection{} \label{ss:3.16} We recall the notion of diagram twisting of Coxeter systems, as introduced in \cite{Mue}.  
  
   Let $(W,S)$ be a Coxeter system with Coxeter matrix $(m_{r,s})_{r,s\in S}$ i.e. for  for $r,s\in S$, let $m_{r,s}\in \Nat_{\geq 1}\cup\set{\infty}$ denote the order of the product $rs\in W$. Suppose that
  $S$ is the disjoint union  $S=J\cup K\cup L\cup M$ where $W_K$ is finite with longest element $w_K$, 
  $M\subseteq\mset{s\in S\mid m_{s,r}=\infty\text{ \rm for all $r\in J$}}$ and $m_{r,s}=2$ for all $r\in L$, $s\in K$.
  We let $J'=w_KJw_K$ and $S':=J'\cup K\cup L\cup M\subseteq T$. Then (\cite{Mue})  $(W,S')$ is a  Coxeter system,
  said to be obtained from $(W,S)$ by a diagram twist.  The Coxeter matrix $(m'_{r,s})_{r,s\in S'}$ of $(W,S')$ is given as follows. If $r,s\in K\cup L\cup M$, then $m'_{r,s}=m_{r,s}$. If $r\in J'$ and $s\in K\cup L\cup M$, then $m'_{r,s}=m'_{s,r}=m_{w_Krw_K,s}$. Finally, if $r,s\in J'$ then $m'_{r,s}=m_{w_Krw_K,w_Ksw_K}$.
 
  \begin{corollary}\label{ss:3.18} Suppose in $\text{\rm \ref{ss:3.15}}$, that $(W,S')$  is obtained by
  twisting $(W,S)$ with notation as in $\text{\rm \ref{ss:3.16}}$.
   Set $\alpha_r=(r,\epsilon_r)$ with $\epsilon _r\in \set{\pm 1}$ 
  for all $r\in S'$. Then $\Pi':=\mset{\alpha_r}_{r\in S'}$ is the set of simple roots of some generative
  quasi-positive system for $\widehat{T}$ iff the following conditions $\text{\rm  (i)---(iii)}$ hold:
  \begin{conds} \item $\epsilon_r=\epsilon_s$ whenever
  $r\neq s$ are in $K\cup L\cup M$ and $m'_{r,s}$ is finite and odd
  \item $\epsilon_r=-\epsilon _s$ when $r\in J'$ and $s\in K$ with
  $m'_{r,s}$ finite and odd
  \item $\epsilon_r=\epsilon_s$ whenever
  $r\in J'$, $s\in L$   and $m'_{r,s}$  finite and odd.\end{conds}
 \end{corollary}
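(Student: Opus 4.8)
The plan is to deduce the corollary directly from Proposition~\ref{ss:3.15}. By that proposition, $\Pi'=\{\alpha_r\}_{r\in S'}$ is the set of simple roots of some generative quasi-positive system of $\widehat T$ if and only if $(rs)^m(\alpha_r)=\alpha_s$ whenever $r\neq s$ lie in $S'$ and $rs$ has odd order $m'_{r,s}=2m+1$; so everything reduces to deciding, for each such pair, what constraint on $(\epsilon_r,\epsilon_s)$ this equation imposes.

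First I would record the general mechanism. Since $\langle r,s\rangle$ is dihedral of order $2(2m+1)$ one has $(rs)^m r(rs)^{-m}=s$, so by the description of the $W$-action on $\widehat T$ in Proposition~\ref{ss:3.1}, $(rs)^m(\alpha_r)=\bigl(s,\ \epsilon_r\,\eta((rs)^m,r)\bigr)$, where $\eta((rs)^m,r)=1$ if $l((rs)^m r)>l((rs)^m)$ and $-1$ otherwise, with $l$, $\eta$ relative to $(W,S)$. Thus the equation $(rs)^m(\alpha_r)=\alpha_s$ is precisely $\epsilon_s=\epsilon_r\,\eta((rs)^m,r)$. Moreover, if $\langle r,s\rangle=W_{\{r,s\}}$ happens to be a standard parabolic of $(W,S)$, then $(rs)^m$ has length $2m$ while $(rs)^m r$ is the longest element of $W_{\{r,s\}}$, of length $2m+1$; hence $\eta((rs)^m,r)=1$ and the constraint is $\epsilon_r=\epsilon_s$.

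Next comes the case analysis over $S'=J'\cup K\cup L\cup M$. If $r,s\in K\cup L\cup M$, both lie in $S$ and $\langle r,s\rangle$ is a standard parabolic, so the constraint is $\epsilon_r=\epsilon_s$: this is~(i). If $r\in J'$ and $s\in M$, then $m'_{r,s}=m_{w_Krw_K,s}=\infty$ (as $w_Krw_K\in J$), so no such pair occurs. It remains to treat $r\in J'$, $s\in J'\cup K\cup L$, and here I would conjugate by the longest element $w_K$ of $W_K$. Put $\tilde r:=w_Krw_K\in J$ and $s^{\circ}:=w_Ksw_K$, which lies in $J$, $K$, or $L$ respectively — in the last case $s^{\circ}=s$, since $m_{s,k}=2$ for all $k\in K$ forces $w_K$ to centralize $s$. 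In all cases $\tilde r,s^{\circ}\in S$, so $\langle\tilde r,s^{\circ}\rangle=w_K\langle r,s\rangle w_K$ is a standard parabolic, again dihedral of order $2(2m+1)$. Applying $w_K$ to $(rs)^m(\alpha_r)=\alpha_s$ and writing $w_K\alpha_x=(w_Kxw_K,\,\delta_x\epsilon_x)$ with $\delta_x:=\eta(w_K,x)$, the equation becomes $(\tilde r\,s^{\circ})^m(\tilde r,\delta_r\epsilon_r)=(s^{\circ},\delta_s\epsilon_s)$, which by the standard-parabolic computation of the previous paragraph is equivalent to $\delta_r\epsilon_r=\delta_s\epsilon_s$. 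Finally $\delta_x=-1$ exactly when $x\in N(w_K)$, and $N(w_K)=T\cap W_K$ is precisely the set of reflections of $W_K$ (these all lie in $N(w_K)$ because $w_K$ is longest in $W_K$, and $\vert N(w_K)\vert=l(w_K)$ equals their number); since $r$ is $w_K$-conjugate to $\tilde r\in J$ and $W_K\cap S=K$, the reflection $r$ does not lie in $W_K$, so $\delta_r=1$, whereas $\delta_s=-1$ iff $s\in K$. Hence the constraint is $\epsilon_r=-\epsilon_s$ when $s\in K$ (condition~(ii)) and $\epsilon_r=\epsilon_s$ when $s\in L$ (condition~(iii)); the same reduction applied to a pair $r,s\in J'$ likewise yields $\epsilon_r=\epsilon_s$. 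Taking the conjunction over all pairs yields (i), (ii) and (iii) (the $J'$--$J'$ pairs contributing further instances of the equality in~(iii)).

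The translation in the second paragraph is routine; the one genuinely delicate point, which I would write out with care, is the sign bookkeeping in the $w_K$-conjugation: verifying $N(w_K)=T\cap W_K$, extracting from it that $\delta_r=1$ for $r\in J'$ precisely because the conjugated $J$-reflection sits outside $W_K$ while $\delta_s=-1$ precisely for $s\in K$, and confirming that $\{\tilde r,s^{\circ}\}$ really generates a standard parabolic so the input $\eta=1$ is available. A small but worth-noting point is that $w_Ksw_K$ need not lie in $S$ for $s\in M$; this is harmless, since every pair meeting $M$ is either already inside $S$ (the $M$--$M$, $M$--$K$, $M$--$L$ pairs, covered by~(i)) or has infinite order (the $M$--$J'$ pairs), so $M$ is never conjugated.
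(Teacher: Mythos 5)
Your argument is in substance the paper's own proof written out in full: the paper simply applies \ref{ss:3.15} to both $S'$ and $S\times\set{1}$ and notes that every pair of distinct elements of $S'$ not both in $S$ whose product has finite order is conjugated into $S$ by $w_K$. Your reduction to the equation $(rs)^m(\alpha_r)=\alpha_s$, the computation $\eta((rs)^m,r)=1$ for pairs generating a standard parabolic, the dismissal of the $J'$--$M$ pairs via $m'_{r,s}=\infty$, and the sign bookkeeping under conjugation by $w_K$ (namely $\eta(w_K,x)=-1$ iff $x\in N(w_K)=T\cap W_K$, whence $\delta_r=1$ for $r\in J'$, $\delta_s=-1$ exactly for $s\in K$, and $w_Ksw_K=s$ for $s\in L$) are all correct and are exactly what the paper's one-line proof leaves to the reader.

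The one genuine defect is the final reconciliation. For $r\neq s$ both in $J'$ with $m'_{r,s}$ finite and odd, your computation correctly yields the constraint $\epsilon_r=\epsilon_s$, but this is \emph{not} ``a further instance of the equality in (iii)'': condition (iii) requires $s\in L$, and (i)--(iii) do not imply the $J'$--$J'$ constraint in general. For example, take $S=\set{a,b,c}$ with $m_{a,b}=3$, $m_{a,c}=m_{b,c}=4$, and twist with $J=\set{a,b}$, $K=\set{c}$, $L=M=\emptyset$; then (i)--(iii) are vacuous, yet \ref{ss:3.15} forces $\epsilon_{cac}=\epsilon_{cbc}$, so $\epsilon_{cac}=1$, $\epsilon_{cbc}=-1$ satisfies (i)--(iii) while $\Pi'$ is not a set of simple roots of any generative quasi-positive system. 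Thus what you have actually proved is an equivalence with (i)--(iii) \emph{together with} the additional family ``$\epsilon_r=\epsilon_s$ whenever $r\neq s$ are in $J'$ and $m'_{r,s}$ is finite and odd'', and your own calculation shows this family is not redundant. You should either state and prove the corollary with this fourth condition included (your work in fact indicates the statement as printed needs it; note that in the examples of \ref{ss:3.19} the set $J$ is a singleton, so the case never arises there), or give an argument deriving it from (i)--(iii) --- which is not possible. Apart from this last step, the proposal is sound and follows the paper's intended route.
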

 \begin{proof} The Corollary follows readily on applying  \ref{ss:3.15} to both $S'$ and $S\times\set{1}\subseteq \widehat T$ on noting that each pair of distinct
 elements of $S'$ which are not both in $S$ and  whose product has finite order is conjugated to a  pair
 of elements of $S$ by $w_K$.  \end{proof}

\begin{example}\label{ss:3.19}  Consider the Coxeter system $(W,S)$ with $S=\set{r,s,t,u}$ and  Coxeter graph as at left below.
  Twisting 
  as in \ref{ss:3.16} using  $J=\set{r}$, $K=\set{s,t}$, $M=\set{u}$ and $L=\emptyset$ gives the 
   Coxeter system $(W,S')$, where $S'=\set {r',s,t,u}$ with $r'=stsrsts$ and  with Coxeter graph as at right below:
   \begin{equation*}
   \xymatrix{{u}\ar@{-}[r]^\infty\ar@{-}[dr]\ar@{-}[d]_\infty&{s}\ar@{-}[d]&&&
   {u}\ar@{-}[r]^\infty\ar@{-}[dr]\ar@{-}[d]_\infty&{s}\ar@{-}[d]&\\
   {r}\ar@{-}[ur]\ar@{-}[r]_\infty&{t}&&&{r'}\ar@{}@<-.8ex>[ur]^(.75
   )\infty\ar@{-}[ur]\ar@{-}[r]&{t}}\end{equation*}
    In this case, \ref{ss:3.18} implies  that $\set{(r',-1), (s,1),(t,1),(u,1)}$ is the set of abstract simple roots of some generative quasi-positive system, with $S'$ as the corresponding set of simple reflections. In particular, this shows that the non-isomorphic
   (irreducible, finitely-generated) Coxeter systems
   $(W,S)$ and $(W,S')$ have isomorphic standard abstract root systems.
   
   On the other hand, take $(W,S)$ with $S=\set{a,b,c,d}$ and Coxeter graph as at left below.
   Twisting with $J=\set{d}$, $K=\set{a}$,  $L=\set{b}$ and $M=\set{c}$ gives the isomorphic Coxeter system $(W,S')$ with $S'=\set{a,b,c,d'}$ where $d'=ada$, and Coxeter graph as at right below. 
   \begin{equation*}
   \xymatrix{{d}\ar@{-}_\infty[dd]\ar@{-}[dr]\ar@{-}[drr]&&&&{d'}\ar@{-}_\infty[dd]\ar@{-}[dr]\ar@{-}[drr]&&\\
   &{a}&{b}&&&{a}&{b}\\
   {c}\ar@{-}[ur]\ar@{-}[urr]&&&&{c}\ar@{-}[ur]\ar@{-}[urr]}\end{equation*}
   Here \ref{ss:3.18} shows there is no generative quasi-positive system with $S'$ as its corresponding set of simple reflections. This shows that arbitrary diagram twists do not necessarily extend to twists of
   the standard abstract root system.\end{example}
  
  \begin{question} \label{ss:3.20} The above suggests the problem  of determining when (finitely generated, irreducible) Coxeter systems have isomorphic standard abstract root systems. 
  One  might also ask whether there is some natural  way of attaching an abstract root system $\widehat \Phi(W,S)$
  to each Coxeter system $(W,S)$ so that two  (say, finitely generated, irreducible) Coxeter systems $(W_i,S_i)$
  are isomorphic iff they have isomorphic abstract root systems $\widehat{\Phi}(W_i,S_i)$.  \end{question} 
  
  In order to limit  the somewhat pathological  behaviour  seen in examples such as  \ref{ss:B2exmp}, \ref{ss:3.14} and \ref{ss:3.19}, we now  introduce combinatorially  
  a notion   which corresponds to that  of betweenness considered in the Introduction (see \ref{ss:5.1}(c)).

\begin{definition} Let $\alpha,\beta,\gamma\in \widehat T$. We say that $\gamma$ is between
$\alpha$ and $\beta$ if one of the following conditions holds:
\begin{conds}\item $\alpha=\pm \beta$ and $\gamma\in \set{\alpha,\beta}$.
\item $\alpha\neq\pm \beta$ and for any maximal dihedral reflection subgroup $W'$ of $(W,S)$, 
all $w\in W'$ and all $\epsilon\in\set{\pm 1}$ with $\epsilon w(\alpha),\epsilon w(\beta)\in \widehat{T'}_+:=(W'\cap T)\times \set{1}$, one has $\epsilon w(\gamma)\in\widehat{T'}_+$.\end{conds}
We let $[\alpha,\beta]$ denote the set of all $\gamma\in \widehat T$ such that $\gamma$ is between $\alpha$ and $\beta$.
\end{definition}

If $\alpha\neq \pm\beta$, the only possible maximal dihedral reflection subgroup
$W'$ as in (ii) is the one containing $s_\alpha$ and $s_\beta$ (though possibly several pairs
$(w,\epsilon)$ may satisfy the conditions of (ii)). On the other hand, using \ref{ss:3.9}--\ref{ss:3.10} and the fact that the set of  maximal
dihedral   reflection subgroups of $W$ is closed under conjugation by elements of $W$, one sees that 
an equivalent condition to (ii) would be obtained by replacing ``$w\in W'$'' in (ii) by  ``$w\in W$.''
From this, it is clear for any $w\in W$, $\epsilon\in \set{\pm1 }$ and $\alpha,\beta,\gamma$ in $\widehat T$, that $\gamma\in[\alpha,\beta]$ iff 
$\epsilon w(\gamma)\in [\epsilon w(\alpha),\epsilon w(\beta)]$. 

\subsection{}\label{ss:3.22} We provide here without proof  a more  concrete description of betweenness (cf \ref{ss:5.1}(c)). Let $\alpha,\beta\in \widehat{T}$
with $\alpha\neq \pm \beta$. Let $W'$ be the maximal dihedral reflection subgroup $W'$
of $W$ containing $\set{s_\alpha,s_\beta}$, and set  $T'=W'\cap T$. Let $ \chi(W')=\set{r,s}$ and $m=\card{T'}$.
Consider a  diagram of $m$ straight lines through  the origin $\bullet=(0,0)$ in the  plane $\real^2$:
\begin{equation*}\xymatrix{{}\ar@{.}[rrrd]&{(rsr,-1)}\ar@{-}[rrd]&{(r,-1)}\ar@{-}[dr]&&(s,1)\ar@{-}[dl]&(srs,1)\ar@{-}[dll]&{}\ar@{.}[dlll]\\
&&&{\bullet}&&&&\\
{}\ar@{.}[rrru]&{(srs,-1)}\ar@{-}[rru]&{(s,-1)}\ar@{-}[ur]&&{(r,1)}\ar@{-}[ul]
&{(rsr,1)}\ar@{-}[ull]&{}\ar@{.}[ulll]}\end{equation*}
If $m$ is finite the lines
are supposed to pass, say,  through the vertices of some regular $k$-gon  with centroid at $\bullet$, where $k=m$ if $m$ is odd and $k=2m$ if $m$ is even, while if $m$ is infinite, the lines  are taken as those passing through, say,   the points
$(n,\pm 1)$  for $n\in \Int\setminus\set{0}$.  
The resulting  $2m$ (closed) rays with $\bullet$ as endpoint are  labelled by the elements of $\widehat{T'}\times \set{\pm 1}$ as suggested by the diagram, so that each line is the union of rays with labels $(t,1)$ and $(t,-1)$ for some (unique) $t\in T$.

Now for  $\gamma\in \widehat T$, one has $\gamma\in [\alpha,\beta]$ iff the ray labelled $\gamma$ is 
in the convex closure of  the union of the  rays labelled $\alpha$ and $\beta$.  (This is easy to see either directly from the definition , or using \ref{ss:5.1}(c)). 

\begin{lemma}\label{ss:det} For any  $\alpha,\beta\in \widehat T$, $s_\beta(\alpha)$ is determined completely by $\alpha$, $\beta$,  the function $(\gamma,\delta)\mapsto[\gamma,\delta]\colon \widehat T\times \widehat{T}\rightarrow \mc{P}(\widehat{T})$,   and the action of $\set{\pm 1}$ on $\widehat T$.
\end{lemma}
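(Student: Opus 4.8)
The plan is to reduce to the case of a dihedral reflection subgroup and then to read off $s_\beta(\alpha)$ from a planar picture of the associated root system, such as the one in \ref{ss:3.22}. First I would dispose of the degenerate case: whether $\alpha=\pm\beta$ is visible from $\alpha$, $\beta$ and the sign action alone, and when $\alpha=\pm\beta$ one has $s_\beta(\alpha)=-\alpha$ (immediate from $s_\beta(\beta)=-\beta$ and the fact that $s_\beta$ commutes with the sign action, see \ref{ss:2.1}), so there is nothing to do. Assume henceforth $\alpha\neq\pm\beta$, and let $W''$ be the unique maximal dihedral reflection subgroup of $(W,S)$ containing $s_\alpha$ and $s_\beta$, with $T'':=W''\cap T$.

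The first substantive step is to recover the set $\widehat{T''}:=T''\times\set{\pm1}\subseteq\widehat T$ from the given data; the claim is that
\[\widehat{T''}=[\alpha,\beta]\cup[\alpha,-\beta]\cup[-\alpha,\beta]\cup[-\alpha,-\beta].\]
Here the inclusion ``$\subseteq$'' holds because the four distinct rays $\pm\alpha,\pm\beta$ subdivide the plane of the picture of \ref{ss:3.22} for $W''$ into two-dimensional convex cones, each spanned by two of these rays, and every root line of $W''$ lies in one of them (so, by \ref{ss:3.22}, every element of $\widehat{T''}$ lies in $[\mu,\nu]$ for a suitable pair $\mu,\nu\in\set{\pm\alpha,\pm\beta}$, and the pairs that occur are precisely the four appearing above); ``$\supseteq$'' is immediate, since the betweenness of two elements of $\widehat{T''}$ is computed entirely inside $W''$ (by the definition of betweenness and the remarks after it), so each of the four sets on the right lies in $\widehat{T''}$. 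By \ref{ss:3.8}, $(\widehat{T''},F'')$ is the standard abstract root system of the dihedral Coxeter system $(W'',\chi(W''))$, and, again by the definition of betweenness, the restriction of $[\,\cdot\,,\cdot\,]$ to $\widehat{T''}$ together with the sign action is precisely the ``convexity plus antipodal'' data of that planar picture. From this one reconstructs the cyclic arrangement of the $2\card{T''}$ rays, up to the symmetries of the picture: when $\card{T''}$ is finite this is elementary, because two non-opposite rays $\gamma,\delta$ are consecutive if and only if $[\gamma,\delta]=\set{\gamma,\delta}$, and this adjacency relation determines the $2\card{T''}$-cycle. Finally, $s_\beta$ acts on $\widehat{T''}$ as a symmetry of this arrangement which reverses it and interchanges $\beta$ with $-\beta=s_\beta(\beta)$; there is exactly one such reversing symmetry (the orientation-preserving symmetry interchanging $\beta$ and $-\beta$ is the antipodal map, i.e.\ the sign action, not $s_\beta$), and it is pinned down by the cyclic arrangement, by $\beta$, and by the sign action (the last being needed to locate $-\beta$). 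Applying that symmetry to $\alpha$ gives $s_\beta(\alpha)$, which is thus determined by the stated data.

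The step I expect to be the main obstacle is the reconstruction of the cyclic arrangement when $\card{T''}=\infty$, i.e.\ when $W''$ is infinite dihedral. In that case the rays accumulate near two ``missing'' antipodal directions, the relation $[\gamma,\delta]=\set{\gamma,\delta}$ by itself splits the ray set into two bi-infinite paths rather than a single cycle, and one has to use the containments among the general betweenness sets $[\gamma,\delta]$, together with the antipodal action, to see how these two paths are joined around the two limit directions. Once that is settled, the identification of $s_\beta\vert_{\widehat{T''}}$ as the unique order-reversing symmetry carrying $\beta$ to $-\beta$, and hence the value of $s_\beta(\alpha)$, proceeds as above; the remaining points (that the fourfold union really equals $\widehat{T''}$, that the restricted betweenness is as described, and the finite-rank reconstruction) are routine.
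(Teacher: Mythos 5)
Your reduction to the maximal dihedral reflection subgroup $W''$, the recovery of $\widehat{T''}$ from the betweenness data, and the finite case are all correct, and the overall strategy is in the same spirit as the paper's (which also works inside the dihedral picture of \ref{ss:3.22}); but the paper avoids reconstructing the whole ray arrangement by locating the second simple root $\gamma$ with $\alpha\in[\beta,\gamma]$ and $[\gamma,-\beta]=\set{\gamma,-\beta}$, so that $[\beta,\gamma]$ is a dihedral positive system containing $\alpha$, and then pinning down $s_\beta(\alpha)$ by the cardinalities $\card{[\beta,\alpha]}$ and $\card{[\alpha,\gamma]}$ --- a device that treats finite and infinite dihedral subgroups uniformly. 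The genuine gap in your write-up is precisely the step you flag and then defer: the case $\card{T''}=\infty$. This case cannot be waved through --- for the intended applications (Lemma \ref{ss:3.29}, hence Theorem \ref{ss:5.13}) the lemma must hold for arbitrary Coxeter systems, where infinite dihedral reflection subgroups are the rule --- and ``one has to use the containments among the general betweenness sets \dots\ Once that is settled'' states the problem rather than solving it. Moreover, your uniqueness claim (``there is exactly one such reversing symmetry'') is justified by your parenthetical only when $T''$ is finite: when $T''$ is infinite the ray set admits many cyclic-order-reversing bijections sending $\beta$ to $-\beta$ (one can shift the two adjacency components independently), so uniqueness genuinely requires sign-equivariance, which you list among the pinning data but never actually invoke.

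The gap is fillable, and more easily than your sketch suggests, because the glued cyclic arrangement is never needed. Betweenness determines the adjacency relation, whose components in the infinite case are two antipodal $\Int$-paths $P$ and $-P$ (the two halves separated by the limit directions, not the positive and negative systems). Any permutation $\phi$ of $\widehat{T''}$ preserving betweenness, commuting with the sign action, and satisfying $\phi(\beta)=-\beta$ must swap $P$ and $-P$, must act on indices by $n\mapsto \pm n+c$ (an adjacency-preserving bijection of a $\Int$-path has this form), and sign-equivariance together with $\phi(\beta)=-\beta$ leaves exactly two candidates: the antipodal map and one ``index-reversing'' map. Since $s_\beta(\alpha)=-\alpha$ would force $s_\alpha\in N(s_\beta)=\set{s_\beta}$ by \ref{ss:3.1}, i.e. $\alpha=\pm\beta$, the restriction of $s_\beta$ is not the antipodal map, hence is the other candidate, which is completely determined by adjacency, the sign action and $\beta$; evaluating it at $\alpha$ gives $s_\beta(\alpha)$. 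Writing this out (or switching to the paper's cardinality argument, which handles both cases at once) would close the proof.
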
\begin{proof} First,  if $\beta=\pm \alpha$, then $s_\beta(\alpha)=- \alpha$, so we suppose henceforward that $\beta\neq \pm\alpha$.
Consider the  smallest (under inclusion)  subset $\widehat{T'}$  of $\widehat {T}$ containing $\set{\alpha,\beta}$, closed under the action of $\set{\pm 1}$ and 
such that if $\gamma,\delta\in \widehat{T'}$, then $[\gamma,\delta]\subseteq \widehat{T'}$. It is easy to see that $\widehat{T'}= T'\times\set{\pm 1}$ where $W'$ is the maximal dihedral reflection subgroup of $W$
 containing $\mset{s_\alpha,s_\beta}$ and $T'=W'\cap T$. 
 
 Now there is a unique  element $\gamma\in \widehat{T'}$ such that 
 $\alpha\in[\beta,\gamma]$ and $[\gamma,-\beta]=\set{\gamma,-\beta}$ (in fact, 
 $\set{\beta,\gamma}$ is the set of simple roots of some quasi-positive system $\epsilon w'(T'\times\set{1})$, for some $\epsilon\in \set{\pm 1}$ and $w'\in W'$, of the standard root system $(W'\cap T)\times\set{\pm 1}$ of $(W',\chi(W'))$;  further, $ [\beta,\gamma]=\epsilon w'(T'\times\set{1})$). Define the cardinalities $m=\vert [\beta,\alpha]\vert $ and $n=\vert [\alpha,\gamma]\vert$, at most one of which is infinite. 
Then one can easily check that  $\delta:=s_\beta(\alpha)$ is determined case by case  as follows. If $m=n+1$, then $\delta=\beta$.  If $m<n+1$ (resp., $m>n+1$), then 
$\delta$ is the unique element of $[\beta,\gamma]$ such that $\vert[\gamma,\delta]\vert=m-1$
(resp.,  $\vert[\beta,\delta]\vert=n+1$).\end{proof}

\begin{definition}A subset $P$ of $\widehat T$ is said to be closed (in $\widehat T$) if for any  $\alpha,\beta\in P$ and $\gamma\in \widehat T$
such that $\gamma$ is between $\alpha$ and $\beta$, one has  $\gamma\in P$.
One says that $P$ is biclosed (in $\widehat T$) if $P$ and $\widehat T\setminus P$ are both closed in $\widehat T$.
Similarly, for a reflection subgroup $W'$ of $W$, we may say that a subset
$P$ of $\widehat{T'}$ is closed or biclosed in $\widehat {T'}=T'\times\set{\pm 1}$.\end{definition}
There should be no confusion with the notion of a closed subset of the root system of a finite Weyl group as defined in \cite{Bou},  which will not be used in this paper.

\begin{example}  \label{ss:3.25} Suppose that $(W,S)$ is a dihedral Coxeter system, say $S=\set{r,s}$ where $r\neq s$. Let $B$  denote the set of all biclosed quasi-positive systems $\Psi_+$ for $\widehat T$.
It is easy to see that if $W$ is finite, then $B=\mset{w(\widehat T_+)\mid w\in W}$.  If $W$ is infinite, 
$B=\mset{\epsilon w(\widehat T_+)\mid w\in W, \epsilon\in \set{\pm 1}}\cup\set { 
\Psi_+, \widehat T\setminus \Psi_+}$ where \[\Psi_+=\mset{(t,1)\mid t\in T,  r\in N(t)}\cup\mset{(t,-1)\mid t\in T, s\in N(t)}.\]
Hence   if  $P\in B$ is generative,  then    $P$ is $W$-conjugate to $\pm \widehat T_+$.
\end{example}

 \begin{definition} A subset ${\Psi_+}$ of $\widehat T$ is an abstract   positive system for $(W,S)$
 if ${\Psi_+}$ is a biclosed,  generative, quasi-positive system of $\widehat T$. Then $\Delta_{\Psi_+}$ is called the corresponding abstract root basis for ${\Psi_+}$, and $S_{\Psi_+}$ is called
 the set of simple reflections of $W$ corresponding to ${\Psi_+}$. Recall from \ref{ss:3.11} that $(W,S_{\Psi_+})$ is a Coxeter system.
 \end{definition}
 
 When confusion with the notion of a positive system of roots of a real root system (as in 
 Section 2) seems unlikely,  we may call abstract positive systems just positive systems. 
 
 \subsection{}\label{ss:3.27}  If ${\Psi_+}$ is a  generative quasi-positive  system for $\widehat{T}$ for  $(W,S)$, then there is a natural identification (as quasi-root systems)  of $\widehat T$ with the standard abstract root system of $(W,S_{\Psi_+})$, such that ${\Psi_+}$ identifies with 
 the standard positive system of the latter abstract root system.   Moreover, under this identification,
 $\widehat T_+$ corresponds to a   generative quasi-positive system of the standard abstract root system of $(W,S_{\Psi_+})$.
 
 The above paragraph remains true with ``generative quasi-positive system'' replaced by ``positive system'' everywhere. 
 Moreover, in that case,  the above  identification  of abstract root systems preserves the notion of betweenness.

 \subsection{} \label{ss:reduc} Let $(W,S)$ be a Coxeter system with irreducible components $(W_i,S_i)$ for $i\in I$. 
Then the standard abstract root system $(\widehat T,F)$ of $(W,S)$ identifies (as quasi-root system) with
the union as in \ref{ss:2.1a} of the (quasi-root systems arising as) standard abstract root systems $(\widehat{T_i},F_i)$ of $(W_i,S_i)$, for $i\in I$. Under this identification, the standard root basis (resp., standard set of positive roots) of $(\widehat T,F)$ is the union of the standard  root bases 
(resp., standard sets of positive roots) of $(\widehat{T_i},F_i)$.

\begin{lemma} For $\Psi_+\subseteq \widehat T$, $\Psi_+$ is a  generative quasi-positive system
(resp.,  positive system)
of $\widehat T$ iff for each $i$, $\Psi^i_+:=\Psi_+\cap \widehat T_i$ is a generative quasi-positive system
(resp., positive system) of $\widehat{T_i}$. In that case, the corresponding set of simple roots
(resp., abstract root basis) $\Delta_{\Psi_+}$ of $\Psi_+$ is the disjoint union $\cup_i  \Delta_+^i$
where $\Delta^i_+$ is the set of simple roots  (resp., abstract root basis) of $\Psi^i_+$ in $\widehat{T_i}$.\end{lemma}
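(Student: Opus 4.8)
The plan is to reduce everything to the irreducible components via the identification of \ref{ss:reduc}. Recall that there $T=\coprod_i T_i$ (with $T_i$ the reflections of $(W_i,S_i)$), that $W$ is the internal restricted direct product of the $W_i$, and that for $\alpha\in\widehat{T_i}$ the permutation $s_\alpha$ preserves $\widehat{T_i}$ — acting on it as in the standard abstract root system of $(W_i,S_i)$ — and fixes every $\widehat{T_j}$, $j\neq i$, pointwise. Since the fibres of the reflection map of $\widehat T$ are the pairs $\set{(t,1),(t,-1)}$, on which $\set{\pm1}$ acts simply transitively, and each such fibre lies inside a single $\widehat{T_i}$, a subset $\Psi_+\subseteq\widehat T$ is a quasi-positive system iff each $\Psi^i_+:=\Psi_+\cap\widehat{T_i}$ is a quasi-positive system of $\widehat{T_i}$ (and then automatically $\Psi_+=\coprod_i\Psi^i_+$). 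This is the first reduction.

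Next I would identify the simple roots. Assuming $\Psi_+$ is a quasi-positive system, for $\alpha\in\Psi^i_+$ one has $s_\alpha(\Psi_+\setminus\set{\alpha})=s_\alpha(\Psi^i_+\setminus\set{\alpha})\cup\coprod_{j\neq i}\Psi^j_+$, and since $s_\alpha$ maps $\widehat{T_i}$ into itself, this set lies in $\Psi_+$ iff $s_\alpha(\Psi^i_+\setminus\set{\alpha})\subseteq\Psi^i_+$. Hence $\Delta_{\Psi_+}=\coprod_i\Delta_{\Psi^i_+}$ and $S_{\Psi_+}=\coprod_i S_{\Psi^i_+}$, where $\Delta_{\Psi^i_+}$, $S_{\Psi^i_+}$ are the simple roots, resp.\ simple reflections, of $\Psi^i_+$ in $\widehat{T_i}$. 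As $S_{\Psi^i_+}\subseteq W_i$ and distinct $W_i$ commute elementwise, $W_{\Psi_+}=\mpair{S_{\Psi_+}}$ is the internal restricted direct product of the subgroups $W_{\Psi^i_+}=\mpair{S_{\Psi^i_+}}\subseteq W_i$. Comparing with $W=$ the restricted direct product of the $W_i$ then gives $W_{\Psi_+}=W$ iff $W_{\Psi^i_+}=W_i$ for every $i$; that is, $\Psi_+$ is generative iff each $\Psi^i_+$ is. Together with the decomposition of $\Delta_{\Psi_+}$ just obtained, this proves the ``generative quasi-positive'' case of the Lemma.

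For the ``positive system'' case it remains only to show $\Psi_+$ is biclosed in $\widehat T$ iff each $\Psi^i_+$ is biclosed in $\widehat{T_i}$, and the crux is an analysis of betweenness. Let $\alpha,\beta\in\widehat T$ with $s_\alpha\neq s_\beta$, let $W'$ be the (unique) maximal dihedral reflection subgroup with $s_\alpha,s_\beta\in W'$, and set $T'=W'\cap T$. If $s_\alpha\in W_i$ and $s_\beta\in W_j$ with $i\neq j$, then $s_\alpha,s_\beta$ commute, so $\mpair{s_\alpha,s_\beta}$ has order $4$; it must equal $W'$, because a dihedral reflection subgroup with $\card{T'}\geq3$ (or infinite) is irreducible, hence contained in a single component $W_k$, which would force $s_\alpha,s_\beta\in W'\cap T=T_k$ and $i=j=k$. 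Then $\widehat{T'}=\set{(s_\alpha,\pm1),(s_\beta,\pm1)}$ and a direct check from the definition (or from the concrete picture in \ref{ss:3.22} with $m=2$) gives $[\alpha,\beta]=\set{\alpha,\beta}$. If instead $s_\alpha,s_\beta$ both lie in the same component $W_i$, then $W'\subseteq W_i$ (by the same irreducibility remark when $\card{T'}\geq3$, trivially when $\card{T'}=2$), so $\widehat{T'}\subseteq\widehat{T_i}$, the betweenness relations of $\widehat T$ and of $\widehat{T_i}$ agree on $\widehat{T_i}$, and $[\alpha,\beta]\subseteq\widehat{T'}\subseteq\widehat{T_i}$; the degenerate case $\alpha=\pm\beta$ with both in $\widehat{T_i}$ is immediate since then $[\alpha,\beta]=\set{\alpha,\beta}$. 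Granting this: a pair $\alpha\in\Psi^i_+$, $\beta\in\Psi^j_+$ with $i\neq j$ never violates closedness, as $[\alpha,\beta]=\set{\alpha,\beta}\subseteq\Psi_+$, while a pair $\alpha,\beta\in\Psi^i_+$ has $[\alpha,\beta]\subseteq\widehat{T_i}$, so $[\alpha,\beta]\subseteq\Psi_+$ iff $[\alpha,\beta]\subseteq\Psi^i_+$. Hence $\Psi_+$ is closed iff every $\Psi^i_+$ is; applying this also to $\widehat T\setminus\Psi_+=\coprod_i(\widehat{T_i}\setminus\Psi^i_+)$ shows $\Psi_+$ is biclosed iff every $\Psi^i_+$ is, which with the previous paragraph completes the proof.

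The step I expect to be the main obstacle is the betweenness analysis of the third paragraph: nailing down the maximal dihedral reflection subgroup through two commuting reflections (so that it has order exactly $4$ and cross-component betweenness is trivial) and checking that betweenness is correctly inherited by a component. Once that geometry is in place, everything else is bookkeeping along the decomposition $\widehat T=\coprod_i\widehat{T_i}$.
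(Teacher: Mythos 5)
Your proposal is correct and follows essentially the same route as the paper: the paper's proof consists precisely of the observation that two reflections in distinct irreducible components commute and have $\set{1,s,t,st}$ as their maximal dihedral reflection subgroup, and then leaves the componentwise bookkeeping (quasi-positive systems, simple roots, generativity, closedness) to the reader. Your write-up simply supplies those omitted routine details, including the cross-component betweenness computation $[\alpha,\beta]=\set{\alpha,\beta}$, exactly as intended.
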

\begin{proof} Observe that if $t\in W_i\cap T$ and $s\in W_j\cap T$ with $i\neq j$  are  reflections of $W$ which are contained in  distinct irreducible components  of $W$, then $st=ts$ and the maximal dihedral reflection subgroup containing $s$ and $t$ is just $\set{1,s,t,st}$.  Using this, the  lemma is easily proved from the definitions. We omit further details of the proof. \end{proof}

   \begin{lemma}\label{ss:3.28} Let $W'$ be a reflection subgroup of $(W,S)$. Set $S'=\chi(W')$. Regard the abstract root system  $\widehat{T'}$ of $(W',S')$
 as a subset of $\widehat{T}$.  Fix $\alpha,\beta,\gamma\in \widehat{T'}$ and $P\subseteq \widehat T$. \begin{num}
 \item $\gamma$ is between $\alpha$ and $\beta$ in $\widehat{T'}$ iff 
  $\gamma$ is between $\alpha$ and $\beta$ in $\widehat{T}$.
  \item  If $P$  is closed (resp., biclosed) in $\widehat T$, then $P\cap \widehat{T'}$ is closed (resp., biclosed) in $\widehat{T'}$.
 \item If $\Psi_+$ is a positive system for $\widehat T$, then $\Psi_+\cap \widehat{T'}$ is a positive system for $\widehat{T'}$.
 \item Suppose that $W'$ is a dihedral reflection subgroup.
 If $\Psi_+$ is a positive system for $\widehat T$, then
 $\Psi_+\cap W'$ is conjugate in $W'$ to $T'\times \set{\epsilon}$ for some
 $\epsilon\in \set{\pm 1}$, where $T'=W'\cap T$. Further, the abstract root basis of  the abstract root system $\widehat {T'}$ of $W'$ corresponding to
the positive system  $\Psi_+\cap \widehat {T'}$ is then  conjugate by an element of  $W'$ to $\chi(W')\times\set{\epsilon}$.
 \end{num}\end{lemma}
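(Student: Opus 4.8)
The plan is to prove (a) by reducing it to the dihedral case and invoking the concrete description of betweenness in \ref{ss:3.22}, and then to deduce (b)--(d) formally from (a) together with \ref{ss:3.13} and Example \ref{ss:3.25}. For (a), the case $\alpha=\pm\beta$ is trivial, since betweenness in either $\widehat{T'}$ or $\widehat T$ then just means $\gamma\in\set{\alpha,\beta}$; so assume $\alpha\neq\pm\beta$, and let $D$ (resp.\ $D'$) be the unique maximal dihedral reflection subgroup of $(W,S)$ (resp.\ of $(W',S')$) containing both $s_\alpha$ and $s_\beta$ (these exist and are unique by \ref{ss:3.7}). I would first check that $D'=D\cap W'$: the group $D\cap W'$ is a subgroup of the dihedral group $D$ containing the two distinct reflections $s_\alpha,s_\beta$, hence is itself dihedral and, being generated by reflections which lie in $W'$, is a reflection subgroup of $W'$; and if $E$ is any dihedral reflection subgroup of $W'$ with $D\cap W'\subseteq E$, then $E$ is a dihedral reflection subgroup of $W$ containing $s_\alpha,s_\beta$, so $E$ is contained in the unique maximal such, namely $D$, whence $E\subseteq D\cap W'$. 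Thus $D\cap W'$ is maximal among dihedral reflection subgroups of $W'$, i.e.\ $D'=D\cap W'$, and, viewing as in \ref{ss:3.8} the standard abstract root systems $\widehat D$ of $(D,\chi(D))$ and $\widehat{D'}$ of $(D',\chi(D'))$ as subsets of $\widehat T$, we obtain $\widehat{D'}=\widehat D\cap\widehat{T'}$.

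By the remark following the definition of betweenness (which rests on \ref{ss:3.9}--\ref{ss:3.10}), for $\alpha\neq\pm\beta$ the statement ``$\gamma$ is between $\alpha$ and $\beta$ in $\widehat T$'' involves only the subgroup $D$ and coincides with the intrinsic betweenness relation of the standard abstract root system $\widehat D$ of the dihedral Coxeter system $(D,\chi(D))$; likewise for $\widehat{T'}$ and $\widehat{D'}$. So (a) reduces to the following statement about the dihedral reflection subgroup $D'$ of the dihedral Coxeter system $D$: for $\gamma\in\widehat{D'}$, $\gamma$ is between $\alpha$ and $\beta$ in $\widehat D$ iff it is in $\widehat{D'}$. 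By \ref{ss:3.22}, being between $\alpha$ and $\beta$ in $\widehat D$ is exactly the condition that, in the planar realisation described there, the ray labelled $\gamma$ lie in the convex closure of the rays labelled $\alpha$ and $\beta$ (in particular this forces $\gamma\in\widehat D$, hence $\gamma\in\widehat D\cap\widehat{T'}=\widehat{D'}$); the same description holds for $\widehat{D'}$. Since the roots of $D'$ occur, as a subset of the roots of $D$, in the same cyclic order as in their own planar realisation (both being realisations of the same intrinsic cyclic ordering of a rank-two root system), and since the convex closure of two given rays depends only on those two rays, the two conditions agree. This proves (a).

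Part (b) is then immediate: if $P$ is closed in $\widehat T$, $\alpha,\beta\in P\cap\widehat{T'}$ and $\gamma\in\widehat{T'}$ is between $\alpha$ and $\beta$ in $\widehat{T'}$, then by (a) it is between them in $\widehat T$, so $\gamma\in P$, hence $\gamma\in P\cap\widehat{T'}$; applying this with $\widehat T\setminus P$ in place of $P$ and using $\widehat{T'}\setminus(P\cap\widehat{T'})=(\widehat T\setminus P)\cap\widehat{T'}$ handles the biclosed case. For (c), a positive system $\Psi_+$ for $\widehat T$ is biclosed, generative and quasi-positive, so by \ref{ss:3.13} the set $\Psi_+\cap\widehat{T'}$ is a generative quasi-positive system for $\widehat{T'}$, and by (b) it is biclosed in $\widehat{T'}$; hence it is a positive system for $\widehat{T'}$. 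For (d), $W'$ is dihedral, so $\widehat{T'}$ is the standard abstract root system of the dihedral Coxeter system $(W',\chi(W'))$, and by (c) the set $\Psi_+\cap\widehat{T'}$ is a biclosed generative quasi-positive system for it; by Example \ref{ss:3.25} it is therefore $W'$-conjugate to $\pm(T'\times\set{1})$, say $\Psi_+\cap\widehat{T'}=w'(T'\times\set{\epsilon})$ with $w'\in W'$ and $\epsilon\in\set{\pm1}$. Since the standard abstract root basis of $T'\times\set{1}$ is $\chi(W')\times\set{1}$, and the passage from a positive system to its abstract root basis is equivariant for the actions of $W'$ and of $\set{\pm1}$, the abstract root basis corresponding to $\Psi_+\cap\widehat{T'}$ is $w'\bigl(\chi(W')\times\set{\epsilon}\bigr)$, which is $W'$-conjugate to $\chi(W')\times\set{\epsilon}$.

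The only real work lies in (a); the step I expect to be the main obstacle is the reduction there to the dihedral case, and in particular the verification that betweenness is inherited correctly by a dihedral reflection subgroup. This is geometrically evident from \ref{ss:3.22}, but making it fully rigorous requires some care with the possibly infinite and possibly non-reduced dihedral root systems; everything else above is formal.
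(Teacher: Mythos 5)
Your proof follows essentially the same route as the paper's: its argument for (a) consists precisely of the observation you verify (that the maximal dihedral reflection subgroup $D$ of $W$ containing $s_\alpha,s_\beta$ intersects $W'$ in a maximal dihedral reflection subgroup of $W'$) combined with \ref{ss:3.8}--\ref{ss:3.10} ``or by other arguments, e.g. involving \ref{ss:3.22}'', and it then deduces (b) from (a), (c) from (b) and \ref{ss:3.13}, and (d) from (c) and Example \ref{ss:3.25} exactly as you do. The one point you leave at the level of assertion --- that the rays of $\widehat{D'}$ occur inside the \ref{ss:3.22} picture for $\widehat{D}$ in their own cyclic order, so that convex closures restrict correctly --- is the same dihedral verification the paper itself leaves implicit, so your write-up is correct and at least as detailed as the published proof.
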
 
\begin{proof} Note that every maximal dihedral reflection subgroup of $W'$ is contained in a maximal dihedral reflection subgroup of $W$, and any maximal dihedral reflection subgroup of $W$
 which contains at least two reflections of $W'$ intersects $W'$ in a maximal dihedral reflection subgroup of $W'$.  Now (a) follows these remarks together with \ref{ss:3.8}--\ref{ss:3.10}
 (or by other arguments, e.g. involving \ref{ss:3.22}).   Then (b) follows from (a),
 and (c) follows from (b) and \ref{ss:3.13}. Using (c), (d) reduces to the case in which $W=W'$ is dihedral and  $\chi(W')=S$,
 when it follows using Example \ref{ss:3.25}.
 \end{proof} 
 
 \begin{lemma}\label{ss:3.29} Let $(W_i,S_i)$ for $i=1,2$ be Coxeter systems
with abstract root systems $\Phi(W_i,S_i)$ for $i=1,2$. Let
$\theta\colon \Phi(W_1,S_1)\rightarrow \Phi(W_2,S_2)$ be a bijection.
Consider the following conditions $\text{\rm (i)--(iii)}$:
\begin{conds}
\item for $\alpha,\beta,\gamma\in \Phi(W_1,S_1)$,
$\gamma$ is between $\alpha$ and $\beta$ iff $\theta(\gamma)$ is between 
$\theta(\alpha)$ and $\theta(\beta)$) in $\Phi(W_2,S_2)$.
\item $\theta(\pm \alpha)=\pm \theta(\alpha)$ for all $\alpha\in \Phi_1(W,S)$
\item \label{eq:comm}$\theta(s_\alpha(\beta))=s_{\theta(\alpha)}(\theta(\beta))$ for all $\alpha,\beta\in \Phi(W_1,S_1)$
\end{conds} 
 \begin{num} \item Conditions $\text{\rm (i)}$ and $\text{\rm (ii)}$ above hold iff 
  $\text{\rm (i)}$ and $\text{\rm (iii)}$  hold. In that case,  $\theta$ is an isomorphism of abstract root systems.
 \item  Condition   $\text{\rm (iii)}$  implies that  there is  a group isomorphism $W_1\rightarrow W_2$ mapping $s_\alpha\mapsto s_{\theta(\alpha)}$.
 \end{num}
We say that $\theta$ preserves betweenness if  conditions  $\text{\rm (i)--(iii)}$  above are satisfied.
  \end{lemma}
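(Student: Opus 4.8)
The plan is to deduce both parts from Lemma~\ref{ss:det}, which says that $s_\beta(\alpha)$ is determined by $\alpha$, $\beta$, the betweenness function $[\,\cdot\,,\cdot\,]$ and the sign action alone, together with the fact recorded in Proposition~\ref{ss:3.1} that each $W_i$ acts faithfully on its standard abstract root system $\Phi(W_i,S_i)$ and is generated by the permutations $s_\alpha$, $\alpha\in\Phi(W_i,S_i)$.

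For part~(a), first observe that (iii) always forces (ii): putting $\alpha=\beta$ in (iii) gives $\theta(-\alpha)=\theta(s_\alpha(\alpha))=s_{\theta(\alpha)}(\theta(\alpha))=-\theta(\alpha)$, exactly as in \ref{ss:2.1a}; hence ``(i) and (iii)'' trivially implies ``(i) and (ii)''. For the converse I would assume (i) and (ii) and verify (iii), i.e.\ $\theta(s_\alpha(\beta))=s_{\theta(\alpha)}(\theta(\beta))$ for all $\alpha,\beta$. The point is that $\theta$ transports the whole recipe used in the proof of \ref{ss:det} to compute $s_\alpha(\beta)$: the minimal subset $P$ of $\Phi(W_1,S_1)$ containing $\set{\alpha,\beta}$ and closed under both the sign action and the operation $[\,\cdot\,,\cdot\,]$; the distinguished root $\gamma\in P$ singled out by the conditions $\alpha\in[\beta,\gamma]$ and $[\gamma,-\beta]=\set{\gamma,-\beta}$; and the interval cardinalities $m=\vert[\beta,\alpha]\vert$, $n=\vert[\alpha,\gamma]\vert$ that drive the final case distinction. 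Each of these ingredients is intrinsic to the pair (betweenness relation, sign action), and since $\theta$ is a bijection respecting betweenness by (i) and the sign action by (ii), it carries the ingredients for $(\alpha,\beta)$ in $\Phi(W_1,S_1)$ to the corresponding ingredients for $(\theta(\alpha),\theta(\beta))$ in $\Phi(W_2,S_2)$; running the recipe there yields $\theta(s_\alpha(\beta))=s_{\theta(\alpha)}(\theta(\beta))$. Once (iii) holds, $\theta$ is by the definitions in \ref{ss:2.1a} a bijective morphism of quasi-root systems, hence an isomorphism, between the abstract root systems $\Phi(W_1,S_1)$ and $\Phi(W_2,S_2)$ — an isomorphism of abstract root systems.

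For part~(b), assume (iii). Since $\theta$ is a bijection, conjugation by $\theta$ is a group isomorphism $\mathrm{Sym}(\Phi(W_1,S_1))\xrightarrow{\cong}\mathrm{Sym}(\Phi(W_2,S_2))$, $\sigma\mapsto\theta\sigma\theta^{-1}$. For $\alpha\in\Phi(W_1,S_1)$ and any $\beta\in\Phi(W_1,S_1)$ one has $(\theta s_\alpha\theta^{-1})(\theta(\beta))=\theta(s_\alpha(\beta))=s_{\theta(\alpha)}(\theta(\beta))$ by (iii), and as $\theta$ is onto this shows $\theta s_\alpha\theta^{-1}=s_{\theta(\alpha)}$ in $\mathrm{Sym}(\Phi(W_2,S_2))$. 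Identifying each $W_i$ with its faithful image in $\mathrm{Sym}(\Phi(W_i,S_i))$ (Proposition~\ref{ss:3.1}), conjugation by $\theta$ therefore restricts to an isomorphism of $W_1=\mpair{s_\alpha\mid\alpha\in\Phi(W_1,S_1)}$ onto $\mpair{s_{\theta(\alpha)}\mid\alpha\in\Phi(W_1,S_1)}=\mpair{s_\beta\mid\beta\in\Phi(W_2,S_2)}=W_2$, the middle equality by surjectivity of $\theta$, and this isomorphism sends $s_\alpha\mapsto s_{\theta(\alpha)}$ as required.

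The main obstacle is the transfer step in part~(a): one must check carefully that every step in the case-by-case construction of $s_\alpha(\beta)$ in the proof of \ref{ss:det} — in particular the existence and uniqueness of $\gamma$, and the fact that at least one of $m$, $n$ is finite — is formulated purely in terms of the betweenness relation and the sign action, so that it commutes with a bijection preserving both. Granting this bookkeeping, everything else is formal.
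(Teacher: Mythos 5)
Your proposal is correct. For part (a) it follows the paper's own route exactly: (iii) forces (ii) by taking $\beta=\alpha$, and (i) together with (ii) yields (iii) as a consequence of Lemma \ref{ss:det}, since that lemma says $s_\beta(\alpha)$ is determined by the betweenness function and the sign action alone; your extra ``bookkeeping'' remark (that the distinguished root $\gamma$, the interval cardinalities, and the case distinction in the proof of \ref{ss:det} are all intrinsic to those two pieces of data, hence transported by $\theta$) is precisely the point the paper leaves implicit when invoking \ref{ss:det} across the two root systems, so it is a legitimate and even welcome expansion rather than a deviation. For part (b), however, you take a genuinely different path. The paper deduces (b) from the presentation of a Coxeter group by generators $\overline{t}$, $t\in T$, subject to the relations $\overline{t}\,\overline{t'}\,\overline{t}=\overline{tt't}$, defining the isomorphism abstractly on these generators. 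You instead identify each $W_i$ with its faithful image in $\text{\rm Sym}(\Phi(W_i,S_i))$ (Proposition \ref{ss:3.1}, or the definition of $W_\Psi$ in \ref{ss:2.2}) and observe that (iii) says exactly $\theta s_\alpha\theta^{-1}=s_{\theta(\alpha)}$, so conjugation by $\theta$ restricts to an isomorphism $W_1\rightarrow W_2$ carrying $s_\alpha$ to $s_{\theta(\alpha)}$; surjectivity of $\theta$ shows the image is all of $W_2$, and well-definedness (independence of the choice of $\alpha$ representing a given reflection) is automatic since $s_{\theta(\alpha)}$ is literally a conjugate of $s_\alpha$. Your argument is shorter and avoids appealing to the reflection presentation (which the paper only asserts as ``well-known and easily checked''), at the mild cost of leaning on the permutation realization of the groups --- which the paper has already set up --- and it yields the extra information that the isomorphism is implemented by conjugation by $\theta$; the paper's presentation-based proof, by contrast, makes no reference to any realization of $W_1$, $W_2$ as permutation groups.
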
 
  \begin{proof} By taking $\beta=\alpha$,  one sees that  (iii) implies (ii). That (i) and (ii) implies (iii) is a direct consequence of \ref{ss:det}. The remainder of (a) is trivial. 
Part (b) may be proved  using the following  fact, which is well-known and easily checked:
if $(W,S)$ is a Coxeter system with set of reflections $T$,  then $W$ is isomorphic to the group
 generated by generators $\overline{t}$ for $t\in T$ subject to relations $\overline{t}\,\overline{t'}\,\overline{t}=\overline{tt't}$ for all $t,t'\in T$.
   \end{proof}

\begin{theorem} \label{ss:abchar}Let $\Delta\subseteq \widehat T$ and set $S':=\mset{s_\alpha\mid \alpha\in \Delta}$. 
Consider the conditions $\text{\rm (i)--(iii)}$ below: 
\begin{conds}\item $\mpair{S'}=W$. 
 \item for  all $\alpha,\beta\in \Delta$ with $\alpha\neq \beta$, $\set{\alpha,\beta}$ is an abstract root basis for the root system $(W_{\alpha,\beta}\cap T)\times\set{\pm 1}$ of $W_{\alpha,\beta}=\mpair{s_\alpha,s_\beta}$ i.e. for some $w\in W_{\alpha,\beta}$ and 
 $\epsilon\in \set{\pm 1}$, $\set{\alpha,\beta}=w( \chi(W_{\alpha,\beta})\times\set{\epsilon})$.
 \item $\Delta$ is contained in some biclosed quasi-positive system $\Psi_+$  of 
 $\widehat T$.
\end{conds} 
Then:
 \begin{num}\item If  $\text{\rm (i)--(ii)}$  hold,  $\Delta$ is the set of simple reflections of some 
 quasi-positive system of $\widehat{T}$; in particular, $(W,S')$ is a Coxeter system.
 \item The conditions $\text{\rm (i)--(iii)}$ all hold iff $\Delta$ is an abstract root basis of $\Psi$.
 In that case, $\Psi_+$ in $\text{\rm (ii)}$ is the unique positive system with $\Delta$ as its set of simple roots.
 \end{num}
 \end{theorem}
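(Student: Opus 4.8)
The plan is to deduce both parts from the structural results above --- chiefly Propositions \ref{ss:3.8a} and \ref{ss:3.15}, Lemmas \ref{ss:3.13} and \ref{ss:3.28}, Example \ref{ss:3.25}, and the bijectivity of $\Psi_+\mapsto\Delta_{\Psi_+}$ on generative quasi-positive systems established above. For (a), assume (i) and (ii). The crux is to see that $(W,S')$ is a Coxeter system: I would first read off from (ii), for each pair $\alpha\neq\beta$ in $\Delta$, that $\set{s_\alpha,s_\beta}$ is a pair of Coxeter generators of the dihedral reflection subgroup $W_{\alpha,\beta}$ (so $s_\alpha s_\beta$ has the order dictated by $W_{\alpha,\beta}$), and then check --- this is the hard step --- that the assignment $s_\alpha\mapsto\set{s_\alpha}$, $\alpha\in\Delta$, respects these pairwise relations to the extent needed to extend to a reflection cocycle $N'\colon W\to\mc P(T)$, so that $(W,S')$ is a Coxeter system by Proposition \ref{ss:3.1} (alternatively, one could try to reduce, via the action of $\set{\pm1}\times W$, to the hypotheses of Proposition \ref{ss:3.8a}). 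Granting this, Proposition \ref{ss:3.15} applied with $\Pi':=\Delta$ finishes the proof: the only hypothesis of that Proposition to be checked is that $(s_\alpha s_\beta)^m(\alpha)=\beta$ whenever $s_\alpha s_\beta$ has odd order $2m+1$, and this follows from (ii) for the pair $\set{\alpha,\beta}$, since in the finite dihedral group $W_{\alpha,\beta}$ one has $(s_\alpha s_\beta)^m s_\alpha(s_\alpha s_\beta)^{-m}=s_\beta$, so $(s_\alpha s_\beta)^m(\alpha)=\pm\beta$, with the sign forced to be $+$ by the conjugacy description in (ii). The generative quasi-positive system $\Psi_+$ produced by \ref{ss:3.15} then has $\Delta$ as its set of simple roots.

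For the backward direction of (b), let $\Delta=\Delta_{\Psi_+}$ with $\Psi_+$ an abstract positive system (biclosed, generative, quasi-positive). Then (i) holds because $\Psi_+$ is generative, and (iii) holds with this same $\Psi_+$ because it is biclosed. For (ii), fix $\alpha\neq\beta$ in $\Delta$, put $W'=W_{\alpha,\beta}$ and $\widehat{T'}=(W'\cap T)\times\set{\pm1}$. By Lemma \ref{ss:3.13}, $\Psi_+\cap\widehat{T'}$ is a generative quasi-positive system of $\widehat{T'}$ whose set of simple reflections is $\chi_{(W,S_{\Psi_+})}(W')$; since $s_\alpha,s_\beta\in S_{\Psi_+}$, the group $W'=\mpair{s_\alpha,s_\beta}$ is the rank-two standard parabolic of $(W,S_{\Psi_+})$ on $\set{s_\alpha,s_\beta}$, so this set of simple reflections is $\set{s_\alpha,s_\beta}$ and --- each fibre of $\tau$ meeting $\Psi_+$ in a single point --- the corresponding set of simple roots is exactly $\set{\alpha,\beta}$. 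By Lemma \ref{ss:3.28}(c),(d), $\Psi_+\cap\widehat{T'}$ is moreover a positive system of $\widehat{T'}$ whose root basis $\set{\alpha,\beta}$ is $W'$-conjugate to $\chi(W')\times\set{\epsilon}$ for some $\epsilon\in\set{\pm1}$ --- which is exactly (ii).

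For the forward direction of (b), assume (i)--(iii). By (a) there is a generative quasi-positive system $\Psi_+'$ with $\Delta_{\Psi_+'}=\Delta$, and by (iii) a biclosed quasi-positive system $\Psi_+\supseteq\Delta$; since each of these meets every fibre of $\tau$ in exactly one point it suffices to show $\Psi_+'\subseteq\Psi_+$. But every $\gamma\in\Psi_+'$ can be brought down, inside the Coxeter system $(W,S')$, by a chain of depth-decreasing simple reflections to a simple root, and at a typical step $\gamma=s_{\delta_0}(\gamma_1)$ with $\delta_0\in\Delta$ a simple root and $\gamma_1$ of smaller depth one has $\gamma\in[\delta_0,\gamma_1]$ (a rank-two fact checked via the description in \ref{ss:3.22}); since $\Delta\subseteq\Psi_+$ and $\Psi_+$ is closed, induction on depth gives $\gamma\in\Psi_+$. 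Hence $\Psi_+'=\Psi_+$ is biclosed, so $\Delta=\Delta_{\Psi_+'}$ is an abstract root basis; and, by the injectivity of $\Psi_+\mapsto\Delta_{\Psi_+}$, this $\Psi_+$ is the unique positive system with $\Delta$ as its set of simple roots (which in particular identifies the $\Psi_+$ of (iii) with $\Psi_+'$). The main obstacle is the Coxeter-system verification in part (a), everything else being bookkeeping with the cited results.
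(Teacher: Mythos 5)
Your reduction of (a) to Proposition \ref{ss:3.15} is fine once $(W,S')$ is known to be a Coxeter system (and your sign argument for the odd-order pairs is correct), and your verification of the necessity direction of (b) via Lemmas \ref{ss:3.13} and \ref{ss:3.28}(c),(d) matches what the paper leaves to the reader. But the statement you yourself flag as ``the hard step'' --- that (i)--(ii) force $(W,S')$ to be a Coxeter system --- is exactly the content of part (a), and neither of your two suggested routes is an argument. The cocycle route is circular: to build $N'$ with $N'(s)=\set{s}$ for $s\in S'$ by prescribing values on generators you need a presentation of $W$ in terms of $S'$, i.e.\ you need to already know that the pairwise dihedral relations are defining, which is what being a Coxeter system would give you. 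The route via Proposition \ref{ss:3.8a} also does not apply: that proposition requires $\Delta\subseteq \widehat T_+$ with each pair literally equal to $\chi(\mpair{s_\alpha,s_\beta})\times\set{1}$, whereas (ii) only gives, for each pair separately, a conjugating element and a sign; producing a single global normalization is essentially Theorem \ref{ss:5.4}, which is proved later \emph{using} \ref{ss:abchar}. The paper's proof of (a) is a genuinely different argument: it verifies Goldschmidt's criterion, building the abstract halfspaces $H_\alpha$ and checking pairwise properness inside each $W_{\alpha,\beta}$ using the positive system $[\alpha,\beta]\cap \widehat{T_{\alpha,\beta}}$ supplied by (ii).

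There is a second gap in your forward direction of (b). Your induction on depth uses the claim that if $\delta_0\in\Delta$ and $\gamma_1=s_{\delta_0}(\gamma)$ has smaller depth then $\gamma\in[\delta_0,\gamma_1]$, where $[\,\cdot\,,\cdot\,]$ is the \emph{ambient} betweenness of $\widehat T$. This is not a rank-two formality: the dihedral subgroup involved is $\mpair{s_{\delta_0},s_{\gamma_1}}$ with $\gamma_1$ an arbitrary root of $\Psi'_+$, so condition (ii) (which only concerns pairs inside $\Delta$) says nothing about how $\set{\delta_0,\gamma_1}$ sits in its standard dihedral root system, and by \ref{ss:3.27} the betweenness attached to $(W,S')$ agrees with the ambient one only once $\Psi'_+$ is known to be biclosed --- which is what you are trying to prove. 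Indeed, for a merely generative quasi-positive system the claim fails: in Example \ref{ss:B2exmp}, with $\delta_0=(s,1)$, $\gamma=(r,1)$, $\gamma_1=s_{\delta_0}(\gamma)=(srs,1)$, one has $\gamma\notin[(s,1),(srs,1)]=\set{(s,1),(srs,1)}$. Your argument also leans on part (a), whereas the paper's proof of (b) is independent of (a): it proves by induction on the length $l'$ in $(W,S')$, via the Deodhar-style factorization $w=xy$ with $y$ in a rank-two subgroup generated by \emph{two elements of} $S'$, that $l'(ws_\alpha)\geq l'(w)$ with $\alpha\in\Delta$ implies $w(\alpha)\in\Psi_+$; this reduces every step to pairs in $\Delta$, where (ii) applies, yields the exchange condition (hence that $(W,S')$ is a Coxeter system), and then identifies $\Psi_+$ with the quasi-positive system of \ref{ss:3.15}. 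Some version of that reduction is what your depth induction is missing.
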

   \begin{proof}   
   First we prove (a). Assume that (i)--(ii) hold.  From (ii), $\Delta\cap -\Delta=\emptyset$. To show $(W,S')$ is a Coxeter system, we shall use a characterization of Coxeter systems in \cite{Gold}.
 Define the (left) $W$-set  $\Omega:=W$ with $W$-action by left translation. We regard  $W$  as a group of permutations of $\widehat{T}$.
 For $\alpha\in \widehat{T}$, define an ``abstract halfspace'' $H_\alpha\subseteq \Omega$ by
 $H_\alpha:=\mset{w\in W\mid \alpha\in w(\Psi_+)}$.  We have $\Omega=H_\alpha\cup H_{-\alpha}$, $H_\alpha\cap H_{-\alpha}=\emptyset$. Also, $1\in \cap_{\alpha\in \Psi_+}H_\alpha$,
 and $w(H_\alpha) =H_{w(\alpha)}$ for all $w\in W$. Further, for $\alpha,\beta\in \Psi$, $H_\alpha\cap H_\beta=\cap_{\gamma\in [\alpha,\beta]}H_\gamma$
 since $w(\Psi_+)$ is closed in $\widehat T$. 
 
 We claim that $X:=\mset{(s_\alpha,H_\alpha)\mid \alpha\in \Delta}$ is a pairwise proper
 system of reflections for $W$ acting on $\Omega$ in the sense of loc cit (but note that we use left group actions instead of right actions as in loc cit). The main result of loc cit is that this implies that $(W,S')$ is a Coxeter system, with $S'=\mset{s_\alpha\mid \alpha\in \Delta}$, and then (a) follows from
 (ii) and \ref{ss:3.15}. According to the definition of system of reflections in loc cit,
  $X$ is a system of reflections since for $\alpha\in \Delta$, 
 $s_\alpha$ is an involution, $H_\alpha\subseteq \Omega$, $H_\alpha\cap s_\alpha(H_\alpha)=\emptyset$, and 
 $H_\Delta:=\cap _{\beta\in \Delta} H_\beta\neq \emptyset$ (the last since $1\in H_\Delta$ by  (ii) and the above). To show $X$ is pairwise proper, fix
 $\alpha\neq \beta$ in $\Delta$. Set $W_{\alpha,\beta}=\mpair{S_{\alpha,\beta}}$
 where $S_{\alpha,\beta}=\set{s_\alpha,s_\beta}$, and let $l_{\alpha,\beta}$ denote the length
 function of the (dihedral) Coxeter system $(W_{\alpha,\beta}, S_{\alpha,\beta})$.
 Set $H_{\alpha,\beta}=H_\alpha\cap H_\beta$, and note it is non-empty.
 According to the definition of pairwise proper system of reflections, we have to show  for all $w\in W_{\alpha,\beta}$, that  either  $w(H_{\alpha,\beta})\subseteq H_\alpha$, or that $w(H_{\alpha,\beta})\subseteq s_\alpha(H_\alpha)$ and $l_{\alpha,\beta}(s_\alpha w)<l_{\alpha,\beta}(w)$.  
 But by (ii), $\mset{\alpha,\beta}$ is an abstract root basis for the abstract root system $\widehat{T_{\alpha,\beta}}=(W_{\alpha,\beta}\cap T)\times\set{\pm 1}$ of  $W_{\alpha,\beta}$. The abstract positive system
 corresponding to $\mset{\alpha,\beta}$ is $\Psi_{\alpha,\beta}=[\alpha,\beta]\cap\widehat{T_{\alpha,\beta}}$.  Hence  by Proposition \ref{ss:3.1},
 for $w\in W$. \begin{equation}\label{eq:5}\mset{\gamma\in \Psi_{\alpha,\beta}\mid l_{\alpha,\beta}(s_\gamma w)<l_{\alpha,\beta}(w)}=\Psi_{\alpha,\beta}\cap w(-\Psi_{\alpha,\beta}).\end{equation}
 Note that from above, $H_{\alpha, \beta}=\cap _{\gamma\in {\Psi_{\alpha,\beta}}}H_\gamma$ and 
 $w(H_{\alpha,\beta})=\cap _{\gamma\in w{\Psi_{\alpha,\beta}}}\,H_\gamma$ . Now
 let $\gamma\in \Psi_{\alpha,\beta}$. If $l_{\alpha,\beta}(s_\gamma w)<l_{\alpha,\beta}(w)$, \eqref{eq:5} gives 
 $-\gamma\in w(\Psi_{\alpha,\beta})$ so $w(H_{\alpha,\beta})\subseteq H_{-\gamma}=s_\gamma(H_\gamma)$. Otherwise, $l_{\alpha,\beta}(s_\gamma w)>l_{\alpha,\beta}(w)$, so \eqref{eq:5} gives 
 $\gamma\in w(\Psi_{\alpha,\beta})$ and $w(H_{\alpha,\beta})\subseteq H_\gamma$. For $\gamma=\alpha$, this is what we had to show to prove that $X$ is pairwise proper.
 
 Next we prove (b), by an argument  independent of (a). The conditions (i)--(iii)  are clearly necessary for $\Delta$ to be an abstract root basis.  Now assume that (i)--(iii) hold.  Let $l'$ be the length function of $(W,S)$. We first claim that
 if $w\in W$ and $\alpha\in \Delta$ with $l'(ws_\alpha)\geq l'(w)$, then $w(\alpha)\in \Psi_+$ (this is an abstract version of a well-known fact \ref{ss:4.3}(b) about  real root systems, and the following proof is essentially the same). The claim is easily checked if $(W,S)$ is dihedral, and we reduce to that case
by induction on $l'(w)$. The claim is trivial if  if $l'(w)=0$.  Otherwise, write $w=w's_\beta$ where $\beta\in \Delta$ and $l'(w')=l(w)-1$.  Necessarily, $\beta\neq \alpha$. Write $w=xy$ where $x\in W$, $y\in \mpair{s_\alpha,s_\beta}$ and $l'(x)$ is minimal. Then $l'(xs_\alpha)\geq l'(x)$, $l'(xs_\beta)\geq l'(x)$ and $l'(x)<l'(w)$. By induction, $x(\alpha)\in \Psi_+$ and $x(\beta)\in \Psi_+$.
 Note that $l''(ys_\alpha)\geq l''(y)$ where $l''$ is the length function of $\mpair{s_\alpha,s_\beta}$ with respect to its Coxeter generators $\set{s_\alpha,s_\beta}$. By the claim for the dihedral case,
 it follows that $y(\alpha)$ is between $\alpha$ and $\beta$ i.e.  $y(\alpha)\in [\alpha,\beta]$.
 Then $w(\alpha)=xy(\alpha)\in [x(\alpha),x(\beta)]$. Since $x(\alpha), x(\beta)\in \Psi_+$ and $\Psi_+$ is closed, $w(\alpha)\in \Psi_+$ as required to prove the claim. From the claim, it follows that
  if $w\in W$ and $\alpha\in \Delta$ with $l'(ws_\alpha)\leq l'(w)$, then $w(\alpha)\in -\Psi_+$.
  It is easy to deduce from this  by standard arguments that $(W,S')$ satisfies the exchange condition and is a Coxeter system (as of course also follows from (a)). Using (ii), Proposition \ref{ss:3.15} and
  its proof imply that $\Psi'_+:=\mset{w(\alpha)\mid w\in W,\alpha\in \Pi, l'(ws_\alpha)\geq l'(w)}$
  is a generative, quasi-positive system with $\Delta$ as its set of simple roots.
  From above, $\Psi'_+\subseteq \Psi_+$. Since $\Psi'_+$ and $\Psi_+$ are both quasi-positive systems, it follows that $\Psi_+=\Psi'_+$. By the definitions,  $\Psi_+$ is a positive system of $\widehat{T}$, and we have seen it has $\Delta$ as its set of simple roots. This completes the proof of (b).
      \end{proof}
   \begin{remark} For $\gamma\in \Psi_+$, one has in the above proof of (a)  above that
   $H_{\epsilon\gamma}=\mset{w\in W\mid \epsilon l'(s_\gamma w)<\epsilon l'(w)}$ for $\epsilon\in \set{\pm 1}$, where $l'$ is the length function of the Coxeter system $(W,S')$. The sets $H_{\gamma}$ for $\gamma\in \widehat T$ are  the sets of chambers of the  ``roots'' of $(W,S')$ as defined in the study of the chamber system attached to $(W,S')$ (see  for example \cite{Bahls}). \end{remark}
   \begin{example} Consider the standard abstract root system
   $\widehat{T}$ of the  Coxeter system $(W,S)$ of type $\tilde{B}_2$ with
   Coxeter graph
   \[ \xymatrix{ {r}\ar@{=}[r]&{s}\ar@{=}[r]&{t.}}\]
   Define the subset $\Delta:=\set{(s,1),(srs,1),(t,1)}$ of $\widehat{T}$  and the corresponding set $S':=\mset{s_\alpha\mid \alpha\in \Delta}=\set{s,srs,t}$ of reflections. Note that $S'$ generates $W$ and that for each distinct  $\alpha,\beta\in \Delta$, $\set{\alpha,\beta}$ is the abstract set of simple roots of some generative quasi-positive system for the standard abstract root system $(\mpair{s_\alpha,s_\beta}\cap T)\times \set{\pm 1}$ of 
   $\mpair{s_\alpha,s_\beta}$. In fact, for $\set{s_\alpha,s_\beta}=\set{s,srs}$ this follows from
   \ref{ss:B2exmp} and for the other pairs, $\mset{\alpha,\beta}=\chi(\mpair{s_\alpha,s_\beta})\times\set{1}$. However, $(W,S')$ is not a Coxeter system, showing the assumptions (ii) in \ref{ss:abchar}(a) can't be  weakened in an obvious way.  If $\vert S\vert$ is finite, the condition (iii) in \ref{ss:abchar}(b) can be replaced by the assumption that $\vert \Delta\vert=\vert S\vert$ (see \ref{ss:abchar2}), but I do not know a proof  of this using  abstract root systems. 
       \end{example}

 \section{Real root  systems} 
   This section describes  the real   reflection representations and
corresponding root systems which we consider in this paper. The results are variants
 of standard facts and can can be proved in a similar way or deduced from the standard versions
(see \cite{Bou}, \cite{Hum}, \cite{Deo}, \cite{Dy2}, \cite{How}), with a few minor differences which we indicate.
\subsection{}  We say a subset $\Pi$ of a real vector space $V$ is positively independent
(resp., strongly positively independent) if $\sum_{\alpha\in \Pi}c_\alpha \alpha=0$ with $c_\alpha\in \real$ almost all non-zero and all $c_\alpha$ non-negative  (resp., and at most one $c_\alpha$ negative) 
implies that all $c_\alpha=0$. Thus, $\Pi$ is positively independent  if it all its elements are non-zero
and for any finite subset of $\Pi$, the set of non-negative linear combination of elements of $\Pi$ is
a pointed polyhedral cone in $V$. Also,  $\Pi$ is  strongly positively independent  if any finite subset of $\Pi$ is a set of representatives of the extreme rays of some pointed polyhedral cone in $V$. 
\subsection{} \label{ss:4.2}
We consider two
$\real$-vector spaces
$V$,
$V'$ with a given  fixed $\real$-bilinear pairing $\mpair{\, ,\,}\colon V\times V'\rightarrow
\real$. For any $\alpha\in V$, $\alpha'\in V$ with
$\mpair{\alpha,\alpha'}=2$,  we let $s_{\alpha,\alpha'}\in \text{\rm GL}(V)$ be
the linear map (pseudoreflection) given by   $v\mapsto v-\pair{v,\alpha'}\alpha$,
and define $s_{\alpha',\alpha}\in \text{\rm GL}(V')$ similarly.
Assume given subsets $\Pi\subseteq V$, $\ck\Pi\subseteq V'$ and a bijection
$\iota\colon \Pi\rightarrow
\ck
\Pi$ denoted $\alpha\mapsto \ck \alpha$ such that $\pair{\alpha,\ck \alpha}=2$ for $\alpha\in \Pi$.
Let $S  :=\mset{s_{\alpha,\ck\alpha}\mid\alpha\in \Pi}$, $W$ denote the subgroup of $\text{\rm GL}(V)$ generated by $S  $,
$\Phi:=\cup_{w\in W}\, w(\Pi)$, and $\Phi_+:=\Phi\cap \sum_{\alpha\in \Pi}\real_{\geq 0}\alpha$.
 Define $S  '$, $W'$, $\ck\Phi$, $\ck\Phi_+$ similarly using $\ck\Pi\subseteq V'$ instead of $\Pi\subseteq V$.

\begin{lemma}\label{ss:4.3}  Define $P:=\mset{4\cos^2\frac{\pi}{m}\mid m\in \Nat_{\geq 2}}\cup[4,\infty)\subseteq\real_{\geq
0}$. Consider the following conditions $\text{\rm (i)--(iii)}$:
\begin{conds}\item $\Phi=\Phi_+\cup(-\Phi_+)$
\item $\ck\Phi=\ck\Phi_+\cup(-\ck\Phi_+)$
\item for $\alpha\neq \beta$ in $\Pi$, we have $\mpair{\alpha,\ck\beta}\leq 0$ and
$c_{\alpha,\beta}:=\mpair{\alpha,\ck\beta}\mpair{\beta,\ck\alpha}\in P$; moreover
$\mpair{\alpha,\ck\beta}=0$ iff $\mpair{\beta,\ck\alpha}=0$  \end{conds}
\begin{num}\item If $\Pi$ is positively independent and $\text{\rm (iii)}$ holds, then $\Pi$ is strongly positively independent.
\item If $\Pi$ and $\ck \Pi$ are strongly positively independent, $\text {\rm (i)--(iii)}$ are equivalent.  In that case, for $w\in W$ and $\alpha\in \Pi$, we have $w(\alpha)\in \Phi_+$ iff $l(ws_\alpha)\geq l(w)$ where $l$ is the length function of $(W,S)$
\end{num}\end{lemma}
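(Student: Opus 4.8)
The plan is to handle (a) by a short pairing argument and to obtain (b) by running the classical construction of the Tits reflection representation (as in \cite[Ch.~V]{Bou}, \cite[Ch.~5]{Hum}, or \cite{Deo}, \cite{Dy2}), paying attention to the two places where the present generality --- possibly non-integral Cartan entries, and positive rather than linear independence --- forces modifications.

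For (a): suppose $\sum_{\alpha\in\Pi}c_\alpha\alpha=0$ with all but finitely many $c_\alpha$ zero and at most one $c_\alpha$ negative. If none is negative, positive independence gives $c_\alpha=0$ for all $\alpha$. Otherwise let $c_\gamma<0$ be the unique negative coefficient and apply $\langle\,\cdot\,,\ck\gamma\rangle$ to the relation, obtaining $2c_\gamma+\sum_{\alpha\ne\gamma}c_\alpha\langle\alpha,\ck\gamma\rangle=0$; here the first term is negative while every remaining summand is $\le0$ by the sign part of (iii) (the condition $c_{\alpha,\beta}\in P$ is not used), a contradiction. So $\Pi$ is strongly positively independent.

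For (b) I would argue as follows. Strong positive independence of $\Pi$ forces any two distinct $\alpha,\beta\in\Pi$ to be linearly independent, so $U_{\alpha,\beta}:=\operatorname{span}\{\alpha,\beta\}$ is a plane stable under $W_{\alpha,\beta}:=\langle s_\alpha,s_\beta\rangle$, on which $s_\alpha s_\beta$ has determinant $1$ and trace $c_{\alpha,\beta}-2$. Assuming (iii), the standard rank-two computation shows that $s_\alpha s_\beta$ has order $m_{\alpha,\beta}$, where $c_{\alpha,\beta}=4\cos^2(\pi/m_{\alpha,\beta})$ when $c_{\alpha,\beta}<4$ and $m_{\alpha,\beta}=\infty$ when $c_{\alpha,\beta}\ge4$, and that in every case (finite, affine, hyperbolic) all roots $w(\alpha),w(\beta)$ with $w\in W_{\alpha,\beta}$ lie in $\pm(\real_{\ge0}\alpha+\real_{\ge0}\beta)$, with $w(\alpha)\in\real_{\ge0}\alpha+\real_{\ge0}\beta$ precisely when $l''(ws_\alpha)\ge l''(w)$, $l''$ the length function of the dihedral Coxeter system $(W_{\alpha,\beta},\{s_\alpha,s_\beta\})$. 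The orders $m_{\alpha,\beta}$ yield a surjection $\widetilde W\twoheadrightarrow W$ from the Coxeter group $\widetilde W$ on the matrix $(m_{\alpha,\beta})$, which then acts on $V$. Still under (iii), I prove by induction on $\widetilde W$-length the key claim that $l(ws_\alpha)\ge l(w)$ (length in $\widetilde W$) implies $w(\alpha)\in\Phi_+$: write $w=w's_\beta$ shorter (so $\beta\ne\alpha$), split $w=uy$ with $y\in W_{\alpha,\beta}$ and $u$ of minimal length in $uW_{\alpha,\beta}$, apply the inductive hypothesis to get $u(\alpha),u(\beta)\in\Phi_+$ and the rank-two fact to get $y(\alpha)\in\real_{\ge0}\alpha+\real_{\ge0}\beta$, and conclude $w(\alpha)=u(y(\alpha))\in\Phi_+$ using convexity of $\sum_{\alpha\in\Pi}\real_{\ge0}\alpha$; this is exactly the argument reproduced abstractly in the proof of Theorem~\ref{ss:abchar}(b). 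Since roots are nonzero, positive independence gives $\Phi_+\cap(-\Phi_+)=\emptyset$, so the key claim forces $\widetilde W$ to act faithfully on $V$; hence $\widetilde W\xrightarrow{\ \cong\ }W$, so $(W,S)$ is a Coxeter system with length function $l$, and applying the claim to $w$ and to $ws_\alpha$ gives the dichotomy $w(\alpha)\in\Phi_+\Longleftrightarrow l(ws_\alpha)\ge l(w)$, whence $\Phi=\Phi_+\cup(-\Phi_+)$, i.e.\ (i). The identical argument with $(V',\ck\Pi)$ in place of $(V,\Pi)$ gives (ii), and the final assertion of (b) is established along the way.

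It remains to prove (i)$\Longrightarrow$(iii) and (ii)$\Longrightarrow$(iii). Here the second modification enters: strong --- not merely --- positive independence of $\Pi$ gives $\big(\sum_{\alpha\in\Pi}\real_{\ge0}\alpha\big)\cap U_{\alpha,\beta}=\real_{\ge0}\alpha+\real_{\ge0}\beta$ for distinct $\alpha,\beta\in\Pi$ (a mixed-sign instance of the definition excludes a larger intersection), so a root lying in $U_{\alpha,\beta}$ belongs to $\Phi_+\cup(-\Phi_+)$ only if it lies in $\pm(\real_{\ge0}\alpha+\real_{\ge0}\beta)$. Assume (i). If $\langle\alpha,\ck\beta\rangle>0$, then $s_\beta(\alpha)=\alpha-\langle\alpha,\ck\beta\rangle\beta$ is a root with coordinates of strictly opposite signs in the basis $\{\alpha,\beta\}$, contradicting (i). If $\langle\alpha,\ck\beta\rangle=0\ne\langle\beta,\ck\alpha\rangle$, then one of $s_\alpha(\beta)$, $s_\beta s_\alpha(\beta)$ is again such a root, contradicting (i). And if $\langle\alpha,\ck\beta\rangle,\langle\beta,\ck\alpha\rangle\le0$ but $c_{\alpha,\beta}\in(0,4)\setminus P$, then $s_\alpha s_\beta$ restricts on $U_{\alpha,\beta}$ to a rotation through an irrational multiple of $\pi$, so its orbit of $\alpha$ is dense in $U_{\alpha,\beta}$ and meets the complement of $\pm(\real_{\ge0}\alpha+\real_{\ge0}\beta)$, once more contradicting (i). Hence (iii) holds; the proof of (ii)$\Longrightarrow$(iii) is identical with $(V,\Pi)$ and $(V',\ck\Pi)$ interchanged, using that $c_{\alpha,\beta}$ is symmetric in $\alpha,\beta$. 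I expect the main obstacle to be the rank-two bookkeeping under (iii): checking that determinant $1$ and trace $c_{\alpha,\beta}-2$ with $c_{\alpha,\beta}\in P$ really pin down the order of $s_\alpha s_\beta$, and that the dihedral orbits remain inside $\pm(\real_{\ge0}\alpha+\real_{\ge0}\beta)$ uniformly over the finite, affine and hyperbolic cases; the rest is the standard induction-on-length machinery.
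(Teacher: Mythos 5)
Your part (a) is exactly the paper's argument (pair the relation against $\ck\gamma$ and use the sign condition in (iii)), and your overall plan for (b) --- verify everything in the dihedral case and then reduce to rank two by the Deodhar-style induction on length, which is the argument abstracted in the proof of Theorem \ref{ss:abchar}(b) --- is precisely the route the paper takes, so the architecture is fine.

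However, there is a genuine error in your explicit rank-two verification of (i)$\Rightarrow$(iii), in the case $\mpair{\alpha,\ck\beta},\mpair{\beta,\ck\alpha}\le 0$ with $c_{\alpha,\beta}\in(0,4)\setminus P$. You claim that $c_{\alpha,\beta}\notin P$ forces $s_\alpha s_\beta$ to act on $U_{\alpha,\beta}$ as a rotation through an irrational multiple of $\pi$, and you then invoke density of the orbit. This is false: $s_\alpha s_\beta$ has eigenvalues $e^{\pm i\theta}$ with $c_{\alpha,\beta}=4\cos^2(\theta/2)$, and membership of $c_{\alpha,\beta}$ in $P$ corresponds to $\theta=2\pi/m$, not to rationality of $\theta/\pi$. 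For example $c_{\alpha,\beta}=4\cos^2(2\pi/5)\approx 0.38$ lies in $(0,4)\setminus P$ (every $4\cos^2(\pi/m)$ with $m\ge 3$ is $\ge 1$), yet $\theta=4\pi/5$ and $s_\alpha s_\beta$ has order $5$, so the orbit of $\alpha$ is finite and certainly not dense, and your argument gives nothing in this case. The conclusion is still true --- when $\theta=2k\pi/n$ with $k\ge 2$ coprime to $n$, the $2n$ rays through the $W_{\alpha,\beta}$-orbit of $\set{\alpha,\beta}$ are spaced so that some root has coordinates of strictly opposite signs in the basis $\set{\alpha,\beta}$ (e.g.\ with $\alpha,\beta$ realized at angle $\pi-\theta$ apart, a suitable power of $s_\alpha s_\beta$ applied to $\alpha$ lands in the open gap between the cone and its negative) --- but this has to be exhibited by the direct dihedral computation, which is exactly the calculation the paper delegates to the description of dihedral root systems in \cite{Dy2} via \ref{ss:4.8}. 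So you need to replace the irrationality/density step by that explicit finite-order analysis (keeping the density argument only for genuinely irrational $\theta/\pi$); the remaining cases of (i)$\Rightarrow$(iii), the cone-intersection lemma from strong positive independence, and the (iii)$\Rightarrow$(i),(ii) induction are all correct as you wrote them.
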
 
\begin{proof} For (a), if $x=\sum_{\beta\in \Pi} c_\beta \beta=0$ with $c_\alpha<0$ and $c_\beta\geq 0$ for $\beta\neq \alpha$, then $0=\mpair{x,\ck\alpha}<0$, a contradiction. Part (b) can be  checked
by direct calculation in the dihedral case $\vert \Pi\vert=2$ (the calculations in general easily reduce to those in \cite{Dy2}, see \ref{ss:4.8}). The proof of (b) in general  is by reduction  to the dihedral case by a standard argument
\cite[Proposition 2.1]{Deo} (cf.  the proof of Theorem \ref{ss:abchar}(b)).\end{proof}
\begin{remark} See \cite{Jon} for other results in which  the  set $P$ (or $P\setminus \set{0}$)
naturally appear.\end{remark}

\begin{proposition} \label{ss:4.4} Assume $\Pi$, $\ck\Pi$ are positively independent and that the conditions 
$\text{\rm  \ref{ss:4.3}(i)--(iii)}$  hold.  
  For $\alpha,\beta\in\Pi$, define
$m_{\alpha,\beta}=1$ if
$\alpha=\beta$,
$m_{\alpha,\beta}=\infty$ if $c_{\alpha,\beta}\geq 4$ and
$m_{\alpha,\beta}=m$ if $c_{\alpha,\beta}=4\cos^2\frac{\pi}{m}$ with $m\in \Nat_{\geq 2}$.
Then 
\begin{num}\item $(W,S  )$ and $(W',S  ')$ are isomorphic Coxeter systems with Coxeter matrix $(m_{\alpha,\beta})_{\alpha,\beta\in \Pi}$,
an isomorphism being given by
$\theta:s_{\alpha,\ck\alpha}\mapsto s_{\ck\alpha,\alpha}$ for $\alpha\in \Pi$. 
\item Regarding $\theta$ as an identification, we have
$\mpair{w\alpha,\beta}=\mpair{\alpha,w^{-1}\beta}$ (i.e. the representation of $W$ on $V$ and $V'$ are ``contragredient''). 
\item The bijection $\iota\colon \Pi\mapsto\ck\Pi$ extends to  a $W$-equivariant bijection $\hat\iota\colon \Phi\mapsto \ck \Phi$, 
which we  still denote as  $\alpha\mapsto \ck\alpha$, and which restricts to a bijection
 $\Phi_+\rightarrow \ck\Phi_+$. Further, $w(\alpha)=c\beta$
with $w\in W$,  $\alpha,\beta\in \Phi$, $c\in \real$ implies $w(\ck \alpha)=c^{-1}\ck\beta$.
\end{num}\end{proposition}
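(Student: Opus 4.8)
The plan is to follow the standard theory of Coxeter groups attached to root bases (\cite{Bou}, \cite{Hum}, \cite{Deo}, \cite{Dy2}), adapting it to the present (possibly non-integral, non-reduced, possibly degenerate) setting and flagging the places where non-reducedness or degeneracy of the pairing forces extra care. First I would note that hypothesis \ref{ss:4.3}(iii) is symmetric under interchanging the two sides of the pairing (and $c_{\ck\alpha,\ck\beta}=c_{\alpha,\beta}$), so Lemma \ref{ss:4.3}(a) applies both to $\Pi\subseteq V$ and to $\ck\Pi\subseteq V'$; hence $\Pi$ and $\ck\Pi$ are both strongly positively independent and Lemma \ref{ss:4.3}(b) becomes available. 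In particular the sign property holds: for $w\in W$, $\alpha\in\Pi$, one has $w(\alpha)\in\Phi_+$ iff $l(ws_{\alpha,\ck\alpha})\geq l(w)$ (with $l$ the length function of $(W,S)$), and likewise for $(W',S')$.

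For part (a): distinct $\alpha,\beta\in\Pi$ are non-proportional (a proportionality would give a positive dependence relation), so the $(-1)$-eigenlines of $s_{\alpha,\ck\alpha}$ and $s_{\beta,\ck\beta}$ are distinct and $\alpha\mapsto s_{\alpha,\ck\alpha}$ is a bijection $\Pi\to S$. From the sign property I would deduce, by the standard argument (as in the proof of Theorem \ref{ss:abchar}(b)), that $(W,S)$ satisfies the exchange condition, hence is a Coxeter system. Its Coxeter matrix entry at $(\alpha,\beta)$ is the order of $s_{\alpha,\ck\alpha}s_{\beta,\ck\beta}$, to be computed inside the rank-two standard parabolic $\mpair{s_{\alpha,\ck\alpha},s_{\beta,\ck\beta}}$ by restricting to the two-dimensional span of $\alpha,\beta$: there $s_{\alpha,\ck\alpha}s_{\beta,\ck\beta}$ has a matrix of trace $c_{\alpha,\beta}-2$ and determinant $1$, which gives order $m$ when $c_{\alpha,\beta}=4\cos^{2}\frac{\pi}{m}$ and infinite order when $c_{\alpha,\beta}\geq 4$; a routine supplementary check (using that $s_{\alpha,\ck\alpha}s_{\beta,\ck\beta}-\Id$ factors through $v\mapsto(\langle v,\ck\alpha\rangle,\langle v,\ck\beta\rangle)$) shows the order on all of $V$ is unchanged. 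Hence $(W,S)$ has Coxeter matrix $(m_{\alpha,\beta})$; by the symmetric argument so does $(W',S')$, and since both groups then carry the Coxeter presentation with the same Coxeter matrix (transported via $\iota$), the assignment $s_{\alpha,\ck\alpha}\mapsto s_{\ck\alpha,\alpha}$ extends to an isomorphism $\theta$.

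For part (b): a one-line expansion of both sides gives $\langle s_{\alpha,\ck\alpha}(v),v'\rangle=\langle v,s_{\ck\alpha,\alpha}(v')\rangle$ for all $v\in V$, $v'\in V'$, i.e. the asserted identity holds on the generators of $W$; since $\theta$ is now at hand, induction on the word length of $w$ in $S$ upgrades this to $\langle w(v),v'\rangle=\langle v,\theta(w)^{-1}(v')\rangle$ for all $w\in W$, which is the contragredience assertion. In particular $\langle w(v),\theta(w)(v')\rangle=\langle v,v'\rangle$.

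For part (c): combining (b) with the pseudoreflection formula gives $ws_{\alpha,\ck\alpha}w^{-1}=s_{w(\alpha),\theta(w)(\ck\alpha)}$ with $\langle w(\alpha),\theta(w)(\ck\alpha)\rangle=2$, so $(w(\alpha),\theta(w)(\ck\alpha))$ is a legitimate root--coroot pair, and one sets $\hat\iota(w(\alpha)):=\theta(w)(\ck\alpha)\in\ck\Phi$ for $w\in W$, $\alpha\in\Pi$. The step I expect to be the main obstacle is well-definedness: that $w(\alpha)=w_{1}(\alpha_{1})$ (with $\alpha,\alpha_{1}\in\Pi$) forces $\theta(w)(\ck\alpha)=\theta(w_{1})(\ck\alpha_{1})$. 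This is precisely where non-reducedness (and possible degeneracy of the pairing) genuinely intervenes; I would settle it as in \cite{Dy2}, using the sign property to show that the reflection $ws_{\alpha,\ck\alpha}w^{-1}$ depends only on the root $w(\alpha)$, together with the same argument run on the $V'$-side to pin the distinguished coroot down exactly (not merely modulo the radical of the pairing). Granting this, $\hat\iota\colon\Phi\to\ck\Phi$ is a well-defined extension of $\iota$, $W$-equivariant by construction; it restricts to a bijection $\Phi_{+}\to\ck\Phi_{+}$ because the sign property holds on both sides and $\theta$ identifies $(W,S)$ with $(W',S')$; it is bijective since the analogous construction from $\ck\Pi$ furnishes a two-sided inverse; and the rescaling clause $w(\alpha)=c\beta\Rightarrow w(\ck\alpha)=c^{-1}\ck\beta$ follows from compatibility of $\hat\iota$ with reflections, since $s_{c\beta,\mu}=s_{\beta,\ck\beta}$ forces $\mu=c^{-1}\ck\beta$.
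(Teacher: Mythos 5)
Your parts (a) and (b) are fine and are essentially what the paper itself treats as ``standard'' (the paper happens to derive them after (c), but your order --- exchange condition via Lemma \ref{ss:4.3}(b), the rank-two order computation, then contragredience on generators extended along $\theta$ --- works and introduces no circularity). The genuine gap is exactly at the point you yourself flag as the main obstacle in (c): the well-definedness of $\hat\iota$, i.e.\ that $w(\alpha)=w_1(\alpha_1)$ forces $\theta(w)(\ck\alpha)=\theta(w_1)(\ck{\alpha_1})$. Your proposed resolution --- ``as in \cite{Dy2}, using the sign property to show that the reflection $ws_{\alpha,\ck\alpha}w^{-1}$ depends only on the root $w(\alpha)$'' --- is not an argument in the present setting. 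In \cite{Dy2} the reflection in a root is defined intrinsically from the root via the symmetric bilinear form, so the issue you must settle simply does not arise there and there is no lemma to import; and the sign property of \ref{ss:4.3}(b) by itself says nothing about two different pairs $(w,\alpha)$, $(w_1,\alpha_1)$ producing the same vector. In fact, granting (b), the statement ``the operator $ws_{\alpha,\ck\alpha}w^{-1}$ depends only on $w(\alpha)$'' already pins the coroot down exactly: if $ws_{\alpha,\ck\alpha}w^{-1}=w_1s_{\alpha_1,\ck{\alpha_1}}w_1^{-1}$, then transporting by $\theta$ and using contragredience, both sides act on $V'$ by $v'\mapsto v'-\mu\,\mpair{\gamma,v'}$ with the same $\gamma=w(\alpha)$, and $\mpair{\gamma,\theta(w)(\ck\alpha)}=2\neq 0$ forces $\theta(w)(\ck\alpha)=\theta(w_1)(\ck{\alpha_1})$ on the nose, radical or no radical. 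So the claim you propose to ``use'' is essentially equivalent to the well-definedness you are trying to prove, and invoking it without proof is circular.

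What is actually needed --- and what constitutes the whole content of the paper's proof --- is an induction on $l(w)$ proving the stronger, rescaled statement: $w(\alpha)=c\beta$ with $\alpha,\beta\in\Pi$, $c>0$, implies $\theta(w)(\ck\alpha)=c^{-1}\ck\beta$. The rescaled form is forced on you by non-reducedness: inside a non-reduced dihedral subsystem an element can send $\alpha$ to a proper positive multiple of a simple root, so an induction passing through rank-two subgroups cannot be closed with the $c=1$ statement alone (and it is also what the final clause of (c) asserts). The induction is Deodhar-style as in \cite[Proposition 2.1]{Deo}: choose $\gamma\in\Pi$ with $l(ws_\gamma)<l(w)$, write $w=w'w''$ with $w''\in\mpair{s_\alpha,s_\gamma}$, $l(w)=l(w')+l(w'')$ and $l(w')$ minimal; then $w''(\alpha)=p\alpha+q\gamma$ with $p,q\geq 0$ and $w'(\alpha),w'(\gamma)\in\Phi_+$, and \emph{strong} positive independence of $\Pi$ (Lemma \ref{ss:4.3}(a)) forces at most one of $p,q$ to be nonzero; one then concludes by induction once the rescaled coroot statement has been verified directly for (possibly non-reduced) dihedral based root data, e.g.\ via the rescaling to a symmetric NGCM in \ref{ss:4.8}. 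Neither this inductive mechanism nor the dihedral base case appears in your sketch, so the central step of (c) is left unproven.
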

\begin{proof}  In (c) in the most common situations,
one  necessarily has  $c=1$ and the result is either trivial (as in \cite{How}) or  the proof uses extra structure
not present here,
as in \cite{Kac}. 
In general, one can proceed as follows. It suffices to show that $w(\alpha)=c\beta$
with $w\in W$,  $\alpha,\beta\in \Pi$, $c\in \real_{>0}$ implies $w(\ck \alpha)=c^{-1}\ck\beta$.
This can be  directly checked in the dihedral case (cf \ref{ss:4.8} again), and then  the general case
reduces to the result in    the dihedral case  by an argument similar to the proof of  \cite[Proposition 2.1]{Deo}.
In fact, if $w\neq 1$, write $w=ws_\gamma$ where $\gamma\in \Pi$ and $l(ws_\gamma)<l(w)$. 
Then write $w=w'w''$ where $w''\in \mpair{s_\alpha,s_\gamma}$ and $l(w)=l(w')+l(w'')$, and $l(w')$ is minimal amongst all such expressions.
We have $l(w''s_\alpha)>l(w'')$, so $w''(\alpha)=p\alpha+q\gamma$ for some $p,q\in \real_{\geq 0}$.
Also, $l(w's_\alpha)\geq l(w)$ and $l(w's_\gamma)\geq l(w')$, so $w'(\alpha), w'(\gamma)\in \Phi_+$.
Now \[\Pi\ni \beta=c^{-1}w'w''(\alpha)=c^{-1}pw'(\alpha)+c^{-1}qw'(\beta),\] which  implies that at most one of $p$, $q$ is non-zero. One easily completes the argument using the result for the dihedral case and induction on $l(w)$. Once one has (c) available, one may abbreviate $s_{\alpha,\ck \alpha}$ as $s_\alpha$ and note $s_{w(\alpha)}=ws_\alpha w^{-1}$, and 
the remainder of the  proofs of (a) and (b) are then standard.\end{proof}

\subsection{}\label{ss:4.5} Maintain the assumptions of \ref{ss:4.2}-\ref{ss:4.4}. The reflection $s_{\alpha,\ck\alpha}\in \text{\rm GL}(V)$ in a root $\alpha\in \Phi$ or  the reflection $s_{\ck\alpha,\alpha}\in \text{\rm GL}(V')$ in the corresponding
coroot $\ck \alpha$ will be denoted just
$s_\alpha$ and regarded as an element of $W$.
 Then $s_{w(\alpha)}=ws_\alpha w^{-1}$,
 so the set of reflections of $(W,S)$ is  $T:=\mset{s_\alpha\mid \alpha\in \Phi^+ }=\mset{ws w^{-1}\mid w\in W, r\in S  }$.
 Using \ref{ss:4.3}(c), one sees that for $\alpha,\beta\in \Phi$, one has $s_\alpha=s_\beta$ iff $\alpha=c\beta$ for some $c\in \real_{\neq 0}$.

We  say that the tuple
$E=(\mpair{ ?,?}\colon V\times
V'\rightarrow
\real, \hat \iota\colon \Phi\rightarrow \ck\Phi)$ is a  root datum
 and that  
 $B=(\mpair{ ?,?}\colon V\times
V'\rightarrow
\real, 
\iota\colon \Pi\rightarrow \ck\Pi)$ is a based   root datum, with $E$ as underlying root datum. 
Note that $E$ is determined by $B$,  while $B$ is determined by $E$ and the subset $\Pi$ of $\Phi$, since
$\iota$ is given by restriction of $\hat\iota$. We call a subset $\Pi$ of $\Phi$  a root basis 
of $E$ if it arises from some based root datum $B$ with underlying root datum $E$ in this way.  As usual, we  call $\Pi\subseteq \Phi_+\subseteq \Phi$
the simple roots, positive roots and roots  respectively, and  
$\ck\Pi\subseteq \ck\Phi_+\subseteq \ck\Phi$  the simple coroots, positive
coroots and coroots, respectively. The matrix 
$\mpair{\alpha,\ck \beta}_{\alpha, \beta\in \Pi}$ will be called the 
possibly non-integral generalized Cartan matrix (NGCM) of $B$. The NGCM's with entries in $\Int$ and finite index sets are precisely the generalized Cartan matrices (GCMs) of \cite{Kac}).

\subsection{} \label{ss:4.6}We say that a based root datum as above is a standard based root datum
if  $V= V'$, $\mpair{?,?}$ is a symmetric bilinear form on $V$,
$\Pi=\ck  \Pi$, and  $\iota=\text{\rm Id}_\Pi$. 
Every Coxeter system is isomorphic to a Coxeter system $(W,S)$ associated to
a standard root datum with $\Pi$ and $\ck \Pi$ linearly independent. The class of standard based root datums
affords the same  class of root systems and reflection representation of Coxeter groups as   considered in \cite{How}. 

\subsection{}\label{ss:4.7} We say that a root system (or a corresponding root datum or based root datum) datum  is reduced if 
for for any $\alpha\in \Phi$ and $c\in \real$ with $c\alpha\in \Phi$, one has $c=\pm 1$
i.e. if $\Phi_+$ is the set of positive roots associated to some chosen root basis $\Pi$, then the map $\alpha\mapsto s_\alpha\colon \Phi\rightarrow T$ is a bijection. The root datum is reduced iff for
any root basis $\Pi$, one has $\mpair{\alpha,\ck\beta}=\mpair{\beta,\ck\alpha}$ for all $\alpha,\beta\in \Pi$ with $m_{\alpha,\beta}$ finite and odd (by \ref{ss:3.14a}, the proof of this reduces  to the case of  dihedral Coxeter systems, where it follows by  a simple computation using the remarks at the end of \ref{ss:4.8}). In particular, a root datum 
for which the NGCM is symmetric or is a GCM is reduced in this sense.  

\subsection{}\label{ss:4.8} Let $B$ be a based root datum with simple roots $\Pi$, and $c_\alpha$ for $\alpha\in \Pi$
be non-negative scalars. Then there is a new based datum
 $B'=(\mpair{ ?,?}\colon V\times
V'\rightarrow
\real, \hat \iota'\colon \Phi'\rightarrow \ck{\Phi'},
\iota'\colon \Pi'\rightarrow \ck{\Pi'})$ where $\Pi'=\mset{c_\alpha\alpha\mid \alpha\in \Pi}$ and
$\iota'(c_\alpha \alpha)=c_\alpha^{-1}\iota(\alpha)$. We say $B'$ is obtained by rescaling
$B$ (or by rescaling $\Pi$). If $B'$ can be chosen so as to have a symmetric NGCM, we say $B$ is symmetrizable.

It is easy to see that there are root datums which can not be rescaled to any reduced root datum
(so in particular,  they are not symmetrizable). However, if the Coxeter graph of $(W,S)$ is a tree, then any root datum is symmetrizable. In particular, this applies if $(W,S)$ is dihedral. 
Since the root systems of dihedral groups with symmetric NGCM are described in \cite{Dy2}, one  obtains
easily a description of  root systems of arbitrary based root datums affording dihedral Coxeter systems.

 The following fact  will play an
important role in the proof of the main result of this paper. 
 \begin{theorem}\label{ss:4.9}
\begin{num}
\item  Let
$W'$ be any reflection subgroup of
$W$. There is a  based root datum
$B'=(\mpair{?,?}\colon V\times
V'\rightarrow
\real,\iota'\colon \Pi'\rightarrow \ck{\Pi'})$
 with associated Coxeter system $(W',S')$  such that $\Pi'\subseteq \Phi_+$ and $\iota'$ is the
restriction of $\hat\iota\colon \Phi\rightarrow \ck{\Phi}$. 
\item For any $B'$ satisfying $\text{\rm (a)}$,
 $S'=\chi(W')$. Hence $B'$ is unique up to rescaling, and it is unique if $B$ is a reduced root datum.
\item A subset $\Pi'$ of $\Phi_+$ arises  as in \text{\rm (a)} from some  reflection subgroup $W'$ iff
the conditions of $\text{\rm  \ref{ss:4.3}(iii)}$  hold for all $\alpha\neq \beta\in \Pi'$.
Further, if these conditions hold, $W'=\mpair{s_\alpha\mid \alpha\in \Pi'}$.
\end{num}\end{theorem}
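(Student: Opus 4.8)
The plan is to prove part (c) first, since it is the engine, then deduce part (a) by producing an explicit root basis out of $\chi(W')$, and finally part (b) by a positivity argument. For (c), the ``only if'' direction is immediate: a based root datum satisfies the standing hypotheses \ref{ss:4.3}(iii) for distinct simple roots by definition. For the ``if'' direction, assume $\Pi'\subseteq\Phi_+$ satisfies \ref{ss:4.3}(iii) for all $\alpha\neq\beta$ in $\Pi'$. First I would check that $\Pi'$ and $\ck{\Pi'}:=\hat\iota(\Pi')$ are positively independent: a short argument (write each element of $\Pi'$ as a nonnegative combination of $\Pi$ and compare coefficients) shows that any subset of the pointed cone $\sum_{\alpha\in\Pi}\real_{\geq0}\alpha$ not containing $0$ is positively independent, and for $\ck{\Pi'}$ one uses that $\hat\iota$ carries $\Phi_+$ into $\ck\Phi_+\subseteq\sum\real_{\geq0}\ck\Pi$ by \ref{ss:4.4}(c). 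By \ref{ss:4.3}(a) this upgrades to \emph{strong} positive independence once \ref{ss:4.3}(iii) is in hand (which, being symmetric in roots and coroots, also holds for $\ck{\Pi'}$), so \ref{ss:4.3}(b) and \ref{ss:4.4} apply: since $\mpair{\alpha,\ck\alpha}=2$ for $\alpha\in\Phi$, the tuple with underlying pairing, simple roots $\Pi'$ and $\iota'=\hat\iota|_{\Pi'}$ satisfies \ref{ss:4.3}(i)--(iii) and is a based root datum with $(W',S')$ a Coxeter system, $W'=\mpair{s_\alpha\mid\alpha\in\Pi'}$, $S'=\mset{s_\alpha\mid\alpha\in\Pi'}$. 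Finally $\Phi'=\cup_{w\in W'}w(\Pi')\subseteq\Phi$ and $\Phi'_+=\Phi'\cap\sum_{\alpha\in\Pi'}\real_{\geq0}\alpha\subseteq\Phi_+$, so $\Pi'$ arises as in (a), and $W'=\mpair{s_\alpha\mid\alpha\in\Pi'}$ as asserted.

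For part (a), given a reflection subgroup $W'$ I would set $S':=\chi(W')$, which by \ref{ss:3.7} is a Coxeter generating set of $W'$ inside $T$, pick for each $r\in S'$ a root $\alpha_r\in\Phi_+$ with $s_{\alpha_r}=r$, and put $\Pi':=\mset{\alpha_r\mid r\in S'}$; by part (c) it suffices to verify \ref{ss:4.3}(iii) for each pair $\alpha_r\neq\alpha_s$ with $r\neq s$ in $S'$. Here I would first note $\chi(\mpair{r,s})=\set{r,s}$: since $\set{r,s}\subseteq\chi(W')$, $N(r)\cap\mpair{r,s}\subseteq N(r)\cap W'=\set{r}$ and likewise for $s$, so $r,s$ are the canonical generators of the dihedral reflection subgroup $D:=\mpair{r,s}$. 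The verification is then a rank-two statement. One observes that $\Phi_D:=\mset{\gamma\in\Phi\mid s_\gamma\in D}$ lies in the plane $U:=\real\alpha_r+\real\alpha_s$ (the $(-1)$-eigenline $\real\gamma$ of each $s_\gamma$, $\gamma\in\Phi_D$, lies in $U$ because $D$ stabilizes $U$ and acts faithfully on it: a normal subgroup of $D$ acting trivially on $U$ contains no reflection, hence lies in the rotation subgroup, but a nontrivial finite-order element acting trivially on both $U$ and $V/U$ is impossible); that $\Phi_D\cap\Phi_+$ generates the pointed cone $\real_{\geq0}\alpha_r+\real_{\geq0}\alpha_s$, with $\alpha_r,\alpha_s$ on its extreme rays (this uses $N(r)\cap D=\set{r}$, $N(s)\cap D=\set{s}$); and hence that $\set{\alpha_r,\alpha_s}$ is, up to rescaling, the simple system of a dihedral based root datum affording $D$. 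Conditions \ref{ss:4.3}(iii) for $(\alpha_r,\alpha_s)$ and the equality $m_{\alpha_r,\alpha_s}=$ (order of $rs$) then follow from the explicit description of dihedral root systems recalled in \ref{ss:4.8} (see \cite{Dy2}), reducing the general case to the dihedral one by the standard argument of \cite[Prop.\ 2.1]{Deo} as used for \ref{ss:4.3}(b) and \ref{ss:4.4}. Part (c) now gives a based root datum with $\mpair{s_\alpha\mid\alpha\in\Pi'}=\mpair{S'}=W'$ and $\mset{s_\alpha\mid\alpha\in\Pi'}=S'=\chi(W')$, as required.

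For part (b), let $B'$ satisfy (a), with Coxeter system $(W',S')$, $S'=\mset{s_\alpha\mid\alpha\in\Pi'}$, and positive roots $\Phi'_+\subseteq\Phi_+$ (the inclusion as in (c)). To see $S'=\chi(W')$, I would take $\alpha\in\Pi'$ and $t=s_\gamma\in N(s_\alpha)\cap W'$ with $\gamma\in\Phi'_+$; then $s_\alpha(\gamma)\in-\Phi_+$, but as a simple reflection of $B'$, $s_\alpha$ maps $\Phi'_+\setminus\real\alpha$ into $\Phi'_+\subseteq\Phi_+$, so $\gamma\in\real_{>0}\alpha$ and $t=s_\alpha$; thus $N(s_\alpha)\cap W'=\set{s_\alpha}$, i.e.\ $S'\subseteq\chi(W')$, and since $S'$ and $\chi(W')$ both generate $W'$ and $(W',\chi(W'))$ is a Coxeter system, \ref{ss:mingen} forces $S'=\chi(W')$. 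Uniqueness up to rescaling then follows: any two based root data as in (a) for the same $W'$ have the same simple reflections $\chi(W')$, so their simple roots differ root-by-root by a positive scalar (as $s_\gamma=s_\delta$ iff $\gamma,\delta$ are proportional, \ref{ss:4.5}), and \ref{ss:4.4}(c) forces the simple coroots to match accordingly; if $B$ is reduced, $\alpha\mapsto s_\alpha$ is injective on $\Phi$, so there is no scaling freedom and $B'$ is unique.

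The hard part will be the rank-two input in (a): that the chosen positive roots $\alpha_r,\alpha_s$, being on the extreme rays of $\mathrm{cone}(\Phi_D\cap\Phi_+)$, satisfy \ref{ss:4.3}(iii) with the \emph{correct} value of $m_{\alpha_r,\alpha_s}$. The delicate point is that extremality must force $c_{\alpha_r,\alpha_s}\in P$ rather than merely $c_{\alpha_r,\alpha_s}=2+2\cos(2\pi j/m)$ for some $j$ coprime to $m$, and that $\Phi_D$ may properly contain the $D$-orbit of $\set{\alpha_r,\alpha_s}$ when the ambient datum is non-reduced; it is precisely here that one leans on the classification of dihedral root systems of \cite{Dy2} via \ref{ss:4.8} instead of giving a self-contained computation.
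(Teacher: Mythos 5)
The paper's own proof of \ref{ss:4.9} is essentially a citation: the standard (symmetric) case is proved in \cite{Dy2} and \cite{Dy0}, and either proof is asserted to extend mutatis mutandis using Lemma \ref{ss:4.3}. Your proposal reconstructs that extension: part (c) from \ref{ss:4.3}--\ref{ss:4.4} together with positive independence inherited from the cone over $\Pi$, part (b) from the fact that a simple reflection of $B'$ permutes $\Phi'_+$ off the ray through its simple root plus \ref{ss:mingen}, and part (a) by reduction to dihedral reflection subgroups, with the rank-two kernel deferred to \cite{Dy2} --- which is exactly where the paper defers. Parts (b) and (c) are essentially right as sketched (in (b), the permutation property you invoke itself needs the strong positive independence of $\Pi'$ furnished by \ref{ss:4.3}(a), but that is routine).

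Two points in (a) need repair. First, your faithfulness argument fails when $D=\mpair{r,s}$ is infinite dihedral: you correctly place the kernel $K$ of the action of $D$ on $U$ inside the rotation subgroup $\mpair{rs}$, but then dismiss it on the grounds that a nontrivial \emph{finite-order} element acting trivially on $U$ and on $V/U$ is impossible; in the infinite case $\mpair{rs}$ has elements of infinite order, and a nontrivial unipotent $(rs)^n$ (with $((rs)^n-1)^2=0$) is not excluded by that argument --- ruling out precisely this possibility (a finite rotation on $U$ while $rs$ has infinite order in $W$) is part of what the theorem asserts, so the step as written is circular territory. Fortunately faithfulness is not needed: $\Phi_D:=\mset{\gamma\in\Phi\mid s_\gamma\in D}\subseteq U$ follows from \ref{ss:3.7}, since $D\cap T$ is the set of $D$-conjugates of $\chi(D)=\set{r,s}$, each such conjugate is $s_{w(\alpha_r)}$ or $s_{w(\alpha_s)}$ with $w(\alpha_r),w(\alpha_s)\in U$, and $s_\gamma=s_\delta$ forces proportionality by \ref{ss:4.5}. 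Second, the decisive claim --- that $N(r)\cap D=\set{r}$ and $N(s)\cap D=\set{s}$ force $\alpha_r,\alpha_s$ to lie on extreme rays of the cone on $\Phi_D\cap\Phi_+$ and then to satisfy \ref{ss:4.3}(iii) with $c_{\alpha_r,\alpha_s}\in P$ --- is asserted in a parenthesis and then referred to \ref{ss:4.8}; but \ref{ss:4.8} describes root systems of dihedral \emph{based root data}, and it cannot be applied to $\Phi_D$ until one knows that $\Phi_D$, with $\set{\alpha_r,\alpha_s}$ as basis, arises from such a datum --- which is exactly the dihedral case of \ref{ss:4.9}(a). That plane analysis is the actual content of \cite{Dy2} and \cite{Dy0}; you should either import it as such (the theorem on canonical generators of dihedral reflection subgroups, extended via \ref{ss:4.3} as the paper indicates) or supply the argument, rather than citing the classification recalled in \ref{ss:4.8}.
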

\begin{proof} In the case of standard based root datums, this is proved 
in \cite{Dy2} and \cite{Dy0}. Either  proof extends 
mutatis mutandis to the more general  situation here, using Lemma \ref{ss:4.3}. \end{proof}

In the above setting, we call $B'$ a based root subdatum of $B$ corresponding to $W'$.
\section{Comparison of real and abstract root systems}

\begin{proposition}\label{ss:5.1}  Let $B=(\mpair{ ?,?}\colon V\times
V'\rightarrow
\real, 
\iota\colon \Pi\rightarrow \ck\Pi)$ be a based   root datum with associated Coxeter system
$(W,S)$, and $(\widehat T,F)$ be the standard abstract root system of $(W,S)$. 
Let  $F'\colon \Phi\rightarrow \text{\rm Sym}(\Phi)$ be defined by $F'(\alpha)= (s_\alpha)_{\vert \Phi}$
(the restriction of $s_\alpha$ to $\Phi$).
\begin{num}\item 
  $(\Phi,F')$ is an abstract root system with $\Phi_+$ as generative quasi-positive system and  with associated Coxeter system naturally isomorphic to $(W,S)$. 
\item There is   a surjective $W$-equivariant map $\theta\colon \Phi\rightarrow \widehat T$ determined by
$\alpha\mapsto (s_\alpha,\epsilon)$ for $\alpha\in \epsilon \Phi_+$ and $\epsilon\in \set{\pm 1}$. Further, $\theta$ determines  an isomorphism
$(\Phi,F')\cong (\widehat T,F)$ iff $\Phi$ is reduced. 
\item Let $\alpha,\beta,\gamma\in \Phi$. Then 
$\gamma\in \real_{\geq 0}\alpha+\real_{\geq 0}\beta$ iff $\theta(\gamma)$ is between  $\theta(\alpha)$ and $\theta(\beta) $. \end{num} \end{proposition}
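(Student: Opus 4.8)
The plan is to prove the three parts of Proposition \ref{ss:5.1} in order, with each building on the earlier ones, and with the bulk of the real work being a reduction to the dihedral (rank two) case.

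\textbf{Part (a).} First I would verify that $(\Phi, F')$ satisfies the two axioms of a quasi-root system from \ref{ss:2.1}. The sign action on $\Phi$ is $\alpha \mapsto -\alpha$ (negation in $V$); it is fixed-point-free since $0 \notin \Phi$, and $s_\alpha(\alpha) = \alpha - \langle \alpha, \check\alpha\rangle \alpha = -\alpha$ by the definition of the pseudoreflection and $\langle\alpha,\check\alpha\rangle = 2$, while $s_{-\alpha} = s_\alpha$ because $s_{c\alpha} = s_\alpha$ for $c \neq 0$ (using \ref{ss:4.5}, or directly). The conjugation axiom $s_{s_\alpha(\beta)} = s_\alpha s_\beta s_\alpha$ is the standard identity $s_{w(\alpha)} = w s_\alpha w^{-1}$ recorded in \ref{ss:4.5}, applied with $w = s_\alpha$. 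So $(\Phi, F')$ is a quasi-root system for $W$. Next, $\Phi_+$ is a quasi-positive system: by \ref{ss:4.3}(i) (which holds since we are in the setting of \ref{ss:4.4}, where $\Pi$ is positively independent and \ref{ss:4.3}(iii) holds, forcing strong positive independence by \ref{ss:4.3}(a)) we have $\Phi = \Phi_+ \cup (-\Phi_+)$, and strong positive independence gives $\Phi_+ \cap (-\Phi_+) = \emptyset$, so $\Phi_+$ is a set of sign-orbit representatives; compatibility with $w(\Phi_+)$ for $w \in W$ follows from the characterization $w(\alpha) \in \Phi_+ \iff l(ws_\alpha) \geq l(w)$ in \ref{ss:4.3}(b), which shows $N_{\Phi_+}$ agrees with the reflection cocycle $N$ of $(W,S)$. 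Finally $\Phi_+$ is generative with set of simple reflections $S$: one checks $\Pi$ is precisely the set of simple quasi-roots of $\Phi_+$ (if $\alpha \in \Pi$, then $s_\alpha$ permutes $\Phi_+ \setminus \{\alpha\}$, a standard fact provable by reduction to rank two; conversely a simple quasi-root $\beta \in \Phi_+$ has $N(s_\beta) = \{s_\beta\}$, forcing $s_\beta \in S$), and $\langle S \rangle = W$ by construction, so $(W, S)$ is the Coxeter system attached to $\Phi_+$ in the sense of \ref{ss:3.5a}, naturally isomorphic to $(W,S)$ itself since $S$ generates.

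\textbf{Part (b).} This is essentially Proposition \ref{ss:2.4}(d) applied to the quasi-root system $(\Phi, F')$ with quasi-positive system $\Phi_+$: the map $\rho \colon \Phi \to T_{\Phi_+}$ of that proposition is exactly the asserted $\theta$, once we identify $T_{\Phi_+}$ with $\widehat T$. That identification is legitimate because by \ref{ss:3.1} the $G$-set $T_{\Phi_+} = T \times \{\pm 1\}$ with its cocycle-twisted action, together with $F_{\Phi_+}$, is a quasi-root system whose reflection map has $\{\pm 1\}$ acting simply transitively on fibers, and the cocycle $N_{\Phi_+}$ equals the reflection cocycle $N$ of $(W,S)$ by Part (a); hence $(T_{\Phi_+}, F_{\Phi_+})$ is literally the standard abstract root system $(\widehat T, F)$. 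Surjectivity and $W$-equivariance of $\theta$ are then from \ref{ss:2.4}(d). For the ``iff'': $\theta$ is injective exactly when distinct roots have distinct images, i.e. when $s_\alpha = s_\beta$ and $\alpha, \beta$ on the same side of $\Phi_+$ forces $\alpha = \beta$; but $s_\alpha = s_\beta \iff \alpha = c\beta$ for some $c \in \real_{\neq 0}$ by \ref{ss:4.5}, and same-side-ness forces $c > 0$, so injectivity holds iff the only positive rescaling fixing $\Phi$ is trivial, which is precisely reducedness (\ref{ss:4.7}). The final sentence of \ref{ss:2.4}(d) says $\rho$ is an isomorphism when $\{\pm 1\}$ acts simply transitively on fibers of the reflection map of $\Phi$, which is again reducedness.

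\textbf{Part (c).} This is the substantive part and where I expect the main obstacle. Fix $\alpha, \beta, \gamma \in \Phi$. If $\alpha = \pm\beta$ both sides are easily checked directly, so assume $\alpha \neq \pm\beta$; then $\langle s_\alpha, s_\beta \rangle =: W'$ is a dihedral reflection subgroup, contained in a unique maximal one. By Theorem \ref{ss:4.9}(a),(c), there is a based root subdatum $B'$ of $B$ with Coxeter system $(W', \chi(W'))$ whose simple roots lie in $\Phi_+$ and whose root system $\Phi' = \cup_{w \in W'} w(\Pi')$ is a subset of $\Phi$, in fact $\Phi' = \{\delta \in \Phi : s_\delta \in W'\}$ up to the rescaling ambiguity — more precisely $\Phi'$ spans the span of $\{\alpha, \beta\}$ and contains scalar multiples of all of $\alpha, \beta, \gamma$ that lie in $W'$-orbits. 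The key geometric observation is that for roots $\delta, \varepsilon, \zeta$ all lying in the two-dimensional cone structure of $\Phi'$, the condition $\zeta \in \real_{\geq 0}\delta + \real_{\geq 0}\varepsilon$ is visibly ray-theoretic: it depends only on the rays $\real_{\geq 0}\delta$, $\real_{\geq 0}\varepsilon$, $\real_{\geq 0}\zeta$ in the plane $\text{span}(\delta, \varepsilon)$, and it is exactly the ``$\zeta$ lies in the convex closure of the union of the rays of $\delta$ and $\varepsilon$'' picture. On the abstract side, by Proposition \ref{ss:3.8} the abstract root system $\widehat{T'}$ of $(W', \chi(W'))$ embeds in $\widehat T$ compatibly, and by Lemma \ref{ss:3.28}(a) betweenness in $\widehat{T'}$ agrees with betweenness in $\widehat T$; moreover the concrete ray-diagram description of betweenness in \ref{ss:3.22} says $\theta(\gamma) \in [\theta(\alpha), \theta(\beta)]$ iff the ray labelled $\theta(\gamma)$ is in the convex closure of the rays labelled $\theta(\alpha), \theta(\beta)$ in that diagram. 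So both sides of the claimed equivalence are ``the ray of the third element lies in the cone spanned by the rays of the first two,'' and what remains is to match the two ray pictures. For this I would invoke the explicit description of dihedral root systems with symmetric NGCM from \cite{Dy2}, noting (per \ref{ss:4.8}) that any dihedral based root datum is symmetrizable and that rescaling changes neither the rays nor the convexity condition, together with the matching of the linear-geometry positive system $\Phi'_+ = \Phi' \cap \Phi_+$ with the standard abstract positive system of $\widehat{T'}$ from Part (a) applied to $B'$. The careful bookkeeping identifying, for each element $t \in T'$, the pair of opposite rays $\real_{\geq 0}\delta, \real_{\geq 0}(-\delta)$ (for $\delta \in \Phi'$ with $s_\delta = t$) with the pair of rays labelled $(t,1), (t,-1)$ in the diagram of \ref{ss:3.22}, in cyclic order, is the fiddly point; once that bijection-of-ray-diagrams is in hand, both the "only if" and "if" directions are immediate from the coincidence of the two convex-closure conditions. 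I would present the dihedral verification by direct computation with the explicit coordinates from \cite{Dy2} rather than grinding it out here.
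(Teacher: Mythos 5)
Your proposal is correct and follows essentially the same route as the paper: parts (a)--(b) are extracted from the definitions and the results of Sections 1--2 (in particular \ref{ss:2.4} and \ref{ss:3.1}), and part (c) is proved by reducing via \ref{ss:4.9} to the dihedral case and matching the ray diagram of \ref{ss:3.22} by direct computation, simplified by the symmetrizability remarks of \ref{ss:4.8}. One small imprecision that does not affect the argument: when $\Phi$ is non-reduced the simple quasi-roots of $\Phi_+$ are all positive roots proportional to elements of $\Pi$, not $\Pi$ itself, but your observation that every simple quasi-root $\beta$ has $N(s_\beta)=\set{s_\beta}$, hence $s_\beta\in S$, is exactly what part (a) requires.
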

\begin{proof} Parts (a)--(b) are clear from the definitions and results in Section 1--2. For part (c), the definitions and \ref{ss:4.9} immediately reduce the proof in general
to that in the case of dihedral Coxeter systems.
In the dihedral case, there is an obvious bijection from the set of rays spanned by the roots
to the set of rays in the diagram \ref{ss:3.22}, mapping $\real_{\geq 0}\alpha$ to the ray labelled $\theta(\alpha)$ for $\alpha\in \Phi$. By direct calculations for dihedral root systems (which are simplified by using the remarks at the end of  \ref{ss:4.8}), one checks that this  bijection preserves the sets of rays in the convex closure  of  a union of  two rays. 
\end{proof}
 
 \begin{theorem}\label{ss:5.2} Let notation be as in the preceding proposition. Let $(W_i,S_i)$ for $i\in I$ be the irreducible components of $(W,S)$, $\Pi_i=\mset{\alpha\in \Pi\mid s_\alpha\in S_i}$ and $V_i:=\real\Pi_i$
 denote the $\real$-vector space spanned by $\Pi_i$.
 Assume that the sum $\sum_i\real V_i$ of subspaces of $V$ is direct
 (e.g. $(W,S)$ is irreducible or $\Pi$ is linearly independent) and that the dual condition (with $V$ replaced by $V'$) also holds.
Then a subset $\Pi'$ of $\Phi$ is a root basis of $E$, affording a based root datum $B'$, say,   iff the restriction $\theta_{\vert \Pi'}$   is injective and  $\theta(\Pi')$  is an abstract root basis of $(\widehat{T},F)$.
In that case, the set of positive roots of $B'$ is $\theta^{-1}(\Psi_+)$ where $\Psi_+$ is the positive system of $(\widehat{T},F)$corresponding to $\theta(\Pi')$. \end{theorem}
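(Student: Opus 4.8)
The plan is to prove the two implications separately, using the parallel characterizations of root bases on the real side (Theorem \ref{ss:4.9}) and on the abstract side (Theorem \ref{ss:abchar}), linked by Proposition \ref{ss:5.1}. Using the direct-sum hypotheses together with the decomposition of $(\widehat T,F)$ over irreducible components (see \ref{ss:reduc}) and the corresponding decompositions of $V$, $V'$, one reduces at once to the case that $(W,S)$ is irreducible, which I describe.

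For the forward direction, suppose $\Pi'$ is a root basis of $E$, with associated based root datum $B'$, positive roots $\Phi'_+:=\Phi\cap\sum_{\alpha\in\Pi'}\real_{\geq 0}\alpha$ and Coxeter system $(W,S')$, $S'=\{s_\alpha\mid\alpha\in\Pi'\}$. Injectivity of $\theta_{\vert\Pi'}$ follows from strong positive independence of $\Pi'$ (Lemma \ref{ss:4.3}(a)) together with the fact that $s_\alpha=s_\beta$ iff $\alpha,\beta$ are proportional (see \ref{ss:4.5}): if $\theta(\alpha)=\theta(\beta)$ then $\alpha=c\beta$ with $c>0$, forcing $\alpha=\beta$. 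I then claim that $\Psi_+:=\theta(\Phi'_+)$ is an abstract positive system of $\widehat T$ with $\theta(\Pi')$ as its abstract root basis. That $\Psi_+$ is a generative quasi-positive system is obtained by pushing forward along $\theta$: by Proposition \ref{ss:5.1}(a) applied to $B'$, $\Phi'_+$ is a generative quasi-positive system of $(\Phi,F')$, and $\theta$ is a surjective, $W$-equivariant, sign-equivariant morphism of quasi-root systems by Proposition \ref{ss:5.1}(b). That $\Psi_+$ is biclosed follows from closedness of $\Phi'_+$ and of $-\Phi'_+$ under the relation $\gamma\in\real_{\geq 0}\alpha+\real_{\geq 0}\beta$ (they are intersections of $\Phi$ with pointed cones), translated via Proposition \ref{ss:5.1}(c) and the surjectivity of $\theta$. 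That $\theta(\Pi')$ is exactly the set of abstract simple roots of $\Psi_+$ follows by matching the reflection cocycles and $\chi$-operators of $(W,S')$ on the real and abstract sides (using Proposition \ref{ss:5.1}(a) and the argument of Lemma \ref{ss:3.13}). By Theorem \ref{ss:abchar}(b), $\theta(\Pi')$ is then an abstract root basis and $\Psi_+$ is the unique positive system having it as its set of simple roots; finally $\theta^{-1}(\Psi_+)=\Phi'_+$, since $\theta(\gamma)\in\theta(\Phi'_+)$ forces $\gamma=c\delta$ for some $\delta\in\Phi'_+$ and $c>0$, whence $\gamma\in\Phi'_+$. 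This also gives the last assertion.

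For the converse, suppose $\theta_{\vert\Pi'}$ is injective and $\Delta:=\theta(\Pi')$ is an abstract root basis, with corresponding abstract positive system $\Psi_+$. Since distinct elements of a quasi-positive system of $\widehat T$ have distinct reflections, $\alpha\mapsto s_\alpha$ is injective on $\Pi'$, so we may write $\Pi'=\{\alpha_r\}_{r\in S'}$ with $S':=\{s_\alpha\mid\alpha\in\Pi'\}$; by Theorem \ref{ss:abchar}(a), $(W,S')$ is a Coxeter system and $\langle s_\alpha\mid\alpha\in\Pi'\rangle=W$. Put $\check\Pi':=\hat\iota(\Pi')$ and let $\iota'$ be the restriction of $\hat\iota$. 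To see that $B':=(\langle?,?\rangle,\iota')$ is a based root datum with underlying root datum $E$ one checks: (i) for each pair $\alpha\neq\beta$ in $\Pi'$ the conditions \ref{ss:4.3}(iii) hold -- indeed, by the necessity part of Theorem \ref{ss:abchar}(ii), $\{\theta(\alpha),\theta(\beta)\}$ is an abstract root basis of the abstract root system of $W_{\alpha,\beta}:=\langle s_\alpha,s_\beta\rangle$, and comparing with the based root subdatum of $B$ on $W_{\alpha,\beta}$ (Theorem \ref{ss:4.9}(a)) via the dihedral case of Proposition \ref{ss:5.1} shows that $\{\alpha,\beta\}$ is, up to sign, a $W_{\alpha,\beta}$-translate and a rescaling of its simple roots, so the scale-invariant conditions \ref{ss:4.3}(iii) are inherited; hence by Theorem \ref{ss:4.9}(c), $\langle s_\alpha\mid\alpha\in\Pi'\rangle$ is already the full reflection subgroup, $=W$; (ii) $\Pi'$ and, dually, $\check\Pi'$ are positively independent -- set $\Phi'_+:=\theta^{-1}(\Psi_+)$; since $\Psi_+$ is biclosed and $\theta$ intertwines $\real_{\geq 0}$-betweenness with abstract betweenness (Proposition \ref{ss:5.1}(c)), $\Phi'_+$ and $-\Phi'_+$ are closed under finite non-negative combinations that remain in $\Phi$ and partition $\Phi$ into orbit representatives for $\{\pm 1\}$; a relation $\sum_{\alpha\in\Pi'}c_\alpha\alpha=0$ with some $c_{\alpha_0}>0$ would exhibit $-\alpha_0\in\Phi$ as such a combination of $\Pi'\setminus\{\alpha_0\}\subseteq\Phi'_+$, putting both $\alpha_0$ and $-\alpha_0$ in $\Phi'_+$, a contradiction; (iii) the root system $\bigcup_{w\in W}w(\Pi')$ and the positive system $\Phi\cap\sum_{\alpha\in\Pi'}\real_{\geq 0}\alpha$ of $B'$ coincide with $\Phi$ and $\Phi'_+=\theta^{-1}(\Psi_+)$, using $\langle s_\alpha\mid\alpha\in\Pi'\rangle=W$, the generativity of $\Psi_+$, surjectivity of $\theta$, and (when $\Phi$ is not reduced) the proportionality bookkeeping of step (ii) of the forward direction. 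Then \ref{ss:4.3}(i),(ii) hold because $\Phi=\Phi'_+\cup(-\Phi'_+)$ and dually, so $B'$ is a based root datum with underlying root datum $E$; that is, $\Pi'$ is a root basis of $E$ with positive roots $\theta^{-1}(\Psi_+)$.

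The main obstacle is steps (ii)--(iii) of the converse: extracting from the purely combinatorial hypothesis (biclosedness and generativity of $\Psi_+$) the linear-algebraic facts that $\Pi'$ and $\check\Pi'$ are positively independent and that the root subsystem generated by $\Pi'$ is \emph{all} of $\Phi$. Positive independence is where the direct-sum hypotheses do genuine work once one has reduced to irreducible type, and the identification of root systems requires tracking rescaling carefully when $\Phi$ is non-reduced; both ultimately rest on the dihedral computations underlying Proposition \ref{ss:5.1}(c) and \ref{ss:4.8}.
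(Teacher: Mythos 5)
Your forward direction and step (i) of the converse track the paper's own argument closely (the paper verifies the conditions of \ref{ss:4.3}(iii) for each pair of elements of $\Pi'$ by exactly the dihedral conjugation-and-rescaling comparison you describe). The genuine gap is in step (ii), the positive independence of $\Pi'$. You assert that biclosedness of $\Psi_+$, transported through \ref{ss:5.1}(c), makes $\Phi'_+=\theta^{-1}(\Psi_+)$ ``closed under finite non-negative combinations that remain in $\Phi$,'' and you then derive a contradiction from a relation $\sum_\alpha c_\alpha\alpha=0$. But closedness in the sense of this paper is only a \emph{pairwise} (two-term) condition: if $\alpha,\beta\in\Phi'_+$ and $\gamma\in\Phi\cap(\real_{\geq 0}\alpha+\real_{\geq 0}\beta)$ then $\gamma\in\Phi'_+$. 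A non-negative combination of three or more roots cannot be handled by iterating this, because the intermediate partial sums need not be roots. So the inference ``biclosed $\Rightarrow$ intersection of $\Phi$ with a pointed cone'' is unproved, and in fact it is false at this level of generality: in Example \ref{ss:5.8} the set $\Gamma$ is an abstract root basis (so the corresponding $\Psi_+$ is biclosed and generative), yet $-\epsilon_j e_j$ is a non-negative combination of three elements of $\theta^{-1}(\Psi_+)$ lying in $\Phi$ but not in $\theta^{-1}(\Psi_+)$, and $\Pi'$ is not positively independent. Your argument as written never uses irreducibility or the direct-sum hypothesis, so it would ``prove'' positive independence in that counterexample too; this shows the step cannot be repaired by a purely formal manipulation of biclosedness.

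This is precisely why the paper's proof of positive independence is much heavier. The paper first proves the converse only under the extra hypothesis that $\Pi$ is finite and linearly independent (there $\vert S\vert=\vert S'\vert$ forces $\Pi'$ to be a basis of $\real\Pi$), uses that special case together with the Howlett--Rowley--Taylor conjugacy result (\ref{ss:5.3} for standard data) to establish Theorem \ref{ss:5.4} (conjugacy up to sign of abstract root bases in the finite rank irreducible case), and only then, in \ref{ss:5.7}, finishes the general case: for each finite irreducible $R\subseteq S'$ one compares $\Delta=\mset{\alpha\in\Pi'\mid s_\alpha\in R}$ with a root basis $\Delta'$ of a based root subdatum for $\mpair{R}$ via \ref{ss:4.9}, and invokes \ref{ss:5.4} to get $\Delta=\epsilon w(\Delta')$ up to rescaling, whence positive independence. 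Your step (iii) inherits the same problem, since it relies on (ii). To complete your proof you would need either to supply this conjugacy input (accepting the paper's ordering of \ref{ss:5.2}--\ref{ss:5.7}) or to find a genuinely new argument that a biclosed generative positive system in the irreducible case has cone-closed preimage; the latter is not available from the definitions alone.
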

   \begin{proof} If $\Pi'$ is a root basis, then $\theta_{\vert \Pi'}$ is injective, $\theta(\Pi')$ is an  abstract root basis of $(\widehat T,F)$, and the positive roots of $B'$ are $\theta^{-1}(\Psi_+)$, using \ref{ss:5.1}.
   
    Suppose now that  $\theta_{\vert \Pi'}$ is injective and $\theta(\Pi')$ is an abstract root basis of $(\widehat T,F)$.     
     For any $\alpha,\beta\in \Pi'$, let $W_{\alpha,\beta}:=\mpair{s_\alpha,s_\beta}$
and $\widehat T_{\alpha,\beta}=(W_{\alpha,\beta}\cap T)\times\set{\pm 1}\subseteq \widehat {T}$. Using
\ref{ss:3.28}(c), $\theta(\set{\alpha,\beta})$ is an abstract set of simple roots for $W_{\alpha,\beta}$ in the abstract root system $T_{\alpha,\beta}$ of $W_{\alpha,\beta}$. This implies (by direct calculation for the dihedral groups, cf. \ref{ss:3.25}) that $\set{\theta(\alpha),\theta(\beta)}$ is conjugate by an element $(w,\epsilon)$ of  $W_{\alpha,\beta}\times \set{\pm{1}}$ to $\set{s_\gamma,s_\delta}\times{1}$, where $\gamma,\delta\in \Phi_+$ with $\set{s_\gamma,s_\delta}= \chi(W_{\alpha,\beta})$.  We have, say
$w(\alpha)=c\gamma$, $w(\beta)=d\delta$ for some  $c,d\in \real$ with $cd>0$ (since $c$, $d$ can be taken of the same sign as $\epsilon$).
Now $\mpair{\gamma,\ck\delta}$ and $\mpair{\delta,\ck \gamma}$ are non-positive real numbers
whose product is in the set $P$ of \ref{ss:4.3}, and such that both are zero if either is zero.
Since $w(\ck \alpha)=c^{-1}\ck \gamma$ and $w(\ck \beta)=d^{-1}\ck\delta$, it follows that
$\mpair{\alpha,\ck\beta}$ and $\mpair{\beta,\ck \alpha}$ have the same properties. 
Since $W=\mpair{s_\alpha\mid\alpha\in \Pi'}$, we conclude from the definitions in  \ref{ss:4.5} that $B'$ is a based  root datum 
provided that $\Pi'$ is positively independent ($\ck {\Pi'}$ will then be positively independent by symmetry).

We prove positive independence of $\Pi'$ first only under the additional hypothesis
that $\Pi$ is finite and linearly independent; this extra hypothesis will be replaced by the more general one of the theorem in \ref{ss:5.7}.
So assume $\Pi$ is finite and linearly independent. Let $S'=\mset{s_\alpha\mid \alpha\in \Pi'}$.
Then $(W,S)$ and $(W,S')$ are   Coxeter systems with $S'\subseteq T$, so $\vert S\vert=\vert S'\vert$
from \ref{ss:3.7}
and thus $\vert \Pi\vert=\vert \Pi'\vert$. Since $\mpair{S'}=W$ with $S'\subseteq T$, every reflection of $W$ (in particular, any element of $S$) is equal to  a reflection in some element of  $\mpair{S'}\Pi'$. It follows that $\Pi\subseteq \real \Pi'$  and hence $\Pi'$ is a $\real$-basis of $\real\Pi$. Since $\Pi'$ is  linearly independent, it  is positively independent as required.
\end{proof}

It will be  helpful to keep in mind in the following  that if $(W,S)$ and $(W,S')$ are two Coxeter systems
with $S'\subseteq T$, then $(W,S)$ is of finite rank (resp., is irreducible) iff $(W,S')$ is of finite rank (resp., is irreducible).

\begin{theorem}\label{ss:5.3} Suppose that $B=(\mpair{ ?,?}\colon V\times
V'\rightarrow
\real, 
\iota\colon \Pi\rightarrow \ck\Pi)$ and 
 $B'=(\mpair{ ?,?}\colon V\times
V'\rightarrow
\real, 
\iota'\colon \Delta\rightarrow \ck\Delta)$ are two based root datums
with the same underlying root datum
 $(\mpair{ ?,?}\colon V\times
V'\rightarrow
\real, \hat \iota\colon \Phi\rightarrow \ck\Phi)$. Assume that  
the associated  Coxeter systems $(W,S  )$ and $(W,S  ')$ respectively 
are both of finite rank and irreducible.Then
there exists $w\in W$, $\epsilon\in\set{\pm 1}$ and
scalars $c_\alpha\in \real_{>0}$ for $\alpha\in \Pi$ such that
$\Delta=\mset{\epsilon c_\alpha w(\alpha)\mid \alpha\in \Pi}$.
In particular, $S'=wSw^{-1}  $, and hence $(W,S  )$ is isomorphic to $(W,S  ')$. \end{theorem}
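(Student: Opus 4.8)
The plan is to transport the statement to the standard abstract root system $(\widehat T,F)$ of $(W,S)$ by means of the map $\theta\colon\Phi\to\widehat T$ of Proposition \ref{ss:5.1}, to apply there the abstract conjugacy results for finite rank irreducible Coxeter systems (parts (b)--(d) of the Theorem of the Introduction; these apply to $(\widehat T,F)$ because the standard reflection representation is reduced, so $\theta$ identifies $(\widehat T,F)$ with the abstract root system of that representation by Proposition \ref{ss:5.1}(b)), and then to pull the resulting conjugacy back along $\theta$, recovering the positive scalars that $\theta$ discards.

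First I would observe that, $(W,S)$ being irreducible, the hypotheses of Theorem \ref{ss:5.2} are met, and that $\theta(\Pi)=\mset{(s_\alpha,1)\mid\alpha\in\Pi}=\hat S$ is the standard abstract root basis. Since $\Delta$ is a root basis of $E$, Theorem \ref{ss:5.2} (applied with $\Pi'=\Delta$) shows that $\theta_{\vert\Delta}$ is injective and that $\theta(\Delta)$ is an abstract root basis of $(\widehat T,F)$; set $S'=\mset{s_\alpha\mid\alpha\in\Delta}$, so that $(W,S')$ is a Coxeter system. As $S'\subseteq T$ and $\mpair{S'}=W$, the remark following Theorem \ref{ss:5.2} gives that $(W,S')$ is again of finite rank and irreducible, whence by part (b) of the Theorem of the Introduction it is isomorphic to $(W,S)$ (the $A_\infty$/$A_{\infty,\infty}$ exception cannot occur in finite rank). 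Thus parts (c)--(d) apply and produce $w\in W$ and $\epsilon\in\set{\pm1}$ with $\theta(\Delta)=\epsilon\,w(\hat S)=\epsilon\,w(\theta(\Pi))$.

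Next, from the $W$-equivariance of $\theta$ and $\theta(-\gamma)=-\theta(\gamma)$, this last equality reads $\theta(\Delta)=\theta\bigl(\mset{\epsilon\,w(\alpha)\mid\alpha\in\Pi}\bigr)$. For $\beta,\beta'\in\Phi$ one has $\theta(\beta)=\theta(\beta')$ iff $\beta=c\beta'$ for some $c\in\real_{>0}$: indeed $s_\beta=s_{\beta'}$ forces $\beta\in\real_{\neq0}\beta'$ by \ref{ss:4.5}, and the sign of $c$ is pinned down by the description of $\Phi_+$. Hence for each $\alpha\in\Pi$ there is a unique $\delta\in\Delta$ with $\theta(\delta)=\theta(\epsilon\,w(\alpha))$, necessarily of the form $\delta=\epsilon\,c_\alpha\,w(\alpha)$ with $c_\alpha\in\real_{>0}$; the assignment $\alpha\mapsto\delta$ is injective (since $w$ is invertible and $\Pi$ is strongly positively independent by \ref{ss:4.3}(a)) and onto $\Delta$ (as $\theta_{\vert\Delta}$ and the restriction of $\theta$ to $\mset{\epsilon\,w(\alpha)\mid\alpha\in\Pi}$ are both injective with the same image, forcing $\vert\Delta\vert=\vert\Pi\vert$, both finite since $(W,S)$ has finite rank). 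This yields $\Delta=\mset{\epsilon\,c_\alpha\,w(\alpha)\mid\alpha\in\Pi}$. Finally, since rescaling a root does not change its reflection, $S'=\mset{s_{\epsilon\,c_\alpha\,w(\alpha)}\mid\alpha\in\Pi}=\mset{w s_\alpha w^{-1}\mid\alpha\in\Pi}=wSw^{-1}$, so conjugation by $w$ is an isomorphism $(W,S)\to(W,S')$.

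The only genuine content beyond bookkeeping is the appeal to the abstract conjugacy theorem in the finite rank irreducible case; checking the hypotheses of Theorem \ref{ss:5.2} and the fact that $\theta$ is injective up to positive scaling are routine. I expect the main obstacle, if that abstract statement must be established rather than merely quoted, to be precisely its proof --- an abstract reworking of the arguments of \cite{How} --- after which the reduction sketched above is straightforward.
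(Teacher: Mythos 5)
Your reduction is essentially the paper's own argument (\ref{ss:5.6}): push $\Pi$ and $\Delta$ forward along $\theta$, apply the abstract conjugacy theorem \ref{ss:5.4} for finite rank irreducible Coxeter systems to get $\theta(\Delta)=\epsilon\, w\,\theta(\Pi)$, and pull back along $\theta$, which loses only positive rescaling factors; your bookkeeping of the scalars $c_\alpha$ and of $S'=wSw^{-1}$ is correct. The one structural point to keep in mind is that the quoted abstract theorem is itself proved in the paper by realizing $(W,S)$ through a standard based root datum and invoking the standard-datum case of the present theorem (i.e.\ the result of \cite{How}), so that special case must be settled first, exactly as the paper does in the opening paragraph of its proof --- which is consistent with your closing remark that the genuine content lies in that \cite{How}-type input rather than in the reduction.
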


\begin{proof}  Suppose that $B$ is a standard based root datum. It is clear  that $B'$ must be a standard based root datum also. In this case, the conclusion is one of the main results of  \cite{How}
(with all $c_\alpha=1$ necessarily in this case). 
If $B$ is not a standard based root datum, the result will be proved in \ref{ss:5.6}.  \end{proof}

\begin{remark} Slightly more generally, one could assume in Theorem \ref{ss:5.3} that the underlying root datums of $B$ and $B'$ are not necessarily equal, but differ by rescaling. This version reduces immediately to the one above by rescaling $B'$, say, appropriately.
\end{remark} 
 \begin{theorem} \label{ss:5.4} Suppose that $(W,S)$ is an irreducible Coxeter system of finite rank.  If ${\Psi_+}$ is an abstract system of positive roots for $\widehat T$, then there exists $w\in W$ and $\epsilon\in \set{\pm 1}$ with ${\Psi_+}=\epsilon w( \widehat T_+)$. In particular, $\Delta_{\Psi_+}=\epsilon w(S_+)$,
  $S_{\Psi_+}=wSw^{-1}$ and $(W,S_{\Psi_+})$ is isomorphic to $(W,S)$ as Coxeter system.\end{theorem}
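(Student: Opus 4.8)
The plan is to deduce the statement from the conjugacy result for based root datums, Theorem~\ref{ss:5.3}, by \emph{lifting} the abstract positive system $\Psi_+$ to a genuine root basis of a reflection representation and then transporting the real conjugacy back to $\widehat T$.

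First I would fix a \emph{standard} based root datum $B$ affording $(W,S)$; such a $B$ exists by \ref{ss:4.6}, and since its NGCM is the symmetric matrix of a symmetric bilinear form, $B$ is reduced by \ref{ss:4.7}. By Proposition~\ref{ss:5.1}(b) the canonical $W$-equivariant map $\theta\colon\Phi\to\widehat T$ is then an isomorphism of abstract root systems, and by construction it carries $\Phi_+$ onto $\widehat T_+$ and satisfies $\theta(-v)=-\theta(v)$.

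Next, put $\Delta:=\Delta_{\Psi_+}$, the abstract root basis of $\Psi_+$, and let $\Pi':=\theta^{-1}(\Delta)\subseteq\Phi$. Since $(W,S)$ is irreducible, the directness hypotheses in Theorem~\ref{ss:5.2} are vacuous, and $\theta_{\vert\Pi'}$ is injective with $\theta(\Pi')=\Delta$ an abstract root basis; hence by \ref{ss:5.2}, $\Pi'$ is a root basis of the underlying root datum $E$ of $B$, affording a based root datum $B'$ whose set of positive roots is exactly $\theta^{-1}(\Psi_+)$. The Coxeter system attached to $B'$ has simple reflections $\mset{s_\alpha\mid\alpha\in\Pi'}=S_{\Psi_+}\subseteq T$, so, by the remark following \ref{ss:5.2}, it is again of finite rank and irreducible. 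Now Theorem~\ref{ss:5.3} applies to the pair $B,B'$: there exist $w\in W$, $\epsilon\in\set{\pm1}$ and scalars $c_\alpha\in\real_{>0}$ with $\Pi'=\mset{\epsilon c_\alpha w(\alpha)\mid\alpha\in\Pi}$. Because the cone spanned by $\mset{c_\alpha\alpha\mid\alpha\in\Pi}$ equals $\sum_{\alpha\in\Pi}\real_{\geq0}\alpha$, the positive system of $B'$ is $\epsilon w(\Phi_+)$, so $\theta^{-1}(\Psi_+)=\epsilon w(\Phi_+)$; applying $\theta$ and using its $W$-equivariance together with $\theta(-v)=-\theta(v)$ and $\theta(\Phi_+)=\widehat T_+$ gives $\Psi_+=\epsilon w(\widehat T_+)$. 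Passing to abstract simple roots (which is an equivariant operation) yields $\Delta_{\Psi_+}=\epsilon w(\hat S)$, and then $S_{\Psi_+}=\mset{ws_\alpha w^{-1}\mid s_\alpha\in S}=wSw^{-1}$, so conjugation by $w$ is an isomorphism $(W,S)\xrightarrow{\cong}(W,S_{\Psi_+})$.

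The substantive content is entirely imported: the hard work lives in Theorem~\ref{ss:5.3} (ultimately the results of \cite{How}) and in Theorem~\ref{ss:5.2}. Within the present argument the only points needing care are checking that the lifted datum $B'$ inherits the finite-rank, irreducible hypotheses required to invoke \ref{ss:5.3}, and tracking the positive scalars $c_\alpha$ and the global sign $\epsilon$ when transporting the real-root-system conjugacy back along $\theta$; both are routine. (Since $B$ is standard and reduced, one checks that $B'$ is standard as well, so in fact only the standard case of \ref{ss:5.3}, i.e. \cite{How}, is used.)
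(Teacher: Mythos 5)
Your proposal is essentially the paper's own proof: the paper likewise realizes $(W,S)$ via a standard based root datum, sets $\Pi'=\theta^{-1}(\Delta_{\Psi_+})$, invokes Theorem \ref{ss:5.2} to see that $\Pi'$ is a root basis, invokes Theorem \ref{ss:5.3} to get $\Pi'=\epsilon w(\Pi)$, and transports back along $\theta$; your bookkeeping with the scalars $c_\alpha$ and the sign is the same routine step, and your observation that only the standard case of \ref{ss:5.3} (i.e.\ \cite{How}) is needed matches the paper.

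One caution about logical order: in the paper, Theorems \ref{ss:5.2} and \ref{ss:5.3} are proved \emph{in full generality} only after \ref{ss:5.4}, and those completions (\ref{ss:5.6}, \ref{ss:5.7}) use \ref{ss:5.4}; so citing \ref{ss:5.2} via the ``$(W,S)$ irreducible'' hypothesis, as you do, would be circular inside the paper. The fix is immediate and is exactly what the paper does: take the standard realization of \ref{ss:4.6} with $\Pi$ linearly independent (and finite, by the finite rank hypothesis), so that only the already-established special cases of \ref{ss:5.2} ($\Pi$ finite and linearly independent) and of \ref{ss:5.3} (standard based root datum) are invoked. With that adjustment your argument is exactly the paper's.
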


\begin{proof} We may suppose without loss of generality that
$(W,S)$ is the Coxeter system associated to a standard based root datum $B$ such that $\Pi$ is linearly independent. Note that both \ref{ss:5.2} and \ref{ss:5.3} have already been proved for $B$ of this special type.
Set $\Pi'=\theta^{-1}(\Delta_{\Psi_+})$. Now $\theta$ is a bijection. By \ref{ss:5.2},
$\Pi'$ is a root basis of the root datum $E$ underlying $B$. By \ref{ss:5.3},
$\Pi'=\epsilon w(\Pi)$ for some $w\in W$ and  $\epsilon\in \set{\pm 1}$. 
Then $\Delta_{\Psi_+}=\theta(\Pi')=\epsilon w\theta(\Pi)=\epsilon w(S\times\set{1})$ as required. 
 \end{proof} 
   \begin{remark} \label{ss:5.5}It is well-known that $\widehat T_+$ is conjugate to $-\widehat T_+$ iff $W$ is finite. 
Further, $w(\widehat {T}_+)=\widehat T_+$ iff $w=1$. It follows that under the assumptions of   Theorem \ref{ss:5.4}, if  $W$ is finite (resp., infinite) then $W$ (resp., $ W\times\set{\pm 1}$) acts simply transitively
  on both the set of abstract positive root systems for $\widehat T$ and the set of
  abstract root bases of $\widehat T$. 
Hence in \ref{ss:5.3} or \ref{ss:5.4},  $w$, $\epsilon$ are uniquely determined
provided $\epsilon=1$ if $W$ is finite. \end{remark} 
 
 \subsection{Completion of proof of \ref{ss:5.3} in general} \label{ss:5.6} Let $\theta$ be as in \ref{ss:5.1}.
 By  (an already proven) part of \ref{ss:5.2}, $\theta(\Pi')$ and $\theta(\Pi)$ are two abstract root bases of
 $\widehat{T}$. By \ref{ss:5.4}, there is $w\in W$ and $\epsilon \in \set{\pm 1}$ such that 
 $\theta(\Pi')=\epsilon \theta(\Pi)$. This implies that  $\Pi'$ and $\epsilon w(\Pi)$ are the same up to rescaling, which is what we needed.
 \subsection{Completion of proof of \ref{ss:5.2} in general} \label{ss:5.7}
 Assume that  $\sum_i \real \Pi_i$ is a direct sum and   that the dual condition also holds.  Let  $\Pi'$ is a subset of $\Phi$ such that $\theta_{\vert \Pi'}$ is injective and  $\theta(\Pi')$ is an abstract root basis of $\widehat T$.
 To complete the proof, it remains to show that $\Pi'$ is positively independent. 
  Using \ref{ss:reduc} and the hypothesis that $\sum_i \real \Pi_i$ is direct, one readily reduces to the case that $(W,S)$ is irreducible, as we assume henceforward.
 
 Let $R$ be any finite subset of $S'$ such that  the Coxeter system $(\mpair{R},R)$ is irreducible. It will suffice to show that  
$\Delta=\mset{\alpha\in \Pi'\mid s_\alpha\in R}$ is positively independent, for any such $R$
(since $\Pi'$ is positively independent if all its finite subsets are positively independent).
Now let $D$ be a based root subdatum of $B$ corresponding to $W':=\mpair{R}$,   as in \ref{ss:4.9}. By rescaling $D$ if necessary, we may assume further that some root basis $\Delta'$ of $D$ is contained in $W'\Delta$. By a further rescaling of $\Delta$ if necessary, we may assume that
 $\Psi:=W'\Delta'=W'\Delta''$ is the root system of $D$. Let $\Psi_+=\Psi\cap \Phi_+$ denote the positive roots of $D$. Let $\widehat{ T'}$ be the abstract root system of $W'$
 and $\theta'\colon \Psi\rightarrow \widehat{T'}$ be the analogue for $D$ of the map $\theta\colon \Phi\rightarrow \widehat{T}$ for $B$ 
(clearly, $\theta'$ is given by restriction of $\theta)$. Since $\theta(\Pi')$ is an abstract root basis of 
$\widehat{T}$, it follows easily that $\theta'(\Delta)$ is an abstract root basis of $\widehat{T'}$.
 But, by \ref{ss:4.9} and \ref{ss:5.1}, $\theta'(\Delta')$ is also an  abstract root basis of $\widehat{T'}$. Since $(W',R)$ is irreducible and of finite rank by assumption, and $R\subseteq T$,  $(W',\chi(W'))$ is irreducible of finite rank also. Hence $\theta'(\Delta)=\epsilon w\theta'(\Delta')$ for some $\epsilon\in \set{\pm 1}$ and $w\in W'$, by \ref{ss:5.4}. Rescaling $\Delta'$ again if necessary,  we may assume that $\Delta=\epsilon w(\Delta')$. Since $\Delta'$ is positively independent, so is $\Delta$.

    \begin{corollary}\label{ss:abchar2} Suppose that $(W,S)$ is of finite rank. Let $\Delta\subseteq \widehat T$. 
  Then $\Delta$ is an abstract root basis of $\widehat T$ iff the conditions
   $\text{\rm \ref{ss:abchar}(i)--(ii)}$ hold and $\vert \Delta\vert=\vert S\vert$.
  \end{corollary}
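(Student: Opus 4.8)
The plan is to deduce the corollary from Theorems~\ref{ss:5.2} and~\ref{ss:5.4} together with the real root system machinery of Section~2. The ``only if'' direction is immediate: if $\Delta$ is an abstract root basis, say $\Delta=\Delta_{\Psi_+}$ for a positive system $\Psi_+$, then conditions \ref{ss:abchar}(i)--(iii) hold by \ref{ss:abchar}(b), so in particular (i)--(ii) hold; moreover $S_{\Psi_+}=\mset{s_\alpha\mid\alpha\in\Delta}$ is a set of Coxeter generators of $W$ contained in $T$ (see \ref{ss:3.11}), so $\vert S_{\Psi_+}\vert=\vert S\vert$ by \ref{ss:3.7}, and since $\set{\pm 1}$ acts simply transitively on the fibers of the reflection map of $\widehat T$, the map $\alpha\mapsto s_\alpha$ is injective on the quasi-positive system $\Psi_+$, hence on $\Delta$, giving $\vert\Delta\vert=\vert S_{\Psi_+}\vert=\vert S\vert$.

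For the ``if'' direction, assume (i)--(ii) and $\vert\Delta\vert=\vert S\vert$. Since all the notions involved depend only on $(W,S)$ up to isomorphism of Coxeter systems, by \ref{ss:4.6} we may assume $(W,S)$ is the Coxeter system associated to a standard based root datum $B$ with $\Pi$ (hence $\ck\Pi$) linearly independent; then $\Phi$ is reduced and the surjection $\theta\colon\Phi\to\widehat T$ of \ref{ss:5.1} is a bijection. Put $\Pi':=\theta^{-1}(\Delta)$ and $S':=\mset{s_\alpha\mid\alpha\in\Pi'}$, so that $\vert\Pi'\vert=\vert S\vert=\dim V$, $\mpair{S'}=W$ by (i), and $(W,S')$ is a Coxeter system by \ref{ss:abchar}(a). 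The main point of the argument will be to show that $\Pi'$ is a root basis of the root datum $E$ underlying $B$; granting this, Theorem~\ref{ss:5.2} (whose directness hypothesis holds since $\Pi$ is linearly independent) gives that $\theta(\Pi')=\Delta$ is an abstract root basis of $\widehat T$, which finishes the proof.

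To see that $\Pi'$ is a root basis of $E$, I would first check that $\Pi'$, and dually $\ck{\Pi'}$, is linearly independent: each $s_{\alpha,\ck\alpha}$ with $\alpha\in\Pi'$ carries $V$ into $\real\Pi'$ and so acts as the identity on $V/\real\Pi'$, hence so does $W=\mpair{S'}$; applying this to $s_{\beta,\ck\beta}\in S$ and using $\mpair{\beta,\ck\beta}=2\neq 0$ forces $\beta\in\real\Pi'$, so $\real\Pi'=\real\Pi=V$, and then $\vert\Pi'\vert=\dim V$ forces linear independence. Next, for $\alpha\neq\beta$ in $\Pi'$ I would verify the conditions of \ref{ss:4.3}(iii) for the pair: condition (ii) together with the obvious fact that an abstract root basis cannot contain a root and its negative shows $s_\alpha\neq s_\beta$, so $W_{\alpha,\beta}=\mpair{s_\alpha,s_\beta}$ is a dihedral reflection subgroup, and then, exactly as in the proof of Theorem~\ref{ss:5.2} (using \ref{ss:3.28}(c), \ref{ss:4.8}--\ref{ss:4.9} and the dihedral case of \ref{ss:5.1}(c)), one produces $(w,\epsilon)\in W_{\alpha,\beta}\times\set{\pm 1}$ conjugating $\theta(\set{\alpha,\beta})$ to $\chi(W_{\alpha,\beta})\times\set{1}$, whence $w(\alpha)=c\gamma$ and $w(\beta)=d\delta$ with $cd>0$ and $\set{\gamma,\delta}$ the canonical root basis of the based root subdatum for $W_{\alpha,\beta}$; then \ref{ss:4.4}(b)--(c) transfer the conditions of \ref{ss:4.3}(iii) from $\set{\gamma,\delta}$ to $\set{\alpha,\beta}$. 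With these facts and \ref{ss:4.3}(a)--(b), the tuple $B':=(\mpair{?,?}\colon V\times V'\to\real,\ \hat\iota_{\vert \Pi'})$ is a based root datum, and its root system $\bigcup_{w\in W}w(\Pi')$ equals $\Phi$, since by \ref{ss:3.7} every reflection of $W$ is $s_\gamma$ for some $\gamma\in\bigcup_{w\in W}w(\Pi')$ and $\Phi$ is reduced; a routine verification (as in the proof of \ref{ss:4.4}(c)) identifies the underlying root datum of $B'$ with $E$.

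The step I expect to be the main obstacle is this last one: showing that $\Pi'=\theta^{-1}(\Delta)$ is genuinely a root basis of $E$ --- in particular that it is linearly independent and that its $W$-orbit recovers all of $\Phi$ --- rather than merely a ``combinatorial'' simple system. This is exactly where the finiteness of the rank enters and where the rank-two computations with real dihedral root systems are needed; as the author remarks after the $\tilde B_2$ example, no argument avoiding real root systems seems to be available. Everything else is assembly of earlier results: \ref{ss:abchar}(a)--(b) for the dictionary between the combinatorial conditions and abstract root bases, \ref{ss:3.7} for the cardinality count, and \ref{ss:4.6}, \ref{ss:5.2}, \ref{ss:5.4} for the passage from real back to abstract root bases.
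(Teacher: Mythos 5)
Your proof is correct and takes essentially the same route as the paper: reduce to a standard based root datum with linearly independent $\Pi$, set $\Pi'=\theta^{-1}(\Delta)$, use hypothesis \ref{ss:abchar}(ii) together with the dihedral computations and \ref{ss:4.9} to reduce everything to positive independence of $\Pi'$, obtain that from $\Pi\subseteq\real\Pi'$ and the count $\vert\Pi'\vert=\vert S\vert$, and conclude via Theorem \ref{ss:5.2}. (Two cosmetic points: $s_{\alpha,\ck\alpha}$ does not carry $V$ into $\real\Pi'$ --- what you use, correctly, is that it acts as the identity on $V/\real\Pi'$ --- and the appeal to \ref{ss:3.28}(c) is unnecessary and would be circular, since no positive system is yet available; hypothesis (ii) is already exactly the required dihedral statement.)
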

  \begin{proof} If $\Delta$ is an abstract root basis, then  \ref{ss:abchar}(i)--(ii) hold and $\vert \Delta\vert=\vert S\vert$. For the converse, assume that  \ref{ss:abchar}(i)--(ii) hold and $\vert \Delta\vert\leq \vert S\vert$. 
  We may assume that $(W,S)$ is realized as the Coxeter system associated to a standard based root datum $B$ with linearly independent root basis $\Pi$ and root system $\Phi$. Let $\theta$ be as in \ref{ss:5.1}(b), and  $\Pi'=\theta^{-1}(\Delta)\subseteq \Phi$.
  To show that $\Delta$ is an abstract root basis of $\widehat T$, it is enough by Theorem \ref{ss:5.2} to show that $\Pi'$ is a root basis of $B$. Using \ref{ss:4.9}, we see from the assumptions \ref{ss:5.2}(i)-(ii), that it would be sufficient to show that  $\Pi'$ is positively independent.  
      Since $\mpair{S'}=W$ where
  $S'=\mset{s_\alpha\mid \alpha\in \Pi'}$, we have $\mpair{S'}\Pi'=\Phi$ and  a similar  argument to that in the last paragraph of the proof of  \ref{ss:5.2} shows that $\Pi'$ is a basis of $\real \Pi$ and hence $\Pi'$ is positively independent. 
 \end{proof}

 \begin{example} \label{ss:5.8} We show that Theorem \ref{ss:5.2} may fail without the hypothesis that
 $\sum_iV_i$ is direct. 
 Consider a real vector space $V$ with basis $\set{e_i}_{i\in I} \cup \set{\delta}$, equipped with a symmetric bilinear form $\mpair{?,?}\colon V\times V\rightarrow \real$ determined by 
 $\mpair{\delta,V}=0$ and $\mpair{e_i,e_j}=2\delta_{i,j}$.
 Set $\Pi=\mset{e_i,\delta-e_i\mid i\in I}$, $\Pi'=\Pi$, $\iota=\Id_{\Pi}\colon \Pi\rightarrow \Pi'$.
 This data determines  a standard based root datum $B$ as in \ref{ss:4.6}. Let $(W,S)$ be the corresponding  Coxeter system, so $S=\cup_{i\in I} S_i$ (disjoint union) where $S_i:=\mset{s_\alpha\mid \alpha\in \Pi_i}$, $\Pi_i:=\set{{e_i}, {\delta-e_i}}$. In fact, the irreducible components of $(W,S)$ are $(W_i,S_i)$ (of type $\tilde A_1$) for $i\in I$, where $W_i:=\pair{S_i}$. Now for any signs $\epsilon_i\in \set{\pm 1}$, 
 $\Gamma:=\cup_{i\in I} S_i\times\set{\epsilon _i}$ is clearly an abstract root basis of $\widehat T$
 by Example \ref{ss:3.14}.
 However, $\theta\colon \Phi\rightarrow \widehat T$ is bijective, and
 $\Pi'=\theta^{-1} (\Gamma)=\cup_{i\in I} \epsilon \Pi_i$ is not a root basis for $B$ if $i\mapsto \epsilon_i$ is not a constant function. For if $i,j\in I$ with $\epsilon_ i=-\epsilon_ j$, then
 $\epsilon_ i e_i +\epsilon _i(\delta-e_i)+\epsilon _j e_j+\epsilon_j(\delta-e_j)=0$  and  $\Pi'$ is not positively independent. 
 
We remark that if $\vert I\vert =4$, the above based root datum $B$ is isomorphic to a based root subdatum   of the standard (as in \cite{Bou} or \cite{Hum}) based root datum of the  affine Weyl group of type $\tilde D_4$
(cf  \cite{Dy2}).\end{example}

\begin{example} \label{ss:5.10} Prior to  studying  conjugacy of root bases of infinite rank Coxeter systems,
we discuss in more detail the two  Coxeter systems 
which play an exceptional role  in Theorem 1. Note  that these two
 Coxeter systems, of type $A_{\infty,\infty}$ and $A_\infty$,  have Coxeter graphs
\[\xymatrix{\ldots\ar@{-}[r]&\bullet\ar@{-}[r]&\bullet\ar@{-}[r]&\bullet\ar@{-}[r]&\ldots&&\bullet\ar@{-}[r]&\bullet\ar@{-}[r]&\bullet\ar@{-}[r]&\ldots}\] 
 respectively, so are obviously non-isomorphic. 
 
 Fix a Coxeter system $(W,S)$ of type $A_{\infty,\infty}$.
We may identify  
$W$  with the group of all permutations of $\Int$ which fix all but finitely many integers, so that 
 $S=\mset{s_n=(n,n+1)\mid n\in \Int}$ (the set of adjacent transpositions).
Let  $K=\mset{s_n\mid n\in \Nat}\subseteq S$. Then the standard  parabolic subsystem $(W_K,K)$ is a Coxeter system of type $A_\infty$.
Clearly, by restriction of its action to $\Nat\subseteq \Int$, 
  $W_K$ identifies  with the group of all permutations of $\Nat$ which fix all but finitely many integers, with  Coxeter generators $\mset{s_n=(n,n+1)\mid n\in \Nat}$.  Then the reflections of $(W,S)$ (resp., $(W_K,K)$) are the transpositions  $T=\mset{(i,j)\mid i<j\text{ \rm in $\Int$}}$ (resp., $T'=\mset{(i,j)\mid i<j\text{ \rm in $\Nat$}}$). 
  
   Clearly, any bijection $\sigma :\Nat\xrightarrow{\cong} \Int$ induces a group isomorphism $\sigma'\colon W_K\rightarrow W$ defined by $ w\mapsto \sigma w\sigma^{-1}$
which restricts to a bijection $T\rightarrow T'$. Similarly,  any permutation $\tau$ of $\Int$ (resp., $\Nat$)  induces an 
(in general outer) automorphism $\tau'$ of $W$ (resp., $W_K$) defined by $ w\mapsto \tau w\tau^{-1}$ and 
 which restricts to a permutation of $T$ (resp., $T')$.

    Let $V$ be a real vector space on basis $e_n$ for $n\in \Int$,
   with (positive definite) symmetric bilinear form defined by
   $\mpair{\sum_n a_ne_n, \sum_n b_ne_n}=\sum_n a_nb_n$.
   Let $\Pi=\mset{e_n-e_{n+1}\mid n\in \Int}\subseteq V$. Then $(\mpair{?,?}\colon V\times V\rightarrow \real,\Id_\Pi\colon \Pi\rightarrow \Pi)$
   is a  standard based root datum affording a Coxeter system $(W,S)$ of type $A_{\infty,\infty}$. 
 (The set $X=\mset{e_n\mid n\in \Int}\subseteq V$ is stable under the $W$-action. Restriction of the $W$-action to $X$ and identifying $X$ with $\Int$ 
   via the bijection $n\mapsto e_n\colon \Int\rightarrow X$,  provides the  description of $W$ as a subgroup of $\text{\rm Sym}(\Int)$ from above. Similarly,  $Y=\mset{e_n\mid n\in \Nat}\subseteq V$
   is stable under the action of $W_K$ and affords in a similar way the embedding $W_K\subseteq \text{\rm Sym}(\Nat)$).    

   For any $J\subseteq S$, let $\Pi_J:=\mset{\alpha\in \Pi\mid s_\alpha\in J}$, $V_J:=\real \Pi_J$ denote the $\real$-span of $\Pi_J$, $W_J:=\mpair{J}$, $\Phi_J:=W_J\Pi_J\subseteq V_J$ and $\mpair{?,?}_J$ denote the restriction of $\mpair{?,?}$ to a bilinear form on $V_J$. Then
   $B_J:=(\mpair{?,?}_J, \Id_{\Pi_J}\colon \Pi_J\rightarrow \Pi_J, 
   )$
   is a standard based root datum for the standard parabolic subsystem $(W_J,J)$ of $(W,S)$.
   Let $E_J$  denote the root datum underlying the based root datum $B_J$, with root system denoted as $\Phi_J$.

 Now fix a  bijection  $\sigma\colon \Nat\rightarrow \Int $. This induces an isomorphism
 $\tilde \sigma\colon \real Y\rightarrow \real X$ of vector spaces  mapping basis elements
 by $e_n\mapsto e_{\sigma(n)}$. It is straightforward to check that
 $\tilde\sigma(\Pi_K)$ (resp., $\tilde\sigma^{-1}(\Pi_S)$) is a root basis of $E_S$ (resp., $E_K$)
 with respect to which the associated Coxeter system is of type $A_\infty$ (resp., $A_{\infty,\infty}$).
 Similarly, a permutation $\tau$ of $\Int$ (resp., $\Nat$) induces an $\real$-linear automorphism $\tilde \tau$  of 
 $\real X$ (resp., $\real Y$)  determined by  $e_n\mapsto e_{\tau(n)}$, and $\tilde \tau(\Pi_S)$ (resp., $\tilde \tau(\Pi_K)$ is a root basis of $E_S$ (resp., $E_K$) which is not in general $W$-conjugate to $\Pi_S$ (resp., $\Pi_K$) up to sign, though the Coxeter system associated to this root basis is still of type $A_{\infty,\infty}$ (resp., $A_\infty$).
 
  \end{example}
\begin{remark} The above   example comes from a similar isomorphism of infinite-rank Kac-Moody Lie algebras
 with Dynkin diagrams as above. Namely, such an algebra of type $A_{\infty,\infty}$ (resp., $A_{\infty}$) 
 may be realized as the complex  Lie algebra of all $\Int\times\Int$-indexed  (resp., $\Nat\times\Nat$-indexed) complex matrices with only finitely many non-zero entries, and trace zero. A bijection $\Int\rightarrow \Nat$ induces an isomorphism between these  Lie algebras in the obvious way, inducing the above isomorphism of root systems in the (restricted dual) of the corresponding Cartan subalgebras.\end{remark}
 \begin{definition} We say that a map  $\lambda\colon \widehat T\rightarrow \widehat T$   is given locally by the action of $W$ if for each finitely generated reflection subgroup $W'$ of $W$,
 there is an element $w=w(W')$ of $W$ such that $\lambda(\alpha)=w(\alpha)$ for all
   $\alpha\in (W'\cap T)\times\set{\pm 1}$.  Such a map $\lambda$ is injective and preserves betweenness; if $\lambda$ is  invertible, its inverse is also given locally by the action of $W$.  We let $\widehat  W$ denote the group of all
   permutations of $\widehat T$ which are given locally by the action of $W$, under composition.
   Then $W\subseteq \widehat W$, and equality holds if  $(W,S)$ is of finite rank. 

  \end{definition}

   \begin{theorem}\label{ss:5.13} Let $(W,S)$ be an irreducible Coxeter system with standard abstract root system $\widehat T$, and  let $\Psi_+$ be a  positive system of $\widehat{T}$. \begin{num}\item $(W,S_{\Psi_+})$ is isomorphic to $(W,S)$ unless one of them is of type $A_{\infty}$ and the other is of type $A_{\infty,\infty}$
   \item If $(W,S_{\Psi_+})\cong (W,S)$, there is $\hat w\in \widehat W$ and $\epsilon\in \set{\pm 1}$
   with $\Psi_+=\epsilon \hat w(\widehat{T}_+)$
   \item Any betweenness-preserving automorphism $\sigma$ of  the abstract root system of $(W,S)$
   is expressible (not necessarily uniquely) as $\sigma =\epsilon' \hat w d'$ where $\hat w\in \widehat W$, and 
  $d'$ (resp., $\epsilon '$) is an automorphism induced by a diagram  automorphism of $(W,S)$ (resp., by the action of $\epsilon\in \set{\pm 1}$). \end{num}
 \end{theorem}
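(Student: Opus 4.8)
Write $S'=S_{\Psi_+}$; by \ref{ss:3.11} and \ref{ss:3.4}(c) $(W,S')$ is a Coxeter system with $S'\subseteq T$, its reflection set is again $T$ and $|S'|=|S|$ (\ref{ss:3.7}), and by \ref{ss:3.27} the whole situation is symmetric in $(W,S)$ and $(W,S')$, with $\widehat W$ unchanged. If $(W,S)$ has finite rank, so does $(W,S')$, and (a), (b) are immediate from \ref{ss:5.4} and \ref{ss:5.5} (here $\widehat W=W$); so I assume henceforth that $(W,S)$, hence $(W,S')$, has infinite rank. The basic tool is the following \emph{local analysis}: for every finite $J\subseteq S$ with $(W_J,J)$ irreducible, $\widehat{T_J}:=(W_J\cap T)\times\set{\pm1}$ is the abstract root system of $(W_J,J)$ (\ref{ss:3.8}), $\Psi_+\cap\widehat{T_J}$ is a positive system of it (\ref{ss:3.28}(c)), and \ref{ss:5.4} provides $w_J\in W_J$, $\epsilon_J\in\set{\pm1}$ with $\Psi_+\cap\widehat{T_J}=\epsilon_J w_J(\widehat{T_J}_+)$, the pair $(\epsilon_J,w_J)$ being unique when $W_J$ is infinite (\ref{ss:5.5}). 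By \ref{ss:3.13} the simple reflections of $\Psi_+\cap\widehat{T_J}$ are $w_JJw_J^{-1}=\chi_{(W,S')}(W_J)$, so $(W_J,\chi_{(W,S')}(W_J))\cong(W_J,J)$ as Coxeter systems; symmetrically, every finite irreducible standard parabolic of $(W,S')$ is a reflection subgroup of $(W,S)$ of the same Coxeter type.

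For (a) I split into two cases. If some finite $J\subseteq S$ has $(W_J,J)$ infinite, then by the local analysis $(W,S')$ also has an infinite irreducible finite-rank reflection subgroup, so by the symmetric statement $(W,S')$ is not locally finite; one then uses Lemma \ref{ss:conj} --- the rigidity of infinite irreducible parabolics under length-increasing conjugation --- together with the type-matching of finite irreducible parabolics to build, step by step along an exhausting family of finite irreducible parabolics, a diagram isomorphism $(W,S)\cong(W,S')$. If instead every finite parabolic of $(W,S)$ is finite, then $(W,S)$ is one of the connected locally-finite types $A_\infty$, $A_{\infty,\infty}$, $B_\infty$, $D_\infty$; every finitely generated reflection subgroup of $W$ is then finite, so $(W,S')$ is locally finite as well, and since a $B_n$ (resp.\ $D_n$) parabolic marks the unique end (resp.\ fork) of the diagram, and occurs in $(W,S)$ iff it occurs in $(W,S')$, one concludes that $(W,S')$ has the same type as $(W,S)$ except when one of them is of type $A_\infty$ and the other of type $A_{\infty,\infty}$ --- the stated exception.

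Part (b) is the heart of the matter; assume $(W,S')\cong(W,S)$. One first pins down the sign: for $W_J\subseteq W_K$ with $W_J$ infinite, restrict $\Psi_+\cap\widehat{T_K}=\epsilon_K w_K(\widehat{T_K}_+)$ to $\widehat{T_J}$ and compute $w_K(\widehat{T_K}_+)\cap\widehat{T_J}$ using \ref{ss:3.9}--\ref{ss:3.10}; this is $\epsilon_K$ times a $W_J$-translate of $\widehat{T_J}_+$, so $\epsilon_J=\epsilon_K$ by the uniqueness in \ref{ss:5.5}. Call the common value $\epsilon$ (in the locally-finite case all $W_J$ are finite and $\epsilon$ is free, so take $\epsilon=1$). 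The remaining --- and principal --- difficulty is that the locally defined conjugators $w_J$ are neither canonical nor composable, because $w_J$ does not normalise $\widehat{T_J}$ relative to a smaller parabolic; one must instead build a coherent family by transfinite recursion (or an inverse-limit argument) along an exhausting family of finite irreducible parabolics, correcting $w_J$ at each stage by an element of $W_J$ so that restrictions agree on overlaps. The key point making this possible is that the relevant discrepancy (expressed through the distinguished coset representative of \ref{ss:3.9}) lies in the normaliser of a smaller parabolic $W_J$ and stabilises its generating set, and Lemma \ref{ss:conj}, applied to the appropriate length-increasing representative, then forces such an element to centralise $W_J$ and hence act trivially on $\widehat{T_J}$; iterating yields $\hat w\colon\widehat T\to\widehat T$, which one checks to be a bijection lying in $\widehat W$ with $\Psi_+=\epsilon\hat w(\widehat T_+)$. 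The locally-finite cases $A_\infty$, $A_{\infty,\infty}$, $B_\infty$, $D_\infty$ run through the same scheme with all groups finite.

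Finally, (c) follows from (b). A betweenness-preserving automorphism $\sigma$ is an automorphism of $(\widehat T,F)$ (\ref{ss:3.29}(a)) inducing a group automorphism of $W$ (\ref{ss:3.29}(b)); hence $\sigma(\widehat T_+)$ is a positive system whose set of simple reflections is $\sigma S\sigma^{-1}$, and $(W,\sigma S\sigma^{-1})\cong(W,S)$ via that automorphism --- so we are not in the exceptional case, and (b) gives $\hat w\in\widehat W$, $\epsilon'\in\set{\pm1}$ with $\sigma(\widehat T_+)=\epsilon'\hat w(\widehat T_+)$. Since $\epsilon'$ is central and commutes with $\hat w$, the map $d':=\hat w^{-1}\epsilon'\sigma$ is a betweenness-preserving automorphism fixing $\widehat T_+$, hence permutes $\Delta_{\widehat T_+}=S\times\set{1}$; the induced permutation of $S$ extends to a group automorphism preserving $S$, i.e.\ is a diagram automorphism, $d'$ is the automorphism it induces, and $\sigma=\epsilon'\hat w d'$. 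Thus the one genuine obstacle is the coherent patching in (b): assembling the non-canonical, non-composable local conjugators supplied by \ref{ss:5.4} into a single element of $\widehat W$, which is precisely what Lemma \ref{ss:conj} and the finite-rank rigidity \ref{ss:5.4}--\ref{ss:5.5} are designed to make possible.
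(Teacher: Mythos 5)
Your outline follows the paper's own route quite closely: finite rank via \ref{ss:5.4}--\ref{ss:5.5}; the split into the locally finite and locally infinite regimes; in the locally infinite case, local conjugators $w_J\in W_J$ with a common sign, and Lemma \ref{ss:conj} used to force the discrepancy between conjugators of nested parabolics to centralize an infinite irreducible piece and hence act trivially on its roots; and (c) deduced from (b) exactly as in the paper (via \ref{ss:3.29}, $d'=\hat w^{-1}\epsilon'\sigma$ fixes $\widehat T_+$, hence $S\times\{1\}$, hence is induced by a diagram automorphism). One simplification you miss, but which is cosmetic: no transfinite recursion or ``correction'' of the $w_J$ is needed. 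With the normalization $a_{W'}\in W'$, the coherence condition of Lemma \ref{ss:5.16}(ii) holds automatically, and \ref{ss:conj} then shows the locally defined maps $\alpha\mapsto a_{W_J}^{-1}(\alpha)$ agree outright on the roots of any infinite irreducible parabolic of $(W,S_{\Psi_+})$ contained in the overlap; since such parabolics are cofinal (type (LocInf), irreducibility), $\lambda$ is simply well defined, and its inverse is produced symmetrically. Also, in this regime (a) is not a separate step-by-step construction of a diagram isomorphism: it falls out of (b), since $\epsilon\hat w$ carries $\Delta_{\Psi_+}$ to $S\times\{1\}$.

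The genuine gap is your final sentence of (b): ``the locally-finite cases $A_\infty$, $A_{\infty,\infty}$, $B_\infty$, $D_\infty$ run through the same scheme with all groups finite.'' They do not. The whole patching mechanism rests on Lemma \ref{ss:conj}, whose hypothesis requires the parabolic being conjugated to be \emph{infinite} irreducible; for finite parabolics the pair $(\epsilon_J,w_J)$ is not unique (\ref{ss:5.5}), and, more seriously, an element conjugating one finite irreducible subset of $S$ onto another with lengths increasing need not centralize it, so the discrepancies between conjugators of nested finite parabolics do not fix the overlap pointwise and neither the direct definition nor a recursion-with-corrections yields a well-defined limit by this argument alone. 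Indeed the required $\hat w$ generally lies in $\widehat W\setminus W$ (Example \ref{ss:5.10}: root bases obtained from an infinite permutation of $\Nat$ or $\Int$ are not $W$-conjugate up to sign to the standard one), and the $A_\infty$/$A_{\infty,\infty}$ exception shows that in this regime the local data can fail to globalize at all, so some global input beyond finite-rank rigidity is unavoidable. The paper handles these four types by a different argument: Corollary \ref{ss:5.17} pins down the possible types, and the explicit models in \ref{ss:5.10} and \ref{ss:5.18} determine all root bases up to sign and exhibit transitive groups $G(A_\infty)$, $G(A_{\infty,\infty})$, $G(B_\infty)$, $G(D_\infty)$ acting on $\widehat T$ by elements of $\widehat W$, which is what gives (b) there. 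You need a replacement of this kind where you invoke ``the same scheme with all groups finite.''
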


  \subsection{}    We need to give  some  generalities on  an  (possibly infinite rank) 
  irreducible Coxeter system $(W,S)$ prior to  the proof of \ref{ss:5.13}. Fix such a Coxeter system with reflections $T$ and standard  abstract root system $\widehat T$. We distinguish two cases throughout the following discussion.
  We say that $(W,S)$ is of type  (LocFin) if  every finite rank, irreducible  standard parabolic subgroup of $(W,S)$ is finite. We say $(W,S)$ is of type  (LocInf) if  $(W,S)$ has some infinite, irreducible, finite rank, standard parabolic subgroup i.e. if it is  not of type (LocFin).
  
  According to a well-known result of Tits (see \cite[Theorem 2.5]{How})   any finite subgroup of $W$ is  contained in some finite  parabolic subgroup of $W$, so the maximal (under inclusion)  finite subgroups of $W$ coincide with the maximal (under inclusion)  finite parabolic subgroups of $W$. Using this, one sees that $(W,S)$  is  of type (LocInf)  iff $(W,S)$ has some  infinite, finite rank (irreducible) reflection subgroup; in particular,  the type of $(W,S)$  depends only on the pair $(W,T)$. 
  
  If $(W,S)$ is of type  (LocFin) (resp., (LocInf)) we let $\mc{S}'$ denote the set of all irreducible, finite rank reflection subgroups of $(W,S)$ (resp., infinite, finite rank irreducible  reflection subgroups of $(W,S)$), ordered by inclusion. Note that $\mc{S}'$ depends only on the pair $(W,T)$.
To allow greater convenience of application of some of the following results,  we work below not just with $\mc{S}'$, but more generally with any cofinal subset  $\mc{S}$ 
  of   $\mc{S}'$.  (Recall that a subset $X$ of a poset $Y$ is
  said to be cofinal in $Y$ if for any $y\in Y$, there is $x\in X$ with $y\leq x$). Since $(W,S)$ is irreducible,
  $\mc{S}$ is cofinal in the (inclusion ordered) family of all finite rank reflection subgroups of $(W,S)$.
  Also $\mc{S}$ is directed; given $W_1,W_2\in \mc{S}$, there is $W_3\in \mc{S}$ with $W_3\supseteq W_1\cup W_2$.

    For any reflection subgroup $W'$ of $W$, let  $\widehat {T'}=(W'\cap T )\times\set{\pm 1}$. For any subset  $\Psi+$ of $\widehat T$, set $\Psi_+(W'):=\widehat{T'} \cap \Psi_+$, so
    $\Psi_+=\cup_{W'\in \mc{S}}\Psi_+(W')$. For example, in this notation, Lemma \ref{ss:3.28}(c)
    asserts that if $\Psi_+$ 
   is an abstract positive system for $\widehat T$, then  $\Psi_+(W')$ is an abstract positive system
   for $\widehat{T'}$. 
   
    Below, by a limit $\lim_{W'\in S} F(W')$  where $F(W')$ is a set for $W'\in \mc{S}$, we mean the set of all $t\in \cup_{W'\in \mc{S}} F(W')$ for which there is $W'\in \mc{S}$ such that for all $W''\in \mc{S}$ with $W''\supseteq W'$, we have $t\in F(W'')$. 

  \begin{lemma} \label{ss:5.16} Let $(W,S)$ be an irreducible  Coxeter system with standard abstract positive system $\Psi_+^0=\widehat{T}_+$. 
 Then a subset $\Psi_+\subseteq \widehat{T}$ is an abstract system
 of positive roots for $\widehat{T}$ iff  for all $W'\in \mc{S}$, $\Psi_+(W')= \epsilon a_{W'}\Psi^0_+(W')$
  for some sign $\epsilon\in \set{\pm 1}$ and function $W'\mapsto a_{W'}\colon \mc{S}\rightarrow W$ satisfying the following conditions 
  \begin{conds}\item  $a_{W'}\in W'$ 
  \item  for $W',W''\in \mc{S}$ with $W'\subseteq W''$, $a_{W'}^{-1}a_{W''}$ is the unique element of minimal length of the coset $W'a_{W''}$  with respect to the standard length function of  $(W,S)$, 
  \item  $S':=\lim_{W'\in \mc{S}}\,a_{W'}\chi(W')a_{W'}^{-1}$ generates $W$.\end{conds} 
Here the $a_{W'}$ and $\epsilon$ are uniquely determined, provided $\epsilon=1$ 
in case $\text{\rm (LocFin)}$.  If the conditions hold, the  abstract root basis  corresponding to $\Psi_+$ is
 $\Delta_{\Psi_+}= \lim_{W'\in \mc{S}}\epsilon a_{W'}(\chi(W')\times \set{1})$, and $S'=S_{\Psi_+}$.
 \end{lemma}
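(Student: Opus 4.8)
The plan is to prove both implications by transferring, through the directed poset $\mc S$, the finite rank irreducible case already settled in Theorem~\ref{ss:5.4} and its uniqueness refinement~\ref{ss:5.5}, together with the reflection subgroup machinery of \ref{ss:3.7}--\ref{ss:3.13}. The one computational point on which everything turns is the following fact $(\ast)$: \emph{for reflection subgroups $W'\subseteq W''$ of $(W,S)$ and $z\in W''$, one has $z(\widehat{T''}_+)\cap\widehat{T'}=\widehat{T'}_+$ if and only if $z$ is the element of minimal standard length in the coset $W'z$}, equivalently $N(z)\cap W'=\emptyset$. This is immediate from the explicit $W$-action on $\widehat T$ in Proposition~\ref{ss:3.1}: for $t\in W'\cap T\subseteq W''\cap T$, the element $z^{-1}(t,\delta)$ lies in $\widehat{T''}$ always (as $z\in W''$) and equals $(z^{-1}tz,\delta)$ precisely when $t\notin N(z)$. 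Writing a general $w\in W''$ as $w=w_0z$ with $w_0\in W'$ and $z$ minimal in $W'z$, and using that $w_0$ preserves $\widehat{T'}$, one obtains the form I will use repeatedly: $w(\widehat{T''}_+)\cap\widehat{T'}=w_0(\widehat{T'}_+)$, so restricting any $W''$-translate of the standard positive system of $\widehat{T''}$ to $\widehat{T'}$ yields a $W'$-translate of the standard positive system of $\widehat{T'}$, with no change of sign.

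For the forward implication, assume $\Psi_+$ is an abstract positive system. For $W'\in\mc S$, Lemma~\ref{ss:3.28}(c) makes $\Psi_+(W')$ an abstract positive system of $\widehat{T'}$, and since $(W',\chi(W'))$ is irreducible of finite rank, Theorem~\ref{ss:5.4} and~\ref{ss:5.5} give $\Psi_+(W')=\epsilon_{W'}b_{W'}(\widehat{T'}_+)$ with $b_{W'}\in W'$, $\epsilon_{W'}\in\set{\pm1}$, uniquely once we demand $\epsilon_{W'}=1$ when $W'$ is finite. In type $\text{(LocFin)}$ every $W'\in\mc S$ is finite, so all $\epsilon_{W'}=1$ and we put $\epsilon:=1$; in type $\text{(LocInf)}$ every $W'\in\mc S$ is infinite, so $\epsilon_{W'}$ and $b_{W'}$ are uniquely determined by $\Psi_+(W')$. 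If $W'\subseteq W''$ then $\Psi_+(W')=\Psi_+(W'')\cap\widehat{T'}$, and by the consequence of $(\ast)$ this equals $\epsilon_{W''}w_0(\widehat{T'}_+)$ for some $w_0\in W'$; comparing with $\epsilon_{W'}b_{W'}(\widehat{T'}_+)$ and using that $w\mapsto w(\widehat{T'}_+)$ is injective and (for infinite $W'$) that $-\widehat{T'}_+$ is not a $W'$-translate of $\widehat{T'}_+$ forces $\epsilon_{W'}=\epsilon_{W''}$ (hence, by directedness, a single $\epsilon$) and $b_{W'}=w_0$, which by $(\ast)$ means $b_{W'}^{-1}b_{W''}$ is minimal in $W'b_{W''}$. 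Thus $a_{W'}:=b_{W'}$ satisfies (i) and (ii). Finally, the simple roots of $\Psi_+(W')$ in $\widehat{T'}$ are $\epsilon a_{W'}(\chi(W')\times\set{1})$; since whether $s_\alpha(\beta)\in\Psi_+$ is detected inside any $W'\in\mc S$ with $s_\alpha,s_\beta\in W'$, and since ($\beta\in\Psi_+(W')\setminus\set{\alpha}\subseteq\Psi_+(W'')\setminus\set{\alpha}$ and $s_\alpha(\beta)\in\widehat{T'}$) ``$\alpha$ simple for $\Psi_+(W'')$'' implies ``$\alpha$ simple for any smaller $W'\ni s_\alpha$'', one gets $\Delta_{\Psi_+}=\lim_{W'\in\mc S}\epsilon a_{W'}(\chi(W')\times\set{1})$ and hence $S_{\Psi_+}=\lim_{W'\in\mc S}a_{W'}\chi(W')a_{W'}^{-1}=:S'$, using $s_{a_{W'}(t,1)}=a_{W'}ta_{W'}^{-1}$. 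As $\Psi_+$ is generative, $S'$ generates $W$, giving (iii); the asserted uniqueness of $\epsilon$ and of the $a_{W'}$ (with $\epsilon=1$ in (LocFin)) is exactly the uniqueness recorded above.

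For the converse, suppose $\epsilon$ and $(a_{W'})_{W'\in\mc S}$ satisfy (i)--(iii), so that $\Psi_+=\bigcup_{W'\in\mc S}\epsilon a_{W'}(\widehat{T'}_+)$ with $\Psi_+\cap\widehat{T'}=\epsilon a_{W'}(\widehat{T'}_+)$: cofinality of $\mc S$ gives $\widehat T=\bigcup_{W'}\widehat{T'}$, and directedness together with (ii) and $(\ast)$ (applied to $z=a_{W'}^{-1}a_{W''}$, using that $a_{W'}$ preserves $\widehat{T'}$) make the pieces glue consistently. Each $\epsilon a_{W'}(\widehat{T'}_+)$ is a set of orbit representatives for $\set{\pm1}$ on $\widehat{T'}$, so $\Psi_+$ is a set of orbit representatives for $\set{\pm1}$ on $\widehat T$, hence a quasi-positive system (fibers of the reflection map of $\widehat T$ have two elements). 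It is biclosed: if $\alpha,\beta\in\Psi_+$ and $\gamma\in[\alpha,\beta]$, then either $\alpha=\beta$ and $\gamma=\alpha\in\Psi_+$, or $\alpha\ne\pm\beta$ and, choosing $W'\in\mc S$ with $s_\alpha,s_\beta\in W'$ (hence $s_\gamma\in W'$), Lemma~\ref{ss:3.28}(a) puts $\gamma$ between $\alpha,\beta$ in $\widehat{T'}$, where $\epsilon a_{W'}(\widehat{T'}_+)$ — being $\pm1$ times a translate of the standard positive system of $\widehat{T'}$, which is biclosed by a real realization (\ref{ss:4.6}) and Proposition~\ref{ss:5.1}(c) — contains $\gamma$; the complement $\widehat T\setminus\Psi_+=-\Psi_+$ is closed by the symmetry $\gamma\in[\alpha,\beta]\iff-\gamma\in[-\alpha,-\beta]$. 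Exactly as in the forward direction, $\Delta_{\Psi_+}=\lim_{W'\in\mc S}\epsilon a_{W'}(\chi(W')\times\set{1})$ and $S_{\Psi_+}=\lim_{W'\in\mc S}a_{W'}\chi(W')a_{W'}^{-1}=S'$, which generates $W$ by (iii); so $\Psi_+$ is generative, and therefore an abstract positive system.

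The main obstacle is $(\ast)$ together with the attendant bookkeeping: one must match the precise meaning of ``minimal length element of $W'a_{W''}$'' in (ii) with the correct side of $(\ast)$, and, more importantly, verify that restriction of positive systems to reflection subgroups neither introduces a spurious sign — this is what legitimizes the single global $\epsilon$ in type (LocInf) and forces $\epsilon=1$ in type (LocFin), via~\ref{ss:5.5} — nor disturbs the ``limit'' descriptions of $\Delta_{\Psi_+}$ and $S_{\Psi_+}$. Everything else is routine propagation of Theorem~\ref{ss:5.4} and the reflection subgroup facts of \ref{ss:3.7}--\ref{ss:3.13} through the directed poset $\mc S$.
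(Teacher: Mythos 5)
Your overall strategy is the paper's own: restrict $\Psi_+$ to the members of the directed family $\mc S$, apply Theorem \ref{ss:5.4} and Remark \ref{ss:5.5} to each $(W',\chi(W'))$, patch the signs and the elements $a_{W'}$ along inclusions, and describe $\Delta_{\Psi_+}$ and $S_{\Psi_+}$ as limits. Your fact $(\ast)$ is precisely the computation the paper uses to pass from its containment $a_{W''}^{-1}a_{W'}\Psi^0_+(W')\subseteq \widehat T_+$ to condition (ii), and your sign-consistency argument in case (LocInf) (no $W'$-translate of $\widehat{T'}_+$ equals $-\widehat{T'}_+$ when $W'$ is infinite) is a legitimate variant of the paper's argument via finiteness of $\Psi_+(W')\cap\pm\Psi^0_+(W')$; your two-sided verification of $\Delta_{\Psi_+}=\lim_{W'}\epsilon a_{W'}(\chi(W')\times\set{1})$ also matches the paper (which, in the forward direction, instead uses one inclusion plus minimality of Coxeter generating sets). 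All of this is sound.

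There is, however, one genuine flaw, in the closure part of the converse: from $s_\alpha,s_\beta\in W'$ you infer ``hence $s_\gamma\in W'$'' for $\gamma\in[\alpha,\beta]$. This is false in general, because betweenness is defined through the maximal dihedral reflection subgroup $M$ of $W$ containing $s_\alpha,s_\beta$, and $M$ need not lie in $W'$. For instance, if $W=\mpair{r,s}$ is infinite dihedral and $W'=\mpair{r,srs}$, then roots attached to $rsr$ lie between suitable roots attached to $r$ and $srs$, although $rsr\notin W'$; for such a choice of $W'\in\mc S$ one has $\gamma\notin\widehat{T'}$, so Lemma \ref{ss:3.28}(a) cannot be invoked and the conclusion $\gamma\in\epsilon a_{W'}(\widehat{T'}_+)$ does not even make sense. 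The repair is immediate and is exactly the paper's phrasing: since $\mc S$ is cofinal in the family of \emph{all} finite rank reflection subgroups of $(W,S)$, choose $W'\in\mc S$ containing the (rank two) maximal dihedral reflection subgroup $M$, equivalently containing $s_\gamma$ as well as $s_\alpha,s_\beta$; with that choice your argument, including the symmetry argument for the complement, goes through verbatim.
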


\begin{proof} Consider first two  abstract positive systems $\Psi^i_+$ for $i=0,1$ with $\Psi_+^0=\widehat{T}_+$.
 We show  that there is 
 a unique function $W'\mapsto a_{W'}\colon \mc{S}\rightarrow W$ with $a_{W'}\in W'$
 and a unique  sign $\epsilon \in \set{\pm 1}$ such that $\epsilon=1$ in case (LocFin),  and such that  for all $W'\in \mc{S}$,
$\Psi_+^1(W')=\epsilon a_{W'}(\Psi_+^0(W'))$.
 In case (LocFin), this follows immediately from \ref{ss:5.4}--\ref{ss:5.5}. 
In case (LocInf), we have also from \ref{ss:5.4}--\ref{ss:5.5} that for $W'\in \mc{S}$,  
$\Psi_+^1(W')=\epsilon_{W'} a_{W'}(\Psi_+^0)$ for a unique sign $\epsilon_{W'}$ and
 $a_{W'}\in W'$, and we have to show that $\epsilon_{W'}$ is independent of $W'$. 
 Since $\mc{S}$ is directed, it suffices to show $\epsilon_{W'_1}=\epsilon_{W'_2}$ if $W_1'\subseteq W_2' \in \mc{S}$. But $\epsilon_{W'}=1$ iff $\Psi_+^1(W')\cap -\Psi_+^0(W')$ is finite,
 $\epsilon_{W'}=-1$ iff $\Psi_+^1(W')\cap \Psi_+^0(W')$ is finite
  and $\Psi_+^i(W_1')\subseteq \Psi_+^i(W'_2)$ if $W_1'\subseteq W_2'$, so this is clear.
  
  Now suppose that $W'\subseteq W''$ in $\mc{S}$. Since $\Psi^1_+(W'')\supseteq \Psi^1_+(W')$, we have \begin{equation}\label{eq:3} a_{W''}^{-1}a_{W'}\Psi^0_+(W')\subseteq \Psi^0_+(W'')\subseteq \widehat{T}_+
  \end{equation}
  which gives (ii). 
  
  Next, we show $\Delta_{\Psi^1_+}=\Delta'$ where $\Delta':= \lim_{W'\in \mc{S}}\epsilon a_{W'}(\chi(W')\times \set{1})$. Note $\Delta'\cap -\Delta'=\emptyset$ and that $S'$ defined as in (iii) also satisfies $S'=\mset{s_\alpha\mid \alpha\in \Delta'}$. Now  if $\alpha\in \Delta_{\Psi^1_+}$, then for any $W''\in \mc{S}$ with
  $\mset{1,s_\alpha}\subseteq W''$, $\alpha$ is an abstract simple root  for $\Psi^1_+(W'')$
  (since it is one for $\Psi^1_+$), so $\alpha\in \epsilon a_{W''}(\chi(W'')\times\set{1})$.
  This shows that $\Delta_{\Psi^1_+}\subseteq \Delta'$. Hence $S_{\Psi_+}\subseteq S'$ both generate $W$. Since any relation on $S'$ in $W$ involves elements of only a finite subset of $S'$, it  is easy to see that $S'$ is a set of Coxeter generators for $W$, so $S'=S_{\Psi_+^1}$ by \ref{ss:mingen}.  Since $\Delta'\cap -\Delta'=\emptyset$,
  this gives $\Delta_{\Psi^1_+}=\Delta'$.
  
  Conversely, suppose given $\epsilon\in\set{\pm 1}$  and $a_{W'}\in W$ for $W'\in \mc{S}$ satisfying
  (i)--(iii). Define the sets $X_{W'}:=\epsilon a_{W'}\Psi^0_+(W')$ and
  $\Psi_+:=\cup_{W'\in \mc{S}} X_{W'}$. From (i) and (ii), we deduce that
  $X_{W'}\subseteq (W'\cap T)\times\set{\pm 1}$, $X_{W'}\subseteq X_{W''}$ for $W'\subseteq W''$ in $\mc{S}$ (cf \eqref{eq:3}) and $X_{W'}=\Psi_{+}(W')$. Clearly, $\Psi_+$ is a quasi-positive system.
  Since any dihedral refection subgroup of $(W,S)$ is contained in $W'$ for some $W'\in \mc{S}$,
   it  easily follows  that $\Psi_+$ is biclosed. Now let $\Delta'$ be as in the paragraph immediately  above.
   As before,  $\Delta'\cap -\Delta'=\emptyset$ and $S'=\mset{s_\alpha\mid \alpha\in \Delta'}$.
   If $\alpha\in \Delta'$, then there is some $W'\in \mc{S}$ such that $\alpha$ is a simple root for $\Psi_+(W'')$ for all $W''\supseteq W'$ in $\mc{S}$. Since $\Psi_+=\cup_{\substack{W''\in \mc{S}\\
   W''\supseteq W'} }
   \Psi_+(W'')$, the definitions give $\Delta'\subseteq \Delta_{\Psi_+}$.  On the other hand, if $\alpha\in \Delta_{\Psi_+}$, then $\alpha$ is a simple reflection of $\Psi_+(W'')$ for all $w''$ with $s_\alpha\in W''$,
   so $\alpha\in \Delta_{\Psi_+}$. Hence $\Delta'=\Delta_{\Psi_+}$, $S'=S_{\Psi_+}$ and by (iii), $\Psi_+$ is generative.
    \end{proof}
   
  \begin{corollary} \label{ss:5.17} Let ${\Psi^i_+}$ be abstract systems of positive roots for the arbitrary  Coxeter system
 $(W,S)$, for $i=0,1$. Then $(W,S_{\Psi^1_+})$ and $(W,S_{\Psi_+^0})$ have the same finitely generated parabolic subgroups. 
 \end{corollary}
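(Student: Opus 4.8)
Here is the approach I would take.

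The plan is to characterise the finitely generated parabolic subgroups of either of the two Coxeter systems by an intrinsic closedness condition on the quasi-root system $\widehat T$ which makes no reference to a choice of abstract positive system. The starting point is \ref{ss:3.27}: since $\Psi^0_+$ and $\Psi^1_+$ are abstract \emph{positive} systems, the canonical identifications of $\widehat T$ with the standard abstract root systems of $(W,S_{\Psi^0_+})$ and of $(W,S_{\Psi^1_+})$ both preserve betweenness, so the two betweenness relations on the set $\widehat T$ coincide and ``closed subset of $\widehat T$'' is unambiguous. I would then reduce the Corollary to the following assertion, applied to $S''=S_{\Psi^0_+}$ and to $S''=S_{\Psi^1_+}$ in turn: \emph{for any Coxeter system $(W,S'')$ whose standard abstract root system is $\widehat T$, and any reflection subgroup $P=\mpair{P\cap T}$ of finite rank, $P$ is a parabolic subgroup of $(W,S'')$ if and only if $\widehat{T_P}:=(P\cap T)\times\set{\pm1}$ is closed in $\widehat T$.} Since the set $T$ of reflections and the betweenness relation are the same for both systems, and since ``finitely generated parabolic'' coincides with ``finite rank parabolic'' by \ref{ss:mingen}, this equivalence yields the Corollary at once. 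The reducible case needs no extra work: either one checks the equivalence directly for reducible $(W,S'')$, or one notes that the partition of $T$ into commutation-connected classes --- hence the irreducible components $W_i$ and the description of parabolic subgroups as products $\prod_i P_i$ over the components --- depends only on $(W,T)$, reducing everything to the irreducible case.

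To prove the equivalence I would realise $(W,S'')$ by a standard based root datum with linearly independent simple roots (possible by \ref{ss:4.6}); such a datum is reduced, so the map $\theta\colon\Phi\rightarrow\widehat T$ of \ref{ss:5.1}(b) is a bijection and, by \ref{ss:5.1}(c), betweenness in $\widehat T$ corresponds under $\theta$ to membership in $\real_{\geq0}$-spans of pairs of roots of the real root system $\Phi$. For ``parabolic $\Rightarrow$ closed'': a finite standard parabolic $W_J$ has all its roots in the span of the corresponding simple roots $\Pi_J$, and $\Phi\cap\real\Pi_J$ is exactly the set of roots of $W_J$, so $\widehat{T_{W_J}}$ is closed; the case of an arbitrary parabolic subgroup then follows by the $W$-equivariance of betweenness. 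For ``closed $\Rightarrow$ parabolic'': let $Q$ be the parabolic closure of $P$ (the smallest parabolic subgroup containing $P$), which has finite rank; after conjugating by a suitable element of $W$ we may assume $Q=W_J$ with $J\subseteq S''$ finite, so that $P\subseteq W_J$, the roots of $P$ span $\real\Pi_J$, and (by \ref{ss:3.28}(b)) $\widehat{T_P}$ is closed in $\widehat{T_{W_J}}$; it remains to conclude that $P=W_J$.

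The main obstacle is exactly this last step, which is a statement about the finite rank root system $\Phi_J$: if the root system $\Phi_P$ of a reflection subgroup $P\subseteq W_J$ is symmetric, spans $\real\Pi_J$, and is closed under $\real_{\geq0}$-betweenness inside $\Phi_J$, then $\Phi_P=\Phi_J$. I would prove this by induction on $|J|$. For $|J|\le2$ it is a direct planar argument: the rays spanned by $\Phi_P$ cut the plane into arcs of angle less than $\pi$, and a root of $\Phi_J$ whose ray lay strictly inside such an arc would be forced into $\Phi_P$ by closedness, so every line through a root of $\Phi_J$ is a line through a root of $\Phi_P$, whence (reducedness) $\Phi_J\subseteq\Phi_P$. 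The general case I would reduce to the dihedral one in the standard way --- inspecting, for a positive root $\gamma\in\Phi_J\setminus\Phi_P$ of minimal height, a rank two reflection subgroup through $\gamma$ and a suitable root of $P$ --- or simply quote it, together with the whole equivalence above, from the theory of reflection subgroups and parabolic closures in \cite{Dy2}.
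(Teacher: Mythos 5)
Your first reduction is sound: by \ref{ss:3.27} the two identifications of $\widehat T$ with the standard abstract root systems of $(W,S_{\Psi^0_+})$ and $(W,S_{\Psi^1_+})$ preserve betweenness, so ``closed in $\widehat T$'' is unambiguous, and the direction ``parabolic $\Rightarrow$ closed'' is also correct. But the key lemma on which your whole argument rests --- that a finite rank reflection subgroup $P$ of $(W,S'')$ is parabolic if and only if $\widehat{T_P}:=(P\cap T)\times\set{\pm 1}$ is closed in $\widehat T$ --- is false, so the converse direction you need cannot be established. Take $(W,S)$ of type $\tilde A_2$ in its standard reduced realization, with simple roots $\alpha_0,\alpha_1,\alpha_2$ and $\delta:=\alpha_0+\alpha_1+\alpha_2$, so that the roots are exactly $\beta+n\delta$ with $\beta\in\set{\pm\alpha_1,\pm\alpha_2,\pm(\alpha_1+\alpha_2)}$ and $n\in\Int$. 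Let $P=\mpair{s_{\alpha_1},s_{\alpha_1+\delta}}$, an infinite dihedral reflection subgroup generated by two reflections. Its root set is $\Phi_P=\mset{\pm\alpha_1+n\delta\mid n\in\Int}$, and any nonnegative combination of two of its elements has the form $c\alpha_1+d\delta$, which is a root only when $c=\pm 1$, in which case it again lies in $\Phi_P$; hence $\Phi_P$ is closed, and so is $\widehat{T_P}$ by \ref{ss:5.1}(b),(c). Yet $P$ is not a parabolic subgroup: every proper parabolic subgroup of $\tilde A_2$ is finite, while $P$ is infinite and proper. The same example breaks the internal steps of your proof of ``closed $\Rightarrow$ parabolic'': the parabolic closure of $P$ is all of $W$, the roots of $P$ span only a proper subspace of $\real\Pi$, so the reduction to your rank-two spanning lemma never applies, and the desired conclusion $P=W_J$ is simply false here.

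By contrast, the paper does not transfer an intrinsic characterization of parabolics at all; it transfers the positive systems themselves, locally. After reducing to the irreducible case by \ref{ss:reduc} and to $\Psi^0_+=\widehat T_+$ by \ref{ss:3.27}, Lemma \ref{ss:5.16} provides a sign $\epsilon$ and elements $a_{W'}\in W'$ with $\Psi^1_+(W')=\epsilon\, a_{W'}\Psi^0_+(W')$ for all $W'$ in a cofinal family of finite rank reflection subgroups; given a finite subset $\Delta_1\subseteq\Delta_{\Psi^1_+}$, one chooses a finite rank standard parabolic $W_K$ with $\Delta_1\subseteq\Psi^1_+(W_K)$ and obtains $\epsilon\, a_{W_K}^{-1}\Delta_1\subseteq K\times\set{1}\subseteq\Delta_{\Psi^0_+}$, so finite subsets of the two abstract root bases are $W$-conjugate up to sign, which yields the equality of finitely generated parabolic subgroups. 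If you want to pursue your route, you would need a property expressible purely in terms of $(\widehat T,\pm 1,\text{betweenness})$ that genuinely singles out parabolic subgroups among finite rank reflection subgroups; closedness does not, and biclosedness is no better (already for a rank one parabolic of type $A_2$ the complement of its root set fails to be closed).
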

 \begin{proof} Using  \ref{ss:reduc}, we may reduce to the case that $(W,S)$ is irreducible.
  In that case, it will suffice to show that
 any finite subset $\Delta_1$ of $\Delta_{\Psi_+}^1$ is $W$-conjugate up to sign to  some finite subset
 $\Delta_0$ of $\Delta_{\Psi^0_+}$. By symmetry (see \ref{ss:3.27}), we may assume $\Psi^0_+=\widehat{T}_+$.
 Choose a finitely generated standard parabolic subgroup  $W'=W_K\in \mc{S}'$ of $(W,S)$ such that $\Delta_1\subseteq \Psi_+^1(W')$.
 Then  as in the proof of  \ref{ss:5.16}, $\Delta_1\subseteq \epsilon a_{W'}(K\times \set{1})$. We may take
 $\Delta_0=\epsilon a_{W'}^{-1}\Delta_1\subseteq (K\times \set{1})\subseteq (S\times \set{1})=\Delta_{\Psi_+^0}$. This proves the Corollary.   \end{proof}
  \begin{example}\label{ss:5.18} Suppose $(W,S)$ is of type (LocFin) but is not of finite rank. Then by the classification of irreducible Coxeter systems, $(W,S)$ is either of type $A_\infty$, $A_{\infty,\infty}$ as in \ref{ss:5.10}, or of type $B_{\infty}$, $D_{\infty}$ with Coxeter graphs as shown below, respectively: \[\xymatrix{&&&&&&&{\bullet}\ar@{-}[d]&\\
{\bullet}\ar@{=}[r]&\bullet\ar@{-}[r]&\bullet\ar@{-}[r]&\bullet\ar@{-}[r]&\ldots&&\bullet\ar@{-}[r]&\bullet\ar@{-}[r]&\bullet\ar@{-}[r]&\ldots}\] 

Corollary \ref{ss:5.17} and the classification of finite Coxeter systems  implies that for a based root datum 
with  associated  Coxeter system  $(W,S)$  of type $A_\infty$ (resp.,
$A_{\infty.\infty}$, resp., $B_\infty$, resp., $D_\infty$), 
 any abstract  root basis of $(W,S)$ is of type $A_{\infty}$ or $A_{\infty,\infty}$
(resp., $A_\infty$ or $A_{\infty,\infty}$, resp., $B_\infty$, resp., $D_\infty$).  

For use in the proof of \ref{ss:5.13}, we describe the various possible root bases in these cases.
 We realize   root systems of type  $A_{\infty,\infty}$ (resp., $A_{\infty}$) as  $E_S$ (resp., $E_K$) in \ref{ss:5.10}.  We realize the root system of type $B_\infty$
 as the subset \[\Phi_B:= \set{\pm (e_i+e_j),\pm(e_i-e_j), \pm e_i\mid i,j\in \Nat,i\neq j}\] and that of type  $D_\infty$ as the subset
\[ \Phi_D:=\set{\pm (e_i+e_j),\pm(e_i-e_j)\mid i,j\in \Nat,i\neq j}\]  of $V_K$ as in \ref{ss:5.10}.  
 As corresponding sets of simple roots, we take
 $\Pi_B=\set{e_0}\cup\mset{e_{n+1}-e_n\mid n\in \Nat}$, $\Pi_D=\set{e_0+e_1}\cup \mset{e_{n+1}-e_n\mid n\in \Nat}$. These determine standard based root datums $E'_B$, $E'_D$ of types $B_\infty$, $D_\infty$ respectively  on $V_K$
 with the restriction of the form $\mpair{?,?}$  on $V$ from \ref{ss:5.10}. Let $H$ be the ``infinite signed permutation group''  consisting of all invertible linear operators on $V_K$ which induce bijections on the set $\mset{\pm e_n\mid n\in \Nat}$.
 It is easy to check directly by ad hoc arguments  from the preceding paragraph  that, up to  sign, every root  basis of $E_S$ or $E_K$ is one of those described in the last paragraph of \ref{ss:5.10}. Further, the group $G(A_\infty)$ (resp., $G(A_{\infty,\infty})$ consisting of all $\real$-linear operators on $V_K$ (resp., $V_S$) which restrict to bijections of the set $\set{e_n\mid n\in \Nat}$ (resp., $\set{e_n\mid n\in \Int}$) acts transitively on the set of root bases of the root systems for   $E_K$ (resp.,  $E_S$) which are of type $A_{\infty}$ (resp., $A_{\infty,\infty}$).
  Similarly, $G(B_\infty):=H$ acts  transitively on the set of  root bases of  $\Phi_B$,  and $G(D_{\infty}):=H$ acts  transitively
   on the set of root bases of $\Phi_D$. 
   
   If $(W,S)$ is of type $A_{\infty}$ (resp., $A_{\infty,\infty}$, $B_{\infty}$, $D_{\infty}$)
   let $G$ be the group $G(A_{\infty})$ (resp., $G(A_{\infty,\infty})$, $G(B_\infty)$, $G(D_\infty)$) acting on the real root system $\Phi$ of corresponding type as above.
   Identify $\Phi=\widehat T$ as in \ref{ss:5.1}(b). From above, $G\times\set{\pm 1}$ acts transitively on the set of abstract root bases of $\widehat{T}$ of the same type ($A_\infty$, $A_{\infty,\infty}$, $B_\infty$ or $D_\infty$) as $(W,S)$. It is easy to see that the action of $G$ on $\widehat T$ is by elements of $\widehat W$; actually, there is a natural identification $G\cong\widehat{W}$ in each case.
    \end{example}

   \subsection{Proof of \ref{ss:5.13}} First, we prove (a)--(b). We assume for this  that $(W,S)$ is not of finite rank,
   otherwise (a)--(b) follows from Theorem \ref{ss:5.4}. If $(W,S)$ is of type (LocFin),  then (a)--(b) follow from
   \ref{ss:5.17}.    
   Now suppose that $(W,S)$ is of type (LocInf). Let $\epsilon$ and $a_{W'}$ be as in the statement of \ref{ss:5.16}.
   We define a map $\lambda\colon \Psi\rightarrow \Psi$ given locally by action of $W$, as follows.
   Let $R$ be a finite subset of $S_{\Psi_+}$ such that the Coxeter system $(\mpair{R},R)$ is infinite and  irreducible (note that the family of all such $R$ is cofinal in the family of all finite subsets of $S_{\Psi_+}$). Let $\Delta:=\mset{\alpha\in \Delta_{\Psi_+}\mid s_\alpha\in R}$. Choose  $J\subseteq S$ with $W_J\in\mc{S'}$ such that $\mpair{R}\subseteq W_J$.   Set 
   $\lambda(\alpha)=\bigl(a_{W_J} \bigr)^{-1}(\alpha)$ for all  $\alpha\in (\mpair{R}\cap T)\times \set{\pm 1}$.
   To show $\lambda$ is well-defined, it will suffice to show that if $J\subseteq K\subseteq S$ with
   $W_K\in \mc{S}'$, then $a_{W_K}a_{W_J}^{-1}$ fixes all   $\alpha\in (\mpair{R}\cap T)\times \set{\pm 1}$,
   or equivalently  that $p:=a_{W_K}^{-1}a_{W_J}$ fixes $a_{W_J}^{-1}(\Delta)$ elementwise.  But from the proof of \ref{ss:5.16},   $\epsilon a_{W_J}^{-1}\Delta=J'\times\set{1}\subseteq J\times\set{1}$ and $\epsilon a_{W_K}^{-1}\Delta=K'\times\set{1} \subseteq K\times\set{1}$ where $J'\subseteq J$, $K'\subseteq K$, so $p(J'\times \set{1})=K'\times\set{1}$.
   Since $(\mpair{R},R)$ is infinite and irreducible, so are $(W_{J'},J')$ and $(W_{K'},K')$ (since they are isomorphic to $(\mpair{R},R)$). By \ref{ss:conj} applied to $(W,S)$, $J'=K'$ and $p$ is in the subgroup of $W_K$ generated by reflections in $S\setminus J'$  which commute with each element of $J'$,  which gives the desired conclusion. Note that from the construction, $\epsilon\lambda(\Psi_+)\subseteq \widehat{T}_+$ and in fact, $\epsilon \lambda(\Delta_{\Psi_+})\subseteq S\times\set{1}$. 
   By symmetry, interchanging the roles of $\Psi_+$ and $\widehat T_+$, we also get a map $\lambda'\colon \Psi\rightarrow \Psi$ given locally by conjugation, and  with $\epsilon \lambda'(\widehat T')\subseteq \Psi_+$ and $\epsilon\lambda'(S\times\set{1})\subseteq \Delta_{\Psi_+}$. We claim that the composite $\lambda'\lambda$  is the identity map on $\Psi$.
   To see this, let $R$ and $\Delta$ be as above in the definition of $\lambda$. 
For some $w\in W$,    we have $\lambda'\lambda(\alpha)=w(\alpha)$ for all $\alpha\in (\mpair{R}\cap T)\times\set{\pm 1}$. Since $w(\Delta)\subseteq \Delta_{\Psi_+}$,  and $(\mpair{R},R)$ is infinite irreducible,
\ref{ss:conj} applied to $(W,S_{\Psi_+})$ shows that $w$ is a product of elements of $S_{\Psi_+}\setminus R$  which commute with each  element of  $R$, so $w(\alpha)=\alpha$ for all $\alpha\in (\mpair{R}\cap T)\times\set{\pm 1}$.
Similarly, $\lambda\lambda'=\Id_{\Psi}$ so $\lambda'=\lambda^{-1}$.
Hence, $\hat w:=\lambda\in \widehat W$.
    Clearly, $\epsilon \hat w $ maps $\Delta_{\Psi_+}$ bijectively to
   $S\times\set{1}$, maps $\Psi_+$ bijectively to $\widehat{T}_+$,   and induces an isomorphism $(W,S_{\Psi_+})\cong (W,S)$.  This proves (a) and (b) in case (LocInf), and hence in all cases.
   
   Now we prove (c) in general. Let $\Psi_+:=\sigma(\widehat{T})$. Then $\Psi_+$ is an abstract positive system and by \ref{ss:3.29},  $\sigma$ induces an isomorphism $s_\alpha\mapsto s_{\sigma(\alpha)}\colon (W,S)\cong (W,S_{\Psi_+})$. Let $\epsilon$, $\hat w$ be as in (b).
   Then $d'=\hat w^{-1}\epsilon'\sigma$ is an automorphism of $\widehat T$ fixing $\widehat T_+$ setwise,
   and therefore fixing the  standard root basis $S\times\set{1}$ setwise as well. It is clear that $d'$ induces 
   an automorphism $s_\alpha\mapsto s_{d'(\alpha)}$ of $(W,S)$ i.e. a diagram automorphism $d$  of $(W,S)$, and that $d'$ is the automorphism of $\widehat{T}$ induced by $d$. This completes the proof of the theorem.
  
\begin{corollary} Let $E$ be a root datum with root system $\Phi$ and $\Pi$, $\Pi'$ be two root bases for $E$
affording based root datums $B$, $B'$  with corresponding Coxeter systems
$(W,S)$ and $(W,S')$ respectively. Assume that $(W,S)$ and $(W,S')$ are isomorphic
and irreducible.
Then, after possibly rescaling $B'$ (still keeping $\Pi'\subseteq \Phi$),
there are linear maps $\sigma\colon V\rightarrow V$ and $\sigma'\colon V'\rightarrow V'$ and a sign
$\epsilon\in \set{\pm 1}$
with the following properties:
\begin{conds}
\item $\epsilon \sigma$ (resp., $\epsilon \sigma'$) restricts to  a    bijection $\Pi'\rightarrow \Pi$ (resp.,
$\ck \Pi\rightarrow \ck{\Pi'}$)
\item for any  subspaces  $V_1\subseteq V$ spanned by a finite set of roots and $V_1'\subseteq V'$ of $V'$ spanned by a finite set  of coroots, 
there is $w=w_{V_1,V_1'}\in W$ such that $\sigma(v)=w(v)$ for any $v\in V_1$ and
$\sigma(v')=w(v')$ for any $v'\in V_1'$   
\item $\sigma$ (resp., $\sigma'$) restricts to a permutation of $\Phi$ (resp., $\ck \Phi$) 
and  $\ck{\sigma(\alpha)}=\sigma'(\ck \alpha)$ for all $\alpha\in \Phi$.
\item $\mpair{\sigma(v),\sigma'(v')}=\mpair{v,v'}$ for all $v\in \real \Pi$, $v'\in \real\Pi'$.
\end{conds} In particular,
$\mpair{\alpha,\ck \beta}=\mpair{\sigma(\alpha),\ck{\sigma(\beta)}}$ for all $\alpha,\beta\in \Pi'$ i.e.  $\Pi$ and $\ck \Pi$ afford the same NGCM (up to rescaling and reindexing).
\end{corollary}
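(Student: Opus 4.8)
The plan is to transfer the statement to the standard abstract root system of $(W,S)$ via the map $\theta$ of \ref{ss:5.1}, to apply \ref{ss:5.13} there, and then to lift the resulting element of $\widehat W$ back to linear maps, absorbing scalars into one rescaling of $B'$. First I would take $\theta\colon\Phi\rightarrow\widehat{T}$ as in \ref{ss:5.1}(b) for $B$, so that $\theta$ is $W$-equivariant, carries $\Phi_+$ onto $\widehat{T}_+$, and satisfies $\theta(\Pi)=S\times\set{1}$; applying \ref{ss:5.1} to the dual based root datum likewise gives a $W$-equivariant $\ck\theta\colon\ck\Phi\rightarrow\widehat{T}$ with $\ck\theta(\ck\alpha)=\theta(\alpha)$, using \ref{ss:4.4}(c). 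Since $(W,S)$ is irreducible, \ref{ss:5.2} applies and shows $\theta(\Pi')$ is an abstract root basis of $(\widehat{T},F)$; let $\Psi_+$ be the abstract positive system with $\Delta_{\Psi_+}=\theta(\Pi')$, so $S_{\Psi_+}=\mset{s_\alpha\mid\alpha\in\Pi'}=S'$ and $(W,S_{\Psi_+})=(W,S')$ is irreducible and, by hypothesis, isomorphic to $(W,S)$. Then \ref{ss:5.13}(b) yields $\hat w\in\widehat{W}$ and $\epsilon\in\set{\pm 1}$ with $\Psi_+=\epsilon\hat w(\widehat{T}_+)$; a short computation (locality of $\hat w$, the identity $w_0s_\beta w_0^{-1}=s_{w_0(\beta)}$, and $s_{-\beta}=s_\beta$) shows $\epsilon\hat w$ is an automorphism of the quasi-root system $\widehat T$, hence carries abstract root bases to abstract root bases, so $\theta(\Pi')=\epsilon\hat w(\theta(\Pi))$. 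Let $\psi\colon\Pi'\rightarrow\Pi$ be the bijection determined by $\theta(\alpha')=\epsilon\hat w(\theta(\psi(\alpha')))$.

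Next I would normalize and build the maps. For $\alpha'\in\Pi'$ and any finitely generated reflection subgroup $W_0$ containing $s_{\alpha'}$ and $s_{\psi(\alpha')}$, choose $w_0\in W$ realizing $\hat w$ on $(W_0\cap T)\times\set{\pm 1}$; $W$-equivariance of $\theta$ gives $\theta(w_0(\psi(\alpha')))=\theta(\epsilon\alpha')$, hence $w_0(\psi(\alpha'))=\epsilon c(\alpha')\alpha'$ for some $c(\alpha')\in\real_{>0}$, and directedness of the poset of such $W_0$ makes $c(\alpha')$ independent of all choices. Since $c(\alpha')\alpha'=\pm w_0(\psi(\alpha'))\in\Phi$, I replace $B'$ by its rescaling (\ref{ss:4.8}) with root basis $\mset{c(\alpha')\alpha'\mid\alpha'\in\Pi'}$ --- still a root basis of $E$, by \ref{ss:5.2}, since $\theta$ is unchanged on it --- and rename, so that now $w_0(\psi(\alpha'))=\epsilon\alpha'$ and, by \ref{ss:4.4}(c), $w_0(\ck{\psi(\alpha')})=\epsilon\ck{\alpha'}$ for every such $W_0$. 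Because $s_\beta(\real\Pi)=\real\Pi$ for $\beta\in\Pi$, one has $\real\Phi=\real\Pi=\real\Pi'$, and likewise for coroots. Given a finite relation $\sum_i d_i\alpha'_i=0$ in $\Pi'$, a single $W_0$ containing all $s_{\alpha'_i},s_{\psi(\alpha'_i)}$ with realizer $w_0$ gives $w_0(\psi(\alpha'_i))=\epsilon\alpha'_i$ for all $i$, hence $\sum_i d_i\psi(\alpha'_i)=\epsilon w_0^{-1}(\sum_i d_i\alpha'_i)=0$, and reading $\alpha'_i=\epsilon w_0(\psi(\alpha'_i))$ the other way, conversely; so $\alpha'\mapsto\epsilon\psi(\alpha')$ extends to a linear isomorphism $\real\Phi\rightarrow\real\Phi$, which I extend by the identity on a complement in $V$ to obtain $\sigma\colon V\rightarrow V$, and symmetrically $\sigma'\colon V'\rightarrow V'$ from $\ck{\alpha'}\mapsto\epsilon\ck{\psi(\alpha')}$.

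Then I would verify (i)--(iv). Part (i) is immediate from $\epsilon\sigma(\alpha')=\psi(\alpha')$. For (ii), given $V_1$ spanned by finitely many roots and $V'_1$ by finitely many coroots, I express those roots and the roots underlying those coroots as finite real combinations of elements of $\Pi'$, collect the finitely many $\alpha'_j\in\Pi'$ so occurring together with the $\psi(\alpha'_j)$, let $W_0$ be the reflection subgroup they generate and $w_0$ a realizer; then $\sigma(\alpha'_j)=\epsilon\psi(\alpha'_j)=w_0^{-1}(\alpha'_j)$ and $\sigma'(\ck{\alpha'_j})=w_0^{-1}(\ck{\alpha'_j})$, so $\sigma=w_0^{-1}$ on $V_1$ and $\sigma'=w_0^{-1}$ on $V'_1$, and one takes $w=w_0^{-1}$. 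For (iii), applying (ii) to $\real\alpha$ and $\real\ck\alpha$ for $\alpha\in\Phi$ gives $\sigma(\alpha)=w(\alpha)\in\Phi$ and $\ck{\sigma(\alpha)}=\ck{w(\alpha)}=w(\ck\alpha)=\sigma'(\ck\alpha)$ by \ref{ss:4.4}(c); moreover $\sigma_{\vert\real\Phi}$ is a bijection whose inverse is again locally given by $W$ (same argument with $\psi^{-1}$ in place of $\psi$), whence $\sigma(\Phi)=\Phi$, and similarly $\sigma'(\ck\Phi)=\ck\Phi$. Part (iv) follows from (ii) and the contragredience \ref{ss:4.4}(b), extended bilinearly. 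The concluding identity is then $\mpair{\psi(\alpha'),\ck{\psi(\beta')}}=\mpair{\sigma(\alpha'),\ck{\sigma(\beta')}}=\mpair{\sigma(\alpha'),\sigma'(\ck{\beta'})}=\mpair{\alpha',\ck{\beta'}}$ for $\alpha',\beta'\in\Pi'$, using (iii), (iv) and $\epsilon^2=1$.

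The main obstacle is the patching that underlies the second and third steps: one must arrange that a \emph{single} $w_0\in W$ simultaneously realizes $\hat w$ on all the finitely many roots and coroots in play, and that after the one rescaling of $B'$ all comparison scalars $c(\alpha')$ equal $1$, so that the local data glue into genuine linear maps $\sigma,\sigma'$ rather than merely a coherent family of elements of $W$. Keeping track of $\theta$ being non-injective along positive rays of roots, and of the coroot bookkeeping through \ref{ss:4.4}(c), is the fiddly part of the argument; once these are handled, the verification of (i)--(iv) is a routine chase.
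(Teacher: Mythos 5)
You have identified the right crux (``the fiddly part''), but the argument you give for it does not work, and this is a genuine gap. The reduction through $\theta$ to \ref{ss:5.13}(b) is fine; the problem is the claim that ``directedness of the poset of such $W_0$ makes $c(\alpha')$ independent of all choices,'' and the consequent assertion that after one rescaling of $B'$ a single realizer $w_0$ of $\hat w$ satisfies $w_0(\psi(\alpha'_i))=\epsilon\alpha'_i$ exactly, for all $i$ at once. Two realizers of $\hat w$ on $(W_0\cap T)\times\set{\pm 1}$ differ by an element $q\in W$ that fixes every abstract root of $W_0$, and all this yields on the real side is $q(\beta)=c_\beta\beta$ with $c_\beta>0$ for each root $\beta$ with $s_\beta\in W_0$; nothing forces $c_\beta=1$, and in the generality of this paper (possibly non-reduced, non-symmetrizable NGCMs) it can genuinely fail. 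Concretely, let $(W,S)$ with $S=\set{s_0,s_1,s_2}$ be of type $\tilde A_2$, realized by a based root datum with linearly independent simple roots $\alpha_0,\alpha_1,\alpha_2$ and NGCM $\mpair{\alpha_0,\ck\alpha_1}=\mpair{\alpha_1,\ck\alpha_2}=\mpair{\alpha_2,\ck\alpha_0}=-t$, $\mpair{\alpha_1,\ck\alpha_0}=\mpair{\alpha_2,\ck\alpha_1}=\mpair{\alpha_0,\ck\alpha_2}=-t^{-1}$, where $t>0$, $t\neq 1$ (all products equal $1$, so all $m_{ij}=3$). A direct computation gives $(s_0s_1s_2)^2(\alpha_1)=t^3\alpha_1$; hence $q:=(s_0s_1s_2)^2$ satisfies $qs_1q^{-1}=s_{q(\alpha_1)}=s_1$ and fixes the abstract root $(s_1,1)$ (since $t^3\alpha_1\in\Phi_+$), yet rescales the real root $\alpha_1$ by $t^3\neq 1$. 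So already for $\Pi'=\Pi$, $\hat w=\Id$ and $W_0=\set{1,s_1}$, the scalar attached to a realizer depends on the realizer ($1$ versus $q$), and your linearity argument ($\sum_i d_i\alpha'_i=0\Rightarrow\sum_i d_i\psi(\alpha'_i)=0$) collapses, because it needs one $w_0$ that works exactly, not up to positive scalars, on all the $\alpha'_i$ simultaneously.

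The paper avoids this by never using arbitrary local realizers: for finite rank it quotes \ref{ss:5.3}; for infinite rank of type (LocFin) it falls back on the classification and the explicit descriptions in \ref{ss:5.10} and \ref{ss:5.18}; and in the remaining case (LocInf) it sets $\sigma(\alpha)=a_{W_J}^{-1}(\alpha)$ using only the canonical elements $a_{W_J}$ of Lemma \ref{ss:5.16} attached to nested standard parabolic subgroups. There the comparison element $p=a_{W_K}^{-1}a_{W_J}$ is shown, via Lemma \ref{ss:conj} (whose infinite irreducible hypothesis is exactly what case (LocInf) provides), to be a product of simple reflections $s_\gamma$ with $\gamma\in S\setminus J'$ commuting with every element of $J'$; for such $\gamma$ one has $\mpair{\delta,\ck\gamma}=0$ for all $\delta\in \Pi_{J'}$, so $p$ fixes the relevant real roots exactly, and the locally defined maps glue. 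Some argument of this kind---restricting to these canonical elements and proving exact (not up-to-scalar) agreement, or otherwise controlling the rescaling ambiguity illustrated above---is what your proposal is missing; without it the local data do not assemble into the linear maps $\sigma,\sigma'$.
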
 
\begin{remark} There is an obvious notion of an isomorphism of based root datums.
If $V=\real \Pi$ and $V'=\real \Pi'$,  $\epsilon\sigma$ and $\epsilon\sigma'$ together  define  such an isomorphism of based root datums $B\xrightarrow{\cong} B'$ (after the possible rescaling of $B'$).\end{remark}
\begin{proof} We assume without loss of generality that  $\Pi$ (and therefore also $\Pi'$) spans $V$,
and similarly for $V'$.
  If $(W,S)$ is of finite rank, the Corollary follows  from Theorem \ref{ss:5.3} with $\sigma=\sigma'\in W$. Assume now that $(W,S)$ is of infinite rank. If $(W,S)$ is of  type (LocInf), then, since any set of simple roots for a finite standard parabolic subgroup of $(W,S)$ must be linearly independent, $\Pi$ must be linearly independent.  Also, the Coxeter graph of $(W,S)$ is a tree.
Therefore, up  to rescaling $\Pi$, $B$ is of the type considered in  Examples \ref{ss:5.10} and \ref{ss:5.18}
and the desired conclusion follows readily.  Finally, assume $(W,S)$ is of infinite rank of type (LocInf).
We may define $\sigma$ following the definition of $\lambda$ in the proof of \ref{ss:5.13}. 
Namely, let $R\subseteq S'$ be such that the parabolic subsystem $(\mpair{R},R)$ is finitely generated and irreducible.   Consider the abstract positive $\Psi_+$ 
in $\widehat T$ corresponding to the positive roots of the root datum $B'$, and define $a_{W'}$, $\epsilon$ as in Lemma \ref{ss:5.16}. Following the proof of \ref{ss:5.13},
 set $\sigma(\alpha)=a_{W_J}^{-1}(\alpha)$ for all 
$\alpha$ in the $\real$-span of $\mset{\beta\in \Phi\mid s_\beta\in R}$. This is well-defined since for $p$, $J'$, $K'$ as in the proof of \ref{ss:5.13}, $p$ fixes $\mset{\alpha\in \Phi_+\mid s_\alpha\in J'}$ elementwise.  In a similar manner to that in the proof of \ref{ss:5.13}, one sees that $\sigma$ is invertible (here, as a linear map).
One may define $\sigma'$ in a similar way on $\real \ck \Pi$, and then clearly, (ii)--(iv) holds.  It is obvious  that $\epsilon \sigma(\Pi')$ differs from $\Pi$ simply by rescaling.
 Rescaling $\Pi$ appropriately, we therefore assume we have $\epsilon \sigma(\Pi')=\Pi$.
 One checks from (iii)--(iv) that this implies that  $\epsilon \sigma'(\ck{\Pi'})=\ck \Pi$, 
 and this gives (i).
 \end{proof}
 The analogue of the following for a class of crystallographic reflection representations
 (arising from Kac-Moody Lie algebras)  was proved in \cite{Moop}. 
 \begin{corollary} Suppose that $B$ is a based root datum affording an irreducible Coxeter system
 $(W,S)$ and that $V$ is finite dimensional. Then any two root bases of $B$ are $W$-conjugate up to sign and rescaling.\end{corollary}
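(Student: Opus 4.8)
The plan is to obtain this as a quick consequence of the preceding Corollary, the essential new input being that when $V$ is finite dimensional it is itself spanned by a \emph{finite} set of roots. Let $E$ be the root datum underlying $B$, with root system $\Phi$; replacing $V$, $V'$ by $\real\Pi$, $\real\ck\Pi$ if necessary, I may assume $\Pi$ spans $V$ (and $\ck\Pi$ spans $V'$), since $\Phi\subseteq\real\Pi$ and so any root basis of $E$ spans the same subspace. Let $\Pi'$ be a second root basis of $E$, affording a based root datum $B'$ and a Coxeter system $(W,S')$; since $S'=\mset{s_\alpha\mid\alpha\in\Pi'}\subseteq T$ by \ref{ss:4.5}, the observation following Theorem \ref{ss:5.2} shows that $(W,S')$ is again irreducible.

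First I would verify that $(W,S)$ and $(W,S')$ are isomorphic, as that is the hypothesis required to invoke the preceding Corollary. By Theorem \ref{ss:5.13}(a) the only obstruction is that one of them be of type $A_\infty$ and the other of type $A_{\infty,\infty}$. In either of those types the Coxeter graph is an infinite path, so the NGCM of the relevant root basis is tridiagonal with the property that its truncation to any finite connected set of nodes is nonsingular (such truncations obey the same determinant recursion as the type $A_n$ Cartan matrices, giving determinant $n+1$); hence that root basis is linearly independent, and being countably infinite it would force the ambient space to be infinite dimensional, contrary to hypothesis. Thus $(W,S)\cong(W,S')$.

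Now I would apply the preceding Corollary to $B$ and $B'$. After a rescaling of $B'$ (which leaves $\Pi'\subseteq\Phi$ and only rescales the elements of $\Pi'$), it yields $\real$-linear maps $\sigma\colon V\to V$, $\sigma'\colon V'\to V'$ and a sign $\epsilon\in\set{\pm1}$ such that $\epsilon\sigma$ restricts to a bijection $\Pi'\to\Pi$ and, for all subspaces $V_1\subseteq V$ spanned by finitely many roots and all $V_1'\subseteq V'$ spanned by finitely many coroots, there is $w=w_{V_1,V_1'}\in W$ with $\sigma|_{V_1}=w|_{V_1}$. Since $\dim V<\infty$ and $\Pi$ spans $V$, I choose a finite $\Pi_0\subseteq\Pi$ with $\real\Pi_0=V$ and apply this with $V_1=V$ (spanned by the finite set of roots $\Pi_0$) and $V_1'=\real\ck{\Pi_0}$ (spanned by finitely many coroots), obtaining $w\in W$ with $\sigma=w$ on all of $V$. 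Then $\Pi=\epsilon\sigma(\Pi')=\epsilon\,w(\Pi')$, so $\Pi'=\epsilon\,w^{-1}(\Pi)$; combined with the rescaling made above, $\Pi'$ is $W$-conjugate to $\Pi$ up to sign and rescaling, which is the assertion.

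The only step I expect to need genuine care is ruling out the exceptional pair $\{A_\infty,A_{\infty,\infty}\}$, i.e. checking that those Coxeter systems admit no finite-dimensional based root datum; as noted, this comes down to the elementary nondegeneracy of $A_\infty$-type NGCMs. Everything else is routine bookkeeping on top of Theorem \ref{ss:5.13} and the preceding Corollary.
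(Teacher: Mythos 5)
Your proof is correct and takes essentially the same route as the paper: rule out the $A_{\infty}$/$A_{\infty,\infty}$ exception because such a root basis would be an infinite linearly independent subset of the finite-dimensional $V$, then invoke the preceding Corollary, using finite dimensionality again (its condition (ii) applied with $V_1=\real\Pi$, which is spanned by finitely many roots) to replace the locally-defined $\sigma$ by a single $w\in W$. Your tridiagonal determinant computation is just a self-contained substitute for the paper's appeal to the linear independence of simple roots of finite standard parabolic subgroups; otherwise the argument coincides with the paper's.
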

 \begin{proof} This follows easily from  the preceding theorem on noting that
 $(W,S)$ cannot be of type (LocInf), hence is not of type $A_\infty$ or $A_{\infty,\infty}$\end{proof}

\appendix\section{Cocycles and extensions of group actions} 
\subsection{} \label{ss:1.1} Let $G$ be a group and $A$ be a (multiplicatively written) group on which $G$ acts on the left, so that $g(a\cdot a')=g(a)\cdot g(a')$. We recall (\cite[Appendix to Ch VII]{Serre})  that  a cocycle of $G$ in $A$  is a map $s\mapsto a_s\colon G \mapsto A$ satisfying
$a_{st}=a_s\cdot s(a_t)$ for all $s,t\in G$. Two cocycles $a$, $b$ are said to be  cohomologous if  there exists $c\in A$
such that $b_s=c^{-1}\cdot a_s\cdot s(c) $ for all $s\in G$. This defines an equivalence relation on the set of cocycles of $G$ in $A$, and the set $H^1(G,A)$ of equivalence classes is called the first cohomology set of $G$ with values in $A$. In general, $H^1(G,A)$ is only a pointed set
(with base point given by the equivalence class of the trivial cocycle $a$ defined by $a_s=1$ for all $s$).
If $A$ is abelian, $H^1(G,A)$ has  a natural structure of abelian group (with the base point as identity element). 

\subsection{} \label{ss:1.2} Let $S$ be a set with a left action of  the group $A$ denoted,  $(x,y)\mapsto x\cdot y$, for $x\in G,s\in S$) of $A$ and a
left action of the group  $G$, denoted  $(g,y)\mapsto g(y)$, compatible with the left action of $G$ on $A$ from \ref{ss:1.1} in the sense that $g(x\cdot y)=g(x)\cdot g(y)$ for $g\in G$, $x\in A$, $y\in S$.  This applies in particular with $S=A$, with its given left $G$-action and natural left $A$-action by left translation.  For a cocycle $a$ as in \ref{ss:1.1}, we let $S_a$ denote the $G$-set $S$  with the  left action $\times_a$ of $G$ on $A_a$ defined by
$s\times_a y=a_s\cdot s(y)$ for $s\in G$ and $y\in A$.   If $b$ is another cocycle, cohomologous to $a$
by the element $c\in A$ as in \ref{ss:1.1}, then the map $y\mapsto c\cdot y\colon S_b\rightarrow S_a$ defines an isomorphism of $G$-sets
 $S_b\rightarrow S_a$. 

\subsection{}  \label{ss:1.3}
Let $G$ be a group acting on a set $X$. For $x\in X$, let $[x]:=Gx$ denote the orbit of $x$. Assume given for each orbit $[x]$ a multiplicative group $A_{[x]}$. 
We consider a category $C$ in which an object is a $G$-set $\tilde X$ with a given (surjective) $G$-equivariant map
$\pi\colon \tilde X\rightarrow X$ and for each orbit $[x]$, a right  action of the group $A_{[x]}$ on $\pi^{-1}([x])$ commuting with the $G$-action on this set and such that for each $y\in [x]$, $A_{[x]}$ acts simply transitively on $\pi^{-1}(y)$. Morphisms are given by $G$-equivariant maps between the corresponding $G$-sets commuting with the projections $\pi$ and the $A_{[x]}$-actions on the inverse images
$\pi^{-1}([x])$, and composition of morphisms is by composition of the underlying  maps of $G$-sets.  (If there is a group $H$ with $A_{[x]}=H$ for all $x$, this notion is an analogue in the category of $G$-sets of a principal $H$-bundle \cite{Hus}).  Any morphism in $C$ is clearly an isomorphism. Below, we  indicate how the objects of $C$
are classified up to isomorphism  by an appropriate first cohomology group $H^1(G,A)$ 

For each $x\in X$, define  a multiplicative abelian group $A_x=A_{[x]}$. Form the product  group $A:=\prod_{x\in X} A_x$, with projections $\pi_x\colon A\rightarrow A_x$ for $x\in X$. For $a\in A$, we write $a$ as $a=(a_x)_{x\in X}$ or for short $a=(a_x)$, where $a_x=\pi_x(a)$. The group $G$ acts naturally on the left of  $A$ by $g(a)=b$ where $b_x=a_{g^{-1}(x)}$,
or for short $g(a_x)=(a_{g^{-1}(x)})$.

\begin{proposition} \label{ss:1.4}There is a natural bijection between the isomorphism classes of objects of the  category $C$ defined in $\text{\rm \ref{ss:1.3}}$ and $H^1(G,A)$.\end{proposition}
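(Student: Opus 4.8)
The plan is to construct explicit, mutually inverse maps between $H^1(G,A)$ and the set of isomorphism classes of objects of $C$, in the style of the classical classification of torsors. First I would exhibit a distinguished (``split'') object $\tilde X_0:=\mset{(x,a)\mid x\in X,\ a\in A_{[x]}}$ with $\pi(x,a)=x$, with $G$ acting by $g\cdot(x,a)=(g(x),a)$, and with $A_{[x]}$ acting on the right of $\pi^{-1}([x])$ by $(x,a)\cdot b=(x,ab)$; one checks at once that this lies in $C$ and that $A_{[y]}$ acts simply transitively on each fiber. Then, given a cocycle $c=(c_g)_{g\in G}$ of $G$ in $A$ (so $c_{gh}=c_g\cdot g(c_h)$ as in \ref{ss:1.1}), I would define a twisted object $\tilde X_c$ with the same underlying set, the same $\pi$, and the same right $A_{[x]}$-actions as $\tilde X_0$, but with the modified $G$-action $g\ast_c(x,a):=(g(x),\,c_g(g(x))\,a)$. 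The cocycle identity for $c$, evaluated at $g(h(x))$, together with $c_1=1$ (a formal consequence of the cocycle relation), is exactly what is needed to verify that $\ast_c$ is a group action; compatibility with $\pi$ and with the right $A$-action is immediate. This is the construction of \ref{ss:1.2} applied fiberwise, and it sends a cocycle to an object of $C$.

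For the reverse direction I would, given an object $(\tilde X,\pi,\set{A_{[x]}\text{-actions}})$, choose (using the axiom of choice) a point $\tilde x\in\pi^{-1}(x)$ for each $x\in X$. For $g\in G$ and $y\in X$ the two points $g\cdot\widetilde{g^{-1}(y)}$ and $\tilde y$ both lie in $\pi^{-1}(y)$, on which $A_{[y]}$ acts simply transitively, so there is a unique $c_g(y)\in A_{[y]}$ with $g\cdot\widetilde{g^{-1}(y)}=\tilde y\cdot c_g(y)$; put $c_g:=(c_g(y))_{y\in X}\in A$. A short computation using that the $G$-action commutes with the $A_{[x]}$-actions gives $c_{gh}=c_g\cdot g(c_h)$, so $c$ is a cocycle. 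Changing the chosen section to $\tilde x\cdot d(x)$ replaces $c$ by the cohomologous cocycle $d^{-1}c\,g(d)$ with $d=(d(x))\in A$; and a morphism $\tilde X\to\tilde X'$ in $C$ carries a section of $\tilde X$ to one of $\tilde X'$ with the same associated cocycle. Hence $[\tilde X]\mapsto[c]\in H^1(G,A)$ is well defined on isomorphism classes.

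Finally I would check the two maps are mutually inverse. Computing the cocycle of $\tilde X_c$ with respect to the section $x\mapsto(x,1)$ returns $c$ on the nose, which gives surjectivity (and that one composite is the identity on $H^1(G,A)$). Conversely, given an object $\tilde X$ with associated cocycle $c$, every element of $\tilde X$ is uniquely of the form $\tilde x\cdot a$ with $x=\pi(\cdot)$ and $a\in A_{[x]}$ (again by simple transitivity), so one defines $\tilde X\to\tilde X_c$ by $\tilde x\cdot a\mapsto(x,a)$; using the defining relation $g\cdot\tilde x=\widetilde{g(x)}\cdot c_g(g(x))$ one verifies this is $G$-equivariant and compatible with $\pi$ and the right actions, hence an isomorphism in $C$. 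If $c'$ is merely cohomologous to $c$ one first adjusts the section of $\tilde X$ (as above) so that the cocycles become literally equal, which yields injectivity of $[\tilde X]\mapsto[c]$. The main obstacle — indeed essentially the only delicate point — is bookkeeping: one must fix consistent left/right conventions and decide whether the cocycle value is indexed by the source or the target point of $g$, so that the identity comes out in precisely the form $c_{gh}=c_g\cdot g(c_h)$ of \ref{ss:1.1} with the $G$-action $g(a_x)=(a_{g^{-1}(x)})$ on $A$; once these are pinned down, every verification above is routine. I would also note that the special case $X=T$, $G=W$ acting by conjugation, $A_{[t]}=\set{\pm1}$ and $A=\mc P(T)$ recovers the correspondence between the functions $N$ and $\eta$ used in \ref{ss:3.1} and \ref{ss:2.4}.
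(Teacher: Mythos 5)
Your proposal is correct and follows essentially the same route as the paper: the paper likewise classifies objects of $C$ by twisting the split object by a cocycle (its action formula $g(x,y_x)=(gx,\eta(g^{-1},x)^{-1}y_x)$ agrees with your $g\ast_c(x,a)=(g(x),c_g(g(x))a)$ via the identity $a_{g^{-1}}=(g^{-1}(a_g))^{-1}$), and your section-based extraction of a cocycle from an arbitrary object merely fills in the verifications the paper dismisses as ``straightforward.''
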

\begin{proof} Arbitrary functions $a\mapsto a_g\colon G\rightarrow A$ correspond bijectively to functions $\eta\colon G\times X\rightarrow \cup_{x\in X} A_{[x]}$  with
$\eta(g,x)\in A_x$ for all $x\in X$, $g\in G$, by the correspondence $a_g=(\eta(g,x))_{x\in X}$. 
One checks that the formula \[g(x,y_x):=(gx,\eta(g^{-1},x)^{-1}y_x)\] for $x\in X$, $g\in G$, $y_x\in A_x$
defines a left $G$-action on the set $\tilde X_a=\cup_{x\in X}\set{x}\times A_x$ iff the function
$g\mapsto a_g$ is a cocycle $a$ of $G$ with values in $A$.
In this case, $\tilde X_a$  has an obvious structure as an object of $C$. Clearly, every object of $C$ is isomorphic to one obtained from this construction from some function $\eta$.
It is straightforward to check that, for two cocycles $a$ and $b$, $\tilde X_a$ is isomorphic in $C$ to $\tilde X_b$ iff $a$ and $b$ are cohomologous. 
Hence the map sending  the cocycle $a$ to the isomorphism class of $\tilde X_a$ gives the required bijection. 
 \end{proof}
\subsection{}  Although we shall not need it, we also describe  an analogue in the above setting  of the construction of an associated bundle of a principal bundle.
For $x\in X$, let $S_x$ be a left $A_{[x]}$-set with $S_x=S_y$ whenever $[x]=[y]$.
Give the product set $S:=\prod_{x\in X}S_x$ the left $A$-action   with $(a_x)\cdot (y_x)=(a_x\cdot y_x)$. There is an action of
$G$ on $S$  defined by $g(y_x)=(y_{g^{-1}(x)})$, compatible with the $G$-action on $A$ in the sense of \ref{ss:1.2}.   If $a\colon G\rightarrow A$  is a cocycle and $\eta$ the corresponding function as in the proof of \ref{ss:1.4}, the formula
 \[g(x,y_x):=(gx,\eta(g^{-1},x)^{-1}y_x)\] for $x\in X$, $g\in G$, $y_x\in S_x$
defines a left $G$-action on the set $X':=\cup_{x\in X}\set{x}\times S_x$. 

Any  $G$-set $X'$ with given equivariant map $\pi'\colon X'\rightarrow X$ such that for each $G$-orbit $[x]$, there is some set $S_{[x]}$ such that  the fibers $(\pi')^{-1}(y)$ for $y\in [x]$ are all in bijection with  $S_{[x]}$, arises from the above construction with $A_{[x]}=\text{\rm Sym}(S_{[x]})$, the symmetric group  acting on $S_{[x]}$ in the usual way,  and $S_x:=S_{[x]}$.

 \section{Quasi-root systems}
In this appendix, we show how the definition and some elementary properties of Bruhat order 
and weak order on Coxeter groups extend to groups with a suitable  quasi-root system realized linearly in  a real vector space. The principal example other than Coxeter groups is provided by real orthogonal groups.

\subsection{}  \label{ss:2.6} Let $(\Psi,F)$ be a quasi-root system. We use notation as in \ref{ss:2.2}. 
Suppose that $\Psi_+$ is a quasi-positive system for $\Psi$, and let $N:=N_{\Psi_+}\colon W\rightarrow \mc{P}(T)$ be the corresponding  reflection cocycle.

We define certain pre-orders (reflexive, transitive relations) on  $W$ as follows.
The weak pre-order $\leq_w$ is defined by $x\leq_w y$ iff $N(x)\subseteq N(y)$; this is a partial order iff $N(x)=N(y)$ for $x,y\in W$ implies $x=y$, or equivalently by the cocycle condition, iff $N(x)=\emptyset$ with $x\in W$  implies $x=\emptyset$. 
Let $A\subseteq T$. The twisted Bruhat pre-order $\leq_A$ on $W$ is defined by 
$x\leq_A y$ iff there exist
$x=x_0,x_1,\ldots, x_n\in W$ and $t_i\in x_i\cdot A$ with $ x_{i-1}=t_ix_i$,
where $w\cdot A=N(w)+wAw^{-1}$.
It is easy to see that for $x,y\in W$, 
 \begin{equation}\label{eq:2.6.1} x\leq_Ay\Longleftrightarrow xw^{-1}\leq_{w\cdot A}yw^{-1}\Longleftrightarrow y\leq _{T+A}x\end{equation}
For $A=\emptyset$, the order $\leq_A$ is denoted just as $\leq$ and is  called  the Bruhat pre-order on $W$. 

In the case $(\Psi,F)$ is the standard  abstract root system of a Coxeter system $(W,S)$, 
$\leq_w$ and $\leq$ are partial orders, called weak right order and Bruhat order respectively, and $\leq_A$ is a twisted Bruhat order in the sense of \cite{Dy1} if $A$ is an ``initial section of a reflection order'' in the sense of loc cit.

\subsection{} We define a linear realization of 
$(\Psi,F)$ to be a tuple $(\mpair{?,?},R,M,\ck M,\iota,\ck\iota)$ where
\begin{conds} \item $R$ is a ring,  $M$ (resp., $\ck M$) is a left (resp., right $R$-module)
and  $\mpair{?,?}\colon M\times \ck M\rightarrow R$ is a $R$-bilinear form
\item $\iota\colon \Psi\rightarrow M$ and  $\ck\iota\colon \Psi\rightarrow \ck M$ are injective functions.    We identify
$\Psi$ with $\iota(\Psi)$,  so $\iota$ becomes an inclusion, set $\ck \Psi:=\ck\iota(\ck \Psi)$ and set $\ck \alpha=\ck\iota (\alpha)$ for $\alpha\in \Psi\subseteq M$ so $\nu:=(\alpha\mapsto \ck\alpha)\colon \Psi\mapsto \ck\Psi$ is a bijection. 
\item $\mpair{\alpha,\ck \alpha}\alpha=2\alpha$, $\ck\alpha\mpair{\alpha,\ck\alpha}=2\ck \alpha$ for $\alpha\in \Psi$.
For $\alpha\in  \Psi$,  define $s_\alpha\in\text{\rm GL}_R(M)$ by $m\mapsto m-\mpair{m,\ck\alpha}\alpha$ and $s_{\ck\alpha}\in \text{\rm GL}_R(\ck M)$ by
$m\mapsto m-\ck \alpha\mpair{\alpha, m}$
\item for $\alpha\in \Psi$, $s_\alpha(\Psi)=\Psi$ and  $F(\alpha)=(s_\alpha)_{\vert \Psi}$,
while  $s_{\ck \alpha}(\ck\Psi)=\ck \Psi$  and $(s_{\ck\alpha})_{\vert \ck \Psi}= F(\alpha) $
\item $\mpair{s_\alpha\mid \alpha\in \Psi}$ identifies with $W\subseteq \text{\rm Sym}(\Psi)$ by restriction and $\mpair{s_{\ck \alpha}\mid \alpha\in \Psi}$ identifies with $ W \subseteq \text{\rm Sym}(\ck\Psi)$ by restriction 
\end{conds} 
Here, we use $\nu$ to identify any action of a group, such as $W:=\mpair{F(\alpha)\mid \alpha\in \Psi}$, on $\Psi\subseteq M$ with an action of that group
on $\ck\Psi$.   
Note (v) follows  automatically from (i)--(iv)  if $R\Psi=M$ and $\ck \Psi R=\ck M$.

\subsection{} 
The main example of  quasi-root systems and their  linear realizations are provided by
Coxeter groups. Any  root  datum $E$ as in \ref{ss:4.5} may be naturally regarded as providing a linear realization  of some quasi-root system (in fact, an abstract root system in the sense of \ref{ss:3.5b}). Other  interesting  examples  of linear realizations 
of   abstract root systems arise from natural  reflection representations of Coxeter groups   over
certain commutative or  non-commutative coefficient rings.

\subsection{} \label{ss:2.8} Certain orthogonal groups  provide examples of (linearly realized) quasi-root systems as follows. Let $K$ be a field of characteristic unequal to two, and 
   $V$ be a finite-dimensional vector space over  $K$ equipped with a symmetric, non-degenerate  bilinear form $(?\mid ?)\colon V\times V\rightarrow K$.
   Let $O(V)$ be the orthogonal group of $(V,(?\mid?))$, consisting of invertible $K$-linear transformations of $V$ preserving the form. For non-isotropic $\alpha\in V$,  let $s_\alpha\in O(V)$ denote the orthogonal reflection, defined by  $ v\mapsto v-2(v\mid \alpha)/(\alpha\mid\alpha)\alpha$. We choose a subset $\Psi$ of $V$  such that $\Psi$ is stable under $O(V)$, consists of non-isotropic vectors 
   and each non-isotropic line $l$ in $O(V)$ contains at least one element of $\Psi$.  Such a set $\Psi$ always exists; for example, one could take $\Psi$ to consist of all non-isotropic vectors in $V$.
   Alternatively, one could choose $\Psi$ so that each non-isotropic line contains exactly two elements of $\Psi$; for instance, if  $K=\real$, one could take $\Psi$ to consist of all $\alpha\in V$ with $(\alpha\mid\alpha)=\pm1$. For $\alpha\in \Psi$, let $F(\alpha)\in \text{\rm Sym}(\Psi)$ denote the restriction of $s_\alpha$ to $\Psi$. Let $\ck\Psi:=\mset{\ck\alpha\mid \alpha\in \Psi}$ where $\ck \alpha:=2\alpha/(\alpha\mid \alpha)$.
   
Then clearly  $(\Psi,F)$ is a quasi-root system. Moreover,   $((?,?),K, V,V,\iota,\ck\iota)$
  is a linear realization of $(\Psi, F)$ where $\iota\colon \Psi\rightarrow V$ is the inclusion  and $\ck\iota\colon \Psi\rightarrow V$ is  the map
$\alpha\mapsto\ck \alpha$. 
\subsection{} Assume that  $(\Psi,F)$ is a quasi-root system  with a linear realization  over $\real$ of the form 
$(\mpair{?,?},\real,V,\ck V,\iota,\ck\iota)$, so $V$ and $\ck V$ are real vector spaces.
We fix a vector space total ordering $\preceq$ of $\ck V$ i.e. a total ordering of $\ck V$
such that the set of positive elements is closed under addition and multiplication by positive real numbers. We also assume given a  family $\set{\omega_i}_{i\in I}$ of elements of $\ck V$, indexed by a well-ordered set $I$,  such that if $v\neq 0$ in $V$, there is $i\in I$ with $\mpair{v,\omega_i}\neq 0$.

For a totally ordered set $X$ with total order $\leq$, let $X^I$ be the family
of $I$-tuples $(x_i)_{i\in I}$ of elements of $X$, totally ordered by the lexicographic order $\leq_\lex$ 
induced by $\leq$ on $X$ and the well-ordering  of $I$. In particular, we have totally ordered sets
$(\ck V)^I$ with order $\preceq_\lex$ and $\real^I$ with order $\leq_\lex$  defined using the usual ordering $\leq$  of
$\real$. The vector space embedding $v\mapsto (\mpair{v,\omega_i})_{i\in I}\colon V\rightarrow  \real^I$ gives rise, by restriction of the order $\leq_{\lex}$ , to a vector space total ordering of $V$, which we denote also as $\leq_\lex$. (It is well-known that if $V$ is finite dimensional and $\mpair{?,?}$ is a perfect pairing, then every vector space total order of $V$ is equal to an order $\leq_{\lex}$ arising from some family $\set{\omega_i}$).

It is easy to see that for $\alpha,\beta\in \Psi$, $s_\alpha=s_\beta$ iff there is $c\in \real_{\neq 0}$ such that $\beta=c\alpha$ and $\ck\beta=c^{-1}\ck \alpha$; it follows readily that  $\Psi_+:=\mset{\alpha\in \Psi\mid 0\preceq \ck\alpha}$
and $\Phi_+:=\mset{\alpha\in \Psi\mid \alpha \leq_{\lex} 0}$ are  compatible quasi-positive systems
for $(\Psi,F)$. Define the reflection cocycle
$N=N_{\Psi_+}\colon G\rightarrow \mc{P}(T)$, and set  $A=\mset{F(\alpha)\mid\alpha\in \Phi_+\cap -\Psi_+}\subseteq T$.
\begin{proposition}  Let assumptions and notation be as immediately above. Then: \begin{num}\item  For all $\alpha\in \Psi_+$ and $w\in W$, \[(s_\alpha w(\omega_i))_{i\in I}\preceq_\lex (w(\omega_i))_{i\in I}\] in $(\ck V)^I$ iff $s_\alpha\in w\cdot A=N(w)+wAw^{-1}$.
\item The  twisted Bruhat pre-order  $\leq_A$ is a partial order on $W$. 
 \end{num}
\end{proposition}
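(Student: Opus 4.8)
The plan is to prove (a) by a direct lexicographic computation and then deduce (b) by using the $(\ck V)^I$-valued ``height'' $\phi(w):=(w(\omega_i))_{i\in I}$ as a quantity that strictly decreases along every defining chain of $\leq_A$. Since $\leq_A$ is already a pre-order (see \ref{ss:2.6}), for (b) only antisymmetry needs an argument.

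For (a), I would first record two preliminaries: the element $s_\alpha\in W$ acts on $\ck V$ by $m\mapsto m-\mpair{\alpha,m}\ck\alpha$, and the representations of $W$ on $V$ and on $\ck V$ are contragredient, $\mpair{w(v),m}=\mpair{v,w^{-1}(m)}$ for $w\in W$, $v\in V$, $m\in\ck V$ (this is proved by the same reduction to the generators $s_\alpha$ as \ref{ss:4.4}(b), where it is immediate from the defining formulas together with $\mpair{\alpha,\ck\alpha}=2$). Hence $(s_\alpha w)(\omega_i)-w(\omega_i)=-\mpair{w^{-1}(\alpha),\omega_i}\,\ck\alpha$ for every $i$. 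As $w^{-1}(\alpha)\in\Psi$ is nonzero, the hypothesis on $\{\omega_i\}$ produces a least $i_0\in I$ with $\mpair{w^{-1}(\alpha),\omega_{i_0}}\neq0$; the two tuples then agree for $i<i_0$ and differ at $i_0$, so $(s_\alpha w(\omega_i))_i\preceq_\lex(w(\omega_i))_i$ holds (and then necessarily strictly) iff $-\mpair{w^{-1}(\alpha),\omega_{i_0}}\ck\alpha\prec0$, that is, using $\ck\alpha\succ0$ because $\alpha\in\Psi_+$, iff $\mpair{w^{-1}(\alpha),\omega_{i_0}}>0$ in $\real$. By the choice of $i_0$ and the definition of $\leq_\lex$ on $V$, this last inequality says exactly $w^{-1}(\alpha)>_\lex0$, i.e. $w^{-1}(\alpha)\in-\Phi_+$, i.e. $\alpha\in w(-\Phi_+)$. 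To match this with ``$s_\alpha\in w\cdot A$'' I would apply \ref{ss:2.5}(a) to the compatible quasi-positive systems $\Psi_+$ and $\Phi_+$ (our $A=\{s_\gamma\mid\gamma\in\Phi_+\cap-\Psi_+\}$ coincides with the set $\{s_\gamma\mid\gamma\in\Psi_+\cap-\Phi_+\}$ appearing there, since $s_\gamma=s_{-\gamma}$), which gives $w\cdot A=N(w)+wAw^{-1}=\{s_\beta\mid\beta\in\Psi_+\cap w(-\Phi_+)\}$; then for $\alpha\in\Psi_+$ one checks that $s_\alpha$ lies in this set iff $\alpha$ does, using that $s_\alpha=s_\beta$ with $\alpha,\beta\in\Psi_+$ forces $\beta=c\alpha$ with $c\in\real_{>0}$ (by the criterion for $s_\alpha=s_\beta$ recalled just before the Proposition, together with $0\prec\ck\alpha,\ck\beta$), so $\beta\in w(-\Phi_+)\iff\alpha\in w(-\Phi_+)$. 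This proves (a), and shows in addition that the inequality there is always strict.

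For (b), let $x\leq_A y$ be witnessed by $x=x_0,\dots,x_n=y$ with $t_i\in x_i\cdot A$ and $x_{i-1}=t_ix_i$. Writing $t_i=s_{\alpha_i}$ with $\alpha_i$ the $\Psi_+$-representative of that reflection, part (a) gives $\phi(x_{i-1})=\phi(s_{\alpha_i}x_i)\prec_\lex\phi(x_i)$ for each $i$, so $\phi(x)=\phi(x_0)\prec_\lex\cdots\prec_\lex\phi(x_n)=\phi(y)$ whenever $n\geq1$. In particular, if $x\neq y$ then any such chain has $n\geq1$, so $\phi(x)\prec_\lex\phi(y)$. Thus if $x\leq_A y$ and $y\leq_A x$ with $x\neq y$ we would get both $\phi(x)\prec_\lex\phi(y)$ and $\phi(y)\prec_\lex\phi(x)$, contradicting that $\preceq_\lex$ is a strict total order on $(\ck V)^I$; hence $x=y$ and $\leq_A$ is a partial order.

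I expect the main obstacle to be the bookkeeping in (a) forced by the realization not being reduced: a reflection $t\in T$ may equal $s_\gamma$ for many roots $\gamma$ on a common line, so one must consistently replace reflections by their $\Psi_+$-representatives and track which rescalings are by positive scalars (so that no relevant sign flips) — both in the computation of $w\cdot A$ and when reading $w^{-1}(\alpha)\in-\Phi_+$ off of $s_\alpha\in w\cdot A$. The other point requiring care is reconciling, via \ref{ss:2.5}(a), the two distinct positive systems in play: $\Psi_+$, coming from the vector space order $\preceq$ on $\ck V$, and $\Phi_+$, coming from the order $\leq_\lex$ on $V$.
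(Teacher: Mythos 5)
Your proof is correct and follows essentially the same route as the paper: the same lexicographic computation of $(s_\alpha w(\omega_i))_i$ versus $(w(\omega_i))_i$ reduced via contragredience to the sign of $w^{-1}(\alpha)$, and the same appeal to Proposition \ref{ss:2.5}(a) to identify this with membership in $w\cdot A=N(w)+wAw^{-1}$. The only (harmless) difference is in (b): you extract strictness in (a) from the separating family $(\omega_i)_i$ and conclude antisymmetry from strict decrease of $w\mapsto(w(\omega_i))_i$ along chains, whereas the paper uses weak decrease together with injectivity of that map, which rests on the same separating hypothesis.
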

\begin{proof} For (a), we have $(s_\alpha w (\omega_i))_i\preceq _\lex (w(\omega_i))_i $
iff $(w(\omega_i)-\mpair{\alpha, w\omega_i}\ck \alpha)_i\preceq_\lex  (w(\omega_i))_i$ in $(\ck V)^I$
iff $(\mpair{\alpha, w(\omega_i)})_i\geq_{\lex} (0)_i$ in $\real^I$ (since $0\prec \ck \alpha$) 
iff $w^{-1}(\alpha)\in -\Phi_+$ iff $s_\alpha\in w\cdot A$ by \ref{ss:2.5}(a).
Now we show that (a)  implies that $\leq_A$ is transitive, which will prove (b).
If $x\leq_A y$ and $y\leq _A x$, then by (a) we have 
\[(  x(\omega_i))_i\preceq _\lex (y(\omega_i))_i \preceq_\lex x (\omega_i))_i\]
and hence $(x(\omega_i))_i= (y(\omega_i))_i$. However, if $x\neq y$, we have $x^{-1}(v)\neq y^{-1}(v)$ for some $v\in V$, so $\mpair{x^{-1}(v),\omega_j}\neq  \mpair{y^{-1}(v),\omega_j}$ for some $j\in I$ by the assumption on the family $(\omega_i)_i$. But then $\mpair{v,x(\omega_j)}\neq
\mpair{v,y(\omega_j)}$ and $x(\omega_j)\neq  y(\omega_j)$ contrary to the above.\end{proof}

   \subsection{} We conclude with some remarks about the above-defined orders in the  special case  
   of the quasi-root systems of  real orthogonal groups  defined in \ref{ss:2.8}.
   In this situation, every vector space total order on $V=\ck V$ arises as the order $\leq_\lex$ from some family   $(\omega_i)_{i\in I}=(\omega_1,\ldots ,\omega_n)$ which may without loss of generality be taken to be a basis of $V$. It is easy to see that $w(\Psi_+)=\Psi_+$ implies that $w=1$, and it follows that the weak pre-order  is actually  a partial order.  Since $-1\in W$ with $N(-1)=\Psi_+$, one readily sees that $w\mapsto -w$ is an isomorphism $(W,\leq_A)\cong (W,\leq_A\op)$ in any of the twisted Bruhat orders $\leq _A$ and in the weak order $\leq_w$ (this is the analogue of the fact that multiplication by the longest element induces an order-reversing bijection of a finite Coxeter group in (any twisted)  Bruhat order or weak order).
   However, there may be several non-isomorphic Bruhat orders (and several non-isomorphic weak orders) depending on the choices of  $\preceq$, $(\omega_i)_i$ etc.

    Assume now that the form $(?\mid?)$ on $V$   is positive definite. Then, without loss of generality,  one may take $(\omega_i)_i$ as an orthonormal basis of $V$, and one sees that  the vector space total orders of $V$ correspond bijectively 
  to ordered orthonormal bases $(\omega_1,\ldots \omega_n)$ of $V$. In particular, 
  $W=O(V)$ acts simply transitively on the set of vector space total orderings of $V$, and the choice of
  the particular ordering $\preceq$ will not effect the family of  order types of posets arising as (W,$\leq_A)$ for varying $A$ (resp., the order type of  $\leq_w$). Further, from this one sees that $\Phi_+=w(\Psi_+)$ for some $w\in W$.
  Hence using \ref{ss:2.5e} and \eqref{eq:2.6.1}, the posets $(W,\leq_A)$ are all isomorphic to $(W,\leq_\emptyset)$.
  Thus, for a positive definite form, there is, up to poset isomorphism, only one twisted Bruhat order $\leq _A$ on $W$ (this is analogous to  the fact that there is, up to isomorphism, only one twisted Bruhat order on a finite Coxeter group).

One might ask whether  other properties of Bruhat order and weak order on finite Coxeter groups are shared by the corresponding orders on orthogonal groups.  For example, does $W$ under the weak
order form a (complete) lattice?


\begin{thebibliography}{10}

\bibitem{Bahls}
Patrick Bahls.
\newblock {\em The isomorphism problem in {C}oxeter groups}.
\newblock Imperial College Press, 2005.

\bibitem{BjBr}
Anders Bj{\"o}rner and Francesco Brenti.
\newblock {\em Combinatorics of {C}oxeter groups}, volume 231 of {\em Graduate
  Texts in Mathematics}.
\newblock Springer, New York, 2005.

\bibitem{Bou}
N.~Bourbaki.
\newblock {\em \'{E}l\'ements de math\'ematique. {F}asc. {XXXIV}. {G}roupes et
  alg\`ebres de {L}ie. {C}hapitre {IV}: {G}roupes de {C}oxeter et syst\`emes de
  {T}its. {C}hapitre {V}: {G}roupes engendr\'es par des r\'eflexions.
  {C}hapitre {VI}: syst\`emes de racines}.
\newblock Actualit\'es Scientifiques et Industrielles, No. 1337. Hermann,
  Paris, 1968.

\bibitem{Mue}
Noel Brady, Jonathan~P. McCammond, Bernhard M{\"u}hlherr, and Walter~D.
  Neumann.
\newblock Rigidity of {C}oxeter groups and {A}rtin groups.
\newblock {\em Geom. Dedicata}, 94:91--109, 2002.

\bibitem{BrHow}
Brigitte Brink and Robert~B. Howlett.
\newblock Normalizers of parabolic subgroups in {C}oxeter groups.
\newblock {\em Invent. Math.}, 136(2):323--351, 1999.

\bibitem{Deo}
Vinay~V. Deodhar.
\newblock On the root system of a {C}oxeter group.
\newblock {\em Comm. Algebra}, 10(6):611--630, 1982.

\bibitem{Dy0}
M.~J. Dyer.
\newblock {\em {Hecke} Algebras and Reflections in {Coxeter} Groups}.
\newblock PhD thesis, Univ. of Sydney, 1987.

\bibitem{Dy1}
M.~J. Dyer.
\newblock Hecke algebras and shellings of {B}ruhat intervals. {II}. {T}wisted
  {B}ruhat orders.
\newblock In {\em Kazhdan-Lusztig theory and related topics (Chicago, IL,
  1989)}, volume 139 of {\em Contemp. Math.}, pages 141--165. Amer. Math. Soc.,
  Providence, RI, 1992.

\bibitem{Dy3}
M.~J. Dyer.
\newblock On parabolic closures in {C}oxeter groups.
\newblock {\em J. of Group Theory}, 13:441--446,  2010.




\bibitem{Dy2}
Matthew Dyer.
\newblock Reflection subgroups of {C}oxeter systems.
\newblock {\em J. Algebra}, 135(1):57--73, 1990.

\bibitem{Gold}
David~M. Goldschmidt.
\newblock Abstract reflections and {C}oxeter groups.
\newblock {\em Proc. Amer. Math. Soc.}, 67(2):209--214, 1977.

\bibitem{Jon}
Frederick~M. Goodman, Pierre de~la Harpe, and Vaughan F.~R. Jones.
\newblock {\em Coxeter graphs and towers of algebras}, volume~14 of {\em
  Mathematical Sciences Research Institute Publications}.
\newblock Springer-Verlag, New York, 1989.

\bibitem{How}
R.~B. Howlett, P.~J. Rowley, and D.~E. Taylor.
\newblock On outer automorphism groups of {C}oxeter groups.
\newblock {\em Manuscripta Math.}, 93(4):499--513, 1997.

\bibitem{Hum}
James~E. Humphreys.
\newblock {\em Reflection groups and {C}oxeter groups}, volume~29 of {\em
  Cambridge Studies in Advanced Mathematics}.
\newblock Cambridge University Press, Cambridge, 1990.

\bibitem{Hus}
Dale Husemoller.
\newblock {\em Fibre bundles}, volume~20 of {\em Graduate Texts in
  Mathematics}.
\newblock Springer-Verlag, New York, 1994.

\bibitem{Kac}
Victor~G. Kac.
\newblock {\em Infinite-dimensional {L}ie algebras}.
\newblock Cambridge University Press, Cambridge, 1990.

\bibitem{Moop}
Robert~V. Moody and Arturo Pianzola.
\newblock {\em Lie algebras with triangular decompositions}.
\newblock Canadian Mathematical Society Series of Monographs and Advanced
  Texts. John Wiley \& Sons Inc., New York, 1995.

\bibitem{Serre}
Jean-Pierre Serre.
\newblock {\em Algebraic groups and class fields}, volume 117 of {\em Graduate
  Texts in Mathematics}.
\newblock Springer-Verlag, New York, 1988.

\end{thebibliography}
\end{document}